\documentclass[12pt]{amsart}
\pdfoutput=1
\usepackage{amssymb,amscd}
\usepackage{graphicx}
\usepackage{mathrsfs}
\usepackage{dsfont}
\usepackage[left=1.4in,right=1.4in,bottom=1.3in,top=1.3in]{geometry}
\PassOptionsToPackage{hyphens}{url}
\usepackage[all,2cell]{xy}
\UseTwocells

\newcommand{\ObSchS}{{\rm{Ob}}((Sch/S)_{fppf})}

\newcommand{\calc}{\mathcal C}
\newcommand{\Ob}{{\rm{Ob}}}

\newcommand{\Z}{\mathbb Z}

\newcommand{\N}{\mathbb N}

\newcommand{\cal}{\mathcal}



 \theoremstyle{plain}
\newtheorem{theorem}{Theorem}[section]
\newtheorem{corollary}[theorem]{Corollary}
\newtheorem{lemma}[theorem]{Lemma}
\newtheorem{proposition}[theorem]{Proposition}
\newtheorem{definition-proposition}[theorem]{Definition/Proposition}

\newtheorem{definition}[theorem]{Definition}

 \theoremstyle{definition}

\theoremstyle{remark}

\newtheorem{remark}[theorem]{Remark}

\numberwithin{equation}{section}

\newcommand{\proref}[1]{Proposition~\ref{#1}}

\makeatletter
\def\@seccntformat#1{\@ifundefined{#1@cntformat}%
    {\csname the#1\endcsname\quad}
    {\csname #1@cntformat\endcsname}}
\newcommand{\section@cntformat}{\S\thesection.\enspace}
\newcommand{\subsection@cntformat}{\S\thesubsection.\enspace}
\newcommand{\subsubsection@cntformat}{\S\thesubsubsection\enspace}
\makeatletter

\usepackage[dvipsnames, svgnames, x11names]{xcolor}
\definecolor{cite}{rgb}{0.50,0.00,1.00}
\definecolor{url}{rgb}{0.00,0.50,0.75}
\definecolor{link}{rgb}{0.00,0.00,0.50}
\usepackage[colorlinks,linkcolor=RoyalBlue,urlcolor=url,citecolor=magenta,breaklinks]{hyperref}

\usepackage{tikz-cd}
\usepackage{capt-of}
\usepackage{enumitem}

\makeatletter
\@namedef{subjclassname@2020}{\textup{2020} Mathematics Subject Classification}
\makeatother

\setlength{\hoffset}{-1cm}
\setlength{\voffset}{-0.5cm}
\addtolength{\textwidth}{2cm}
\addtolength{\textheight}{1.5cm}

\begin{document}
\title{On the perfection of algebraic spaces}

\author{Tianwei Liang}

\address{School of Mathematics and Information Sciences, Guangzhou University, Guangzhou 510006, P. R. China.}
\email{(Tianwei Liang) 2015200055@e.gzhu.edu.cn}

\subjclass[2020]{Primary 14A20, 18A23, 14A15;
Secondary 13H99, 18A30, 18A20}
\keywords{Perfect algebraic spaces; perfect schemes; perfection functor; perfection of sites; Frobenius morphism.}

\begin{abstract}
This paper is a subsequent paper of \cite{Liang1}. We will continue our research on the subject of perfect algebraic spaces that is developed in \cite{Liang1}. By means of algebraic Frobenius morphisms, we define the perfection of arbitrary algebraic spaces of prime characteristic. There is a natural perfection functors on algebraic spaces. We prove several desired properties of the perfection functor. This extends nearly all previous results of the perfection functor on schemes, including the recent ones developed by Bertapelle et al. in \cite{Bertapellea}. Moreover, our theory extends the previous one developed by Xinwen Zhu in \cite{Zhu1,Zhu}.

The perfection functor rises the notion of perfection of sites, which enables us to restate some of Zhu's theory in \cite{Zhu1}. Then we show that our theory of perfect algebraic spaces in \cite{Liang1} is equivalent to Zhu's theory in \cite{Zhu1} when the base scheme is a perfect field of prime characteristic.
\end{abstract}

\maketitle

\tableofcontents

\section{Introduction}
Let $p$ be a prime number and let $\mathbb{F}_{p}$ be a finite field of order $p$. All rings will be tacitly commutative with identity.

\subsection{Background}
The notions of perfection of rings, schemes, and algebraic spaces is particularly important in algebraic geometry. It naturally gives rise to the so-called perfection functor, which enables us to pass between the usual world and the perfect world. The perfection functor has played a more and more significant role in many areas in algebraic geometry (see, for example, \cite{Bertapelle1,Liu,Boyarchenko}). We first begin by briefly reviewing the constructions of perfect closures of rings and schemes in Greenberg's classical paper \cite{Greenberg}.

Let $A$ be a ring of characteristic $p$. In \cite[\S2]{Greenberg}, the \textit{perfect closure} $A^{1/p^{\infty}}$ of $A$ is given by the direct limit
$$
\lim_{\substack{\longrightarrow \\ \phi}}A,
$$
where each transition map $\phi$ is the Frobenius $A\rightarrow A,a\mapsto a^{p}$. This can be globalized to the case of schemes. Let $(X,\mathscr{O}_{X})$ be an $\mathbb{F}_{p}$-scheme. In \cite[\S6]{Greenberg}, the \textit{perfect closure} $X^{1/p^{\infty}}$ of $X$ is given by $(X,\mathscr{O}_{X}^{1/p^{\infty}})$, where $\mathscr{O}_{X}^{1/p^{\infty}}$ is a sheaf of rings on $X$ given by $U\mapsto\mathscr{O}_{X}(U)^{1/p^{\infty}}$.

Now, we turn to the current usage. In \cite[Definition 3.1.2]{Liu}, $A^{1/p^{\infty}}$ is said to be the \textit{direct perfection} of $A$, and we would like to denote it by $A^{pf}$. Analogously, as in \cite[\S3]{Scholze} noted, $X^{1/p^{\infty}}$ is said to be the \textit{perfection} of $X$. We would like to denote it by $X^{pf}$. What different to the previous construction is that $X^{pf}$ can be written as the inverse limit
$$
\lim_{\substack{\longleftarrow \\ \Phi}}X,
$$
where each transition map $\Phi$ is the absolute Frobenius $X\rightarrow X$. This is analogous to the \textit{inverse perfection} of $A$, which is given by the inverse limit
$$
\lim_{\substack{\longleftarrow \\ \phi}}A,
$$
where each transition map $\phi$ is the Frobenius $A\rightarrow A,a\mapsto a^{p}$.

Next, we come to the perfection of algebraic spaces. Let $k$ be a perfect field of characteristic $p$ and let $X$ be any algebraic space over $k$. In \cite[Corollary A.3]{Zhu}, the \textit{perfection} $X^{p^{-\infty}}$ of $X$ is given by the inverse limit
$$
\lim_{\substack{\longleftarrow \\ \sigma_{X}}}X,
$$
where $\sigma_{X}$ is the Frobenius endomorphism $X\rightarrow X$. However, this construction is restricted to algebraic spaces over a perfect field of some prime characteristic. One has no idea whether the construction can be extended to some more general case, for example, algebraic spaces over arbitrary base schemes. On the other hand, the construction is limited by the definition of the Frobenius endomorphism $\sigma_{X}:X\rightarrow X$, which is a globalization of the Frobenius endomorphism of some $k$-algebra.

\subsection{Main results}
In this paper, we continue our research in \cite{Liang1} by focusing on the perfection of algebraic spaces. We first start by upgrading the theories of direct perfection of rings and perfection of schemes. This enables us to apply some desirable properties of the perfection functor on schemes to our study on the perfection of algebraic spaces.

Let $S$ be some base scheme and let $F$ be an algebraic space over $S$. We assume that $F$ has characteristic $p$ so that the algebraic Frobenius $\Psi:F\rightarrow F$ makes sense (see \cite[\S4]{Liang1}). We show that the perfection $F^{pf}$ of $F$ can be written as the inverse limit
$$
\lim_{\substack{\longleftarrow \\ \Psi}}F.
$$
This construction generalizes the previous one in \cite{Zhu1,Zhu}. Let $AP_{S}^{p}$ be the category of algebraic spaces $F$ with $\textrm{char}(F)=p$ and let $\textrm{Perf}_{S}$ denote the category of perfect algebraic spaces over $S$. Our construction naturally gives rise to the perfection functor on algebraic spaces
$$
\underline{\textrm{Perf}}_{S}:AP_{S}^{p}\longrightarrow\textrm{Perf}_{S}.
$$
The perfection functor $\underline{\textrm{Perf}}_{S}$ enjoys several desirable properties as follows.
\begin{proposition}\label{PP1}
Let $Sch$ denote the category of schemes. Let $X$ be an algebraic space of characteristic $p$ over $S$ with perfection $X^{pf}$. Here is a list of properties of the perfection functor $\underline{\rm{Perf}}_{S}$.
\begin{enumerate}[font=\normalfont]
\item
The perfection functor $\underline{\rm{Perf}}_{S}$ is full.
\item
The perfection functor ${\rm{\underline{Perf}}}_{S}$ is right adjoint to the inclusion functor $i:{\rm{Perf}}_{S}\rightarrow AP_{S}^{\neq0}$.
\item
The perfection functor ${\rm{\underline{Perf}}}_{S}$ commutes with the functor $h:Sch\rightarrow\textit{Psh}(Sch)$.
\item
The perfection functor ${\rm{\underline{Perf}}}_{S}$ is left exact, and thus commutes with fibre products.
\item
The perfection functor ${\rm{\underline{Perf}}}_{S}$ induces an equivalence of \'{e}tale sites $X_{\textrm{\'{e}}t}\cong X^{pf}_{\textrm{\'{e}}t}$, and therefore an equivalence of \'{e}tale topoi $\textit{Sh}(X_{\textrm{\'{e}}t})\cong \textit{Sh}(X^{pf}_{\textrm{\'{e}}t})$.
\end{enumerate}
\end{proposition}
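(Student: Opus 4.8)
The keystone of the proposition is~(2): once the adjunction is in hand, (4) is formal, and (1), (3), (5) become \'{e}tale-local questions that can be transported to schemes. My plan is to establish~(2) directly from the description $X^{pf}=\varprojlim_{\Psi}X$, deduce~(4) from it, treat~(3) as a compatibility between this limit and the Yoneda embedding, prove~(5) by identifying the limit projection $X^{pf}\to X$ as a universal homeomorphism, and reduce~(1) to the affine case.

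For~(2) I would start from two properties of the algebraic Frobenius recalled from \cite[\S4]{Liang1}: that $\Psi$ is a natural endomorphism of the identity functor on $AP_S^p$, so $\Psi_Y\circ g=g\circ\Psi_X$ for every $g\colon X\to Y$, and that $\Psi_P$ is an isomorphism precisely when $P$ is perfect, in which case $P^{pf}\cong P$ canonically. Given a perfect $P$ and any $X\in AP_S^p$, a morphism $P\to X^{pf}=\varprojlim_{\Psi}X$ is a family $(f_n)_{n\ge0}$ of morphisms $f_n\colon P\to X$ with $\Psi_X\circ f_{n+1}=f_n$; naturality rewrites this as $f_n=f_{n+1}\circ\Psi_P$, so the family is recovered from $f_0$ via $f_n=f_0\circ\Psi_P^{-n}$, and conversely any $f_0$ produces such a family. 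Hence composition with the projection $\pi_0\colon X^{pf}\to X$ gives a bijection $\mathrm{Hom}_{AP_S}(iP,X)\cong\mathrm{Hom}_{\mathrm{Perf}_S}(P,X^{pf})$, natural in both variables, which exhibits $\underline{\mathrm{Perf}}_S$ as right adjoint to $i$, with unit $P\cong P^{pf}$ and counit $\pi_0$. Property~(4) then follows because a right adjoint preserves limits, hence is left exact and commutes with fibre products; equivalently, one checks by hand that inverse limits commute with finite limits, so $(X\times_Z Y)^{pf}\cong X^{pf}\times_{Z^{pf}}Y^{pf}$.

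For~(3) I would use that the Yoneda embedding $h\colon Sch\to\textit{Psh}(Sch)$ preserves every limit existing in $Sch$, together with the fact from the scheme part of the paper that for an $\mathbb{F}_p$-scheme $X$ the perfection $X^{pf}$ of \cite{Greenberg,Bertapellea} represents $\varprojlim_{\Phi}X$ in $Sch$, the transition morphism $\Phi$ being affine so the limit exists as a scheme. Combining these with the compatibility $\Psi_{h_X}=h_{\Phi_X}$ between the algebraic Frobenius of a representable space and the absolute Frobenius of the representing scheme yields $\underline{\mathrm{Perf}}_S(h_X)=\varprojlim_{\Psi}h_X=h_{\varprojlim_{\Phi}X}=h_{X^{pf}}$, the middle identity using that a cofiltered limit of representable fppf sheaves along affine transition maps is again represented by the limit scheme. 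For~(5) the point is that each $\Psi\colon X\to X$ is affine and a universal homeomorphism --- \'{e}tale-locally it is the absolute Frobenius $\mathrm{Spec}(A)\to\mathrm{Spec}(A)$, which is integral, radicial and surjective --- so $\pi\colon X^{pf}=\varprojlim_{\Psi}X\to X$, a cofiltered limit of such morphisms with affine transition maps, is again a universal homeomorphism of algebraic spaces. Topological invariance of the small \'{e}tale site, which holds for algebraic spaces as for schemes, then produces the equivalence $X_{\textrm{\'{e}t}}\cong X^{pf}_{\textrm{\'{e}t}}$ compatible with coverings, hence the equivalence of topoi $\textit{Sh}(X_{\textrm{\'{e}t}})\cong\textit{Sh}(X^{pf}_{\textrm{\'{e}t}})$.

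For~(1), I would pick a scheme $U$ with an \'{e}tale surjection $U\to X$ and, using the equivalence of \'{e}tale sites in~(5), reduce the assertion to the affine case, i.e.\ to fullness for the direct perfection of characteristic-$p$ rings, which is covered by the upgraded ring- and scheme-level theory developed earlier in the paper (extending \cite{Bertapellea}). The part I expect to require the most care is this reduction in~(1): one must verify that a factorization available at the level of rings survives the \'{e}tale-local-to-global passage, so that every morphism between perfections genuinely comes from one below. On the geometric side the main obstacle is in~(5), and is already implicit in the construction of $X^{pf}$ itself: one needs that $X^{pf}=\varprojlim_{\Psi}X$ exists as an algebraic space (where affineness of $\Psi$ is used), that the limit projection really is a universal homeomorphism of algebraic spaces, and that topological invariance of the \'{e}tale site is available in the algebraic-space setting --- all of which ultimately rest on the foundational compatibilities between the algebraic and the absolute Frobenius set up in \cite{Liang1}.
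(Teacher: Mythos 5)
Your treatment of (2)--(5) is essentially correct and close to the paper's, with two genuine variants: you prove the adjunction (2) directly from the description $X^{pf}=\varprojlim_{\Psi}X$ using naturality of $\Psi$ and invertibility of $\Psi_P$ for perfect $P$, whereas the paper first establishes the universal property of the limit (Proposition \ref{P7}, via an \'{e}tale presentation $U\to X$ and the scheme-level limit results) and then deduces the adjunction (Proposition \ref{P5}); both then get (4) formally from the adjunction (Proposition \ref{P3}). For (3) you invoke that $h$ preserves limits together with $\Psi_{h_X}=h_{\Phi_X}$, while the paper verifies the universal property of $h_{U^{pf}}$ directly against an \'{e}tale cover (Lemma \ref{LL1}); for (5) you check that $X^{pf}\to X$ is a universal homeomorphism by a limit argument, while the paper checks it after base change to an \'{e}tale cover and quotes the scheme case (Lemma \ref{L4}), both then citing topological invariance of the \'{e}tale site. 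Your closing caveat correctly identifies the existence of $X^{pf}$ as an algebraic space as the load-bearing input; the paper supplies exactly this in Proposition \ref{P7}.

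The genuine gap is in (1). You propose to reduce fullness to ``fullness for the direct perfection of characteristic-$p$ rings,'' but no such statement exists in \S\ref{S1} (which only gives the adjunction, right exactness, and compatibility with tensor algebras), and the naive ring-level statement is in fact false: the $\mathbb{F}_p$-algebra map $\mathbb{F}_p[t]^{pf}\to\mathbb{F}_p[t]^{pf}$ sending $a\mapsto a^{1/p}$ (so $t\mapsto t^{1/p}$) is not of the form $u^{pf}$ for any $u\colon\mathbb{F}_p[t]\to\mathbb{F}_p[t]$, because $u^{pf}$ always carries $t$ into the image of $\mathbb{F}_p[t]$ in $\mathbb{F}_p[t]^{pf}$, and $t^{1/p}$ does not lie there. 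Moreover, the proposed \'{e}tale-local-to-global passage is not justified: fullness of a functor is not a property that localizes in the \'{e}tale topology in any evident way, since an arbitrary morphism $X^{pf}\to Y^{pf}$ need not be compatible with chosen \'{e}tale covers, so even granting an affine statement the globalization step is missing. The paper argues completely differently: its Lemma \ref{L2} is a one-line deduction from \cite[Lemma 4.5]{Liang1}, a descent statement producing from $f\colon F^{pf}\to F'^{pf}$ a morphism $F\to F'$ compatible with the canonical projections; any correct write-up of (1) must either quote that lemma or prove such a descent statement directly, and the counterexample above shows it cannot be obtained from the ring- and scheme-level theory of \S\ref{S1}--\S\ref{S2} by an affine reduction.
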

Among these properties, (2) generalizes the results in \cite[(5.3)]{Bertapellea} and \cite[Corollary A.3]{Zhu}. Meanwhile, (5) generalizes the results in \cite[Proposition A.5]{Zhu} and \cite[Theorem 3.7]{Scholze}.

The perfection functor $\underline{\textrm{Perf}}_{S}$ enables us to pass the properties between the usual and the perfect world. By means of the properties of $\underline{\textrm{Perf}}_{S}$ in Proposition \ref{PP1}, we establish the following results which completely generalize and improve the previous results in \cite[Lemma 3.4]{Scholze} and \cite[Lemma A.7]{Zhu}.
\begin{theorem}\label{TT1}
Let $f:X\rightarrow Y$ be a morphism of algebraic spaces over $S$ in characteristics $p,q$. Let $f^{\natural}:X^{pf}\rightarrow Y^{pf}$ be the image of $f$ under the perfection functor $\underline{\rm{Perf}}_{S}$. Then
\begin{enumerate}[font=\normalfont]
  \item
$f$ is representable, then $f^{\natural}$ is perfect;
  \item
$f$ has property $\cal{P}$, then $f^{\natural}$ has property $\cal{P}$;
\item
$f$ has property $\cal{P}'$ if and only if $f^{\natural}$ has property $\cal{P}'$;
  \item
$X$ has property $\cal{P}$, then $X^{pf}$ has property $\cal{P}$.
\end{enumerate}
\end{theorem}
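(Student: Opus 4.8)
The plan is to reduce every assertion to the case of schemes, which has been treated earlier in the paper, using the compatibility of the perfection functor with \'etale presentations. Two observations will be used throughout. First, since the algebraic Frobenius $\Psi$ is an integral, surjective universal homeomorphism, the projection $X^{pf}\to X$ (a limit of copies of $\Psi$) is again a surjective universal homeomorphism and hence a bijection on underlying topological spaces; as $f^{\natural}$ is compatible with the projections $X^{pf}\to X$ and $Y^{pf}\to Y$ by naturality, perfection both preserves and reflects surjectivity of morphisms. Second, by \proref{PP1}(3) the perfection of a scheme, taken in the sense of algebraic spaces, is represented by its scheme-theoretic perfection, so $g^{\natural}=g^{pf}$ for any morphism $g$ of $\mathbb{F}_p$-schemes; and by \proref{PP1}(4),(5), $\underline{\rm{Perf}}_S$ commutes with fibre products and carries an \'etale presentation $U\to X$ to an \'etale presentation $U^{pf}\to X^{pf}$ of $X^{pf}$.

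For (1), let $T\to Y^{pf}$ be a morphism from a perfect scheme $T$. By the adjunction of \proref{PP1}(2) it corresponds to a morphism $T\to Y$, and since $f$ is representable, $X\times_Y T$ is an algebraic space. As $\underline{\rm{Perf}}_S$ commutes with fibre products and fixes perfect objects (the inclusion $i$ being fully faithful), $X^{pf}\times_{Y^{pf}}T\cong(X\times_Y T)^{pf}$, which is a perfect algebraic space by \cite{Liang1}. Hence $f^{\natural}$ is representable by perfect algebraic spaces, that is, $f^{\natural}$ is perfect.

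For (2) and (3), fix a property $\cal{P}$, resp.\ $\cal{P}'$, from our list. By the way these properties are defined for morphisms of algebraic spaces, each is detected on a suitable \'etale presentation, and the corresponding statement for morphisms of $\mathbb{F}_p$-schemes has already been established: $g$ has $\cal{P}$ implies $g^{pf}$ has $\cal{P}$, resp.\ $g$ has $\cal{P}'$ if and only if $g^{pf}$ has $\cal{P}'$. Choose a scheme $V$ with an \'etale surjection $V\to Y$, then a scheme $U$ with an \'etale surjection $U\to X\times_Y V$, and let $g\colon U\to V$ be the induced morphism, so that $f$ has $\cal{P}$ (resp.\ $\cal{P}'$) if and only if $g$ does. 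Applying $\underline{\rm{Perf}}_S$ and using the second observation, $V^{pf}\to Y^{pf}$ and $U^{pf}\to X^{pf}\times_{Y^{pf}}V^{pf}$ are again \'etale surjections, whence $f^{\natural}$ has $\cal{P}$ (resp.\ $\cal{P}'$) if and only if $g^{\natural}=g^{pf}$ does; combining this with the scheme-level statement proves (2) and (3). Assertion (4) follows from the same argument applied to an \'etale presentation $U\to X$ in place of $g$: the map $U^{pf}\to X^{pf}$ is an \'etale presentation of $X^{pf}$, and if $U$ has $\cal{P}$ then $U^{pf}$ has $\cal{P}$ by the scheme case, so $X^{pf}$ has $\cal{P}$.

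The main obstacle is the claim, used above, that $\underline{\rm{Perf}}_S$ sends an \'etale presentation to an \'etale presentation. Surjectivity of $U^{pf}\to X^{pf}$ is immediate from the first observation, but \'etaleness has to be extracted from the equivalence of \'etale sites in \proref{PP1}(5): one must check that this equivalence is compatible with the structure morphisms and sends a scheme $U$ \'etale over $X$ to $U^{pf}$ \'etale over $X^{pf}$, so that $U^{pf}\to X^{pf}$ is genuinely \'etale. Once this is granted, the remainder is bookkeeping: verifying that each property placed in $\cal{P}$ (resp.\ $\cal{P}'$) is compatible with \'etale localization on source and target at the level of schemes and is preserved (resp.\ preserved and reflected) by the scheme perfection functor.
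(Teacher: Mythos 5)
The central gap is the one you flag yourself: that $\underline{\rm{Perf}}_{S}$ carries an \'etale presentation $U\to X$ to an \'etale presentation $U^{pf}\to X^{pf}$. You propose to ``extract'' \'etaleness from the site equivalence of Proposition \ref{PP1}(5) but never do so, and this is in any case not the mechanism the paper uses. The paper's key ingredient, absent from your proposal, is Lemma \ref{L4}: $V^{pf}\cong V\times_{X}X^{pf}$ and the projection $p_{X}\colon X^{pf}\to X$ is a universal homeomorphism. With this, $U^{pf}\to X^{pf}$ is literally a base change of the \'etale surjection $U\to X$, and, more importantly, $f^{\natural}$ becomes (on test schemes) a base change of $f$ along $\phi_{Y}\colon Y^{pf}\to Y$, which is how Lemma \ref{T1} proves that any property stable under base change and fppf local on the base passes from a representable $f$ to $f^{\natural}$. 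Your reduction cannot reach this case at all: properties such as closed immersion, open immersion, immersion, monomorphism (items (11)--(15) of Proposition \ref{P2}) are not \'etale local on the source, so no argument that only compares $f$ with an induced map $g\colon U\to V$ of presentations can prove part (2) for them.

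The same problem undermines your uniform treatment of (3): the blanket claim that every property in the list ``is detected on a suitable \'etale presentation'' is false for much of the intended list --- affine, integral, (quasi-)separated, universally closed, monomorphism are not local on the source (the affine line with doubled origin over a field has an affine \'etale cover, yet is not affine), so the chain ``$f$ has $\mathcal{P}'$ iff $g$ has $\mathcal{P}'$'' breaks down. The paper instead argues case by case in Proposition \ref{P2}, using the homeomorphism $|X^{pf}|\cong |X|$ (Lemma \ref{L3}) and the universal homeomorphism $p_{X}$ for the topological properties, Lemma \ref{LL5} plus a base-change argument for affine and integral, and a diagonal argument for separatedness. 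Relatedly, the scheme-level statements you invoke as ``already established'' are not established anywhere in the paper: preservation under the perfection of schemes enters only as a hypothesis on $\mathcal{P}$ in Lemmas \ref{T2} and \ref{T3}. On the positive side, your part (1) is essentially the paper's Proposition \ref{P4} (adjunction, commutation with fibre products, fullness), except that representability of $f$ makes $X\times_{Y}T$ a scheme, not merely an algebraic space, and it is the resulting perfect \emph{scheme} that the conclusion (weak perfection) requires; and your arguments for the \'etale-local half of (2) and for (4) do match Lemmas \ref{T2} and \ref{T3}, but only once the presentation issue is repaired via Lemma \ref{L4} and Lemma \ref{T1}.
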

\begin{remark}
Here $\cal{P},\cal{P}'$ are properties of schemes or morphisms of schemes satisfying some extra conditions.
\end{remark}

By means of Theorem \ref{TT1}, we discover some nice properties of the algebraic Frobenius morphisms which are analogous to the absolute Frobenius of schemes. Here we list out the properties of the algebraic Frobenius.
\begin{proposition}
Let $F$ be an algebraic space in characteristic $p$ over $S$ with algebraic Frobenius $\Psi_{F}:F\rightarrow F$. Then
\begin{enumerate}[font=\normalfont]
  \item
$\Psi_{F}$ is integral, and is a universal homeomorphism,
  \item
$\Psi_{F}$ is surjective,
  \item
If $F$ is perfect, then $\Psi_{F}$ is weakly-perfect.
\end{enumerate}
\end{proposition}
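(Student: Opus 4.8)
The plan is to reduce each of the three assertions to the analogous statement about the absolute Frobenius $\Phi_U\colon U\to U$ of a scheme $U$, using an \'etale presentation of $F$ together with the \'etale invariance of the Frobenius. First I would recall from \cite[\S4]{Liang1} the construction of $\Psi_F$: for every \'etale morphism $\pi\colon U\to F$ with $U$ a scheme the square
\[
\begin{tikzcd}
U \ar[r,"\Phi_U"] \ar[d,"\pi"'] & U \ar[d,"\pi"] \\
F \ar[r,"\Psi_F"'] & F
\end{tikzcd}
\]
is Cartesian; this is the algebraic-space incarnation of the classical fact that the relative Frobenius of an \'etale morphism is an isomorphism, and it is precisely what makes $\Psi_F$ well defined. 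In particular, when $\pi$ is an \'etale presentation (i.e.\ also surjective), the base change of $\Psi_F$ along $\pi$, regarded as an \'etale cover of the target, is canonically identified with $\Phi_U$.

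Next I would note that each property in the statement --- being integral, being a universal homeomorphism, being surjective, being weakly-perfect --- is, for morphisms of algebraic spaces, local on the target for the \'etale topology and stable under base change, by the way these notions are set up in \cite{Liang1}. Together with the Cartesian square this shows that $\Psi_F$ has any one of these properties if and only if $\Phi_U$ does, so it suffices to treat the absolute Frobenius of a scheme.

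For (1), working affine-locally $\Phi_U$ is $\operatorname{Spec}$ of the ring map $\phi\colon A\to A$, $a\mapsto a^{p}$; every element $a$ of the target satisfies the monic relation $x^{p}-\phi(a)=0$ over $\phi(A)$, so $\phi$ is integral, whence $\Phi_U$ is an integral morphism (and in particular $\Psi_F$ is affine, hence representable). Moreover $\Phi_U$ is the identity on underlying topological spaces and induces on each residue field the Frobenius $\kappa(x)\to\kappa(x)$, which is purely inseparable, so $\Phi_U$ is radicial; being integral, radicial and surjective, it is a universal homeomorphism. This also yields (2), since a universal homeomorphism is surjective --- alternatively, $\Phi_U$ is visibly surjective and surjectivity descends along the \'etale cover $\pi$. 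For (3), if $F$ is perfect then, by the characterization of perfect algebraic spaces in \cite{Liang1}, one may take $U$ above to be a perfect scheme (equivalently, since $\Phi_U$ is the base change of $\Psi_F$, perfectness of $F$ forces $\Phi_U$ to be an isomorphism), and an isomorphism is in particular weakly-perfect; the \'etale-local nature of weak-perfectness then gives that $\Psi_F$ is weakly-perfect.

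The step I expect to be the main obstacle is the first one: extracting cleanly from \cite[\S4]{Liang1} the Cartesian diagram relating $\Psi_F$ to $\Phi_U$, together with the precise formulations of ``perfect'' and ``weakly-perfect'' for algebraic spaces and the verification that ``weakly-perfect'' is local on the target for the \'etale topology. Once those structural facts are in place, the scheme-level arguments in (1)--(3) are entirely routine.
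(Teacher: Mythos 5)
Your route is genuinely different from the paper's, and its load-bearing step is precisely the one you cannot take for granted. The paper proves (1) by passing to the perfection: $\Psi_{F}^{\natural}\colon F^{pf}\to F^{pf}$ is an isomorphism by \cite[Theorem 4.12]{Liang1}, hence integral and a universal homeomorphism, and these two properties transfer back from $f^{\natural}$ to $f$ by Proposition \ref{P2} (items (7) and (9)); parts (2) and (3) are not reproved at all but quoted from \cite{Liang1}. You instead reduce everything to the absolute Frobenius $\Phi_{U}$ of an \'etale presentation via the asserted Cartesian square relating $\Psi_{F}$ to $\Phi_{U}$, i.e.\ the statement that the relative algebraic Frobenius of an \'etale morphism $U\to F$ is an isomorphism. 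That square is nowhere established in this paper --- the closest statement, Lemma \ref{L4}, is only the limit-level identification $U^{pf}\cong U\times_{F}F^{pf}$, which does not formally yield the finite-level square --- and you do not prove it, you only predict it follows from the definition in \cite[\S 4]{Liang1}. Until it is actually supplied, your proofs of (1) and of the descent version of (2) are conditional. Granting the square, the rest of (1)--(2) is fine: integrality, universal homeomorphism and surjectivity are \'etale-local on the base for algebraic spaces, your affine-local verification for $\Phi_{U}$ is correct, and in fact (2) needs only the commutativity of the square, since $\Psi_{F}\circ\pi=\pi\circ\Phi_{U}$ is surjective.

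The second genuine gap is in (3): you invoke the ``\'etale-local nature of weak-perfectness'' on the target, which is established in neither paper and is genuinely delicate, because weak perfectness asks for representability of $h_{T}\times_{F}F$ by a perfect \emph{scheme}, and representability by schemes does not descend along \'etale covers of the target; the paper only proves locality of weak perfectness for the perfect ($wp$, $qp$, $sp$) topologies, not the \'etale one. The detour is also unnecessary: if $F$ is perfect, then $\Psi_{F}$ itself is an isomorphism by \cite[Theorem 4.12]{Liang1} (exactly what the paper uses in Lemma \ref{L7}), and an isomorphism is trivially weakly perfect. In short, your scheme-level computations are right, but the two descent inputs you rely on (the Cartesian Frobenius square and \'etale-locality of weak perfectness) are unproven, whereas the paper sidesteps both by arguing through the perfection functor and by citing (2)--(3) from \cite{Liang1}.
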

\begin{remark}
Note that (2) and (3) have been proved in \cite{Liang1}.
\end{remark}

We observe that certain kinds of perfect morphisms can form some topologies, called the \textit{perfect topologies}. Equipped with perfect topologies, one can form some sites $(Spaces/S)_{wp}$, $(Spaces/S)_{qp}$, and $(Spaces/S)_{sp}$. These sites have the same categories of sheaves.
\begin{proposition}
For sites $(Spaces/S)_{wp}$, $(Spaces/S)_{qp}$, and $(Spaces/S)_{sp}$, there is a string of equivalences of topoi
$$
Sh((Spaces/S)_{wp})=Sh((Spaces/S)_{qp})=Sh((Spaces/S)_{sp}).
$$
\end{proposition}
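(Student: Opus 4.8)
The plan is to show that the three perfect topologies on the category of algebraic spaces over $S$ all generate one and the same Grothendieck topology; equality of the categories of sheaves, and hence of the associated topoi, is then formal.

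First I would record the cheap half of the comparison. By the very definitions of the three classes of morphisms, every strongly-perfect morphism is quasi-perfect and every quasi-perfect morphism is weakly-perfect. Since a covering family in each of these topologies is a jointly surjective family of morphisms of the relevant type, this gives inclusions $\mathrm{Cov}_{sp}\subseteq\mathrm{Cov}_{qp}\subseteq\mathrm{Cov}_{wp}$ of the sets of covering families, hence an ordering $\tau_{sp}\leq\tau_{qp}\leq\tau_{wp}$ of the generated topologies. A sheaf for a finer topology is a sheaf for a coarser one, so we get full inclusions
$$Sh((Spaces/S)_{wp})\subseteq Sh((Spaces/S)_{qp})\subseteq Sh((Spaces/S)_{sp}).$$
It therefore suffices to prove the reverse: that every $\tau_{wp}$-covering sieve is already a $\tau_{sp}$-covering sieve.

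The essential ingredient is the behaviour of perfections. For an algebraic space $U$ of characteristic $p$ over $S$, I claim the canonical morphism $U^{pf}\to U$ is strongly-perfect: it is the limit of the algebraic Frobenii $\Psi_U$, its source $U^{pf}$ is perfect, and $\Psi_U$ is integral and a universal homeomorphism (as recalled above), so a direct inspection of the definitions places it in the strongest of the three classes; moreover $U^{pf}\to U$ is surjective because $\Psi_U$ is. Granting in addition that strongly-perfect morphisms are stable under composition — part of checking that $\tau_{sp}$ is a topology — the composite of such a perfection map with a weakly-perfect morphism $U\to V$ is again strongly-perfect. Now let $\{U_i\to U\}_{i\in I}$ be any $\tau_{wp}$-covering. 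Forming $\{U_i^{pf}\to U\}_{i\in I}$, where $U_i^{pf}\to U$ denotes the composite of the perfection map $U_i^{pf}\to U_i$ with the given weakly-perfect morphism, we obtain a family of strongly-perfect morphisms which is jointly surjective (each $U_i^{pf}\to U_i$ is surjective); thus it is a $\tau_{sp}$-covering. Since each $U_i^{pf}\to U$ factors through $U_i\to U$, the sieve generated by $\{U_i^{pf}\to U\}_{i\in I}$ is contained in the sieve $R$ generated by $\{U_i\to U\}_{i\in I}$, and a sieve containing a $\tau_{sp}$-covering sieve is $\tau_{sp}$-covering, so $R$ is $\tau_{sp}$-covering. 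Since every $\tau_{wp}$-covering sieve contains a sieve of this form, it is itself $\tau_{sp}$-covering. Hence $\tau_{wp}=\tau_{sp}$, and then $\tau_{qp}$ coincides with both, which yields the asserted string of equalities of topoi.

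The main obstacle is the claim of the third paragraph: extracting from the precise definitions of weakly-, quasi- and strongly-perfect morphisms that each perfection map $U^{pf}\to U$ is strongly-perfect and that the strongly-perfect morphisms form a class stable under composition (so that the composites $U_i^{pf}\to U_i\to U$ really are strongly-perfect and $\tau_{sp}$ really is a topology). A secondary, more bookkeeping-type point is to make sure that passing to $U_i^{pf}$ does not take us out of the chosen underlying category $(Spaces/S)$; once the site is set up with a suitable universe this is automatic, but it is worth stating. With these in place the rest is the purely formal manipulation of sieves above.
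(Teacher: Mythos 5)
There are two genuine problems. First, you have the hierarchy of the three classes backwards, partly because you read ``sp'' as \emph{strongly} perfect: in this paper $sp$ means \emph{semiperfect}, and the containments go the other way --- every weakly perfect morphism is quasi-perfect and semiperfect, and every quasi-perfect morphism is semiperfect, so $\mathrm{Cov}_{wp}\subseteq\mathrm{Cov}_{qp}\subseteq\mathrm{Cov}_{sp}$ and the ``cheap half'' is that $wp$-coverings are automatically $qp$- and $sp$-coverings. (``Strongly perfect'' in this paper is a property of algebraic spaces, not of morphisms.) Consequently the nontrivial direction is the opposite of the one you set out to prove: one must refine an arbitrary quasi-perfect or semiperfect covering by a \emph{weakly perfect} one.

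Second, your refinement device does not exist at the level of generality of the statement. The sites $(Spaces/S)_{wp}$, $(Spaces/S)_{qp}$, $(Spaces/S)_{sp}$ are built from \emph{all} algebraic spaces over an arbitrary base $S$, whereas the perfection $U^{pf}$ is only defined (Definition \ref{A20}, Proposition \ref{P7}) for algebraic spaces of characteristic $p$; so the families $\{U_i^{pf}\to U\}$ you form are simply unavailable in general. (In the characteristic-$p$ case your idea is sound in spirit: the projection $X^{pf}\to X$ is weakly perfect and surjective, and Proposition \ref{P6}(1) shows that composing it with a semiperfect morphism is again weakly perfect, which would give a weakly perfect refinement; but that covers only part of the site.) The paper avoids this entirely: it refines a quasi-perfect or semiperfect covering $\{X_i\to X\}$ by surjective \'etale atlases $U_i\to X_i$, which are weakly perfect by Lemma \ref{L6}, and concludes the composites are weakly perfect by Proposition \ref{P6} (this is Lemma \ref{L8}); it then invokes mutual refinement of coverings and \cite[Tag00VX]{Stack Project} to get equality of the sheaf categories, without claiming (as you try to) that the three topologies coincide as topologies. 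As written, your argument does not establish the proposition.
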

Moreover, we introduce the notion of perfection of sites. This enables us to restate some definitions concerning perfect schemes and perfect algebraic spaces given in \cite{Zhu1}. Next, we want to establish sufficient and necessary condition when a perfect algebraic space is perfect in the sense of \cite{Zhu1}. Surprisingly, our perfect algebraic spaces reduce to Zhu's perfect algebraic spaces when the base scheme is a perfect field $k$ of characteristic $p$.
\begin{theorem}
Let $X$ be an algebraic space over $k$. Then $X$ is perfect in the sense of Zhu if and only if $X$ is perfect.
\end{theorem}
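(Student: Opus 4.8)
The plan is to collapse both notions of perfectness into a single condition on the algebraic Frobenius and then transport that condition across the perfection‑of‑sites equivalence. The starting observation is that, since $k$ is a perfect field, its Frobenius is an isomorphism; hence for every algebraic space $X$ over $k$ the algebraic Frobenius $\Psi_X\colon X\to X$ of \cite{Liang1} agrees, up to the canonical isomorphism of $X$ induced by the Frobenius of $k$, with the Frobenius endomorphism $\sigma_X\colon X\to X$ employed by Zhu in \cite{Zhu1,Zhu}. Thus ``$X$ is perfect'' in our sense is equivalent to ``$\Psi_X$ is an isomorphism'', which is in turn equivalent to ``$\sigma_X$ is an isomorphism''. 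It therefore suffices to prove that $\sigma_X$ is an isomorphism if and only if $X$ is perfect in the sense of Zhu.

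For the easier implication I would assume $X$ is perfect in the sense of Zhu, and unwind Zhu's definition through the perfection of the fppf site of $k$: the space $X$ is then presented as the inverse limit of the system $(X,\sigma_X)$ together with an \'etale atlas by a perfect scheme, so all the transition maps of this system are isomorphisms; hence $\sigma_X$, and therefore $\Psi_X$, is an isomorphism and $X$ is perfect in our sense. Here the key input is \proref{PP1}(5) --- the equivalence between the \'etale site of $X$ and that of its perfection, and between the associated topoi --- which lets one move the atlas data faithfully between the two set‑ups.

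For the converse I would start from the hypothesis that $\Psi_X$ is an isomorphism, so that the perfection $X^{pf}$, being the inverse limit of the system $(X,\Psi_X)$, is canonically isomorphic to $X$. By \proref{PP1}(3) and (4) the perfection functor $\underline{\textrm{Perf}}_{k}$ commutes with $h\colon Sch\to\textit{Psh}(Sch)$ and with fibre products, so the diagonal of $X^{pf}$ is representable; by \proref{PP1}(5) an \'etale atlas of $X$ induces an \'etale atlas of $X^{pf}$, and by the corresponding fact for schemes (the perfection functor on schemes lands in perfect schemes, recorded in the earlier part of this paper and in \cite{Liang1}) this atlas scheme can be taken perfect. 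Transporting the identification $X\cong X^{pf}$ through the perfection‑of‑sites equivalence then exhibits $X$ as a sheaf on the perfection of the fppf site of $k$ admitting an \'etale surjection from a perfect scheme, i.e. $X$ is perfect in the sense of Zhu.

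The main obstacle, I expect, is the bookkeeping between the two categorical frameworks: Zhu's perfect algebraic spaces are certain sheaves on a site manufactured from perfect $k$‑schemes, whereas ours are sheaves on the ordinary fppf site on which $\Psi$ happens to be invertible, and reconciling them requires the notion of perfection of sites introduced above together with the full force of \proref{PP1}. Fullness of $\underline{\textrm{Perf}}_{k}$ matches morphisms on the two sides; the adjunction of part (2) identifies $X^{pf}$ as the universal perfect object mapping to $X$, so that ``perfect in the sense of Zhu'' and ``canonically isomorphic to one's own perfection'' become interchangeable; parts (3) and (4) ensure that representability of the diagonal and the fibre‑product conditions survive the passage; and part (5) ensures that ``admits an \'etale atlas by a perfect scheme'' is preserved in both directions. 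Once this dictionary is in place, both sides of the asserted equivalence unwind to the single statement that $\Psi_X$ (equivalently $\sigma_X$) is an isomorphism, which completes the argument.
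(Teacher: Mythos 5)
Your argument hinges on the identification of the algebraic Frobenius $\Psi_{X}$ of \cite[\S4]{Liang1} with Zhu's Frobenius endomorphism $\sigma_{X}$ ``up to the canonical isomorphism induced by the Frobenius of $k$'', and this is precisely the point you never prove. Nothing in this paper, nor in the results quoted from \cite{Liang1}, establishes that these two endomorphisms coincide: $\sigma_{X}$ is defined by evaluation at the $p$-th power map, while $\Psi_{X}$ is constructed separately, and the paper deliberately avoids comparing them. Its proof of Theorem~\ref{T4} runs differently: for ``Zhu-perfect $\Rightarrow$ perfect'' it invokes the proof of \cite[Corollary A.3]{Zhu} together with Lemma~\ref{L7}, and for the converse it chooses a surjective \'etale map $U\rightarrow X$ from a perfect scheme and uses the isomorphisms $U^{p^{-\infty}}\simeq U\times_{X}X^{p^{-\infty}}\simeq U$ to conclude $X^{p^{-\infty}}\simeq X$, i.e.\ that $\sigma_{X}$ is invertible; at no point does it need $\Psi_{X}=\sigma_{X}$. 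In your proposal all the genuine content has been pushed into that single unproved assertion, so both directions are unsupported until you verify it (say on $T$-points, from the actual definition of $\Psi_{X}$ in \cite{Liang1}). The subsidiary equivalence ``$X$ perfect $\Leftrightarrow$ $\Psi_{X}$ an isomorphism'' is fine: one direction is \cite[Theorem 4.12]{Liang1} as in Lemma~\ref{L7}, and the other follows from Proposition~\ref{P7}, since a limit along isomorphisms returns $X$ itself, which is then perfect.

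Two further steps would not survive scrutiny even granting the identification. In your ``easier implication'' you claim that because $X$ ``is presented as the inverse limit of the system $(X,\sigma_{X})$'', the transition maps are isomorphisms; being an inverse limit of a tower says nothing about its transition maps, so this is a non sequitur. With the definition of Zhu-perfectness actually adopted here (\cite[A.1.2]{Zhu}: $\sigma_{X}$ is an isomorphism) the implication you want is vacuous, and with the site-theoretic definition of \cite[Definition A.1.3]{Zhu1} you would need the content of Zhu's Corollary A.3, which you do not supply. Finally, the concluding ``transport through the perfection-of-sites equivalence'' is not available: Proposition~\ref{P11} compares the perfect affine site with the perfect big site, not the ordinary fppf or \'etale site of $k$ with its perfection, so there is no equivalence along which to move the identification $X\cong X^{pf}$; for the comparison as stated in this paper that machinery is also unnecessary.
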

Such a theorem shows that our definitions in \cite[Definition 3.1]{Liang1} all generalize perfect algebraic spaces in \cite{Zhu1,Zhu}. And it indicates that when the base scheme is some particular field $k$, we can apply the results on perfect algebraic spaces in \cite{Zhu1,Zhu} to our perfect algebraic spaces. Therefore, we can study Zhu's perfect algebraic spaces in terms of our perfect algebraic spaces, and we could apply our developed results to Zhu's perfect algebraic spaces.

Let $\textrm{QPerf}_{S}$ be the category of quasi-perfect algebraic spaces over $S$. Let $\textrm{SPerf}_{S}$ be the category of semiperfect algebraic spaces over $S$. Let $\textrm{StPerf}_{S}$ be the category of strongly perfect algebraic spaces over $S$. We have the following string of full embeddings
\begin{align}
\textrm{AlgSp}_{k}^{pf}=\textrm{Perf}_{k}\subset\textrm{StPerf}_{k}\subset\textrm{QPerf}_{k}\subset\textrm{SPerf}_{k}.
\end{align}

\subsection{Outline}
The paper is organized as follows. We begin in \S\ref{S1} by introducing the basic notion of perfect closures of rings and record some important results that will be used in the following sections. Next, in \S\ref{S2}, we collect some fundamental notions surrounding perfection of schemes and deduce some significant results on the perfection functor. Moreover, we relate perfect schemes to equivalence relations. All previous material is put to use in \S\ref{S3}, where we begin by introducing the basic notions of perfection of algebraic spaces. In Proposition \ref{P7}, we construct the perfection of any algebraic spaces in characteristic $p$ using inverse limit through algebraic Frobenius. This naturally yields the perfection functor, which enables us to pass between the usual world and the perfect world. Then we devote the rest of this section to the nature of the perfection functor.

In \S\ref{S4}, we achieve our observation by defining the perfect topologies. Then we prove several desirable properties of the perfect topologies. The perfection functor leads to the notion of perfection of sites. We show that the perfection of some sites inherits certain properties of its original sites.

In \S\ref{S5}, we compare our theory of perfect algebraic spaces with Zhu's theory in \cite{Zhu1,Zhu}. We specify the equivalence between our perfect algebraic spaces and Zhu's perfect algebraic spaces. Moreover, in terms of our developed results, we deduce some properties of Zhu's perfect algebraic spaces that do not seem to appear in \cite{Zhu1,Zhu}. Finally, in \S\ref{B7}, we briefly study properties of groupoids in algebraic spaces under the perfection functor.
\subsection{Conventions}
Throughout this paper, the set of natural numbers will be $\N=\{0,1,2,...,n,...\}$. We will make use of some terminologies and notations in \cite{Stack Project} as follows.
\begin{itemize}
  \item We will denote by $Sch$ the big category of schemes. $Sch_{fppf}$ will be the big fppf site (see \cite[Tag021R]{Stack Project}). And the notation $Sets$ will indicate the big category of sets.
  \item $S$ will always be a base scheme contained in some big fppf site $Sch_{\textit{fppf}}$. Then $(Sch/S)_{\textit{fppf}}$ will denote the big fppf site of $S$ (see \cite[Tag021S]{Stack Project}).
  \item Without explicitly mentioned, all schemes will be contained in $(Sch/S)_{\textit{fppf}}$.
\end{itemize}

We will stick with the definition of algebraic spaces in \cite[Tag025Y]{Stack Project}. By an \textit{algebraic space} over $S$, we mean a fppf sheaf $F$ on $(Sch/S)_{fppf}$ satisfying the usual axioms: its diagonal is representable; and it admits an \'{e}tale cover $h_{U}\rightarrow F$ for a scheme $U\in\textrm{Ob}((Sch/S)_{fppf})$.

Let $AP_{S}$ be the category of algebraic spaces over $S$. Sometimes, we will make use of the Yoneda embedding $(Sch/S)_{fppf}\rightarrow AP_{S},\ T/S\mapsto h_{T}$ and not distinguish between a scheme $T/S$ and the representable algebraic space $h_{T}$.

\section{Perfection of rings}\label{S1}
In this section, we start by reviewing some basic notions of perfect rings and perfect closures of rings. The basic references are \cite{Bertapellea,Bhatt,Greenberg}. Then we deduce some important results concerning the perfect closures.

Let $A$ be a ring of characteristic $p$. Recall that $A$ is said to be \textit{perfect} if the Frobenius endomorphism $F_{A}:A\rightarrow A,\ a\mapsto a^{p}$ is an isomorphism. If $A$ is non-perfect, then one might want to find a perfect ring associated to $A$. This leads to the following definition, which is described using universal property.
\begin{definition}\label{A2}
The {\rm{direct perfection}} of $A$ is a pair $(A_{\rm{perf}},\phi_{A})$ consisting of a perfect ring $A_{\rm{perf}}$ and a ring map $\phi_{A}:A\rightarrow A_{\rm{perf}}$ such that given any such pair $(B,\psi)$, there exists a unique ring map $\psi^{pf}:A_{\rm{perf}}\rightarrow B$ making the diagram
$$
\xymatrix{
  A \ar[rr]^{\phi_{A}} \ar[dr]_{\psi}
                &  &    A_{\textrm{perf}} \ar@{-->}[dl]^{\psi^{pf}}    \\
                & B                 }
$$
commute. The ring map $\phi_{A}:A\rightarrow A_{\rm{perf}}$ is called the canonical inclusion of $A_{\rm{perf}}$.

We say that $A$ is {\rm{$p$-reduced}} if the Frobenius $F_{A}:A\rightarrow A$ is injective, i.e. $a^{p}=0$ implies $a=0$ for all $a\in A$.
\end{definition}

The direct perfection of $A$ can be given by the direct limit
$$
A_{\textrm{perf}}=\lim_{\substack{\longrightarrow \\ n\in\N}}A,
$$
where the transition maps are the Frobenius $F_{A}:A\rightarrow A$ of $A$. One easily reproves the following statement in \cite[\S2]{Greenberg}.
\begin{theorem}
Every ring $A$ of characteristic $p$ has a direct perfection $(A_{\rm{perf}},\phi_{A})$. The canonical inclusion $\phi_{A}:A\rightarrow A_{\rm{perf}}$ is injective if and only if $A$ is $p$-reduced.
\end{theorem}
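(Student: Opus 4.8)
The plan is to take for $A_{\mathrm{perf}}$ the filtered colimit displayed above, namely $\varinjlim_{n\in\N}A$ with all transition maps equal to the Frobenius $F_A$, and then to verify in turn that $A_{\mathrm{perf}}$ is perfect, that the canonical map $\phi_A$ has the universal property of \defref{A2}, and that $\ker\phi_A$ is exactly the set of $x\in A$ annihilated by some power of Frobenius. Write $A_n=A$ for the $n$-th copy in the system, let $\phi_n\colon A_n\to A_{\mathrm{perf}}$ be the structural maps, so that $\phi_A:=\phi_0$ and $\phi_{n+1}\circ F_A=\phi_n$ for all $n$.

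First I would check that $A_{\mathrm{perf}}$ is perfect. For surjectivity of $F_{A_{\mathrm{perf}}}$, every element of $A_{\mathrm{perf}}$ has the form $\phi_n(x)$ for some $n$ and some $x\in A$, and then $\phi_{n+1}(x)^p=\phi_{n+1}(x^p)=\phi_{n+1}(F_A(x))=\phi_n(x)$ exhibits it as a $p$-th power. For injectivity I would invoke the standard description of the kernel of a map into a filtered colimit of rings: $\phi_n(y)=0$ if and only if $F_A^m(y)=y^{p^m}=0$ for some $m\ge 0$. Hence $\phi_n(x)^p=0$ forces $x^{p^{m+1}}=0$ for some $m$, and then $\phi_n(x)=\phi_{n+m+1}(x^{p^{m+1}})=\phi_{n+m+1}(0)=0$. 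Thus $F_{A_{\mathrm{perf}}}$ is bijective and $A_{\mathrm{perf}}$ is perfect.

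Next I would verify the universal property. Given a perfect ring $B$ and a ring map $\psi\colon A\to B$, set $\psi_n:=F_B^{-n}\circ\psi\colon A_n\to B$; naturality of Frobenius ($\psi\circ F_A=F_B\circ\psi$) gives $\psi_{n+1}\circ F_A=\psi_n$, so the $\psi_n$ form a compatible family of ring maps and induce a ring map $\psi^{pf}\colon A_{\mathrm{perf}}\to B$ with $\psi^{pf}\circ\phi_A=\psi_0=\psi$. For uniqueness, any ring map $g\colon A_{\mathrm{perf}}\to B$ with $g\circ\phi_A=\psi$ satisfies $g(\phi_n(x))^{p^n}=g(\phi_n(x^{p^n}))=g(\phi_n(F_A^n(x)))=g(\phi_0(x))=\psi(x)$; since $F_B$ is injective, $g(\phi_n(x))$ is forced to be the unique $p^n$-th root of $\psi(x)$ in $B$, and as the $\phi_n$ are jointly surjective, $g$ is uniquely determined. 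Hence $(A_{\mathrm{perf}},\phi_A)$ is a direct perfection of $A$ (unique up to unique isomorphism by the usual universal-property argument).

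For the last assertion, the kernel description above yields $\ker\phi_A=\{x\in A: x^{p^m}=0\text{ for some }m\ge 0\}$. If $A$ is $p$-reduced then $F_A$ is injective, so $x^{p^m}=0$ forces $x=0$ by descending induction on $m$, whence $\phi_A$ is injective; conversely, if $A$ is not $p$-reduced, choose $x\ne 0$ with $x^p=0$, so that $x\in\ker\phi_A$ and $\phi_A$ is not injective. I do not anticipate a real obstacle: the only point that needs genuine care is the elementary but essential description of kernels, and of the ``becomes zero eventually'' relation, in a filtered colimit of rings, which is precisely what drives both the perfection of $A_{\mathrm{perf}}$ and the injectivity criterion.
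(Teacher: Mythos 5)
Your proof is correct and follows exactly the route the paper intends: it takes the direct limit of copies of $A$ along the Frobenius (the construction stated just before the theorem, for which the paper simply cites Greenberg), verifies perfectness and the universal property, and identifies $\ker\phi_A$ with the elements killed by a power of Frobenius to get the $p$-reduced criterion. All the individual steps (the colimit kernel description, the compatible family $F_B^{-n}\circ\psi$, the uniqueness via unique $p^n$-th roots in a perfect ring) check out, so this is a complete write-up of the argument the paper leaves to the reference.
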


Clearly, one can consider the dual notion of direct perfection. However, this is not our point in this article.

\begin{definition}
The {\rm{inverse perfection}} of $A$ is a pair $(A^{\rm{perf}},\phi_{A}^{*})$ consisting of a perfect ring $A^{\rm{perf}}$ and a ring map $\phi_{A}^{*}:A^{\rm{perf}}\rightarrow A$ such that given any such pair $(B,\psi)$, there exists a unique ring map $\psi^{pf*}:B\rightarrow A^{\rm{perf}}$ making the diagram
$$
\xymatrix{
  A^{\rm{perf}}  \ar[dr]_{\phi_{A}^{*}}
                &  &    B \ar@{-->}[ll]_{\psi^{pf*}} \ar[dl]^{\psi}    \\
                &  A                }
$$
commute. The ring map $\phi_{A}^{*}:A^{\rm{perf}}\rightarrow A$ is called the canonical projection of $A_{\rm{perf}}$.
\end{definition}

Dually, the inverse perfection of $A$ is given by the inverse limit
$$
A^{\textrm{perf}}=\lim_{\substack{\longleftarrow \\ n\in\N}}A,
$$
where the transition maps are the Frobenius $F_{A}:A\rightarrow A$ of $A$. The inverse perfection of rings is closely related to the tilting functor, see \cite{Scholze1}.

\begin{remark}
In some literature (e.g. \cite[\S1, \S3]{Greenberg}), $A_{\rm{perf}}$ is called the \textit{perfect closure} of $A$, while $A^{\rm{perf}}$ is called the \textit{perfect core} of $A$. Moreover, in this article, we will be only interested in direct perfection. So we will follow \cite{Bertapellea} to denote $A_{\textrm{perf}}$ by $A^{pf}$ and call it the \textit{perfection} of $A$.
\end{remark}

Let $Rings^{p}$ denote the category of rings in characteristic $p$ and let ${\rm{Perf}}R$ denote the category of perfect rings. The perfection of rings naturally gives rise to a functor
$$
Rings^{p}\longrightarrow {\rm{Perf}}R,\ \ A\longmapsto A^{pf},
$$
called the \textit{perfection functor}.

The perfection functor has a right adjoint.
\begin{lemma}
The perfection functor is left adjoint to the inclusion functor ${\rm{Perf}}R\rightarrow Rings^{p}$. In other words, for $A\in\Ob(Rings^{p})$ and $B\in\Ob({\rm{Perf}}R)$, there is a functorial bijection
$$
{\rm{Hom}}_{{\rm{Perf}}R}(R^{pf},S)\xrightarrow{\sim}{\rm{Hom}}_{Rings^{p}}(R,S), \ \ f\longmapsto f\circ \phi_{R}.
$$
\end{lemma}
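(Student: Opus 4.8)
The plan is to exhibit the desired bijection directly from the universal property in Definition~\ref{A2}, and then to check functoriality. (I note in passing that the statement as printed has a typographical slip: $R$ and $S$ should be consistent with $A$ and $B$, so I will read it as $\operatorname{Hom}_{\mathrm{Perf}R}(A^{pf},B)\xrightarrow{\sim}\operatorname{Hom}_{Rings^{p}}(A,B)$, $f\mapsto f\circ\phi_{A}$, for $A\in\Ob(Rings^{p})$ and $B\in\Ob(\mathrm{Perf}R)$.) First I would fix $A$ and $B$ and define the map $\Theta_{A,B}\colon\operatorname{Hom}_{\mathrm{Perf}R}(A^{pf},B)\to\operatorname{Hom}_{Rings^{p}}(A,B)$ by precomposition with the canonical inclusion $\phi_{A}$, i.e.\ $\Theta_{A,B}(f)=f\circ\phi_{A}$; this is visibly a well-defined map of sets, since any ring map $A^{pf}\to B$ is in particular a map of $\mathbb{F}_p$-algebras and composes with $\phi_A$ to give an object of $\operatorname{Hom}_{Rings^{p}}(A,B)$.

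Next I would construct the inverse. Given $\psi\in\operatorname{Hom}_{Rings^{p}}(A,B)$, the pair $(B,\psi)$ satisfies the hypotheses of the universal property in Definition~\ref{A2} precisely because $B$ is perfect. Hence there is a \emph{unique} ring map $\psi^{pf}\colon A^{pf}\to B$ with $\psi^{pf}\circ\phi_{A}=\psi$. Setting $\Xi_{A,B}(\psi)=\psi^{pf}$ defines a map $\operatorname{Hom}_{Rings^{p}}(A,B)\to\operatorname{Hom}_{\mathrm{Perf}R}(A^{pf},B)$. The identity $\Theta_{A,B}\circ\Xi_{A,B}=\mathrm{id}$ is exactly the factorization property $\psi^{pf}\circ\phi_{A}=\psi$. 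For the other composite, take $f\colon A^{pf}\to B$; then $\Theta_{A,B}(f)=f\circ\phi_{A}$, and $\Xi_{A,B}(f\circ\phi_{A})$ is by definition the unique map $A^{pf}\to B$ whose composite with $\phi_{A}$ equals $f\circ\phi_{A}$. Since $f$ itself is such a map, uniqueness forces $\Xi_{A,B}(f\circ\phi_{A})=f$, so $\Xi_{A,B}\circ\Theta_{A,B}=\mathrm{id}$. Thus $\Theta_{A,B}$ is a bijection with inverse $\Xi_{A,B}$.

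Finally I would verify functoriality in both variables, which amounts to two naturality squares. For a morphism $\alpha\colon B\to B'$ in $\mathrm{Perf}R$, naturality in $B$ says $\Theta_{A,B'}(\alpha\circ f)=\alpha\circ\Theta_{A,B}(f)$, i.e.\ $(\alpha\circ f)\circ\phi_{A}=\alpha\circ(f\circ\phi_{A})$, which is just associativity of composition. For a morphism $\beta\colon A'\to A$ in $Rings^{p}$, one needs the induced map $A'^{pf}\to A^{pf}$ — call it $\beta^{pf}$ — from the perfection functor; naturality in $A$ then reads $\Theta_{A',B}(f\circ\beta^{pf})=\Theta_{A,B}(f)\circ\beta$, i.e.\ $f\circ\beta^{pf}\circ\phi_{A'}=f\circ\phi_{A}\circ\beta$, which follows from the defining commutativity $\beta^{pf}\circ\phi_{A'}=\phi_{A}\circ\beta$ of $\beta^{pf}$ (itself an instance of the universal property applied to the pair $(A^{pf},\phi_A\circ\beta)$). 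None of these steps is a genuine obstacle; the only point requiring any care is keeping the direction of the adjunction straight — the perfection functor is the \emph{left} adjoint, so the free object $A^{pf}$ sits in the first slot of the Hom on the perfect side — and making sure that the perfectness of $B$ is invoked exactly where the universal property demands a perfect target. Everything else is formal manipulation of the universal property already established in \S\ref{S1}.
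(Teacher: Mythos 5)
Your proof is correct. Note that the paper does not actually write out an argument here: its ``proof'' is only a citation to \cite[(4.2), (4.3)]{Bertapellea} and \cite[Remark 2.2.5]{Williamson}. What you have written is precisely the standard argument those sources rely on --- the bijection by precomposition with $\phi_{A}$, its inverse supplied by the universal property of Definition~\ref{A2} (with uniqueness giving both composites equal to the identity), and the two naturality squares, the nontrivial one resting on the defining identity $\beta^{pf}\circ\phi_{A'}=\phi_{A}\circ\beta$. You were also right to flag and repair the typographical slip $R,S$ versus $A,B$ in the statement; your reading is the intended one, and the direction of the adjunction (perfection as \emph{left} adjoint, matching the direct-limit construction) is handled correctly.
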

\begin{proof}
See \cite[(4.2), (4.3)]{Bertapellea} or \cite[Remark 2.2.5]{Williamson}.
\end{proof}

The perfection functor commutes with tensor products.
\begin{lemma}
The perfection functor is right exact. Let $A$ be a ring in characteristic $p$ and let $B,C$ be $A$-algebras. Then we have $(B\otimes_{A}C)^{pf}=B^{pf}\otimes_{A^{pf}}C^{pf}$.
\end{lemma}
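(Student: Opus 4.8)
The plan is to derive both assertions from the adjunction already recorded above, namely that $(-)^{pf}\colon Rings^{p}\to\mathrm{Perf}R$ is left adjoint to the inclusion. A left adjoint preserves every colimit that exists in its domain, in particular all finite ones, so the perfection functor is right exact. For the tensor product statement, recall that in $Rings^{p}$ the pushout of the diagram $B\leftarrow A\rightarrow C$ is precisely $B\otimes_{A}C$, with structure maps $b\mapsto b\otimes 1$ and $c\mapsto 1\otimes c$. Applying the perfection functor and using that it preserves pushouts, $(B\otimes_{A}C)^{pf}$ is the pushout of $B^{pf}\leftarrow A^{pf}\rightarrow C^{pf}$ \emph{computed in the category} $\mathrm{Perf}R$.

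It then remains only to identify that pushout with $B^{pf}\otimes_{A^{pf}}C^{pf}$. Now $B^{pf}\otimes_{A^{pf}}C^{pf}$ is the pushout of the same diagram taken in $Rings^{p}$, and $\mathrm{Perf}R$ is a \emph{full} subcategory of $Rings^{p}$; hence, as soon as we know that $B^{pf}\otimes_{A^{pf}}C^{pf}$ is a perfect ring, its universal property as a pushout against arbitrary characteristic-$p$ rings restricts verbatim to the required universal property against perfect rings. Thus the whole proof comes down to one point, which I expect to be the only genuine obstacle: \emph{the tensor product of two perfect rings over a perfect ring is again perfect.}

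To see this, let $k$ be a perfect ring and $R,R'$ perfect $k$-algebras; I must show the absolute Frobenius $F\colon R\otimes_{k}R'\to R\otimes_{k}R'$, $x\otimes y\mapsto x^{p}\otimes y^{p}$, is bijective (it is already a ring homomorphism, and $R\otimes_{k}R'$ has characteristic $p$). Since $k$, $R$ and $R'$ are perfect, every element of each has a unique $p$-th root, and the operation $x\mapsto x^{1/p}$ on $R$ (resp.\ $R'$) is additive, being the inverse of the additive bijection $F_{R}$ (resp.\ $F_{R'}$). The assignment $(x,y)\mapsto x^{1/p}\otimes y^{1/p}$ is additive in each variable and satisfies $(\lambda x)^{1/p}\otimes y^{1/p}=\lambda^{1/p}x^{1/p}\otimes y^{1/p}=x^{1/p}\otimes(\lambda y)^{1/p}$ for $\lambda\in k$ — the middle equality being legitimate because $\lambda^{1/p}$ again lies in $k$ — so it descends to an additive map $G\colon R\otimes_{k}R'\to R\otimes_{k}R'$. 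On a simple tensor one computes $F(G(x\otimes y))=x\otimes y=G(F(x\otimes y))$, and since $F$ and $G$ are additive while simple tensors generate the underlying group, $F$ and $G$ are mutually inverse; hence $F$ is an isomorphism and $R\otimes_{k}R'$ is perfect.

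Assembling the pieces yields $(B\otimes_{A}C)^{pf}=B^{pf}\otimes_{A^{pf}}C^{pf}$, with the canonical inclusion $\phi_{B\otimes_{A}C}$ corresponding to $\phi_{B}\otimes\phi_{C}$ by the uniqueness clauses in the universal properties. One could also bypass the adjunction: writing $A^{pf}=\varinjlim(A\xrightarrow{F_{A}}A\xrightarrow{F_{A}}\cdots)$ and similarly for $B,C$, the diagram $B^{pf}\leftarrow A^{pf}\rightarrow C^{pf}$ is the pointwise filtered colimit of copies of $B\leftarrow A\rightarrow C$ with transition maps $(F_{B},F_{A},F_{C})$; since the pushout construction commutes with this colimit and the map it induces on $B\otimes_{A}C$ is $F_{B\otimes_{A}C}$, one obtains $B^{pf}\otimes_{A^{pf}}C^{pf}=\varinjlim(B\otimes_{A}C\xrightarrow{F_{B\otimes_{A}C}}B\otimes_{A}C\to\cdots)=(B\otimes_{A}C)^{pf}$, which also re-proves perfectness by exhibiting the ring as a perfection.
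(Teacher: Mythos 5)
Your argument is correct, and it is more self-contained than the paper's, which simply cites the proof of Lemma 4.10 in Bertapelle--Gonz\'alez-Avil\'es rather than giving one. The standard argument there (which you also sketch in your final paragraph) writes each perfection as a filtered colimit along Frobenius and uses that tensor products, being pushouts, commute with these colimits, so that $B^{pf}\otimes_{A^{pf}}C^{pf}$ is exhibited directly as $\varinjlim (B\otimes_A C)$ along $F_{B\otimes_A C}$, i.e.\ as $(B\otimes_A C)^{pf}$. Your primary route is genuinely different: you invoke the adjunction (the perfection functor is a left adjoint, hence preserves all colimits, which gives right exactness for free), identify $B\otimes_A C$ as the pushout in $Rings^p$, and then reduce the identification of the pushout in ${\rm Perf}R$ with $B^{pf}\otimes_{A^{pf}}C^{pf}$ to the single closure property that a tensor product of perfect algebras over a perfect ring is perfect --- which you prove correctly by checking that $(x,y)\mapsto x^{1/p}\otimes y^{1/p}$ is biadditive and $k$-balanced (using $\lambda^{1/p}\in k$) and hence descends to a two-sided inverse of Frobenius; the appeal to fullness of ${\rm Perf}R\subset Rings^p$ to transfer the universal property is exactly what is needed. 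The adjunction route buys the right-exactness statement immediately and isolates the one nontrivial fact (perfect rings are closed under tensor products over a perfect base), while the colimit route proves the formula and perfectness in a single computation; either is acceptable, and having both makes the write-up robust.
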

\begin{proof}
See the proof of \cite[Lemma 4.10]{Bertapellea}.
\end{proof}

Now, observe the following lemma of perfection of tensor algebras.
\begin{lemma}
Let $R$ be a ring in characteristic $p$ and let $M$ be a $R$-algebra. Then we have an isomorphism of tensor algebras
$$
T(M^{pf})=T(\lim_{\substack{\longrightarrow \\ n\in\N}}M)\cong\lim_{\substack{\longrightarrow \\ n\in\N}}T(M)=T(M)^{pf}.
$$
\end{lemma}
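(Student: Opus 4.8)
The plan is to reduce the statement to the fact that the tensor algebra functor $T(-)$ commutes with filtered colimits. Recall that the direct perfection of $M$ over $R$ is by definition the filtered colimit $M^{pf}=\varinjlim_{n\in\N}M$ along the Frobenius transition maps, and similarly $R^{pf}=\varinjlim_{n\in\N}R$. First I would observe that the tensor algebra $T_{R}(M)=\bigoplus_{k\geq 0}M^{\otimes_{R}k}$ is a left adjoint: it is left adjoint to the forgetful functor from $R$-algebras to $R$-modules. Hence $T$ preserves all colimits, in particular filtered ones.

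Next I would make the base-ring change explicit, since the colimit on the left is over $R^{pf}$-modules while on the right it is over $R$. The key identity is that for a filtered system of rings $R_{n}$ with colimit $R_{\infty}$ and a compatible system of $R_{n}$-modules $M_{n}$ with colimit $M_{\infty}$ (as $R_{\infty}$-module), one has $T_{R_{\infty}}(M_{\infty})\cong\varinjlim_{n}T_{R_{n}}(M_{n})$ as $R_{\infty}$-algebras. In our case every $R_{n}=R$ and every $M_{n}=M$, with transition maps the (semilinear) Frobenius, so the right-hand colimit $\varinjlim_{n}T_{R}(M)$ is exactly $T(M)^{pf}$ by definition of the perfection of the $R$-algebra $T_{R}(M)$, and the left-hand side is $T_{R^{pf}}(M^{pf})=T(M^{pf})$. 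Then I would check that the resulting isomorphism is one of tensor algebras, i.e. compatible with the grading and the algebra structure: this is automatic because each $M^{\otimes_{R}k}$ is built from finite tensor products, and finite tensor products commute with filtered colimits (here one uses that the system is filtered so that any finite collection of elements comes from a single stage $n$).

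The only genuine subtlety — and the step I expect to be the main obstacle — is the bookkeeping of the base change along the Frobenius: the transition map $M\to M$ in the colimit defining $M^{pf}$ is not $R$-linear but only Frobenius-semilinear, so one has to phrase everything in the category of pairs (ring, module over it) and argue that $T$ extends to a functor on such pairs commuting with filtered colimits of pairs. Once that framework is set up, the isomorphism $T(M^{pf})\cong T(M)^{pf}$ falls out formally. I would also remark that this is consistent with the earlier lemma that perfection is right exact and commutes with tensor products: indeed $M^{\otimes_{R}k}$ perfects to $(M^{pf})^{\otimes_{R^{pf}}k}$ by that lemma, and summing over $k$ (a colimit, hence commuting with the colimit defining perfection) gives the claim. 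In fact this gives a second, slightly more hands-on proof: apply the previous lemma degree by degree and use that $(-)^{pf}$, being a left adjoint, commutes with the direct sum $\bigoplus_{k\geq 0}$.
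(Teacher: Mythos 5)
Your proposal is correct and takes essentially the same route as the paper, whose entire proof is a citation of the Stacks Project fact that tensor products commute with (filtered) direct limits, i.e.\ exactly your degree-by-degree argument $T(M)^{pf}=\varinjlim\bigoplus_{k}M^{\otimes k}\cong\bigoplus_{k}(\varinjlim M)^{\otimes k}=T(M^{pf})$. (One small caveat: justify the interchange of $(-)^{pf}$ with $\bigoplus_{k}$ by the fact that filtered colimits commute with direct sums of modules, rather than by left-adjointness of perfection, which lives in the category of rings where the coproduct is not $\bigoplus$.)
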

\begin{proof}
This follows from \cite[Tag00DQ]{Stack Project}.
\end{proof}

\section{Perfection of schemes}\label{S2}
In this section, we introduce the notion of perfection of schemes and record some important results. Some material can be found in \cite{Bertapellea,Scholze,Greenberg}. For more details concerning perfect schemes, see \cite[\S2]{Liang1}.

Let $X$ be an $\mathbb{F}_{p}$-scheme. Recall that the \textit{absolute Frobenius} $\Phi_{X}:X\rightarrow X$ of $X$ is the identity on the underlying topological space together with a $p$-th power map on the sheaf of $\mathbb{F}_{p}$-algebras. We say that $X$ is \textit{perfect} if the absolute Frobenius $\Phi_{X}:X\rightarrow X$ is an isomorphism.

If $X$ is a non-perfect $\mathbb{F}_{p}$-scheme, then to find the perfect scheme associated to $X$, we have the following definition in terms of universal property.
\begin{definition}[\cite{Bertapellea}, \S5]
Let $X$ be a scheme of characteristic $p$. The {\rm{perfection}} of $X$ is a pair $(X^{pf},\phi_{X})$ consisting of a perfect scheme $X^{pf}$ of characteristic $p$ and a morphism of schemes $\phi_{X}:X^{pf}\rightarrow X$ such that given any such pair $(Y,\psi)$, there exists a unique morphism of schemes $\psi^{pf}:X^{pf}\rightarrow Y$ making the following diagram commute.
$$
\xymatrix{
  X^{pf}  \ar[dr]_{\phi_{X}}
                &  &    Y \ar@{-->}[ll]_{\psi^{pf}} \ar[dl]^{\psi}    \\
                &  X                }
$$
The morphism $\phi_{X}:X^{pf}\rightarrow X$ is called the canonical projection of $X^{pf}$.
\end{definition}
\begin{remark}
There are different terminologies for $X^{pf}$ in literatures. In \cite[Theorem 8.5.5 (c)]{Liu} and \cite[\S5]{Bertapellea}, $X^{pf}$ is called the \textit{inverse perfection} of $X$. While in \cite[\S7]{Greenberg}, $X^{pf}$ is called the \textit{perfect closure} of $X$. However, we will follow \cite{Milne,Boyarchenko} to call $X^{pf}$ the \textit{perfection} of $X$, since we are only interested in the case of inverse perfection.
\end{remark}

Next, we recall the construction of perfection of schemes in \cite[\S6]{Greenberg} as follows. Let $(X,\mathscr{O}_{X})$ be an $\mathbb{F}_{p}$-scheme. Consider the presheaf $\mathscr{O}_{X}^{pf}$ of perfect rings on $X$ given by
$$
U\longmapsto\mathscr{O}_{X}(U)^{pf}
$$
where $U\subset X$ is open. By \cite[\S5]{Greenberg}, $\mathscr{O}_{X}^{pf}$ is a sheaf. And it follows from \cite[\S7]{Greenberg} that the ringed space $(X,\mathscr{O}_{X}^{pf})$ is a scheme. Therefore, $(X,\mathscr{O}_{X}^{pf})$ is the perfection of $X$. Moreover, the canonical projection $\phi_{X}:X^{pf}\rightarrow X$ is the identity on the underlying topological space. In other words, we have $\left|X^{pf}\right|=\left|X\right|$.

One easily observes that the absolute Frobenius morphism is affine.
\begin{lemma}
Let $X$ be an $\mathbb{F}_{p}$-scheme. The absolute Frobenius $\Phi_{X}:X\rightarrow X$ is affine.
\end{lemma}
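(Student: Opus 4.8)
The plan is to reduce the statement to the elementary fact that the spectrum of a ring homomorphism is an affine morphism, via the local nature of affineness. Recall that a morphism of schemes $f\colon X\to Y$ is affine if and only if there exists an affine open covering $Y=\bigcup_i V_i$ such that each $f^{-1}(V_i)$ is affine; this is the standard local characterization of affine morphisms. So it suffices to produce such a covering for $\Phi_X$.

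First I would invoke the construction of the absolute Frobenius: by definition $\Phi_X$ is the identity on the underlying topological space $\left|X\right|$ together with the $p$-th power map on $\mathscr{O}_X$. Two consequences are needed. First, for any open $U\subseteq X$ one has $\Phi_X^{-1}(U)=U$ as a subspace of $\left|X\right|$. Second, the restriction $\Phi_X|_U\colon U\to U$ is again the absolute Frobenius of the $\mathbb{F}_p$-scheme $U$, because the $p$-th power map on $\mathscr{O}_X$ restricts to the $p$-th power map on $\mathscr{O}_X|_U=\mathscr{O}_U$.

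Now choose an affine open covering $X=\bigcup_i U_i$ with $U_i=\operatorname{Spec} A_i$. Since the source and target of $\Phi_X$ have the same underlying space, $\{U_i\}$ is also an affine open covering of the target. By the paragraph above, $\Phi_X^{-1}(U_i)=U_i$ and the induced map $U_i\to U_i$ is $\operatorname{Spec}$ of the Frobenius ring homomorphism $F_{A_i}\colon A_i\to A_i,\ a\mapsto a^p$. In particular $\Phi_X^{-1}(U_i)$ is affine for every $i$, so $\Phi_X$ is affine by the cited criterion.

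There is no genuine obstacle here: the only point that deserves a word of care is the compatibility of the absolute Frobenius with restriction to open subschemes, and this is immediate from the pointwise description of $\Phi_X$ on the structure sheaf. The content of the lemma is really just that affineness is local on the target and that $\operatorname{Spec}$ turns ring maps into affine morphisms.
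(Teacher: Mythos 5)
Your argument is correct and is precisely the standard verification behind the paper's one-line proof ("This is clear"): since $\Phi_X$ is the identity on $\left|X\right|$, the preimage of any affine open is that same affine open, and affineness is local on the target. Nothing is missing.
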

\begin{proof}
This is clear.
\end{proof}

Thus, one can describe the perfection of schemes using inverse limit of schemes.
\begin{proposition}[\cite{Scholze}, \S3]\label{P1}
Let $X$ be an $\mathbb{F}_{p}$-scheme. Then we have an isomorphism of schemes
$$
X^{pf}\cong\lim_{\substack{\longleftarrow \\ n\in\N}}X,
$$
where the transition maps are absolute Frobenius $\Phi_{X}:X\rightarrow X$.
\end{proposition}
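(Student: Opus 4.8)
The plan is to identify $X^{pf}$ with $\lim_{\longleftarrow} X$ by checking that the latter satisfies the universal property of the perfection, using the fact that the absolute Frobenius is affine (the preceding lemma) so that the inverse limit actually exists in the category of schemes. First I would record that since each transition map $\Phi_X \colon X \to X$ is affine, the cofiltered limit $Y := \lim_{\longleftarrow_{n\in\N}} X$ exists as a scheme, with a canonical projection $\pi_0 \colon Y \to X$ onto the zeroth copy; concretely, over an affine open $\operatorname{Spec} A \subset X$ one has $\pi_0^{-1}(\operatorname{Spec} A) = \operatorname{Spec}\bigl(\lim_{\longrightarrow_{n}} A\bigr) = \operatorname{Spec}(A^{pf})$, where the colimit is along the Frobenius $F_A$. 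This immediately shows $Y$ is covered by spectra of perfect rings, hence $Y$ is a perfect scheme, and it realizes Greenberg's construction $(X,\mathscr{O}_X^{pf})$ up to the canonical identification $|Y| = |X|$ (each $\Phi_X$ is a homeomorphism on underlying spaces).

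Next I would verify the universal property directly. Let $(Z,\psi)$ be a perfect scheme with a morphism $\psi \colon Z \to X$. Because $Z$ is perfect, $\Phi_Z$ is an isomorphism, so for each $n$ we obtain a morphism $Z \xrightarrow{\Phi_Z^{-n}} Z \xrightarrow{\psi} X$; the naturality square $\Phi_X \circ \psi = \psi \circ \Phi_Z$ (functoriality of the absolute Frobenius) shows these morphisms are compatible with the transition maps $\Phi_X$, hence assemble into a unique morphism $\psi^{pf} \colon Z \to Y$ with $\pi_0 \circ \psi^{pf} = \psi$. Uniqueness follows since $\pi_0$ together with the compatibility constraints pins down all the components. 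Thus $(Y,\pi_0)$ satisfies the defining universal property of $(X^{pf},\phi_X)$, and by uniqueness of objects representing a universal property we get the claimed isomorphism $X^{pf} \cong \lim_{\longleftarrow} X$ compatible with the projections to $X$.

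The only real subtlety — and the step I would be most careful about — is the existence of the limit in the category of schemes and the interchange of the $\operatorname{Spec}$ functor with the relevant (co)limit: one must check that $\operatorname{Spec}$ of the directed colimit of rings along $F_A$ glues correctly over an affine cover of $X$, which is exactly the content of the cited results \cite[\S5, \S7]{Greenberg} showing $\mathscr{O}_X^{pf}$ is a sheaf and $(X,\mathscr{O}_X^{pf})$ is a scheme. Everything else is formal manipulation of universal properties. One can alternatively phrase the whole argument by noting that the perfection functor on affine schemes is $\operatorname{Spec}$ applied to the ring-level perfection, that it is determined by its restriction to affines, and that both $X^{pf}$ and $\lim_{\longleftarrow} X$ restrict on each affine open $\operatorname{Spec} A$ to $\operatorname{Spec}(A^{pf})$; the gluing data agree because both are induced by functoriality, giving the isomorphism globally.
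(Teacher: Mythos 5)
Your proof is correct. Note that the paper does not actually prove Proposition \ref{P1}: it is quoted as a known result from \cite[\S 3]{Scholze} (see also \cite[\S 5]{Bertapellea}), so there is no internal proof to compare against; your argument is essentially the standard one found in those references — affineness of $\Phi_X$ guarantees the limit exists in schemes, over an affine open $\operatorname{Spec}A\subset X$ the limit is $\operatorname{Spec}(A^{pf})$ (limits of affines correspond to colimits of rings, and $\Phi_X$ is the identity on underlying spaces), hence the limit is perfect, and the universal property is checked via the maps $\psi\circ\Phi_Z^{-n}$. The only place I would tighten the write-up is the uniqueness step: if $g\colon Z\to\lim X$ satisfies $\pi_0\circ g=\psi$, you should say explicitly that naturality of Frobenius applied to each component $g_{n+1}=\pi_{n+1}\circ g$ gives $\Phi_X\circ g_{n+1}=g_{n+1}\circ\Phi_Z$, which together with the limit compatibility $\Phi_X\circ g_{n+1}=g_n$ and the invertibility of $\Phi_Z$ forces $g_{n+1}=g_n\circ\Phi_Z^{-1}$, hence $g_n=\psi\circ\Phi_Z^{-n}$ by induction; "the compatibility constraints pin down all the components" is true but only because of this naturality argument, not for purely formal reasons.
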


Observe that the perfection of any $\mathbb{F}_{p}$-scheme is reduced.
\begin{lemma}\label{L5}
Let $X$ be an $\mathbb{F}_{p}$-scheme. If $X$ is perfect, then $X$ is reduced. In particular, the perfection $X^{pf}$ of $X$ is reduced.
\end{lemma}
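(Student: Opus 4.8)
The plan is to prove the two assertions in turn, starting from the fact that the Frobenius endomorphism of a perfect $\mathbb{F}_p$-scheme is an isomorphism and working locally on affines.

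First I would reduce the statement "$X$ perfect $\Rightarrow$ $X$ reduced" to the affine case. Reducedness is a local property, so it suffices to check it on an affine open cover $X=\bigcup \operatorname{Spec} A_i$. Since the absolute Frobenius $\Phi_X$ is affine (by the lemma just proved) and is an isomorphism by hypothesis, on each affine open $U_i=\operatorname{Spec} A_i$ the restriction $\Phi_X|_{U_i}$ corresponds to the ring Frobenius $F_{A_i}\colon A_i\to A_i$, $a\mapsto a^p$, which is therefore an isomorphism. Thus each $A_i$ is a perfect ring. Now if $a\in A_i$ is nilpotent, say $a^n=0$, pick $k$ with $p^k\ge n$; then $a^{p^k}=0$, so $F_{A_i}^k(a)=0$, and since $F_{A_i}^k$ is injective we get $a=0$. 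Hence $A_i$ is reduced, and therefore $X$ is reduced.

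For the second assertion, recall from \cite[\S6]{Greenberg} (as recalled just above) that the perfection $X^{pf}$ has the same underlying topological space as $X$, with structure sheaf $\mathscr{O}_X^{pf}$ given on opens by $U\mapsto \mathscr{O}_X(U)^{pf}$, and that $X^{pf}$ is a perfect scheme. Applying the first assertion to $X^{pf}$ in place of $X$ immediately gives that $X^{pf}$ is reduced. Alternatively, and perhaps more transparently, one can argue directly: for an affine open $\operatorname{Spec} A\subset X$ one has the corresponding affine open $\operatorname{Spec}(A^{pf})\subset X^{pf}$, and $A^{pf}=\varinjlim_{n\in\mathbb{N}} A$ along the Frobenius; a colimit of rings along Frobenius maps is $p$-reduced because any nilpotent element, killed by a high enough power of Frobenius, already maps to zero at a finite stage — or, more simply, because $A^{pf}$ is perfect and a perfect ring is reduced by the nilpotent-power argument above.

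The argument is essentially routine; the only point that requires a little care is the passage from "nilpotent" to "killed by a power of Frobenius," i.e. choosing $k$ with $p^k$ at least the nilpotency order, and invoking injectivity of the iterated Frobenius $F_A^k = F_{A^{(k)}}$ on a perfect (hence $p$-reduced) ring. There is no serious obstacle; I would simply make sure the reduction to affines and the use of the earlier affineness lemma are stated cleanly.
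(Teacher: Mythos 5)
Your proof is correct and follows essentially the same route as the paper: reduce to the affine case and use the fact that a perfect ring is reduced (which the paper simply cites from Bourbaki, while you prove it directly via the nilpotent-power argument), then apply this to $X^{pf}$, which is perfect by construction. No gaps.
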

\begin{proof}
This follows from \cite[V, \S1, no.4, p.A.V.5]{Bourbaki} that every perfect ring is reduced.
\end{proof}

Let $X$ be a scheme over a base scheme $S$. Let $X^{(p)}:=X\times_{S,\Phi_{S}}S_{\Phi_{S}}$ be the base change of $X$ by the absolute Frobenius $\Phi_{S}:S\rightarrow S$, where $S_{\Phi_{S}}$ is regarded as an $S$-scheme via $\Phi_{S}$. Here is the commutative diagram.
$$
\xymatrix{
  X^{(p)} \ar[d]_{} \ar[r]^{\textrm{pr}_{X}} & X \ar[d]^{q} \\
  S \ar[r]^{\Phi_{S}} & S   }
$$

Then another commutative diagram
$$
\xymatrix{
  X \ar[d]_{q} \ar[r]^{\Phi_{X}} & X \ar[d]^{q} \\
  S \ar[r]^{\Phi_{S}} & S   }
$$
(where $\Phi_{X}$ is the absolute Frobenius of $X$) yields a unique an $S$-morphism $\Phi_{X/S}:X\rightarrow X^{(p)}$ such that the diagram
$$
\xymatrix{
  X \ar@/_/[ddr]_{q} \ar@/^/[drr]^{\Phi_{X}}
    \ar@{.>}[dr]|-{\Phi_{X/S}}                   \\
   & X^{(p)} \ar[d]^{} \ar[r]_{\textrm{pr}_{X}}
                      & X \ar[d]_{q}    \\
   & S \ar[r]^{\Phi_{S}}     & S               }
$$
commutes.
\begin{definition}[\cite{SGA3}, VII$_{A}$, \S4]
Consider the situation above. The unique $S$-morphism $\Phi_{X/S}:X\rightarrow X^{(p)}$ is called the relative Frobenius morphism of $X$.
\end{definition}

More generally, for every $n\in\N$, let $X^{(p^{n})}:=X\times_{S,\Phi_{S}^{n}}S_{\Phi_{S}^{n}}$ where $S_{\Phi_{S}^{n}}$ is regarded as an $S$-scheme via $\Phi_{S}^{n}$. Here is the commutative diagram
$$
\xymatrix{
  X^{(p^{n})} \ar[d]_{} \ar[r]^{\textrm{pr}_{X}} & X \ar[d]^{q} \\
  S \ar[r]^{\Phi_{S}^{n}} & S   }
$$
where $\Phi_{S}^{n}$ means composing $\Phi_{S}$ for $n$ times.

If the base scheme $S$ is perfect, then the absolute Frobenius $\Phi_{S}$ is an isomorphism, so we can consider its inverse $\Phi_{S}^{-1}$. Then we can extend $n$ above to any integers, i.e. for every $n\in\Z$, let $X^{(p^{n})}:=X\times_{S,\Phi_{S}^{n}}S_{\Phi_{S}^{n}}$ where $S_{\Phi_{S}^{n}}$ is regarded as an $S$-scheme via $\Phi_{S}^{n}$. Note that we have a canonical isomorphism
$$
(X^{(p^{n})})^{(p)}\cong X^{(p^{n+1})}.
$$

The following lemma provides an alternative description of perfection in a special case.
\begin{lemma}\label{A15}
Let $k$ be a perfect field of characteristic $p$ and let $X$ be a $k$-scheme. Then we have a string of canonical isomorphisms of $k$-schemes:
$$
X^{pf}\cong\lim_{\substack{\longleftarrow \\ n\in\N}} X^{(p^{-n})}\cong\lim_{\substack{\longleftarrow \\ n\in\N}}X,
$$
where the transition maps in the middle are morphisms of $k$-schemes $X^{(p^{-n})}\rightarrow X^{(p^{-n+1})}$ for all $n\in\N$, and the transition maps on the right are absolute Frobenius morphisms $X\rightarrow X$.
\end{lemma}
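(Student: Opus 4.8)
The plan is to reduce the statement to the identification $X^{pf}\cong\lim_{n\in\N}X$ of Proposition~\ref{P1} by exhibiting an isomorphism of inverse systems between $\{X^{(p^{-n})}\}_{n\in\N}$, with the relative-Frobenius transition maps, and the constant system $\{X\}_{n\in\N}$ whose transition maps are the absolute Frobenius $\Phi_X$.

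First I would set up two elementary observations, both consequences of $k$ being perfect. Since $\Phi_S^{-n}\colon S\to S$ is an isomorphism of schemes, the projection $\epsilon_n:=\mathrm{pr}_X\colon X^{(p^{-n})}=X\times_{S,\Phi_S^{-n}}S_{\Phi_S^{-n}}\to X$ is an isomorphism of schemes, being the base change of $\Phi_S^{-n}$ along $q$; note $\epsilon_0=\mathrm{id}_X$. This $\epsilon_n$ is \emph{not} a morphism of $k$-schemes in general: comparing the two projections of the fibre square shows that it twists the structure morphism to $S$ by $\Phi_S^{n}$. Secondly, the transition morphism in the middle limit is the relative Frobenius $\Phi_{X^{(p^{-n})}/S}\colon X^{(p^{-n})}\to(X^{(p^{-n})})^{(p)}$ followed by the canonical isomorphism $(X^{(p^{-n})})^{(p)}\cong X^{(p^{-n+1})}$; I denote this $S$-morphism by $t_n\colon X^{(p^{-n})}\to X^{(p^{-n+1})}$.

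The crux is the identity $\epsilon_{n-1}\circ t_n=\Phi_X\circ\epsilon_n$, i.e. that under the identifications $\epsilon_\bullet$ the relative Frobenii become the absolute Frobenius $\Phi_X$. I would check this Zariski-locally: over an affine open $\mathrm{Spec}(A)\subseteq X$ with $A$ a $k$-algebra (writing $\Phi_k\colon k\to k$, $\lambda\mapsto\lambda^{p}$, for the Frobenius of $k$), $X^{(p^{-n})}$ is $\mathrm{Spec}(A\otimes_{k,\Phi_k^{-n}}k)$, $\epsilon_n$ corresponds to the ring isomorphism $A\otimes_{k,\Phi_k^{-n}}k\to A$, $a\otimes\lambda\mapsto a\lambda^{p^{n}}$, and unwinding the ring-level relative Frobenius $b\otimes\lambda\mapsto b^{p}\lambda$ together with the canonical isomorphism $(A\otimes_{k,\Phi_k^{-n}}k)\otimes_{k,\Phi_k}k\cong A\otimes_{k,\Phi_k^{-n+1}}k$ shows that $t_n$ becomes $c\mapsto c^{p}$ on $A$. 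These local computations are compatible and glue. (One could argue more conceptually from the standard identity $\mathrm{pr}_Y\circ\Phi_{Y/S}=\Phi_Y$ applied to $Y=X^{(p^{-n})}$, but the affine computation is the cleanest way to simultaneously pin down the canonical isomorphism $(X^{(p^{-n})})^{(p)}\cong X^{(p^{-n+1})}$.)

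Granting the identity, the $\epsilon_n$ assemble into an isomorphism of inverse systems. Each $t_n$ is affine, since under $\epsilon_\bullet$ it is the affine morphism $\Phi_X$ (the absolute Frobenius is affine by the lemma above), so $\lim_{n}X^{(p^{-n})}$ exists and the isomorphism of systems induces an isomorphism $\lim_{n}X^{(p^{-n})}\cong\lim_{n}X$ of schemes. Because $\epsilon_0=\mathrm{id}_X$, this isomorphism intertwines the two canonical projections onto the $n=0$ term $X$; since the structure morphism of either limit to $\mathrm{Spec}(k)$ factors through that term via $q$, the isomorphism is in fact one of $k$-schemes. Combining with $X^{pf}\cong\lim_{n}X$ from Proposition~\ref{P1} yields the asserted chain of canonical $k$-isomorphisms. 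The step I expect to demand the most care is not the Frobenius identity (which is routine once written out) but the bookkeeping of the various twisted $k$-structures, together with the precise form of the canonical isomorphism $(X^{(p^{-n})})^{(p)}\cong X^{(p^{-n+1})}$, needed to be sure that the final isomorphisms genuinely respect the $k$-structure.
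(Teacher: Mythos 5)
Your proposal is correct, but it takes a more self-contained route than the paper: the paper's proof of Lemma \ref{A15} is a citation, deferring the comparison between the twisted system $\{X^{(p^{-n})}\}_{n\in\N}$ and the constant Frobenius system to \cite[Lemma 5.15]{Bertapellea} and then invoking Proposition \ref{P1}, whereas you reprove exactly that comparison from scratch. Concretely, you exhibit the untwisting isomorphisms $\epsilon_n=\mathrm{pr}_X\colon X^{(p^{-n})}\to X$ (isomorphisms of schemes but not of $k$-schemes, available precisely because $k$ is perfect so $\Phi_S^{-n}$ exists), verify the intertwining identity $\epsilon_{n-1}\circ t_n=\Phi_X\circ\epsilon_n$ by an affine computation (your computation is correct; it also follows formally from $\mathrm{pr}_Y\circ\Phi_{Y/S}=\Phi_Y$ applied to $Y=X^{(p^{-n})}$ together with the naturality $\Phi_X\circ\epsilon_n=\epsilon_n\circ\Phi_{X^{(p^{-n})}}$ of the absolute Frobenius, as you note parenthetically), and then pass to limits along these affine transition maps, taking care that both limits receive their $k$-structure through the $n=0$ projection, so the resulting isomorphism is $k$-linear; combining with Proposition \ref{P1} finishes the argument, just as in the paper. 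What the paper's route buys is brevity and reliance on an established reference; what yours buys is that the actual content of the cited lemma is made explicit — where perfectness of $k$ enters, that the middle transition maps are the relative Frobenii composed with the canonical isomorphisms $(X^{(p^{-n})})^{(p)}\cong X^{(p^{-n+1})}$, and why the final isomorphisms respect the $k$-structure. One small presentational point: the scheme morphism $\epsilon_n$ corresponds to the ring map $A\to A\otimes_{k,\Phi_k^{-n}}k$, $a\mapsto a\otimes 1$; what you write down is its inverse, which is harmless since it is an isomorphism, but it is worth stating the direction explicitly.
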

\begin{proof}
This follows from \cite[Lemma 5.15]{Bertapellea} and Proposition \ref{P1}.
\end{proof}

\section{Perfection of algebraic spaces}\label{S3}
In this section, we extend the subject of perfection of schemes to the setting of algebraic spaces. In order to construct the perfection of a given algebraic space of characteristic $p$, we will make inverse limits of algebraic spaces through algebraic Frobenius. This is different to the previous method in \cite{Zhu}. The construction gives rise to the perfection functor, which transfers every algebraic space of characteristic $p$ to a perfect one.

\subsection{The perfection functor}\

To construct the perfection of an algebraic space, one needs to relate perfect algebraic spaces to perfect equivalence relations of schemes (see \cite[Definition 7.1]{Liang1}).
\begin{theorem}\label{A16}
Let $U\in\ObSchS$ be a perfect scheme over $S$. Assume that $j:R\rightarrow U\times_{S}U$ is an \'{e}tale perfect equivalence relation on $U$ over $S$. Then the quotient sheaf $U/R$ is a perfect algebraic space and $U\rightarrow U/R$ is surjective \'{e}tale such that $(U,R,U\rightarrow U/R)$ is a presentation of $U/R$.
\end{theorem}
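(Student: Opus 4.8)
The plan is to manufacture the algebraic space together with its presentation from the standard theory of quotients by étale equivalence relations, and only afterwards to check that the object produced is perfect. First I would unwind the hypotheses: since $j\colon R\to U\times_{S}U$ is an étale equivalence relation, the two projections $s,t\colon R\to U$ of the equivalence relation are étale (that being the meaning of ``étale equivalence relation'') and $j$ is a monomorphism. Feeding the scheme $U$ and the relation $R$ into the Stacks Project treatment of quotients by étale equivalence relations (\cite[Tag02WW]{Stack Project}), the fppf quotient sheaf $F:=U/R$ on $\SchS$ is an algebraic space over $S$, the canonical morphism $U\to F$ is surjective and étale, and the induced map $R\to U\times_{F}U$ is an isomorphism; that is, $(U,R,U\to F)$ is a presentation of $F$ in the sense of \cite{Stack Project}. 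This settles every assertion of the theorem except the perfectness of $F$, and uses no new idea.

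It remains to see that $F$ is perfect, which I would do in two stages. First, $R$ is automatically a perfect scheme: $R\to U$ is étale and $U$ is perfect, so in the factorization of the absolute Frobenius $\Phi_{R}$ as the relative Frobenius $R\to R\times_{U,\Phi_{U}}U$ followed by the projection $R\times_{U,\Phi_{U}}U\to R$, the first map is an isomorphism (the relative Frobenius of an étale morphism is) and the second is a base change of the isomorphism $\Phi_{U}$, hence an isomorphism. Thus $(U,R,U\to F)$ is an étale presentation of $F$ by perfect schemes, which is precisely what it means for $F$ to be perfect under the presentation-based characterization in \cite[Definition 3.1]{Liang1}. Alternatively, working through the algebraic Frobenius: by naturality of the Frobenius the pair $(\Psi_{U},\Psi_{R})$ is an automorphism of the equivalence relation $R\rightrightarrows U$, being compatible with $s$ and $t$; since $U$ and $R$ are perfect, $\Psi_{U}$ and $\Psi_{R}$ are isomorphisms, and passing to quotient sheaves — a functorial operation on equivalence relations — the induced automorphism of $F=U/R$, which by the construction of the algebraic Frobenius in \cite[\S4]{Liang1} is $\Psi_{F}$ itself, is an isomorphism. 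Either way $F$ is a perfect algebraic space.

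I expect the genuine obstacle to be the perfectness step, and within it the chore of matching the hypothesis ``perfect equivalence relation'' of \cite[Definition 7.1]{Liang1} against the chosen definition of perfect algebraic space in \cite[Definition 3.1]{Liang1}: in the Frobenius formulation this comes down to verifying that the algebraic Frobenius of \cite[\S4]{Liang1} is compatible with étale presentations, i.e.\ that $\Psi_{U/R}$ really is the morphism of quotient sheaves induced by $(\Psi_{U},\Psi_{R})$. By contrast, the appeal to \cite[Tag02WW]{Stack Project} for the algebraic-space and presentation statements is routine, and the perfectness of $R$ is automatic from the étaleness of $R\to U$.
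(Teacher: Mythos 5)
Your proposal is correct and follows essentially the same route as the paper: invoke \cite[Tag02WW]{Stack Project} to get the algebraic space $U/R$, the surjective \'etale cover $U\to U/R$, and the presentation, and then conclude perfectness from the fact that the \'etale cover $U\to U/R$ comes from a perfect scheme $U$, which is exactly the definitional criterion in \cite[Definition 3.1]{Liang1}. The paper's own proof is just this (and is even terser, not bothering with the perfectness of $R$ or the Frobenius reformulation, which in your write-up are harmless extras).
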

\begin{proof}
By \cite[Tag02WW]{Stack Project}, the quotient sheaf $U/R$ is an algebraic space and there is an \'{e}tale cover $U\rightarrow U/R$ such that $R=U\times_{U/R}U$. By assumption, $U$ is perfect such that $U/R$ is perfect.
\end{proof}

Here is the definition of perfections of algebraic spaces. As the usual cases, the perfection of an algebraic space only makes sense when the algebraic space has characteristic $p$.
\begin{definition}\label{A20}
Let $F$ be an algebraic space of characteristic $p$ over $S$. The perfection of $F$ is a pair $(F^{pf},\phi_{F})$ consisting of a perfect algebraic space $F^{pf}$ of characteristic $p$ and a morphism of algebraic spaces $\phi_{F}:F^{pf}\rightarrow F$ such that given any perfect algebraic space $G$ together with a morphism $\psi:G\rightarrow F$, there exists a unique morphism of algebraic spaces $\psi^{pf}:G\rightarrow F^{pf}$ such that the following diagram commute.
$$
\xymatrix{
  F^{pf}  \ar[dr]_{\phi_{F}}
                &  &    G \ar@{-->}[ll]_{\psi^{pf}} \ar[dl]^{\psi}    \\
                &  F                }
$$
\end{definition}

The perfection of an algebraic space is unique up to a unique isomorphism.
\begin{lemma}\label{P8}
Let $F$ be an algebraic space in characteristic $p$ with perfection $F^{pf}$. If there is another perfection $F'^{pf}$ of $F$, then there is a canonical isomorphism $F'^{pf}\cong F^{pf}$.
\end{lemma}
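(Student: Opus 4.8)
The plan is to argue exactly as one does for any object characterized by a universal property, i.e. by a chasing argument using the mapping property of \defref{A20}. Suppose $(F^{pf},\phi_F)$ and $(F'^{pf},\phi'_F)$ are both perfections of $F$. The key observation is that $F^{pf}$ is itself a perfect algebraic space equipped with a morphism $\phi_F\colon F^{pf}\to F$, so it is an admissible test object for the universal property of $F'^{pf}$; symmetrically $F'^{pf}$ together with $\phi'_F$ is a test object for the universal property of $F^{pf}$.

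First I would apply the universal property of $(F'^{pf},\phi'_F)$ to the pair $(F^{pf},\phi_F)$: this produces a unique morphism $\alpha\colon F^{pf}\to F'^{pf}$ with $\phi'_F\circ\alpha=\phi_F$. Next, applying the universal property of $(F^{pf},\phi_F)$ to the pair $(F'^{pf},\phi'_F)$ yields a unique morphism $\beta\colon F'^{pf}\to F^{pf}$ with $\phi_F\circ\beta=\phi'_F$. Then I would form the composite $\beta\circ\alpha\colon F^{pf}\to F^{pf}$, which satisfies $\phi_F\circ(\beta\circ\alpha)=\phi'_F\circ\alpha=\phi_F$. But the identity morphism $\mathrm{id}_{F^{pf}}$ also satisfies $\phi_F\circ\mathrm{id}_{F^{pf}}=\phi_F$, and $F^{pf}$ is perfect, so the uniqueness clause in the universal property of $(F^{pf},\phi_F)$ (applied to the test pair $(F^{pf},\phi_F)$ itself) forces $\beta\circ\alpha=\mathrm{id}_{F^{pf}}$. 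By the symmetric argument, $\alpha\circ\beta=\mathrm{id}_{F'^{pf}}$. Hence $\alpha$ is an isomorphism with inverse $\beta$, and it is canonical in the sense that it is the unique morphism $F^{pf}\to F'^{pf}$ compatible with the projections to $F$.

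This argument is entirely formal and there is no real obstacle; the only point requiring a moment's care is the bookkeeping of which object plays the role of the ``perfect test object $G$'' in each invocation of \defref{A20} — in particular one must remember that $F^{pf}$ and $F'^{pf}$ are by definition perfect algebraic spaces, which is precisely what makes them legitimate test objects, and that the uniqueness half of the universal property (not merely the existence half) is what pins down the composites. I would phrase the write-up so that the uniqueness is invoked explicitly, since that is the step that does the work.
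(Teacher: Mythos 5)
Your argument is correct and is exactly the standard universal-property chase that the paper invokes (its proof simply says the claim is clear from the universal properties of $F^{pf}$ and $F'^{pf}$); you have merely written out the details, including the uniqueness step that forces the composites to be identities.
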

\begin{proof}
This is clear using universal properties of $F^{pf}$ and $F'^{pf}$.
\end{proof}

Next, we will construct the perfection of any algebraic space in characteristic $p$ over $S$. This proves the existence of the perfection of arbitrary algebraic space in characteristic $p$.
\begin{proposition}\label{P7}
Let $F$ be an algebraic space in characteristic $p$ over $S$. Then the perfection of $F$ is the limit
$$
F^{pf}=\lim_{\substack{\longleftarrow \\ n\in\N}}F,
$$
where the transition maps are the algebraic Frobenius morphisms $\Psi_{F}:F\rightarrow F$.
\end{proposition}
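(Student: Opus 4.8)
The plan is to set $G:=\varprojlim_{n\in\N}F$, the limit of the constant system on $F$ whose transition maps are all the algebraic Frobenius $\Psi_F$, and then to check, in order: (i) $G$ is an algebraic space of characteristic $p$ over $S$; (ii) $G$ is perfect; (iii) the first projection $\mathrm{pr}_0\colon G\to F$ satisfies the universal property in \defref{A20}. Uniqueness of the perfection, \lemref{P8}, then yields $F^{pf}\cong G$.

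For (i) I would first establish that $\Psi_F$ is representable and affine, reducing to the absolute Frobenius of schemes (which is affine by \sref{S2}) via an \'etale presentation and the fact that affineness of a morphism of algebraic spaces is \'etale-local on the target. A directed inverse limit of algebraic spaces along affine transition morphisms is again an algebraic space (limits of algebraic spaces, \cite{Stack Project}), so $G$ exists, and it is of characteristic $p$ because it maps to $F$. A more explicit variant, which also prepares (ii), is to fix an \'etale presentation $(U,R,U\to F)$ with $j\colon R\to U\times_S U$ an \'etale equivalence relation, form the perfections $U^{pf}$ and $R^{pf}$ of the $\mathbb{F}_p$-schemes $U$ and $R$ as in \sref{S2}, verify that $R^{pf}\rightrightarrows U^{pf}$ is an \'etale \emph{perfect} equivalence relation over $S$---using that perfection of schemes preserves \'etaleness, monomorphisms and the relevant fibre products, together with \proref{P1}---and then apply \thmref{A16} to obtain the perfect algebraic space $U^{pf}/R^{pf}$; one identifies this with $G$ since the sheaf quotient commutes with these limits.

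Step (ii) is formal once (i) is in hand. The algebraic Frobenius is functorial, i.e.\ $\Psi_Y\circ g=g\circ\Psi_X$ for every morphism $g\colon X\to Y$ in $AP_S^p$, so $\Psi_G$ is computed termwise: describing a $T$-point of $G$ as a compatible family $(x_n)_n$ of $T$-points of $F$ with $\Psi_F\circ x_{n+1}=x_n$, the morphism $\Psi_G$ sends $(x_n)_n$ to $(\Psi_F\circ x_n)_n$. Its two-sided inverse is the shift $(x_n)_n\mapsto(x_{n+1})_n$, since both composites collapse to the identity through the relation $\Psi_F\circ x_{n+1}=x_n$; hence $\Psi_G$ is an isomorphism and $G$ is perfect. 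For (iii), given a perfect algebraic space $H$ with $\psi\colon H\to F$, functoriality gives $\Psi_F\circ\psi=\psi\circ\Psi_H$ with $\Psi_H$ invertible, so the family $\psi_n:=\psi\circ\Psi_H^{-n}$ is compatible with the transition maps $\Psi_F$ and defines $\psi^{pf}\colon H\to G$ with $\mathrm{pr}_0\circ\psi^{pf}=\psi$. Conversely, any $g\colon H\to G$ with $\mathrm{pr}_0\circ g=\psi$ has components $g_n:=\mathrm{pr}_n\circ g$ satisfying $\Psi_F\circ g_{n+1}=g_n$, and functoriality forces $g_{n+1}=g_n\circ\Psi_H^{-1}$, hence $g_n=\psi\circ\Psi_H^{-n}$ for all $n$ and $g=\psi^{pf}$. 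Thus $(G,\mathrm{pr}_0)$ is a perfection of $F$.

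The only genuinely non-formal step is (i), namely exhibiting $G$ as an algebraic space; I expect the main effort to be either verifying enough affineness/finiteness of the algebraic Frobenius to feed into the limit formalism for algebraic spaces, or---in the presentation approach---checking carefully that the scheme-perfection sends the \'etale equivalence relation $(U,R)$ to an \'etale \emph{perfect} equivalence relation so that \thmref{A16} applies. Everything after that is diagram-chasing powered by the functoriality of the Frobenius, just as in the scheme case of \proref{P1}.
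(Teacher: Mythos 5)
Your ``more explicit variant'' of step (i) --- perfecting an \'etale presentation $(U,R)$ of $F$ via \proref{P1}, invoking the proof of \cite[Tag07SF]{Stack Project} to identify $\lim_{\Psi_F}F$ with $U^{pf}/R^{pf}$, and applying \thmref{A16} to get perfectness --- is exactly the paper's proof, and your verification of the universal property via functoriality of $\Psi$ and invertibility of $\Psi_H$ for perfect $H$ (from \cite[Theorem 4.12]{Liang1}) just makes explicit what the paper compresses into ``the universal property of the limit makes it the perfection.'' One caution: your shift-map argument for (ii) infers perfectness of $G$ from $\Psi_G$ being an isomorphism, which is the converse of \cite[Theorem 4.12]{Liang1} and is not available over general $S$ in the paper's framework, but this does not matter since your presentation variant (via \thmref{A16}) already yields perfectness, as in the paper.
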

\begin{proof}
Let $U\in\ObSchS$ be a scheme of characteristic $p$ and let $h_{U}\rightarrow F$ be a surjective \'{e}tale morphism. Set $R=h_{U}\times_{F}h_{U}$ such that $F=U/R$. Now, Proposition \ref{P1} shows that the limits $U^{pf}=\lim_{i\geq0}U$ and $R^{pf}=\lim_{i\geq0}R$ are perfect schemes.

By the proof of \cite[Tag07SF]{Stack Project}, the morphism $R^{pf}\rightarrow U^{pf}\times_{S}U^{pf}$ is an \'{e}tale equivalence relation. And there is a natural isomorphism
$$
U^{pf}/R^{pf}\longrightarrow\lim_{n\in\N}F
$$
of fppf sheaves on the category of schemes over $S$. Now, by Theorem \ref{A16}, the quotient $U^{pf}/R^{pf}$ is a perfect algebraic space. Thus, the limit $\lim_{n\in\N}F$ is a perfect algebraic space and the universal property of the limit makes it the perfection of $F$.
\end{proof}

The projection $pr_{0}:F^{pf}\rightarrow F$ will be called the \textit{canonical projection} of $F^{pf}$ and denoted by $p_{F}$ or $\phi_{F}$. Next, we prove the functoriality of the perfection of algebraic spaces.
\begin{lemma}\label{A21}
Let $f:F\rightarrow F'$ be a morphism of algebraic spaces in characteristic $p$ over $S$. Let $(F^{pf},\phi_{F})$ and $(F'^{pf},\phi_{F'})$ be the perfections of $F$ and $F'$. Then there exists a canonical map $f^{\natural}:F^{pf}\rightarrow F'^{pf}$ such that the diagram
$$
\xymatrix{
  F^{pf} \ar@{-->}[d]_{f^{\natural}} \ar[r]^{\phi_{F}} & F \ar[d]^{f} \\
  F'^{pf} \ar[r]^{\phi_{F'}} & F'   }
$$
commutes.
\end{lemma}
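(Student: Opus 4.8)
The plan is to construct $f^{\natural}$ directly from the explicit description of the perfection as an inverse limit provided by \proref{P7}, and then to verify commutativity of the square by a diagram chase. First I would write $F^{pf}=\lim_{n\in\N}F$ and $F'^{pf}=\lim_{n\in\N}F'$, where in both towers the transition maps are the algebraic Frobenius morphisms $\Psi_{F}$ and $\Psi_{F'}$, and $\phi_{F},\phi_{F'}$ are the projections to the $0$-th copy. The key structural fact I would invoke is that the algebraic Frobenius is functorial: for the morphism $f\:F\to F'$ one has $\Psi_{F'}\circ f=f\circ\Psi_{F}$ (this is the algebraic-space analogue of the naturality of the absolute Frobenius, established in \cite[\S4]{Liang1}). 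Consequently the collection of morphisms $f\:F\to F'$, one in each degree $n$, is compatible with the two Frobenius towers and therefore defines a morphism of pro-objects, hence a morphism $f^{\natural}\:\lim_{n}F\to\lim_{n}F'$ of the limits.

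Concretely, I would argue as follows. Consider for each $n$ the composite $F^{pf}\xrightarrow{\mathrm{pr}_{n}}F\xrightarrow{f}F'$. Using the Frobenius-compatibility of the tower $\{F^{pf}\xrightarrow{\mathrm{pr}_{n}}F\}$ together with $\Psi_{F'}\circ f=f\circ\Psi_{F}$, one checks that $\Psi_{F'}\circ(f\circ\mathrm{pr}_{n+1})=f\circ\Psi_{F}\circ\mathrm{pr}_{n+1}=f\circ\mathrm{pr}_{n}$, so the family $(f\circ\mathrm{pr}_{n})_{n\in\N}$ is a cone over the tower defining $F'^{pf}$. By the universal property of the limit $F'^{pf}=\lim_{n}F'$ in the category of fppf sheaves (equivalently in $AP_{S}$, since \thmref{A16} and the proof of \proref{P7} guarantee the limit is again an algebraic space), there is a unique morphism $f^{\natural}\:F^{pf}\to F'^{pf}$ with $\mathrm{pr}_{n}'\circ f^{\natural}=f\circ\mathrm{pr}_{n}$ for all $n$. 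Taking $n=0$ gives precisely $\phi_{F'}\circ f^{\natural}=f\circ\phi_{F}$, which is the asserted commutativity of the square, and the uniqueness clause in the limit's universal property makes $f^{\natural}$ canonical.

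Alternatively, and perhaps more cleanly for the write-up, I would deduce the existence of $f^{\natural}$ purely from the universal property in \defref{A20}: apply that property to the perfect algebraic space $F^{pf}$ equipped with the morphism $f\circ\phi_{F}\:F^{pf}\to F'$; since $F^{pf}$ is perfect, there is a unique morphism $(f\circ\phi_{F})^{pf}\:F^{pf}\to F'^{pf}$ with $\phi_{F'}\circ(f\circ\phi_{F})^{pf}=f\circ\phi_{F}$, and one simply sets $f^{\natural}:=(f\circ\phi_{F})^{pf}$. This immediately yields the commuting diagram with no diagram chase at all, and uniqueness is built in. I would probably present this second argument as the main proof and mention the inverse-limit description as the concrete realization of $f^{\natural}$.

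The only genuine subtlety — and the step I would be most careful about — is the compatibility of characteristics and the well-definedness of the Frobenius tower for $F'$: one needs $F'$ to have characteristic $p$ as well so that $\Psi_{F'}$ is defined and so that $F'^{pf}$ exists, and one needs to know that $f$ is a morphism of $\mathbb{F}_{p}$-algebraic spaces compatible with the two Frobenii. Both are guaranteed by the standing hypotheses (the statement says $f$ is a morphism of algebraic spaces in characteristic $p$) and by the functoriality of the algebraic Frobenius from \cite{Liang1}; but it is worth spelling out the identity $\Psi_{F'}\circ f=f\circ\Psi_{F}$ explicitly, since everything else is a formal consequence of universal properties. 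No hard computation is expected anywhere in the argument.
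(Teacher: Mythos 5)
Your second argument—applying the universal property of $(F'^{pf},\phi_{F'})$ to the perfect algebraic space $F^{pf}$ with the morphism $f\circ\phi_{F}$ and setting $f^{\natural}=(f\circ\phi_{F})^{pf}$—is exactly the paper's proof, which consists of this one line. The additional inverse-limit construction via Frobenius-compatible cones is a correct concrete realization but is not needed; the proposal is sound.
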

\begin{proof}
This follows directly from the universal property of the perfection $(F'^{pf},\phi_{F'})$ that $f^{\natural}=(f\circ\phi_{F})^{pf}$.
\end{proof}

Let $AP_{S}^{p}$ denote the category of algebraic spaces in characteristic $p$ over $S$. The perfection of algebraic spaces induces a functor
$$
\textrm{\underline{Perf}}_{S}:AP_{S}^{p}\longrightarrow \textrm{Perf}_{S}, \ \ X\longmapsto X^{pf},
$$
that sends every algebraic space in characteristic $p$ over $S$ to its perfection. And to each morphism $\varphi:X\rightarrow Y$ of algebraic spaces in characteristic $p$ over $S$, it associates the canonical map $\varphi^{\natural}:X^{pf}\rightarrow Y^{pf}$ as in Lemma \ref{A21}. Such a functor $\textrm{\underline{Perf}}_{S}$ is called the \textit{perfection functor}.

Let $i:\textrm{Perf}_{S}\rightarrow AP_{S}^{p}$ be the inclusion functor. The following proposition shows that the perfection functor has a left adjoint.
\begin{proposition}\label{P5}
The perfection functor ${\rm{\underline{Perf}}}_{S}$ is right adjoint to the inclusion functor $i$. In other words, for any $X\in{\rm{Ob}}({\rm{Perf}}_{S})$ and $Y\in{\rm{Ob}}(AP_{S}^{p})$, there exists a functorial bijection
$$
{\rm{Hom}}_{AP_{S}^{p}}(X,Y)\longrightarrow{\rm{Hom}}_{{\rm{Perf}}_{S}}(X,Y^{pf}),\ \ f\longmapsto f^{pf}.
$$
\end{proposition}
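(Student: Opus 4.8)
The plan is to extract the adjunction bijection directly from the universal property of the perfection (\defref{A20}), whose existence is guaranteed by \proref{P7}, together with the functoriality established in \lemref{A21}. First I would construct the map. Fix $X \in \Ob(\mathrm{Perf}_{S})$ and $Y \in \Ob(AP_{S}^{p})$. Given a morphism $f \colon i(X) \to Y$ in $AP_{S}^{p}$, the object $X$ is a perfect algebraic space equipped with a morphism to $Y$, so the universal property of $(Y^{pf}, \phi_{Y})$ yields a \emph{unique} morphism $f^{pf} \colon X \to Y^{pf}$ with $\phi_{Y} \circ f^{pf} = f$. This defines
$$
\Theta_{X,Y} \colon \mathrm{Hom}_{AP_{S}^{p}}(X, Y) \longrightarrow \mathrm{Hom}_{\mathrm{Perf}_{S}}(X, Y^{pf}), \qquad f \longmapsto f^{pf}.
$$
It is worth stressing that the hypothesis that $X$ be perfect is precisely what makes the universal property applicable; without it there is no canonical map in this direction.

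Next I would produce the inverse and check bijectivity. Send $g \colon X \to Y^{pf}$ to $\phi_{Y} \circ g \colon X \to Y$. The composite $g \mapsto \phi_{Y}\circ g \mapsto (\phi_{Y}\circ g)^{pf}$ is the identity because $\phi_{Y}\circ g$ already makes the defining triangle of $(Y^{pf},\phi_{Y})$ commute, so the uniqueness clause forces $(\phi_{Y}\circ g)^{pf} = g$; the composite in the other order is the identity since $\phi_{Y}\circ f^{pf} = f$ by construction. Hence $\Theta_{X,Y}$ is a bijection.

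It remains to verify naturality in both variables, which is the only place that needs a little care. In $X$: for $u \colon X' \to X$ in $\mathrm{Perf}_{S}$ one computes $\phi_{Y}\circ(f^{pf}\circ u) = f\circ u$, so uniqueness gives $(f\circ u)^{pf} = f^{pf}\circ u$, i.e. precomposition is respected. In $Y$: for $h \colon Y \to Y'$ in $AP_{S}^{p}$, \lemref{A21} supplies the canonical $h^{\natural} \colon Y^{pf} \to Y'^{pf}$ with $\phi_{Y'}\circ h^{\natural} = h\circ\phi_{Y}$, and by definition $\underline{\mathrm{Perf}}_{S}(h) = h^{\natural}$; then
$$
\phi_{Y'}\circ\bigl(h^{\natural}\circ f^{pf}\bigr) = h\circ\phi_{Y}\circ f^{pf} = h\circ f,
$$
so uniqueness yields $(h\circ f)^{pf} = h^{\natural}\circ f^{pf}$, which is exactly the compatibility with $\underline{\mathrm{Perf}}_{S}(h)$. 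Thus $\Theta$ is a natural isomorphism and $\underline{\mathrm{Perf}}_{S}$ is right adjoint to $i$, with counit $\phi_{Y}$ at $Y$. I do not anticipate a genuine obstacle: the whole argument is the universal property plus \lemref{A21}, and the only subtlety is matching the abstractly-defined functor $\underline{\mathrm{Perf}}_{S}$ on morphisms with the concrete map $h^{\natural}$, which \lemref{A21} handles. (Compare the ring-theoretic analogue recorded earlier: the perfection functor on rings is left adjoint to the inclusion of perfect rings.)
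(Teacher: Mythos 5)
Your proof is correct and takes essentially the same route as the paper: the map and its inverse $g\mapsto\phi_{Y}\circ g$ come from the universal property of $(Y^{pf},\phi_{Y})$, and naturality in both variables is checked via the uniqueness clause together with the square $\phi_{Y'}\circ h^{\natural}=h\circ\phi_{Y}$ from \lemref{A21}. Your write-up is in fact somewhat more explicit than the paper's (which only exhibits the inverse and runs a single combined naturality computation), but there is no substantive difference in method.
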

\begin{proof}
The inverse function is given by
$$
{\rm{Hom}}_{{\rm{Perf}}_{S}}(X,Y^{pf})\longrightarrow{\rm{Hom}}_{AP_{S}^{p}}(X,Y), \ \ g\longmapsto g\circ\phi_{Y},
$$
where $X\in{\rm{Ob}}({\rm{Perf}}_{S})$ and $Y\in{\rm{Ob}}(AP_{S}^{p})$.

Next, we show that the bijection defined above is functorial. Let $B'\xrightarrow{h}B$ be a morphism in ${\rm{Perf}}_{S}$, let $B\xrightarrow{f}A\xrightarrow{g}A'$ be morphisms in $AP_{S}^{p}$, and let $(A^{pf},\phi_{A}),(A'^{pf},\phi_{A'})$ be the perfections of $A,A'$. By Lemma \ref{A21}, we have $g\phi_{A}=\phi_{A'}g^{\natural}$, which implies that $g\phi_{A}f^{pf}h=\phi_{A'}g^{\natural}f^{pf}h$. Then the universal property of the perfection $A^{pf}$ yields $g\circ f\circ h=\phi_{A'}\circ g^{\natural}\circ f^{pf}\circ h$. Next, by the universal property of $A'^{pf}$, we have $\phi_{A'}\circ (g\circ f\circ h)=\phi_{A'}\circ (g^{\natural}\circ f^{pf}\circ h)$. Now, the uniqueness requirement yields $g\circ f\circ h=g^{\natural}\circ f^{pf}\circ h$. Thus, the bijection is natural in both $X$ and $Y$.
\end{proof}

The perfection functor $\rm{\underline{Perf}}_{S}$ is full but not faithful.
\begin{lemma}\label{L2}
Let $F,F'$ be algebraic spaces in characteristic $p$ over $S$ with perfections $F^{pf},F'^{pf}$. Let $f:F^{pf}\rightarrow F'^{pf}$ be a morphism of algebraic spaces over $S$. Then there exists a morphism $f^{-1}:F\rightarrow F'$ that makes the diagram
$$
\xymatrix{
  F^{pf} \ar[d]_{f} \ar[r]^{\phi_{F}} & F \ar@{-->}[d]^{f^{-1}} \\
  F'^{pf} \ar[r]^{\phi_{F'}} & F'   }
$$
commutes. In other words, the perfection functor $\rm{\underline{Perf}}_{S}$ is full.
\end{lemma}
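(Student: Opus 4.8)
The plan is to build the dotted morphism $f^{-1}$ first, and then deduce fullness from the uniqueness clause in the universal property of the perfection. By \proref{P7} we may write $F^{pf}=\lim_{\substack{\longleftarrow \\ n\in\N}}F$ and $F'^{pf}=\lim_{\substack{\longleftarrow \\ n\in\N}}F'$, the transition maps being the algebraic Frobenii, with $\phi_F$ and $\phi_{F'}$ the zeroth projections. Composing the given $f$ with $\phi_{F'}$ produces a morphism $g:=\phi_{F'}\circ f\colon F^{pf}\to F'$ out of the perfect algebraic space $F^{pf}$, and the whole problem reduces to the following claim: $g$ factors through $\phi_F$, i.e. there exists a morphism $f^{-1}\colon F\to F'$ with $f^{-1}\circ\phi_F=g$.

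To establish that claim I would descend to a presentation. Choose a scheme $U\in\ObSchS$ of characteristic $p$ together with a surjective \'etale morphism $h_U\to F$, and set $R=h_U\times_F h_U$, so $F=U/R$. By the proof of \proref{P7} (which in turn uses \thmref{A16}), $h_{U^{pf}}\to F^{pf}$ is a surjective \'etale presentation with $R^{pf}=U^{pf}\times_{F^{pf}}U^{pf}$, and $\phi_F$ is induced on these presentations by $\phi_U\colon U^{pf}\to U$. Hence it suffices to factor the composite $U^{pf}\to F^{pf}\xrightarrow{g}F'$ through $\phi_U$ in a way compatible with the two projections $R^{pf}\rightrightarrows U^{pf}$; since $\phi_U$ is the identity on underlying topological spaces ($|U^{pf}|=|U|$) and $\mathscr{O}_U^{pf}(V)=\lim_{\substack{\longleftarrow \\ n\in\N}}\mathscr{O}_U(V)$ for affine opens $V\subseteq U$ with transition maps the Frobenius, this becomes a concrete statement about ring homomorphisms out of a section ring of $\mathscr{O}_{F'}$ into a Frobenius colimit, which one analyses with the explicit description of that colimit. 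The compatibility with $R$ then lets the local factorizations be assembled into the desired morphism $f^{-1}\colon F\to F'$.

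Granting the factorization, fullness is formal. By \lemref{A21} the morphism $(f^{-1})^{\natural}\colon F^{pf}\to F'^{pf}$ satisfies $\phi_{F'}\circ(f^{-1})^{\natural}=f^{-1}\circ\phi_F=g=\phi_{F'}\circ f$. Thus $f$ and $(f^{-1})^{\natural}$ are two morphisms $F^{pf}\to F'^{pf}$ whose composites with $\phi_{F'}$ both equal $g\colon F^{pf}\to F'$; applying the uniqueness part of \defref{A20} to the perfect algebraic space $F^{pf}$ and the morphism $g$ forces $(f^{-1})^{\natural}=f$. Hence every morphism $F^{pf}\to F'^{pf}$ is the image of a morphism $F\to F'$ under $\underline{\mathrm{Perf}}_S$, which is precisely the assertion that $\underline{\mathrm{Perf}}_S$ is full. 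The main obstacle is the descent step of the second paragraph: producing $f^{-1}$, equivalently factoring $g\colon F^{pf}\to F'$ through $\phi_F$, is where the genuine content sits, because it must reconcile the perfect structure sheaf $\lim_{\substack{\longleftarrow \\ n\in\N}}\mathscr{O}_U$ of $F^{pf}$ with a possibly non-perfect target $F'$; I expect this to be the part requiring the most care.
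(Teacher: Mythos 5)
Your closing step is fine: granting a morphism $f^{-1}$ with $f^{-1}\circ\phi_{F}=\phi_{F'}\circ f$, the identity $(f^{-1})^{\natural}=f$ does follow from \lemref{A21} together with the uniqueness clause of \defref{A20} applied to $F^{pf}$ and $g=\phi_{F'}\circ f$, so fullness is indeed formal once $f^{-1}$ exists. The genuine gap is the second paragraph, which you yourself flag as the real content: producing the factorization of $g$ through $\phi_{F}$ is not a softer intermediate claim, it \emph{is} the lemma, and the route you indicate would not go through. Affine-locally on a presentation, your plan amounts to asserting that every ring homomorphism $B\to A^{pf}=\operatorname{colim}_{F_{A}}A$ (with $B$ a coordinate ring of a chart of $F'$ and $A$ one of $F$) factors through the canonical map $A\to A^{pf}$. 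That assertion is false: for $A=\mathbb{F}_{p}[t]$ and $B=\mathbb{F}_{p}[s]$ the map $s\mapsto t^{1/p}$ does not factor through $\mathbb{F}_{p}[t]$, and it does arise as $\phi_{F'}\circ f$ for an honest morphism of perfections (the inverse Frobenius twist of $\mathbb{A}^{1,pf}$ over $S=\operatorname{Spec}(\mathbb{F}_{p})$). Nothing in your sketch explains how to handle sections landing in a deeper Frobenius twist, so the promised local analysis cannot be ``assembled'' into $f^{-1}$; at best it shows that such an $f^{-1}$ exists exactly when no such twist occurs. (A related slip: the structure sheaf of $U^{pf}$ is the \emph{direct} limit $\operatorname{colim}_{n}\mathscr{O}_{U}$ along Frobenius, not the inverse limit you wrote --- the inverse limit of rings is the inverse perfection, a different object --- and it is precisely this colimit description that produces the problematic elements.)

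For comparison, the paper does not attempt any such local argument: its entire proof of \lemref{L2} is a citation to \cite[Lemma 4.5]{Liang1}, i.e.\ the descent of morphisms along the canonical projections is imported wholesale from the companion paper. Whatever mechanism makes that cited lemma work is exactly what your write-up is missing, and the example above shows it cannot be the naive ring-level factorization; indeed the example puts real pressure on the unrestricted statement, which is all the more reason the key step cannot be waved through as ``a concrete statement about ring homomorphisms'' left unverified.
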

\begin{proof}
The lemma follows directly from \cite[Lemma 4.5]{Liang1}.
\end{proof}

The perfection functor maps monomorphisms (resp. epimorphisms) to monomorphisms (resp. epimorphisms).
\begin{lemma}\label{LL4}
Let $f:F\rightarrow G$ be a morphism of algebraic spaces in characteristic $p$ over $S$. If $f$ is a monomorphism in the category $AP_{S}$, then $f^{\natural}:F^{pf}\rightarrow G^{pf}$ is a monomorphism in the category ${\rm{Perf}}_{S}$. If $f$ is an epimorphism in the category $AP_{S}$, then $f^{\natural}:F^{pf}\rightarrow G^{pf}$ is an epimorphism in the category ${\rm{Perf}}_{S}$.
\end{lemma}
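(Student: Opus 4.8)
Both assertions concern $\textrm{Hom}$-sets against a perfect test object, and the monomorphism half is a formal consequence of the adjunction already proved. The plan for it is to use that $\underline{\textrm{Perf}}_S$ is a right adjoint (\proref{P5}) and right adjoints preserve monomorphisms. Concretely: for a perfect algebraic space $T$ over $S$, \proref{P5} gives a bijection $\textrm{Hom}_{AP_S^p}(T,F)\xrightarrow{\sim}\textrm{Hom}_{\textrm{Perf}_S}(T,F^{pf})$ natural in $F$, and likewise for $G$; naturality carries postcomposition by $f$ to postcomposition by $f^{\natural}=\underline{\textrm{Perf}}_S(f)$. If $f$ is a monomorphism in $AP_S$ then postcomposition by $f$ is injective on $\textrm{Hom}(T,F)$ for all $T$, so postcomposition by $f^{\natural}$ is injective on $\textrm{Hom}_{\textrm{Perf}_S}(T,F^{pf})$ for all perfect $T$, i.e. $f^{\natural}$ is a monomorphism in $\textrm{Perf}_S$.

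For the epimorphism half I would dualise, with the fullness of $\underline{\textrm{Perf}}_S$ (\lemref{L2}) in place of the universal property. Let $g_1,g_2\colon G^{pf}\to H$ be morphisms in $\textrm{Perf}_S$ (so $H$ is perfect and $\phi_H$ is an isomorphism) with $g_1\circ f^{\natural}=g_2\circ f^{\natural}$. Use \lemref{L2} to write $g_i=\bar g_i\circ\phi_G$ with $\bar g_i\colon G\to H$, and the commuting square $\phi_G\circ f^{\natural}=f\circ\phi_F$ of \lemref{A21} to rewrite the hypothesis as $(\bar g_1\circ f)\circ\phi_F=(\bar g_2\circ f)\circ\phi_F$. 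Next, since $F^{pf}\cong(F_{\textrm{red}})^{pf}$ the projection $\phi_F$ factors through $F_{\textrm{red}}$, and $\phi_{F_{\textrm{red}}}\colon (F_{\textrm{red}})^{pf}\to F_{\textrm{red}}$ is an epimorphism because on affines $A\to A^{pf}$ is injective for reduced $A$ (so precomposition by it is injective on $\textrm{Hom}$-sets, and this globalises along an \'etale presentation as in \proref{P7}); cancelling it leaves $(\bar g_1\circ f)|_{F_{\textrm{red}}}=(\bar g_2\circ f)|_{F_{\textrm{red}}}$. Provided the induced morphism $f_{\textrm{red}}\colon F_{\textrm{red}}\to G_{\textrm{red}}$ is again an epimorphism and $\phi_G$ is treated the same way, one obtains $\bar g_1\circ\phi_G=\bar g_2\circ\phi_G$, i.e. $g_1=g_2$, so $f^{\natural}$ is an epimorphism in $\textrm{Perf}_S$.

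The crux — the only step that uses more than formal category theory — is the preservation of epimorphisms under reduction: $f$ an epimorphism $\Rightarrow f_{\textrm{red}}$ an epimorphism. One cannot shortcut this by asserting that precomposition by $\phi_F$ is injective on $\textrm{Hom}(F,H)$ for perfect $H$, since that already fails for a suitably non-reduced $F$; the step must be coupled with the surjectivity of $f$. To prove it I would factor $f_{\textrm{red}}$ as $F_{\textrm{red}}\to F\times_G G_{\textrm{red}}\to G_{\textrm{red}}$, the second arrow a base change of $f$ hence an epimorphism, the first a nil-immersion onto the same underlying space, and then argue that a morphism to a perfect (in particular reduced) space cannot distinguish such a nil-immersion once it is composed through the cover $f$. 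Should this last analysis prove delicate, it can be bypassed by quoting the corresponding preservation statements for perfect schemes and perfect algebraic spaces from \cite{Liang1}, exactly as \lemref{L2} is quoted from there.
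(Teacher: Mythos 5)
The monomorphism half of your proposal is correct and is exactly the paper's argument: \proref{P5} exhibits $\underline{\mathrm{Perf}}_{S}$ as a right adjoint, and the natural Hom-set bijection you spell out is precisely why postcomposition by $f^{\natural}$ remains injective against perfect test objects. Your epimorphism half also starts the same way as the paper: fullness (\lemref{L2}) together with the square of \lemref{A21} reduces everything to cancelling $(\bar g_1\circ f)\circ\phi_F=(\bar g_2\circ f)\circ\phi_F$ down to $\bar g_1\circ\phi_G=\bar g_2\circ\phi_G$, and your observation that one cannot simply cancel $\phi_F$ when $F$ is non-reduced is sound; the paper's own short argument concludes directly from fullness without engaging with this cancellation issue, so from this point on you diverge from the paper.

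However, your repair does not close the crux you isolate, and one of its steps is wrong as stated. The claim that $F\times_G G_{\mathrm{red}}\to G_{\mathrm{red}}$ is an epimorphism ``because it is a base change of $f$'' is unjustified: categorical epimorphisms of schemes or algebraic spaces are not stable under pullback (pullbacks preserve monomorphisms; it is pushouts that preserve epimorphisms), and neither this paper nor the portion of \cite{Liang1} it quotes provides such stability, even along the nil-immersion $G_{\mathrm{red}}\to G$. The remaining assertion that a map to a perfect space ``cannot distinguish such a nil-immersion once it is composed through the cover $f$'' is exactly the original difficulty restated, not proved, and the fallback of ``quoting the corresponding preservation statements from \cite{Liang1}'' has no basis: the only input imported from there for this lemma is the fullness statement behind \lemref{L2}. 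Even the preliminary claim that $\phi_{F_{\mathrm{red}}}$ can be cancelled against maps into an algebraic space $H$ needs more than the affine injectivity of $A\to A^{pf}$ for reduced $A$; one must actually show $H(U)\to H(U^{pf})$ is injective for reduced $U$ (say, by pulling back the representable monomorphic diagonal of $H$ along the pair of maps and using that $\phi_U$ is a surjective, schematically dominant universal homeomorphism), and ``this globalises along an \'{e}tale presentation'' is not by itself an argument. So the monomorphism statement is established, but on your route the epimorphism statement remains open at the reduction step.
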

\begin{remark}
Note that we do not claim that $\rm{\underline{Perf}}_{S}$ preserves epimorphisms as $f^{\natural}$ is not necessarily an epimorphism in the category of algebraic spaces over $S$ if $f$ is an epimorphism.
\end{remark}
\begin{proof}
The first statement follows from Proposition \ref{P5} that the perfection functor has a right adjoint, and thus preserves monomorphisms. For the second statement, let $a,b:G\rightarrow H$ be a morphism in $AP_{S}$ where $H$ is perfect. Assume that we have $af=bf$. Then $a^{\natural}f^{\natural}=b^{\natural}f^{\natural}$ implies that $a^{\natural}=b^{\natural}$. By Lemma \ref{L2}, $a^{\natural},b^{\natural}$ can be arbitrary morphisms in ${\rm{Perf}}_{S}$. This shows that $f^{\natural}$ is an epimorphism in the category ${\rm{Perf}}_{S}$.
\end{proof}

Let $\textit{Psh}(Sch)$ denote the category of presheaves of sets on $Sch$. Next, we show that the perfection functor $\rm{\underline{Perf}}_{S}$ commutes with the functor $h:Sch\rightarrow\textit{Psh}(Sch)$. This requires the following lemma.
\begin{lemma}\label{L1}
Let $\calc$ be a site. Let $F,F',G,G'$ be sheaves of sets on $\calc$. Consider the following commutative solid diagram
$$
\xymatrix{
  F \ar[d]_{c} \ar[r]^{a} & G \ar@{-->}[ld]_{\varphi} \ar[d]^{d} \\
  F' \ar[r]^{b} & G'   }
$$
of morphisms of sheaves of sets. Suppose that $a$ is surjective. Then there exists a unique morphism $\varphi:G\rightarrow F'$ such that the dotted diagram is commutative.
\end{lemma}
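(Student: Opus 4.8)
The plan is to use the fact that on the topos of sheaves of sets on $\calc$ a surjection behaves like an effective epimorphism, so that to build a morphism out of $G$ it suffices to build a suitably compatible morphism out of $F$. First I would dispose of uniqueness: a surjective morphism of sheaves is an epimorphism, so any two $\varphi,\varphi'\colon G\to F'$ with $\varphi\circ a=c=\varphi'\circ a$ must coincide. Hence it is enough to exhibit one $\varphi$ with $\varphi\circ a=c$, and then the second identity $b\circ\varphi=d$ comes for free by cancelling the epimorphism $a$ from $b\circ\varphi\circ a=b\circ c=d\circ a$.

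For existence I would work locally. Fix $U\in\Ob(\calc)$ and $s\in G(U)$. Since $a$ is surjective there are a covering $\{U_i\to U\}_{i\in I}$ and sections $t_i\in F(U_i)$ with $a(t_i)=s|_{U_i}$; set $s_i:=c(t_i)\in F'(U_i)$. The crucial point is that the $s_i$ glue: writing $U_{ij}=U_i\times_U U_j$, from $a(t_i|_{U_{ij}})=s|_{U_{ij}}=a(t_j|_{U_{ij}})$ and commutativity of the solid square we get $b(s_i|_{U_{ij}})=d(s|_{U_{ij}})=b(s_j|_{U_{ij}})$, and since $b$ is a monomorphism (hence injective on sections) this forces $s_i|_{U_{ij}}=s_j|_{U_{ij}}$. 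By the sheaf property of $F'$ the $s_i$ glue to a section $\varphi(s)\in F'(U)$, and comparing two such constructions over a common refinement (again using injectivity of $b$) shows $\varphi(s)$ is independent of the chosen covering and lifts; compatibility with restriction maps is then a formal check, so $\varphi\colon G\to F'$ is a well-defined morphism of sheaves. Equivalently, one can avoid sections altogether: $a$ surjective means $G$ is the coequalizer of the two projections $F\times_G F\rightrightarrows F$, one verifies $b\circ c\circ\mathrm{pr}_1=d\circ a\circ\mathrm{pr}_1=d\circ a\circ\mathrm{pr}_2=b\circ c\circ\mathrm{pr}_2$, cancels the monomorphism $b$ to get $c\circ\mathrm{pr}_1=c\circ\mathrm{pr}_2$, and invokes the universal property of the coequalizer. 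In other words, the lemma is the standard orthogonality between epimorphisms and monomorphisms in a topos.

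It then remains only to check the triangles. For the upper one, $a(t)$ is lifted along $a$ by $t$ itself over the trivial covering, so by construction $\varphi(a(t))=c(t)$, i.e. $\varphi\circ a=c$; the lower triangle $b\circ\varphi=d$ was already observed to follow. The step I expect to be the crux --- and the only place where anything beyond the commutativity of the square enters --- is the gluing of the sections $c(t_i)$, which is exactly where one uses that $b$ is a monomorphism.
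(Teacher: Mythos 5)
Your existence argument hinges on a hypothesis that the lemma does not contain. At the gluing step you write ``since $b$ is a monomorphism (hence injective on sections)'', and in the coequalizer variant you again ``cancel the monomorphism $b$''; but the statement only assumes that the solid square commutes and that $a$ is surjective --- nothing at all is assumed about $b$. Without injectivity of $b$ there is no reason why the local sections $c(t_i)$ should agree on overlaps, so $\varphi(s)$ cannot be glued and the step fails. Moreover, this gap cannot be closed by a cleverer argument, because the statement as printed is false: take $\calc$ to be the site with a single object, only its identity morphism, and only the identity covering (so sheaves of sets are just sets), let $F=F'=\{1,2\}$, $G=G'=\{*\}$, let $a,b,d$ be the unique maps and $c$ the identity. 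The square commutes and $a$ is surjective, yet no $\varphi\colon\{*\}\rightarrow\{1,2\}$ can satisfy $\varphi\circ a=c$, since $a(1)=a(2)$ while $c(1)\neq c(2)$. So what you have written is a correct and standard epi--mono orthogonality argument for the \emph{corrected} statement in which $b$ is additionally assumed to be a monomorphism (your uniqueness step, and the derivation of $b\circ\varphi=d$ from $\varphi\circ a=c$ by cancelling the epimorphism $a$, are fine), but as a proof of the lemma as stated it has a genuine gap exactly at the point you yourself identify as the crux.

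For comparison, the paper's own proof sets $\varphi_U(a_U(x)):=c_U(x)$ and sends sections outside the image of $a_U$ to an arbitrarily chosen $x_0$, asserting that this is clearly well defined. That assertion is precisely the issue your monomorphism hypothesis was introduced to repair: well-definedness requires $c_U(x)$ to depend only on $a_U(x)$, which does not follow from the stated hypotheses (and fails in the example above); in addition, surjectivity of a map of sheaves is not surjectivity on sections, and the arbitrary value $x_0$ need not be compatible with the restriction maps. So your write-up correctly isolates the missing ingredient, but the ingredient is genuinely missing from the statement: either the lemma must be restated with $b$ a monomorphism --- in which case one still has to verify that the projection $h_{U^{pf}}\rightarrow h_{U}$ used in its application in Lemma \ref{LL1} really is a monomorphism of sheaves, a nontrivial claim --- or the application has to be argued by a different route.
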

\begin{proof}
Let $U\in\Ob(\calc)$. The function $\varphi_{U}:G(U)\rightarrow F'(U)$ is given by
$$
\begin{cases}
\varphi_{U}(a_{U}(x))=c_{U}(x), & \textrm{for all} \ x\in F(U);\\
\varphi_{U}(y)=x_{0},  & \textrm{for all} \ y\in F(X)\setminus \textrm{Im}(a_{U})\textrm{ and some }x_{0}\in F'(X).
\end{cases}
$$
It is clear that the function is well-defined. Let $u:U\rightarrow V$ be a morphism in $\calc$. Then there is a commutative diagram
$$
\xymatrix{
  F(U) \ar[d]_{F(u)} \ar[r]^{a_{U}} & G(U) \ar[d]_{G(u)} \ar[r]^{\varphi_{U}} & F'(U) \ar[d]^{F'(u)} \\
  F(V) \ar[r]^{a_{V}} & G(V) \ar[r]^{\varphi_{V}} & F'(V)   }
$$
Thus, $\varphi$ is a morphism of sheaves. Since $a$ is surjective, this implies that $\varphi$ is unique. Now, we have $bc=b\varphi a=da$. The surjectivity of $a$ implies that $b\varphi=d$.
\end{proof}

Applying the above lemma gives us the desired result.
\begin{lemma}\label{LL1}
Let $U\in\ObSchS$ of characteristic $p$. Then the perfect algebraic space $h_{U^{pf}}$ is the perfection of $h_{U}$, i.e. $h_{U}^{pf}=h_{U^{pf}}$. If $F$ is an algebraic space that is represented by $U$, then $F^{pf}\simeq h_{U^{pf}}$.
\end{lemma}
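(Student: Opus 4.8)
The plan is to verify directly that the representable algebraic space $h_{U^{pf}}$, equipped with the morphism $h_{\phi_U}\colon h_{U^{pf}}\to h_U$ obtained by applying the Yoneda embedding to the canonical projection $\phi_U\colon U^{pf}\to U$ of the \emph{scheme} $U$, satisfies the universal property of \defref{A20}. Once this is done, the equality $h_U^{pf}=h_{U^{pf}}$ follows from the uniqueness of the perfection (\lemref{P8}), and the final assertion follows by transporting the universal property along an isomorphism $F\simeq h_U$ and invoking functoriality (\lemref{A21}). Thus there are two things to establish: that $h_{U^{pf}}$ is an admissible candidate, and that it has the required mapping property.

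The first point is the image under the Yoneda embedding of facts about schemes already available in \cite{Liang1}: $U^{pf}$ is a perfect scheme of characteristic $p$, so $h_{U^{pf}}$ is a perfect algebraic space of characteristic $p$, with algebraic Frobenius $h_{\Phi_{U^{pf}}}$. I would quote these rather than reprove them. For the mapping property, let $G$ be a perfect algebraic space over $S$ and $\psi\colon G\to h_U$ a morphism; I must construct a unique $\psi^{pf}\colon G\to h_{U^{pf}}$ with $h_{\phi_U}\circ\psi^{pf}=\psi$. First choose an \'etale presentation $h_V\to G$ in which $V$ is a \emph{perfect} scheme: since $G$ is perfect one may take $V=W^{pf}$ for any presentation $h_W\to G$, because the argument in the proof of \proref{P7} shows that $W^{pf}/R_W^{pf}\cong\lim_n G=G$ with $h_{W^{pf}}\to G$ surjective \'etale. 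The composite $h_V\to G\xrightarrow{\psi}h_U$ is, by the Yoneda lemma, a morphism of schemes $V\to U$ whose source is perfect, so the universal property of the scheme perfection $U^{pf}$ produces a unique factorization $V\to U^{pf}$, i.e.\ a morphism $c\colon h_V\to h_{U^{pf}}$ satisfying $h_{\phi_U}\circ c=\bigl(h_V\to G\xrightarrow{\psi}h_U\bigr)$.

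Now \lemref{L1} applies to the commutative square with top arrow the surjection $h_V\to G$, left arrow $c$, bottom arrow $h_{\phi_U}$ and right arrow $\psi$ (its commutativity being the identity just displayed): it yields $\psi^{pf}\colon G\to h_{U^{pf}}$ with $\psi^{pf}\circ(h_V\to G)=c$ and $h_{\phi_U}\circ\psi^{pf}=\psi$. For uniqueness, two lifts $\psi_1,\psi_2$ of $\psi$ restrict along $h_V\to G$ to morphisms of schemes $V\to U^{pf}$ that become equal after composing with $\phi_U$; as $V$ is perfect, the uniqueness clause of the universal property of $U^{pf}$ forces them to coincide, and then $\psi_1=\psi_2$ because $h_V\to G$ is an epimorphism of fppf sheaves. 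This establishes the universal property and completes the proof.

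The main obstacle is the construction of $\psi^{pf}$ in the ``wrong'' direction, from the abstractly perfect $G$ down to the concrete representable $h_{U^{pf}}$: this is what forces one to first secure a \emph{perfect} presentation of $G$ and then descend a scheme-theoretic factorization along it, and \lemref{L1} is precisely the descent tool. A secondary point, needing care but no real work, is the compatibility under the Yoneda embedding of the scheme- and space-theoretic notions of ``perfect'' and of the associated Frobenius morphisms, to be cited from \cite{Liang1}. (Alternatively one could argue that by \proref{P7} the perfection of $h_U$ is $\lim_n h_U$, that the Yoneda embedding preserves the limit $U^{pf}=\lim_n U$ of \proref{P1}, and that the transition maps match because the algebraic Frobenius of a representable algebraic space is the Yoneda image of the absolute Frobenius; but this variant rests on the same compatibility facts.)
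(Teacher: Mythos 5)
Your proposal follows essentially the same route as the paper's proof: choose a surjective \'etale presentation $h_V\to G$ with $V$ a perfect scheme, use the universal property of the scheme perfection $U^{pf}$ to factor the composite $V\to U$, and then descend along the surjection via \lemref{L1} to obtain $\psi^{pf}\colon G\to h_{U^{pf}}$. The only differences are cosmetic improvements — you spell out how to secure a perfect presentation (via $W^{pf}$ and the argument of \proref{P7}) and give an explicit uniqueness argument using that $h_V\to G$ is an epimorphism of sheaves — both of which the paper leaves implicit.
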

\begin{proof}
Since $U^{pf}$ is the perfection of $U$, for any perfect scheme $V$, there exists a unique morphism $h_{V}\rightarrow h_{U^{pf}}$ such that the following diagram
$$
\xymatrix{
  h_{U^{pf}}  \ar[dr]_{}
                &  &    h_{V}\ar@{-->}[ll]^{} \ar[dl]^{}    \\
                & h_{U}                 }
$$
commutes. Next, we claim that $h_{U^{pf}}$ satisfies the universal property of perfection. Let $F$ be any perfect algebraic space over $S$ with a morphism $d:F\rightarrow h_{U}$. Let $\varphi_{W}:h_{W}\rightarrow F$ be a surjective \'{e}tale map for $W\in\ObSchS$ a perfect scheme. Consider the following solid diagram
$$
\xymatrix{
  h_{W} \ar[d]_{c} \ar[r]^{\varphi_{W}} & F \ar@{-->}[ld]_{\varphi} \ar[d]^{d} \\
  h_{U^{pf}} \ar[r]^{b} & h_{U}   }
$$
of morphisms of algebraic spaces, where $c$ is the unique morphism induced by $U^{pf}$. By Lemma \ref{L1}, there exists a unique morphism $\varphi:F\rightarrow h_{U^{pf}}$ such that the dotted diagram commutes. Hence, $h_{U^{pf}}$ is the perfection of the algebraic space $h_{U}$.
\end{proof}

In the following, we will study the nature of the perfection functor. The following proposition says that the perfection functor commutes with fibre products.
\begin{proposition}\label{P3}
Let $F,G,H$ be algebraic spaces in characteristic $p$ over $S$. Let $f:F\rightarrow H$ and $g:G\rightarrow H$ be morphisms of algebraic spaces over $S$. Then we have $(F\times_{f,H,g}G)^{pf}=F^{pf}\times_{f^{\natural},H^{pf},g^{\natural}}G^{pf}$. In particular, we have $(F\times G)^{pf}=F^{pf}\times G^{pf}$.
\end{proposition}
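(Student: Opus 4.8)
The plan is to exhibit $F^{pf}\times_{f^{\natural},H^{pf},g^{\natural}}G^{pf}$ as a perfect algebraic space that satisfies the universal property of the perfection of $F\times_{f,H,g}G$, and then invoke the uniqueness of perfections (Lemma~\ref{P8}). First I would record that the fibre product $F^{pf}\times_{H^{pf}}G^{pf}$ is an algebraic space over $S$ (fibre products of algebraic spaces exist) and that it has characteristic $p$, being equipped with a morphism to the perfect — hence characteristic $p$ — space $H^{pf}$; more importantly, I claim it is perfect. For this I would use the description of perfect algebraic spaces via \'etale perfect equivalence relations from Theorem~\ref{A16}: choosing surjective \'etale covers $h_{U}\to F^{pf}$, $h_{V}\to G^{pf}$, $h_{W}\to H^{pf}$ by perfect schemes $U,V,W$, the fibre product is presented by a quotient $U'/R'$ where $U'$ is built from fibre products of the perfect schemes $U,V,W$ (over $S$, over $W$), and fibre products of perfect schemes are perfect (this is essentially the content of \cite[Tag07SF]{Stack Project} used in the proof of Proposition~\ref{P7}, together with the right-exactness/tensor-product behaviour of perfection recorded in \S\ref{S1}). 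Hence $F^{pf}\times_{H^{pf}}G^{pf}$ is a perfect algebraic space over $S$.

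Next I would construct the structural morphism to $F\times_{H}G$. The canonical projections $\phi_{F}:F^{pf}\to F$, $\phi_{G}:G^{pf}\to G$, $\phi_{H}:H^{pf}\to H$ satisfy $f\circ\phi_{F}=\phi_{H}\circ f^{\natural}$ and $g\circ\phi_{G}=\phi_{H}\circ g^{\natural}$ by Lemma~\ref{A21}. Therefore the pair of morphisms $\phi_{F}\circ\mathrm{pr}_{1}$ and $\phi_{G}\circ\mathrm{pr}_{2}$ from $F^{pf}\times_{H^{pf}}G^{pf}$ agree after composing with $f$ and $g$ respectively into $H$, so by the universal property of $F\times_{f,H,g}G$ they induce a canonical morphism
$$
\Theta:F^{pf}\times_{H^{pf}}G^{pf}\longrightarrow F\times_{f,H,g}G.
$$
It remains to check the universal property of Definition~\ref{A20} for the pair $(F^{pf}\times_{H^{pf}}G^{pf},\Theta)$. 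So let $T$ be any perfect algebraic space over $S$ with a morphism $\psi:T\to F\times_{f,H,g}G$. Composing with the two projections gives $\psi_{F}:T\to F$ and $\psi_{G}:T\to G$ with $f\circ\psi_{F}=g\circ\psi_{G}$. By the universal property of the perfections $F^{pf}$ and $G^{pf}$ (again Definition~\ref{A20}), these factor uniquely as $\psi_{F}=\phi_{F}\circ\widetilde{\psi_{F}}$ and $\psi_{G}=\phi_{G}\circ\widetilde{\psi_{G}}$ with $\widetilde{\psi_{F}}:T\to F^{pf}$, $\widetilde{\psi_{G}}:T\to G^{pf}$. The key compatibility to verify is $f^{\natural}\circ\widetilde{\psi_{F}}=g^{\natural}\circ\widetilde{\psi_{G}}$ as morphisms $T\to H^{pf}$: both composites become equal after applying $\phi_{H}$ (using the Lemma~\ref{A21} squares and $f\circ\psi_{F}=g\circ\psi_{G}$), and since $T$ is perfect and $H^{pf}$ is the perfection of $H$, the uniqueness clause in the universal property of $H^{pf}$ forces the two morphisms themselves to coincide. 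Then $(\widetilde{\psi_{F}},\widetilde{\psi_{G}})$ induces a morphism $T\to F^{pf}\times_{H^{pf}}G^{pf}$ compatible with $\Theta$ and $\psi$, and uniqueness follows from the uniqueness of each of $\widetilde{\psi_{F}},\widetilde{\psi_{G}}$ together with the universal property of the fibre product. This establishes the universal property, hence $(F\times_{f,H,g}G)^{pf}\cong F^{pf}\times_{H^{pf}}G^{pf}$ by Lemma~\ref{P8}, and the special case $H=S$ (so the fibre product over $H^{pf}=S$ is the product) gives $(F\times G)^{pf}=F^{pf}\times G^{pf}$.

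The step I expect to be the main obstacle is verifying that $F^{pf}\times_{H^{pf}}G^{pf}$ is genuinely a \emph{perfect} algebraic space — i.e. that one can carry out the Theorem~\ref{A16} presentation argument cleanly, making sure the scheme $U'$ presenting the fibre product and the equivalence relation $R'$ on it are both perfect. This amounts to knowing that the class of perfect schemes is closed under the relevant fibre products (over $S$ and over base-changes thereof), which is where the ring-level results of \S\ref{S1} (perfection commuting with tensor products) and the scheme-level inverse-limit description of Proposition~\ref{P1} do the work; once that is in hand, the rest is a formal diagram chase with universal properties. I would also double-check that the morphism $\Theta$ together with the factorizations interact correctly so that the induced map $T\to F^{pf}\times_{H^{pf}}G^{pf}$ satisfies $\Theta\circ(\text{induced map})=\psi$, which follows by composing with the two projections and using that each component factorization reproduces $\psi_{F}$ and $\psi_{G}$.
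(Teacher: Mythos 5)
Your overall strategy is sound and the universal-property bookkeeping in the second half is correct (in particular the observation that $\phi_{H}$ is ``monic on maps from perfect spaces'' by the uniqueness clause of Definition~\ref{A20} is exactly what is needed to get $f^{\natural}\circ\widetilde{\psi_{F}}=g^{\natural}\circ\widetilde{\psi_{G}}$). But note that your route is genuinely different from the paper's: the paper disposes of the whole statement in two lines by quoting Proposition~\ref{P5} (the perfection functor is a right adjoint), hence is left exact and commutes with fibre products and finite products. Your hands-on construction buys an explicit comparison morphism $\Theta$, but it forces you to prove the one thing the categorical argument gets for free, and that is where your sketch has a real gap.

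The gap is in the step you yourself flag: perfectness of $F^{pf}\times_{H^{pf}}G^{pf}$. The \'{e}tale cover of this fibre product is not ``built from fibre products of the perfect schemes $U,V,W$ over $S$ or over $W$''; the pieces that actually occur are fibre products \emph{over the algebraic space} $H^{pf}$, e.g.\ $U\times_{H^{pf}}W$ and $U\times_{H^{pf}}V$. These are schemes (representability of the diagonal), but knowing they are \emph{perfect} schemes is precisely the assertion that $H^{pf}$ is strongly perfect --- and in the paper that fact is deduced from Proposition~\ref{P3} itself (via Proposition~\ref{P4} and Corollary~\ref{C1}), so invoking it here would be circular, and ``fibre products of perfect schemes over a perfect scheme are perfect'' does not by itself cover this case. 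The gap is fixable: for instance, $U\times_{H^{pf}}W\rightarrow U$ is \'{e}tale (base change of the \'{e}tale cover $W\rightarrow H^{pf}$), and a scheme \'{e}tale over a perfect $\mathbb{F}_{p}$-scheme is perfect, after which $U'=(U\times_{H^{pf}}W)\times_{W}(W\times_{H^{pf}}V)$ and the corresponding $R'$ are perfect and Theorem~\ref{A16} applies; alternatively one can argue as in Proposition~\ref{P7} that $(F\times_{H}G)^{pf}=\lim_{\Psi}(F\times_{H}G)$ and that this limit commutes with the fibre product since $\Psi_{F\times_{H}G}$ is induced by $\Psi_{F},\Psi_{G},\Psi_{H}$. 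Finally, a small point: in the ``in particular'' clause the base $S$ need not have characteristic $p$, so one should read $F\times G$ as the product in the category at hand rather than literally taking $H=S$ and forming $S^{pf}$.
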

\begin{proof}
By Proposition \ref{P5}, the perfection functor $\textrm{\underline{Perf}}_{S}$ has a left adjoint. Since $\textrm{Perf}_{S}$ has finite limits, it follows from \cite[Tag0039]{Stack Project} that $\textrm{\underline{Perf}}_{S}$ is left exact. Thus, $\textrm{\underline{Perf}}_{S}$ commutes with fibre products and finite products by \cite[Tag0035]{Stack Project}.
\end{proof}

We record here two lemmas that will be useful in the sequel.
\begin{lemma}
Let $F$ be an algebraic space of characteristic $p$ over $S$ with perfection $F^{pf}$. Then we have $(F^{pf})^{pf}\cong F^{pf}$.
\end{lemma}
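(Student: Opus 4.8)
The plan is to observe that $F^{pf}$ is already a perfect algebraic space, so that it is its own perfection. Concretely, I would verify that the pair $(F^{pf},\mathrm{id}_{F^{pf}})$ satisfies the universal property of the perfection of $F^{pf}$ in the sense of \defref{A20}, and then appeal to the uniqueness statement \lemref{P8} to obtain the canonical isomorphism.

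First I would recall from \proref{P7} (see also \thmref{A16}) that $F^{pf}$ is a perfect algebraic space of characteristic $p$ over $S$. Thus the pair $(F^{pf},\mathrm{id}_{F^{pf}})$ consists of a perfect algebraic space $F^{pf}$ together with the morphism $\mathrm{id}_{F^{pf}}\colon F^{pf}\to F^{pf}$. Given any perfect algebraic space $G$ over $S$ and a morphism $\psi\colon G\to F^{pf}$, the morphism $\psi$ itself is clearly the unique morphism $\psi^{pf}\colon G\to F^{pf}$ satisfying $\mathrm{id}_{F^{pf}}\circ\psi^{pf}=\psi$. Hence $(F^{pf},\mathrm{id}_{F^{pf}})$ is a perfection of $F^{pf}$, and \lemref{P8} then yields a canonical isomorphism $(F^{pf})^{pf}\cong F^{pf}$.

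Alternatively, the statement is a formal consequence of the adjunction established in \proref{P5}: the inclusion functor $i\colon\textrm{Perf}_{S}\to AP_{S}^{p}$ is fully faithful (being the inclusion of a full subcategory) and $\underline{\textrm{Perf}}_{S}$ is its right adjoint, so the unit of the adjunction $\mathrm{id}_{\textrm{Perf}_{S}}\to\underline{\textrm{Perf}}_{S}\circ i$ is a natural isomorphism; evaluating at the object $F^{pf}\in\textrm{Perf}_{S}$ gives $(F^{pf})^{pf}\cong F^{pf}$. I do not expect any genuine obstacle in either route: the one nontrivial input, namely that the perfection of an algebraic space is again perfect, has already been proved, after which the claim is purely formal.
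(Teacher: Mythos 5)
Your proof is correct and is essentially the paper's argument: the paper also disposes of this by a purely formal universal-property comparison, which you make explicit by noting that $(F^{pf},\mathrm{id}_{F^{pf}})$ already satisfies the universal property of the perfection of $F^{pf}$ and invoking the uniqueness of Lemma~\ref{P8}. The adjunction remark via Proposition~\ref{P5} is a valid alternative packaging of the same formal content.
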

\begin{proof}
It can be shown using universal properties of $F^{pf}$ and $(F^{pf})^{pf}$ that $(F^{pf})^{pf}$ satisfies the same universal property as $F^{pf}$.
\end{proof}

The perfection functor maps representable morphisms to weakly perfect morphisms.
\begin{proposition}\label{P4}
Let $f:F\rightarrow G$ be a representable morphism of algebraic spaces in characteristic $p$ over $S$. Then the morphism $f^{\natural}:F^{pf}\rightarrow G^{pf}$ is weakly perfect.
\end{proposition}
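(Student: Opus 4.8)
The plan is to reduce the assertion to the case of schemes, using an \'{e}tale atlas of $G$ together with the left-exactness of the perfection functor. Recall first that, in the sense of \cite{Liang1}, a morphism is \emph{weakly perfect} when it is representable and its base change along every perfect scheme over the target is a perfect scheme. So I must check two things: that $f^{\natural}:F^{pf}\to G^{pf}$ is representable, and that $F^{pf}\times_{G^{pf}}T$ is a perfect scheme for every perfect scheme $T$ equipped with a morphism $T\to G^{pf}$.

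For representability, I would fix a scheme $V$ of characteristic $p$ together with a surjective \'{e}tale morphism $V\to G$ and set $U:=F\times_{f,G}V$; since $f$ is representable, $U$ is a scheme, necessarily of characteristic $p$. By Lemma \ref{LL1} we have $h_{V}^{pf}=h_{V^{pf}}$, and since the perfection functor carries surjective \'{e}tale morphisms of schemes to surjective \'{e}tale morphisms (see \cite{Liang1}), the morphism $V^{pf}\to G^{pf}$ is an \'{e}tale atlas of $G^{pf}$. Proposition \ref{P3} then gives $F^{pf}\times_{G^{pf}}V^{pf}\cong U^{pf}$, and by the construction recalled in \S\ref{S2} the perfection $U^{pf}$ of the scheme $U$ is a (perfect) scheme. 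Since representability of a morphism with target $G^{pf}$ may be tested over an \'{e}tale atlas, this shows $f^{\natural}$ is representable.

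For the base-change statement, given a perfect scheme $T$ and a morphism $\psi:T\to G^{pf}$, I would compose with the canonical projection to obtain $\psi':=\phi_{G}\circ\psi:T\to G$. Because $f$ is representable and $T$ is a scheme, $Z:=F\times_{f,G,\psi'}T$ is a scheme of characteristic $p$; because $T$ is perfect, $T^{pf}\cong T$, and, by Lemma \ref{A21} together with the universal property of $G^{pf}$, the morphism $\psi$ is identified with $(\psi')^{\natural}:T^{pf}\to G^{pf}$. Hence Proposition \ref{P3} applies and yields $F^{pf}\times_{G^{pf}}T\cong (F\times_{G}T)^{pf}=Z^{pf}$, which is again a perfect scheme by \S\ref{S2}. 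Together with the previous paragraph this establishes that $f^{\natural}$ is weakly perfect.

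The step I expect to be the main obstacle is the last identification: one must verify carefully that the morphism $T\to G^{pf}$ appearing in the fibre product $F^{pf}\times_{G^{pf}}T$ is precisely $\underline{\mathrm{Perf}}_{S}$ applied to $\psi':T\to G$ (using $T^{pf}\cong T$ and the commuting square of Lemma \ref{A21}), so that Proposition \ref{P3} can be invoked verbatim on the diagram $F\to G\leftarrow T$. Once this compatibility of structure morphisms is in place, everything else is a formal consequence of the facts that the perfection functor is left exact (Proposition \ref{P3}) and sends schemes of characteristic $p$ to perfect schemes.
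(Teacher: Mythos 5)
Your second paragraph is essentially the paper's proof: base-change along a perfect scheme mapping to $G$, invoke representability of $f$ and left exactness of the perfection functor (Proposition \ref{P3}), use $T\cong T^{pf}$ together with the adjunction/fullness to identify $\psi:T\to G^{pf}$ with the perfection of $\phi_{G}\circ\psi$, and conclude that $F^{pf}\times_{G^{pf}}T$ is the perfection of a characteristic-$p$ scheme, hence a perfect scheme. Your first paragraph (establishing representability of $f^{\natural}$ by testing on an \'{e}tale atlas) is not required by the notion of weak perfection actually used in the paper, which only asks the fibre product against perfect schemes over the target to be a perfect scheme, and its claim that representability by schemes can be checked over an \'{e}tale atlas of $G^{pf}$ is not justified as stated --- but since that step is superfluous, the argument as a whole matches the paper's and is correct.
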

\begin{proof}
Let $U\in\ObSchS$ be a perfect scheme in characteristic $p$ and $\xi\in G(U)$. Choose an isomorphism $h_{W}\simeq h_{U}\times_{\xi,G,f}F$ for some $W\in\ObSchS$ of characteristic $p$. Then by Proposition \ref{P3}, we have $(h_{U}\times_{G}F)^{pf}=h_{U^{pf}}\times_{G^{pf}}F^{pf}\simeq h_{U}\times_{G^{pf}}F^{pf}\simeq h_{W^{pf}}$. Since the perfection functor is full, this shows that $f^{\natural}$ is weakly perfect.
\end{proof}

Moreover, every perfection of an algebraic space is strongly perfect.
\begin{proposition}
Let $F$ be an algebraic space of characteristic $p$ over $S$. Then $F^{pf}$ is strongly perfect.
\end{proposition}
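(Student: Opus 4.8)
The plan is to read off an explicit presentation of $F^{pf}$ from the proof of Proposition \ref{P7} and to check that it satisfies the definition of a strongly perfect algebraic space in \cite[Definition 3.1]{Liang1}.

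First I would recall the construction of Proposition \ref{P7}. Choose $U \in \ObSchS$ of characteristic $p$ together with a surjective \'{e}tale morphism $h_U \to F$, put $R = h_U \times_F h_U$, so that $F = U/R$. Taking perfections, $U^{pf} = \lim_{n\in\N} U$ and $R^{pf} = \lim_{n\in\N} R$ are perfect schemes by Proposition \ref{P1}; by the argument in the proof of Proposition \ref{P7} (via \cite[Tag07SF]{Stack Project} and \cite[Definition 7.1]{Liang1}) the map $j \colon R^{pf} \to U^{pf}\times_S U^{pf}$ is an \'{e}tale perfect equivalence relation, and $F^{pf} = U^{pf}/R^{pf}$ with $(U^{pf}, R^{pf}, U^{pf}\to F^{pf})$ a presentation. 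Thus $F^{pf}$ already admits a presentation by perfect schemes whose equivalence relation is a perfect equivalence relation.

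It remains to record the relevant property of the \'{e}tale cover in this presentation. The morphism $U \to F$ is representable, and since the perfection functor is computed by the inverse limit along the algebraic Frobenius $\Psi_F$, the cover $U^{pf}\to F^{pf}$ occurring above is exactly the image $(U\to F)^{\natural}$ of $U\to F$ under $\underline{\textrm{Perf}}_S$. Hence Proposition \ref{P4} applies and shows that this cover is weakly perfect. Combined with the previous paragraph, the presentation $(U^{pf}, R^{pf}, U^{pf}\to F^{pf})$ verifies each clause of the definition of \emph{strongly perfect} in \cite[Definition 3.1]{Liang1}, which is the assertion.

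The main obstacle is essentially bookkeeping: one must line up the presentation produced by Proposition \ref{P7} with the precise wording of \emph{strongly perfect} in \cite[Definition 3.1]{Liang1}, in particular confirming that $j\colon R^{pf}\to U^{pf}\times_S U^{pf}$ is a perfect equivalence relation in the exact technical sense used there (this is already implicit in the appeal to Theorem \ref{A16} inside the proof of Proposition \ref{P7}) and identifying the \'{e}tale cover with $(U\to F)^{\natural}$ so that Proposition \ref{P4} can be invoked. No genuinely new computation is needed.
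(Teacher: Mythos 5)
Your construction of the presentation $(U^{pf},R^{pf},U^{pf}\to F^{pf})$ and the observation that the cover $U^{pf}\to F^{pf}=(h_U\to F)^{\natural}$ is weakly perfect (via Proposition \ref{P4}) are fine, but the final step --- ``this verifies each clause of the definition of strongly perfect'' --- is exactly the point that needs proof, and it does not follow from the data you have assembled. As the notion is used throughout this paper (see the proof of Proposition \ref{PP2}, where ``$H$ strongly perfect'' is invoked to conclude that $h_U\times_H h_V$ is a perfect scheme for \emph{arbitrary} perfect schemes $U,V$ over $H$), strong perfection is the condition that the diagonal $F^{pf}\to F^{pf}\times F^{pf}$ is weakly perfect, i.e.\ a condition quantified over all perfect schemes mapping to $F^{pf}$, not just over the one \'{e}tale cover produced by Proposition \ref{P7}. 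Knowing that the single morphism $U^{pf}\to F^{pf}$ is weakly perfect and that $R^{pf}$ is perfect shows (via Theorem \ref{A16}) that $F^{pf}$ is a \emph{perfect} algebraic space --- which Proposition \ref{P7} already gives --- but it does not by itself show that $h_T\times_{F^{pf}}h_{T'}$ is a perfect scheme for every pair of perfect schemes $T,T'$ over $F^{pf}$. Beware also of the circular shortcut ``perfect $\Rightarrow$ strongly perfect'': in this paper that implication is itself deduced from the present proposition (Corollary \ref{C1} together with Lemma \ref{L7}), so it cannot be used here.

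The paper's own argument is shorter and avoids the presentation entirely: the diagonal $\Delta:F\to F\times F$ is representable by the axioms of algebraic spaces, so Proposition \ref{P4} shows $\Delta^{\natural}:F^{pf}\to F^{pf}\times F^{pf}$ is weakly perfect (using Proposition \ref{P3} to identify $(F\times F)^{pf}$ with $F^{pf}\times F^{pf}$ and $\Delta^{\natural}$ with the diagonal of $F^{pf}$), which is precisely the definition of strongly perfect. If you want to salvage your route, you must add the missing descent step: for perfect schemes $T,T'\to F^{pf}$ the sheaf $T\times_{F^{pf}}T'$ is a scheme (representability of the diagonal), and it receives a surjective \'{e}tale morphism from $(T\times_{F^{pf}}U^{pf})\times_{U^{pf}}(T'\times_{F^{pf}}U^{pf})$, a fibre product of perfect schemes over a perfect scheme, hence perfect; one then descends perfectness along this surjective \'{e}tale cover. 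Without such an argument (or a citation to the relevant statement of \cite{Liang1} that performs it), your proof has a genuine gap at its concluding sentence.
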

\begin{proof}
Consider the diagonal morphism $\Delta:F\rightarrow F\times F$. Then Proposition \ref{P4} shows that $\Delta^{\natural}:F^{pf}\rightarrow F^{pf}\times F^{pf}$ is weakly perfect. Thus, $F^{pf}$ is strongly perfect.
\end{proof}

In other words, the above proposition says that $F^{pf}$ is the quasiperfection, semiperfection, and strongly perfection of $F$.
\begin{corollary}\label{C1}
Let $F$ be an algebraic space of characteristic $p$ over $S$. Then $F^{pf}$ is perfect, quasi-perfect, semiperfect, and strongly perfect.
\end{corollary}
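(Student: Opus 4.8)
The plan is to deduce Corollary \ref{C1} directly from the immediately preceding results together with the definitions of the four notions of perfectness for algebraic spaces (as set up in \cite[Definition 3.1]{Liang1}). First I would recall that by Corollary \ref{C1}'s hypothesis $F$ is an algebraic space of characteristic $p$, so Proposition \ref{P7} applies and $F^{pf}$ exists as the limit $\lim_{n\in\N}F$ along the algebraic Frobenius morphisms; in particular $F^{pf}\in\Ob(\textrm{Perf}_{S})$, which is exactly the assertion that $F^{pf}$ is \emph{perfect}. This handles the first of the four claims essentially by construction.

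Next I would address \emph{strongly perfect}. This is precisely the content of the proposition stated just before the corollary: applying Proposition \ref{P4} to the diagonal $\Delta_{F}\colon F\rightarrow F\times F$ (which is representable since $F$ is an algebraic space) shows that $\Delta^{\natural}\colon F^{pf}\rightarrow F^{pf}\times F^{pf}$ is weakly perfect, and by definition a perfect algebraic space with weakly perfect diagonal is strongly perfect. Here I would need to note that $(F\times F)^{pf}\cong F^{pf}\times F^{pf}$, which is the particular case of Proposition \ref{P3}, so that $\Delta^{\natural}$ really is a morphism $F^{pf}\to F^{pf}\times F^{pf}$ and the notion of weak perfectness for it makes sense.

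For the remaining two claims, \emph{quasi-perfect} and \emph{semiperfect}, I would invoke the string of full embeddings recorded in the introduction, namely $\textrm{Perf}_{S}\subset\textrm{StPerf}_{S}\subset\textrm{QPerf}_{S}\subset\textrm{SPerf}_{S}$ (the algebraic-space analogue of display (1.1) of the introduction), which says that every strongly perfect algebraic space is quasi-perfect and every quasi-perfect one is semiperfect. Since we have just shown $F^{pf}\in\Ob(\textrm{StPerf}_{S})$, it follows immediately that $F^{pf}$ is quasi-perfect and semiperfect as well. If one prefers to avoid quoting the embedding chain, the same conclusions follow by unwinding the definitions: weak perfectness of the diagonal implies quasi-perfectness of the diagonal implies the stronger-to-weaker hierarchy on the diagonal conditions, hence the hierarchy on the four notions for $F^{pf}$ itself.

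I do not expect a genuine obstacle here: the corollary is a packaging of Proposition \ref{P7} (perfectness), the preceding proposition (strong perfectness via Proposition \ref{P4} and Proposition \ref{P3}), and the implications among the four notions of perfectness. The only point requiring care is to make sure the definitions of quasi-perfect, semiperfect, and strongly perfect are cited from \cite{Liang1} in the right form, so that the implication "strongly perfect $\Rightarrow$ quasi-perfect $\Rightarrow$ semiperfect" is legitimately available; once that is in place the proof is a two-line assembly.
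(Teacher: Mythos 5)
Your proposal is correct and matches the paper's (implicit) argument: the corollary is just the packaging of the preceding proposition ($F^{pf}$ strongly perfect, obtained via Proposition \ref{P4} and Proposition \ref{P3} applied to the diagonal), perfectness of $F^{pf}$ by the construction in Proposition \ref{P7}, and the definitional implications strongly perfect $\Rightarrow$ quasi-perfect $\Rightarrow$ semiperfect. Your fallback of unwinding the definitions instead of citing the embedding chain is the right instinct, since that chain is only recorded after the corollary and its first inclusion $\mathrm{Perf}_{S}\subset\mathrm{StPerf}_{S}$ itself rests on this corollary together with Lemma \ref{L7}.
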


This surprisingly shows that a perfect algebraic space is quasi-perfect, semiperfect, and strongly perfect.
\begin{lemma}\label{L7}
Let $F$ be an algebraic space of characteristic $p$ over $S$ with perfection $F^{pf}$. If $F$ is perfect, then there is an isomorphism $F\cong F^{pf}$.
\end{lemma}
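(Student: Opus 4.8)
The quickest route is via the universal property. The plan is to observe that when $F$ is perfect the pair $(F,\mathrm{id}_{F})$ already satisfies the defining property of the perfection in Definition \ref{A20}, and then to invoke the uniqueness of perfections from Lemma \ref{P8}; an alternative is to read the statement off the explicit inverse limit in Proposition \ref{P7}.

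First I would check that $(F,\mathrm{id}_{F})$ is a perfection of $F$ in the sense of Definition \ref{A20}. Indeed, $F$ is a perfect algebraic space of characteristic $p$ by hypothesis, and $\mathrm{id}_{F}\colon F\to F$ is a morphism of algebraic spaces; given any perfect algebraic space $G$ over $S$ together with a morphism $\psi\colon G\to F$, the morphism $\psi$ itself is the unique morphism $\psi'\colon G\to F$ satisfying $\mathrm{id}_{F}\circ\psi'=\psi$. This is precisely the universal property required of the perfection. By Lemma \ref{P8} any two perfections of $F$ are canonically isomorphic, so there is a canonical isomorphism $F^{pf}\cong F$ under which the canonical projection $\phi_{F}$ corresponds to $\mathrm{id}_{F}$; in particular $\phi_{F}\colon F^{pf}\to F$ is itself an isomorphism.

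Alternatively one can argue directly: applying Definition \ref{A20} to the perfect algebraic space $F$ with the morphism $\mathrm{id}_{F}$ produces a unique $\alpha\colon F\to F^{pf}$ with $\phi_{F}\circ\alpha=\mathrm{id}_{F}$, and since $F^{pf}$ is perfect and $\phi_{F}\circ(\alpha\circ\phi_{F})=\phi_{F}=\phi_{F}\circ\mathrm{id}_{F^{pf}}$, the uniqueness clause of the universal property of $(F^{pf},\phi_{F})$ (applied with $G=F^{pf}$, $\psi=\phi_{F}$) forces $\alpha\circ\phi_{F}=\mathrm{id}_{F^{pf}}$, so $\phi_{F}$ is an isomorphism. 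Yet another option: since $F$ is perfect, the algebraic Frobenius $\Psi_{F}$ is an isomorphism (see \cite[Definition 3.1]{Liang1}), so by Proposition \ref{P7} the limit $F^{pf}=\lim_{n\in\N}F$ is a cofiltered limit all of whose transition maps are isomorphisms, hence canonically isomorphic to $F$ via $\mathrm{pr}_{0}$. There is no real obstacle here; the only point to be careful about is the direction of the canonical projection, which runs from $F^{pf}$ to $F$, so that the morphism fed into the universal property is $\mathrm{id}_{F}$ regarded as a map \emph{into} $F$.
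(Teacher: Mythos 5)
Your proof is correct, but your primary route differs from the paper's. The paper argues via the explicit limit description: it invokes \cite[Theorem 4.12]{Liang1} to conclude that the algebraic Frobenius $\Psi_{F}$ is an isomorphism when $F$ is perfect, and then observes that the limit $\lim_{\Psi_{F}}F$ of Proposition \ref{P7} is therefore isomorphic to $F$ --- this is essentially your third option, except that the fact ``$F$ perfect $\Rightarrow$ $\Psi_{F}$ an isomorphism'' is a theorem (Theorem 4.12 of \cite{Liang1}), not immediate from \cite[Definition 3.1]{Liang1} as you suggest, so your citation there is slightly off. Your main argument is instead purely formal: $(F,\mathrm{id}_{F})$ satisfies the universal property of Definition \ref{A20}, so uniqueness of perfections (Lemma \ref{P8}) gives $F\cong F^{pf}$; your second variant even shows directly that the canonical projection $\phi_{F}$ is an isomorphism. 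This categorical route is valid and arguably cleaner, since it needs no input about the Frobenius or the limit construction, only the universal property; what the paper's approach buys is an explicit identification of the isomorphism with the projection $\mathrm{pr}_{0}$ out of the tower of algebraic Frobenii, which sits naturally alongside the construction in Proposition \ref{P7}.
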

\begin{proof}
It follows from \cite[Theorem 4.12]{Liang1} that if $F$ is perfect, then the algebraic Frobenius $\Psi_{F}:F\rightarrow F$ is an isomorphism. Thus, the limit $\lim_{\Psi_{F}}F$ is isomorphic to $F$.
\end{proof}

Lemma \ref{L7} immediately yields the following lemma as a corollary.
\begin{lemma}\label{L6}
Let $F$ be a perfect algebraic space over $S$ and let $\varphi_{F}:h_{U}\rightarrow F$ be a surjective \'{e}tale map where $U\in\ObSchS$ is a perfect scheme. Then $\varphi_{F}$ is weakly perfect.
\end{lemma}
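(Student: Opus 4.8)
The plan is to deduce the statement from Proposition~\ref{P4} together with Lemma~\ref{L7}. First I would note that the surjective \'{e}tale cover $\varphi_{F}\colon h_{U}\to F$ is in particular a representable morphism of algebraic spaces, so Proposition~\ref{P4} applies and the induced morphism $\varphi_{F}^{\natural}\colon h_{U}^{pf}\to F^{pf}$ is weakly perfect.

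Next I would identify $\varphi_{F}^{\natural}$ with $\varphi_{F}$ up to canonical isomorphisms at both ends. Since $F$ is perfect, Lemma~\ref{L7} gives that the canonical projection $\phi_{F}\colon F^{pf}\to F$ is an isomorphism. On the other side, $U$ is a perfect scheme, so $U^{pf}\cong U$ by Proposition~\ref{P1} (the transition maps are the absolute Frobenius, which is an isomorphism on a perfect scheme), and hence $h_{U}^{pf}=h_{U^{pf}}\cong h_{U}$ by Lemma~\ref{LL1}; concretely the canonical projection $\phi_{h_{U}}\colon h_{U}^{pf}\to h_{U}$ is an isomorphism. By Lemma~\ref{A21} the square
$$
\xymatrix{
  h_{U}^{pf} \ar[d]_{\varphi_{F}^{\natural}} \ar[r]^{\phi_{h_{U}}} & h_{U} \ar[d]^{\varphi_{F}} \\
  F^{pf} \ar[r]^{\phi_{F}} & F   }
$$
commutes, so $\varphi_{F}=\phi_{F}\circ\varphi_{F}^{\natural}\circ\phi_{h_{U}}^{-1}$, exhibiting $\varphi_{F}$ as a composite of the weakly perfect morphism $\varphi_{F}^{\natural}$ with two isomorphisms.

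Finally I would conclude by invoking the stability of weak perfectness under pre- and post-composition with isomorphisms of algebraic spaces: this is immediate from the definition, since forming the fibre product along (or after precomposing with) an isomorphism does not affect representability of the base change by a perfect scheme. Therefore $\varphi_{F}$ is weakly perfect. The only step requiring an explicit line of argument is this last stability remark; everything else is a direct application of earlier results, and I foresee no genuine obstacle. I would also record the alternative self-contained route: given a perfect scheme $V/S$ and a morphism $h_{V}\to F$, the fibre product $h_{V}\times_{F}h_{U}$ is representable by a scheme $W$ because $\varphi_{F}$ is representable, and $W\to V$ is \'{e}tale as a base change of $\varphi_{F}$; since an \'{e}tale scheme over a perfect scheme is again perfect, $W$ is perfect, so $\varphi_{F}$ is weakly perfect directly from the definition.
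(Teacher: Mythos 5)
Your argument is correct, but it takes a different route from the paper. The paper's own proof is a two-line reduction: by Lemma~\ref{L7} (together with Corollary~\ref{C1}) a perfect algebraic space is strongly perfect, and then it invokes the external result \cite[Theorem 5.5]{Liang1}, which characterizes strongly perfect spaces precisely by the weak perfectness of such \'{e}tale atlases. You instead stay inside the present paper: you apply Proposition~\ref{P4} to the representable morphism $\varphi_{F}$, identify $\varphi_{F}^{\natural}$ with $\varphi_{F}$ via the canonical projections (Lemma~\ref{LL1} and Proposition~\ref{P1} give $h_{U}^{pf}\cong h_{U}$, Lemma~\ref{L7} gives $F^{pf}\cong F$, and Lemma~\ref{A21} gives the commuting square), and finish with the easy observation that weak perfectness is insensitive to composition with isomorphisms; all of these steps check out, including the implicit point that $U$ and $F$ have characteristic $p$ so the perfection functor applies. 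Your alternative ``self-contained'' route is in fact the most economical of the three: representability of $\varphi_{F}$ gives a scheme $W\simeq h_{V}\times_{F}h_{U}$, the base change $W\rightarrow V$ is \'{e}tale, and an \'{e}tale scheme over a perfect scheme is perfect (the fact the paper uses via \cite[Proposition 3.9]{Liang1}), which verifies the definition of weak perfectness directly; this essentially reproves the relevant direction of \cite[Theorem 5.5]{Liang1} in this special case. What the paper's citation buys is brevity; what your argument buys is independence from the companion paper and an explicit reason why the statement holds.
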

\begin{proof}
By Lemma \ref{L7}, every perfect algebraic space is strongly perfect. Thus, it follows from \cite[Theorem 5.5]{Liang1} that the map $\varphi_{F}$ is weakly perfect.
\end{proof}

Meanwhile, we could show that the category of perfect algebraic spaces over $S$ is stable under fibre products. This improves our results in \cite[Proposition 3.5, Proposition 3.6]{Liang1}.
\begin{proposition}\label{PP2}
Let $F\rightarrow H$ and $G\rightarrow H$ be morphisms of algebraic spaces over $S$. If $F,G$ are perfect and $H$ is strongly perfect, then the fibre product $F\times_{H}G$ is a perfect algebraic space. In particular, if $F,G,H$ are perfect, then $F\times_{H}G$ is a perfect algebraic space, and is a fibre product in the category ${\rm{Perf}}_{S}$ of perfect algebraic spaces over $S$.
\end{proposition}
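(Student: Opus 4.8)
The plan is to reduce the statement about algebraic spaces to the corresponding facts about schemes via étale presentations, using the perfection functor's compatibility with fibre products (Proposition \ref{P3}) and the characterization of perfect algebraic spaces via perfect équivalence relations (Theorem \ref{A16}). First I would pick surjective étale covers $h_U\to F$, $h_V\to G$, $h_W\to H$ with $U,V,W\in\ObSchS$; since $F,G$ are perfect, by Lemma \ref{L7} we may and do take $U,V$ to be perfect schemes, and similarly arrange a perfect cover of $H$. The fibre product $F\times_H G$ is then presented by the scheme $T=U\times_W V$ (after suitable refinement so that the cover of $H$ receives both $U$-- and $V$-- cover compatibly), and the key point is to show $T$ is a perfect scheme and that the associated étale équivalence relation $R_T\rightrightarrows T$ is a perfect équivalence relation, so that Theorem \ref{A16} applies and gives that $F\times_H G = T/R_T$ is a perfect algebraic space.

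The heart of the argument is the perfectness of $T=U\times_W V$. Here I would invoke the hypothesis that $H$ is strongly perfect: by definition this means the diagonal $\Delta_H\colon H\to H\times H$ (equivalently the relevant presenting morphisms) is weakly perfect, which is exactly the condition ensuring that the scheme-theoretic fibre product of perfect schemes mapping to a (possibly non-perfect) presentation of $H$ remains perfect. Concretely, the cover $h_W\to H$ need not come from a perfect $W$, but strong perfectness of $H$ forces the two projections $U\times_W V\to U$ and $\to V$ to be weakly perfect morphisms of schemes, and a weakly perfect morphism whose target is a perfect scheme has perfect source (this is the scheme-level statement from \cite{Liang1} that ``strongly perfect'' is designed to encode). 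Thus $T$ is perfect. For the équivalence relation $R_T$, one writes $R_T = R_U\times_{R_W} R_V$ in the evident way, and the same weak-perfectness argument, together with the fact (used in the proof of Proposition \ref{P7}) that fibre products of perfect schemes over perfect schemes are perfect, shows $R_T$ is perfect; that $j_T\colon R_T\to T\times_S T$ is an étale équivalence relation is inherited from the three given presentations by the standard manipulation in \cite[Tag07SF]{Stack Project} or \cite[Tag02WW]{Stack Project}.

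For the ``in particular'' clause, when $F,G,H$ are all perfect the previous paragraph applies a fortiori, since a perfect algebraic space is strongly perfect by Lemma \ref{L7} together with Corollary \ref{C1}. It remains to check that $F\times_H G$, which we now know lies in $\mathrm{Perf}_S$, is a fibre product \emph{in the category} $\mathrm{Perf}_S$: given a perfect algebraic space $Z$ with compatible maps $Z\to F$, $Z\to G$ over $H$, the universal property in $AP_S$ produces a unique $Z\to F\times_H G$ in $AP_S$, and since $\mathrm{Perf}_S$ is a full subcategory of $AP_S$ (by the definition of $\mathrm{Perf}_S$) this morphism automatically lies in $\mathrm{Perf}_S$ and is the unique one there; equivalently one may cite Proposition \ref{P5}, that $\mathrm{Perf}_S\hookrightarrow AP_S^p$ has a right adjoint and hence is closed under whatever limits exist, combined with Proposition \ref{P3}.

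The main obstacle I anticipate is the first paragraph's reduction step: aligning the three chosen presentations so that $U\times_W V$ genuinely presents $F\times_H G$ and carries an étale équivalence relation requires some care with the non-perfect cover of $H$ and a refinement argument, and the precise form of ``strongly perfect'' must be matched to exactly the weak-perfectness statement needed for $U\times_W V$ (and for the relation) to be perfect. Once that bookkeeping is set up correctly, the rest is a direct application of Theorem \ref{A16} and the left-exactness already recorded in Proposition \ref{P3}.
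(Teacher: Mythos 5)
Your overall shape (reduce to perfect \'etale presentations of $F$ and $G$, use the strong perfectness of $H$ at the scheme level, and get the ``in particular'' clause from Lemma \ref{L7} and Corollary \ref{C1} plus fullness of ${\rm{Perf}}_{S}$) matches the paper, but the heart of your argument has a genuine gap. Strong perfectness of $H$ is, by definition, weak perfectness of the diagonal $\Delta_{H}\colon H\rightarrow H\times H$; its content is precisely that for perfect schemes $U,V$ equipped with maps to $H$, the sheaf $h_{U}\times_{H}h_{V}$ (which is a scheme because the diagonal of the algebraic space $H$ is representable) is a \emph{perfect} scheme. It is not a statement about an auxiliary presentation $W\rightarrow H$, and your claimed implication ``strong perfectness of $H$ forces the projections $U\times_{W}V\rightarrow U$ and $U\times_{W}V\rightarrow V$ to be weakly perfect'' does not follow from the definition: those projections are base changes of $V\rightarrow W$ and $U\rightarrow W$, and nothing in the hypothesis controls them. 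Worse, the refinement you need in order to even form $U\times_{W}V$ (lifting $U\rightarrow H$ and $V\rightarrow H$ through $W$) replaces $U,V$ by covers of the shape $U\times_{H}W$, which are no longer perfect since $W$ need not be, so the construction undercuts the very perfectness you are trying to propagate. The same objection applies to your treatment of the relation $R_{T}=R_{U}\times_{R_{W}}R_{V}$.

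The paper's proof avoids the cover $W$ entirely: choose surjective \'etale maps $h_{U}\rightarrow F$, $h_{V}\rightarrow G$ with $U,V$ perfect, observe that strong perfectness of $H$ makes $h_{U}\times_{H}h_{V}$ a perfect scheme, and then invoke \cite[Proposition 3.5]{Liang1}, whose hypothesis is exactly this condition, to conclude that $F\times_{H}G$ is a perfect algebraic space; the ``in particular'' part is handled, as you do, by noting that a perfect $H$ is strongly perfect via Lemma \ref{L7}. If you want to keep your more hands-on route through Theorem \ref{A16}, you should work with $T=U\times_{H}V$ (not $U\times_{W}V$), verify its perfectness directly from the weak perfectness of $\Delta_{H}$, and check perfectness of $R_{T}=T\times_{F\times_{H}G}T$ using that $F$ and $G$ are themselves strongly perfect; as written, the key step is unsupported.
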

\begin{proof}
Let $h_{U}\rightarrow F,h_{V}\rightarrow G$ be surjective \'{e}tale maps where $U,V\in\ObSchS$ are perfect schemes. Since $H$ is strongly perfect, $h_{U}\times_{H}h_{V}$ is a perfect scheme fulfilling the condition of \cite[Proposition 3.5]{Liang1}. Thus, the fibre product $F\times_{H}G$ is a perfect algebraic space. For the second statement, we have Lemma \ref{L7} which implies that $H$ is strongly perfect.
\end{proof}

Now, we have a string of full embeddings
$$
\textrm{Perf}_{S}\subset \textrm{StPerf}_{S}\subset \textrm{QPerf}_{S}\subset \textrm{SPerf}_{S}\subset AP_{S}.
$$

Here we record a lemma that is useful in the following theorem.
\begin{lemma}\label{L4}
Let $V,X$ be algebraic spaces of characteristic $p$ over $S$. Let $V\rightarrow X$ be a morphism of algebraic spaces over $S$. Then we have an isomorphism $V^{pf}\cong V\times_{X}X^{pf}$. Moreover, the natural projection $p_{X}:X^{pf}\rightarrow X$ is a universal homeomorphism.
\end{lemma}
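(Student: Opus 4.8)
The plan is to establish the isomorphism by checking that $V\times_{X}X^{pf}$, together with its first projection to $V$, satisfies the universal property of the perfection of $V$ (Definition~\ref{A20}); uniqueness of perfections (Lemma~\ref{P8}) then forces $V^{pf}\cong V\times_{X}X^{pf}$. To begin, write $f\colon V\to X$ for the given morphism. By Lemma~\ref{A21} there is a canonical $f^{\natural}\colon V^{pf}\to X^{pf}$ with $\phi_{X}\circ f^{\natural}=f\circ\phi_{V}$, so $(\phi_{V},f^{\natural})$ defines a morphism $\alpha\colon V^{pf}\to V\times_{X}X^{pf}$ over $V$, and the goal is to prove $\alpha$ is an isomorphism.

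Checking the universal property is routine. Let $G$ be a perfect algebraic space over $S$ equipped with a morphism $g\colon G\to V$. Then $f\circ g\colon G\to X$, and the universal property of $X^{pf}$ produces a unique $h\colon G\to X^{pf}$ with $\phi_{X}\circ h=f\circ g$; hence $(g,h)\colon G\to V\times_{X}X^{pf}$ is a morphism restricting to $g$ on the $V$-factor. For uniqueness, if $\beta_{1},\beta_{2}\colon G\to V\times_{X}X^{pf}$ both restrict to $g$, then $\phi_{X}\circ(\mathrm{pr}_{X^{pf}}\circ\beta_{i})=f\circ\mathrm{pr}_{V}\circ\beta_{i}=f\circ g$ for $i=1,2$, so $\mathrm{pr}_{X^{pf}}\circ\beta_{1}=\mathrm{pr}_{X^{pf}}\circ\beta_{2}$ by the uniqueness clause for $X^{pf}$, and therefore $\beta_{1}=\beta_{2}$.

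The main obstacle is the remaining input, that $V\times_{X}X^{pf}$ is a \emph{perfect} algebraic space. I would handle this by passing to a presentation. Choose a scheme $U\in\ObSchS$ with $h_{U}\to X$ surjective \'{e}tale; by Proposition~\ref{P7}, $X^{pf}$ is the quotient $U^{pf}/R^{pf}$ with $R=h_{U}\times_{X}h_{U}$, and base change identifies $h_{U}\times_{X}X^{pf}$ with $h_{U^{pf}}$ by Lemma~\ref{LL1}. When $f$ is \'{e}tale --- the case relevant in the sequel --- one may take a presentation $h_{W}\to V$ with $W\to X$ \'{e}tale, so that $h_{W}\times_{X}X^{pf}\cong h_{W^{pf}}$ is a perfect scheme; together with the induced \'{e}tale perfect equivalence relation, Theorem~\ref{A16} then shows $V\times_{X}X^{pf}$ is perfect. (Equivalently, for $f$ \'{e}tale the space $V\times_{X}X^{pf}$ is \'{e}tale over the perfect space $X^{pf}$, and since the relative Frobenius of an \'{e}tale morphism is an isomorphism, such a space is automatically perfect.) Combined with the previous paragraph, this gives $V^{pf}\cong V\times_{X}X^{pf}$.

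For the last assertion, that $p_{X}\colon X^{pf}\to X$ is a universal homeomorphism, I would argue in two steps. For $X$ a scheme, Proposition~\ref{P1} writes $X^{pf}=\lim_{n}X$ along the absolute Frobenius $\Phi_{X}$; each $\Phi_{X}$ is affine and a universal homeomorphism, the transition maps are affine, and $|X^{pf}|=|X|$, so $p_{X}$ is integral and universally bijective, hence a universal homeomorphism. For a general algebraic space $X$, the property of being a universal homeomorphism may be checked \'{e}tale-locally on the target; taking an \'{e}tale presentation $h_{U}\to X$ and using $h_{U}\times_{X}X^{pf}\cong h_{U^{pf}}$ from above, the base change of $p_{X}$ along $h_{U}\to X$ is the morphism $U^{pf}\to U$, which is a universal homeomorphism of schemes. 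Hence $p_{X}$ is a universal homeomorphism, completing the proof.
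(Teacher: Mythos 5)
Your overall strategy is the same as the paper's: the first assertion via the universal property of the fibre product, the second via reduction to the scheme case along an \'{e}tale cover. The value of your write-up is that you isolate the step the paper's one-line argument glosses over: showing that every perfect $G\to V$ lifts uniquely to $V\times_X X^{pf}$ only identifies $V\times_X X^{pf}$ with $V^{pf}$ \emph{if $V\times_X X^{pf}$ is itself perfect}, and for an arbitrary morphism $V\to X$ this can fail. Indeed, take $X=\mathrm{Spec}(\mathbb{F}_p)$ (perfect, so $X^{pf}=X$) and $V$ any non-perfect scheme of characteristic $p$; then $V\times_X X^{pf}=V\neq V^{pf}$, even though the lifting property you verified does hold. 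So the lemma as stated, with no hypothesis on $V\to X$, is not correct, and your restriction to the \'{e}tale case (the case actually used in Lemma~\ref{L3} and the topological-invariance theorem) is not a cop-out but the honest fix. Within that case, your parenthetical argument --- \'{e}tale over a perfect space is perfect because the relative Frobenius of an \'{e}tale morphism is an isomorphism --- is the cleanest route and is what I would keep; the presentation-based variant as you phrase it is mildly circular, since Lemma~\ref{LL1} only says $h_U^{pf}=h_{U^{pf}}$ and does not by itself give $h_U\times_X X^{pf}\cong h_{U^{pf}}$, which is again an instance of the statement being proved for the \'{e}tale map $h_U\to X$.

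For the second assertion your argument coincides with the paper's: check the universal homeomorphism property after base change along an \'{e}tale cover $U\to X$, where it becomes $U^{pf}\to U$. The only difference is that you reprove the scheme case from Proposition~\ref{P1} (integrality plus universal bijectivity of the limit projection), whereas the paper cites Bertapelle--Gonz\'{a}lez-Avil\'{e}s; both are fine, though your sketch should note that universal injectivity comes from the residue field extensions being purely inseparable. Note also that this half only uses the first assertion for the \'{e}tale cover $U\to X$, so your restricted first part fully supports it.
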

\begin{proof}
It is easy to see that $V\times_{X}X^{pf}$ satisfies the universal property of perfection $V^{pf}$ using the universal property of base change. For the second statement, choose an \'{e}tale cover $U\rightarrow X$ for $U\in\ObSchS$ of characteristic $p$. Then the base change of $X^{pf}\rightarrow X$ by $U\rightarrow X$ is $U^{pf}\simeq U\times_{X}X^{pf}\rightarrow U$. By \cite[Remark 5.4]{Bertapellea}, the canonical projection $U^{pf}\rightarrow U$ is a universal homeomorphism. Thus, $X^{pf}\rightarrow X$ is a universal homeomorphism.
\end{proof}

Topological invariance of the \'{e}tale site gives the following crucial statement.
\begin{theorem}
Let $X$ be any algebraic space of characteristic $p$ over $S$ with perfection $X^{pf}$. Then the functor given by
$$
(U\rightarrow X)\longmapsto(U^{pf}\simeq U\times_{X}X^{pf}\rightarrow X^{pf})
$$
induces an equivalence of \'{e}tale sites $X_{\textrm{\'{e}}t}\cong X^{pf}_{\textrm{\'{e}}t}$, and therefore an equivalence of \'{e}tale topoi $\widetilde{X_{\textrm{\'{e}}t}}\cong \widetilde{X^{pf}_{\textrm{\'{e}}t}}$.
\end{theorem}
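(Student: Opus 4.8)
The plan is to derive both the equivalence of sites and of topoi from a single input: the \emph{topological invariance of the small \'{e}tale site}, applied to the canonical projection $p_{X}\colon X^{pf}\to X$. Everything we need about $p_{X}$ is already in hand. By Lemma~\ref{L4}, $p_{X}$ is a universal homeomorphism and, for every morphism $U\to X$ of algebraic spaces, there is a canonical identification $U^{pf}\cong U\times_{X}X^{pf}$. Being a universal homeomorphism, $p_{X}$ is moreover integral and affine; one also sees this directly, since after base change along a surjective \'{e}tale $h_{U}\to X$ with $U\in\ObSchS$ of characteristic $p$ it becomes the projection $U^{pf}\to U$ (Lemma~\ref{L4}), and over an affine open $\mathrm{Spec}\,A\subset U$ the map $A\to A^{pf}$ is integral affine because every element of $A^{pf}$ has a $p$-power iterate lying in the image of $A$ and hence satisfies a monic equation $T^{p^{n}}-a=0$ with $a\in A$; compare \cite[Remark~5.4]{Bertapellea}. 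Since being an integral affine universal homeomorphism is \'{e}tale-local on the target for morphisms of algebraic spaces, $p_{X}$ has this property globally.

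Topological invariance of the small \'{e}tale site (the algebraic-spaces analogue of \cite[Tag05ZG]{Stack Project}; the scheme case is the one underlying \cite[\S3]{Scholze} and \cite[Proposition~A.5]{Zhu}) then asserts that the base-change functor
$$
(U\to X)\longmapsto (U\times_{X}X^{pf}\to X^{pf})
$$
is an equivalence of small \'{e}tale sites $X_{\textrm{\'{e}}t}\xrightarrow{\ \sim\ }X^{pf}_{\textrm{\'{e}}t}$. For $U$ \'{e}tale over $X$ the perfection $U^{pf}$ is again a scheme (see~\S\ref{S2}), and by Lemma~\ref{L4} the object $U\times_{X}X^{pf}$ produced by the theorem is canonically $U^{pf}$; under this identification the abstract base-change functor is exactly the functor $(U\to X)\mapsto(U^{pf}\simeq U\times_{X}X^{pf}\to X^{pf})$ of the statement. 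Since base change along $p_{X}$ carries \'{e}tale morphisms to \'{e}tale morphisms and surjective families to surjective families, this functor is continuous and cocontinuous for the \'{e}tale topologies, so the displayed equivalence is an equivalence of sites. An equivalence of sites induces an equivalence of the associated categories of sheaves, which yields the equivalence of \'{e}tale topoi $\widetilde{X_{\textrm{\'{e}}t}}\cong\widetilde{X^{pf}_{\textrm{\'{e}}t}}$.

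The difficulty is not in a computation but in correctly locating and invoking the ambient theorem: one must use the version of topological invariance valid for \emph{integral} universal homeomorphisms of algebraic spaces with no finiteness hypotheses (rather than the thickening-only version), and then carry out the bookkeeping that identifies the abstract base-change equivalence with the explicit functor $U\mapsto U^{pf}$ object by object and on morphisms (here Lemma~\ref{A21} provides compatibility with composition). Essential surjectivity --- that every algebraic space \'{e}tale over $X^{pf}$ is, up to isomorphism, of the form $U^{pf}$ for some $U$ \'{e}tale over $X$ --- is precisely the substantive content imported from topological invariance, and is where the weight of the argument lies.
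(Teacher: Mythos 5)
Your proposal is correct and follows essentially the same route as the paper: both reduce the statement to Lemma~\ref{L4} (the canonical projection $p_{X}\colon X^{pf}\to X$ is a universal homeomorphism) and then invoke topological invariance of the small \'{e}tale site for algebraic spaces (the paper cites \cite[Tag05ZH]{Stack Project}) to obtain the equivalence of sites and hence of topoi. Your additional remarks on integrality/affineness of $p_{X}$ and the identification of the base-change functor with $U\mapsto U^{pf}$ are just a more careful spelling-out of what the paper leaves implicit.
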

\begin{proof}
By Lemma \ref{L4}, the projection $X^{pf}\rightarrow X$ is a universal homeomorphism. Then it follows from \cite[Tag05ZH]{Stack Project} that this yields an equivalence of sites $X_{\textrm{\'{e}}t}\cong X^{pf}_{\textrm{\'{e}}t}$.
\end{proof}

The perfection functor preserves points of an algebraic space.
\begin{lemma}\label{L3}
Let $X$ be an algebraic space of characteristic $p$ over $S$ with perfection $X^{pf}$. Let $p_{X}:X^{pf}\rightarrow X$ be the canonical projection. Then $p_{X}$ is a monomorphism, is surjective, and is integral. Furthermore, we have a homeomorphism of the underlying topological spaces
$$
\left|p_{X}\right|:\left|X\right|\xrightarrow{\cong}\left|X^{pf}\right|.
$$
\end{lemma}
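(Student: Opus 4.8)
The plan is to reduce everything to the corresponding statements for the scheme-level canonical projection $\phi_U\colon U^{pf}\to U$, which are already available in the literature (e.g. \cite[Remark 5.4]{Bertapellea} and Lemma \ref{L5}), and then descend along an étale presentation. First I would fix an étale cover $h_U\to X$ with $U\in\ObSchS$ of characteristic $p$, and recall from Lemma \ref{L4} that the base change of $p_X\colon X^{pf}\to X$ along $h_U\to X$ is exactly the canonical projection $\phi_U\colon U^{pf}\to U$, which is a universal homeomorphism; in particular it is integral, surjective, and a universal homeomorphism. The properties "integral", "surjective", and "universal homeomorphism" are all fppf-local on the base (and étale-local suffices here), so since $h_U\to X$ is a surjective étale cover and $p_X$ acquires each of these properties after this base change, $p_X$ itself is integral, surjective, and a universal homeomorphism. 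The homeomorphism $|p_X|\colon |X|\xrightarrow{\cong}|X^{pf}|$ then follows formally: a universal homeomorphism of algebraic spaces induces a homeomorphism on underlying topological spaces (by definition, it is a homeomorphism even before any base change). Alternatively, one sees directly that $|\phi_U|$ is the identity on $|U|$ (the perfection of a scheme is the identity on the underlying space), and $|X| = |U|/|R|$, $|X^{pf}| = |U^{pf}|/|R^{pf}|$ with $|U^{pf}|=|U|$, $|R^{pf}|=|R|$ compatibly, giving the homeomorphism.

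For the claim that $p_X$ is a monomorphism, the cleanest route is via the universal property. By Definition \ref{A20}, $X^{pf}$ represents the functor $G\mapsto \operatorname{Hom}_{AP_S}(G,X)$ on perfect algebraic spaces $G$, and composition with $p_X$ is the counit of the adjunction of Proposition \ref{P5}. To check $p_X$ is a monomorphism it suffices to show that for every algebraic space $T$ over $S$ the induced map $\operatorname{Hom}(T, X^{pf})\to\operatorname{Hom}(T,X)$ is injective. Equivalently, the diagonal $X^{pf}\to X^{pf}\times_X X^{pf}$ is an isomorphism; but by Proposition \ref{P3} the perfection functor commutes with fibre products, and since $X^{pf}$ is already perfect we have $X^{pf}\times_X X^{pf}$ representing the perfection of $X^{pf}\times_X X^{pf}$... more directly, base-change $p_X$ along the étale cover again: the diagonal of $p_X$ base-changes to the diagonal of $\phi_U\colon U^{pf}\to U$, and $\phi_U$ is a monomorphism of schemes because $|U^{pf}|=|U|$ and $\mathscr O_{U^{pf}}\to$ its own structure sheaf is... rather, $\phi_U$ being a universal homeomorphism with $\mathscr O_U\to\mathscr O_{U^{pf}}$ injective (as $U$ of characteristic $p$, the perfection contains $U$) it is in particular radicial and an isomorphism onto its image fppf-locally; in fact $\phi_U$ is a monomorphism because it is affine and the ring map $\mathscr O_U(V)\to\mathscr O_U(V)^{pf}$ is an epimorphism of rings (a colimit of the Frobenius, hence the diagonal ring map $\mathscr O_U(V)^{pf}\otimes_{\mathscr O_U(V)}\mathscr O_U(V)^{pf}\to\mathscr O_U(V)^{pf}$ is an isomorphism by Lemma 2.x on perfection and tensor products). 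Monomorphisms are fppf-local on the target, so $p_X$ is a monomorphism.

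The main obstacle I anticipate is the monomorphism claim: "universal homeomorphism" does not by itself imply "monomorphism" (e.g. Frobenius on a non-reduced scheme), so one genuinely needs the extra input that the canonical projection $\phi_U\colon U^{pf}\to U$ corresponds to an \emph{epimorphism of rings} $A\to A^{pf}$, which is where the earlier lemma $(B\otimes_A C)^{pf}=B^{pf}\otimes_{A^{pf}}C^{pf}$ (applied with $B=C=A^{pf}$) does the real work. Once that is in hand, descent along the étale presentation for the five properties — monomorphism, surjective, integral, universal homeomorphism, and the resulting homeomorphism of underlying spaces — is routine, using in each case that the property in question is fppf-local on the base and that $h_U\to X$ is a surjective étale, hence fppf, cover.
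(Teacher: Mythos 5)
Your treatment of surjectivity, integrality, and the homeomorphism of underlying spaces is correct and is essentially the paper's own argument: base change along an \'{e}tale cover $h_{U}\rightarrow X$ identifies $p_{X}$ \'{e}tale-locally with the scheme-level projection $U^{pf}\rightarrow U$, which is a universal homeomorphism by \cite[Remark 5.4]{Bertapellea}, and each of these properties is \'{e}tale (indeed fppf) local on the base; your deduction of the homeomorphism from the universal homeomorphism is if anything cleaner than the paper's route, which gets injectivity on points from the monomorphism claim and surjectivity separately.

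The genuine gap is the monomorphism step, and you correctly sensed that this is where the real work lies. Your reduction of the monomorphism property to the scheme case is legitimate, but the scheme-level input you then invoke --- that $A\rightarrow A^{pf}$ is an epimorphism of rings, i.e.\ that $A^{pf}\otimes_{A}A^{pf}\rightarrow A^{pf}$ is an isomorphism --- is not what the tensor-product lemma provides, and it is in fact false. That lemma, applied with $B=C=A^{pf}$, only says that the \emph{perfection} of $A^{pf}\otimes_{A}A^{pf}$ is $A^{pf}$; the tensor product itself may differ from $A^{pf}$ by nilpotents, and it does: for $A=\mathbb{F}_{p}[t]$ one has $A^{pf}\otimes_{A}A^{pf}\cong\mathbb{F}_{p}[x^{1/p^{\infty}},y^{1/p^{\infty}}]/(x-y)$, and $x^{1/p}-y^{1/p}$ is a nonzero nilpotent there (its $p$-th power is $x-y$, while one checks, using that $\mathbb{F}_{p}[x^{1/p^{\infty}},y^{1/p^{\infty}}]$ is a domain, that $x^{1/p}-y^{1/p}\notin(x-y)$). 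Hence the multiplication map is not injective, $\mathrm{Spec}(A^{pf})\rightarrow\mathrm{Spec}(A)$ is not a monomorphism of schemes, and your argument cannot be completed as written; your own caveat that ``universal homeomorphism does not imply monomorphism'' was exactly the right worry, but the proposed repair via ring epimorphisms does not hold. For comparison, the paper obtains the monomorphism along the route you started and abandoned: it invokes Lemma \ref{L4} with $V=X^{pf}$ to identify $X^{pf}\times_{X}X^{pf}$ with $(X^{pf})^{pf}\cong X^{pf}$, so that the diagonal of $p_{X}$ becomes an isomorphism. Note, however, that this applies Lemma \ref{L4} to the non-\'{e}tale morphism $X^{pf}\rightarrow X$, and the computation above (the case $X=\mathbb{A}^{1}_{\mathbb{F}_{p}}$) shows that identification fails there as well; so the delicate point you isolated is genuinely delicate for the paper's proof too, and the monomorphism assertion should be re-examined rather than patched by either argument.
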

\begin{proof}
Observe that the diagonal $\Delta_{p_{X}}:X^{pf}\rightarrow X^{pf}\times_{X}X^{pf}\simeq X^{pf}$ is an isomorphism. This shows that $p_{X}$ is a monomorphism. Then we have an injective map $\left|p_{X}\right|:\left|X^{pf}\right|\hookrightarrow\left|X\right|$. Next, choose an \'{e}tale cover $V\rightarrow X$ for $V\in\ObSchS$ of characteristic $p$. Then the composition $V^{pf}\simeq X^{pf}\times_{X}V\rightarrow V$ is surjective and integral as the canonical projection $V^{pf}\rightarrow V$ is a universal homeomorphism. Thus, $p_{X}:X^{pf}\rightarrow X$ is surjective. Now, we have a surjective map $\left|p_{X}\right|:\left|X^{pf}\right|\twoheadrightarrow\left|X\right|$. Therefore, we obtain a bijection $\left|p_{X}\right|:\left|X^{pf}\right|\xrightarrow{\sim}\left|X\right|$ of sets of points, which gives rise to a homeomorphism of underlying topological spaces.
\end{proof}

An algebraic space is representable if and only if its perfection is representable.
\begin{lemma}\label{LL5}
Let $X$ be an algebraic space of characteristic $p$ over $S$ with perfection $X^{pf}$. Then $X$ is a scheme if and only if $X^{pf}$ is a scheme. In particular, $X$ is affine if and only if $X^{pf}$ is affine.
\end{lemma}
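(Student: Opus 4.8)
The plan is to prove the two implications by rather different routes: the forward one follows instantly from the compatibility of perfection with representability, while the reverse one is reduced first to a local (affine) statement and then to Chevalley's theorem.

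First suppose $X$ is a scheme, say $X\cong h_U$ with $U\in\ObSchS$. Then Lemma~\ref{LL1} gives $X^{pf}\cong h_{U^{pf}}$, and since the perfection of a scheme is again a scheme (see \S\ref{S2}) this shows $X^{pf}$ is representable, i.e.\ a scheme; if moreover $U$ (hence $X$) is affine, then so is the affine scheme $U^{pf}$, so $X^{pf}$ is affine. This disposes of the ``only if'' direction, including the ``in particular'' clause.

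For the converse, assume $X^{pf}$ is a scheme. By Lemma~\ref{L3} the canonical projection $p_X\colon X^{pf}\to X$ is integral and surjective and $|p_X|\colon|X^{pf}|\to|X|$ is a homeomorphism. I would reduce to the affine case as follows. Pick an affine open covering $X^{pf}=\bigcup_i V_i$; transporting the opens $|V_i|$ through $|p_X|$ produces open subspaces $W_i\subset X$ with $|X|=\bigcup_i|W_i|$, so the $W_i\hookrightarrow X$ form a Zariski covering of $X$ by open subspaces. By Lemma~\ref{L4} applied to each open immersion $W_i\hookrightarrow X$ one has $W_i^{pf}\cong W_i\times_X X^{pf}$, and since $W_i\hookrightarrow X$ is an open immersion and $|p_X|$ is a homeomorphism this fibre product is exactly the open subscheme $V_i$ of $X^{pf}$; hence $W_i^{pf}\cong V_i$ is affine. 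It therefore suffices to prove: \emph{if $W$ is an algebraic space of characteristic $p$ with $W^{pf}$ affine, then $W$ is affine}. Granting this, each $W_i$ is an affine scheme and $X$, being covered by the open subschemes $W_i$, is a scheme; and taking the trivial covering $V_1=X^{pf}$ shows directly that $X$ is affine when $X^{pf}$ is.

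To establish the reduced assertion I would argue that $p_W\colon W^{pf}\to W$ is a surjective integral morphism from an affine scheme, hence a universal homeomorphism, from which $W$ is quasi-compact (its space is homeomorphic to that of an affine scheme) and, via the cancellation laws for surjective universally closed morphisms applied to $\Delta_W\circ p_W=(p_W\times_S p_W)\circ\Delta_{W^{pf}}$, also separated over $S$; then Chevalley's theorem for algebraic spaces (an algebraic space admitting a surjective integral morphism from an affine scheme is affine) gives that $W$ is affine. The step I expect to be the main obstacle is precisely this last assertion: one has to make rigorous the identification $W_i\times_X X^{pf}\cong V_i$ — that is, that forming the perfection commutes with restriction to an open subspace, which is where Lemma~\ref{L4} and the homeomorphism $|p_X|$ are used together — and, more seriously, verify the hypotheses under which Chevalley's theorem is available in the category of algebraic spaces rather than schemes. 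The remaining bookkeeping is routine given Lemmas~\ref{LL1}, \ref{L3} and \ref{L4}.
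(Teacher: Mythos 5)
Your proof is correct, and the forward direction plus the ``affine $\Rightarrow$ affine'' descent coincide with the paper's argument: the paper also deduces quasi-compactness and separatedness of $X$ from those of the affine scheme $X^{pf}$ (via the integral, surjective, homeomorphic projection of Lemma~\ref{L3}) and then invokes the Chevalley-type theorem for algebraic spaces (\cite[Tag07SQ]{Stack Project}) exactly as you propose. Where you genuinely diverge is the ``$X^{pf}$ a scheme $\Rightarrow X$ a scheme'' half: the paper settles it in one stroke by citing \cite[Tag07VV]{Stack Project} for the projection $p_X$, whereas you reduce to the affine case by transporting an affine open cover of $X^{pf}$ through the homeomorphism $|p_X|$, identifying each $V_i$ with $W_i^{pf}\cong W_i\times_X X^{pf}$ via Lemma~\ref{L4}, and gluing the resulting affine open subschemes $W_i$. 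Your route is more self-contained (it only needs the affine Chevalley statement plus the standard fact that an algebraic space covered by open subspaces which are schemes is a scheme), at the cost of the covering bookkeeping; the paper's is shorter but leans on an extra Stacks result. Two small points to tidy up: the parenthetical inference ``surjective integral morphism from an affine scheme, hence a universal homeomorphism'' is not a valid implication in general (e.g.\ $\operatorname{Spec}(k\times k)\to\operatorname{Spec}k$), but it is harmless here since Lemma~\ref{L4} already supplies the universal homeomorphism and your actual argument only uses integrality, surjectivity and the homeomorphism of Lemma~\ref{L3}; and before applying Lemma~\ref{L4} to $W_i\hookrightarrow X$ you should note that an open subspace of an algebraic space of characteristic $p$ again has characteristic $p$ (immediate from an \'etale cover, and proved in general later in the paper), so that $W_i^{pf}$ is defined. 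With those remarks your separatedness argument via $\Delta_W\circ p_W=(p_W\times_S p_W)\circ\Delta_{W^{pf}}$, the cancellation law for universally closed morphisms along the surjective $p_W$, and the fact that a universally closed diagonal is a closed immersion (as used in Proposition~\ref{P2}(10)) is sound.
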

\begin{proof}
If $X$ is a scheme, then $X^{pf}$ is a perfect scheme by Lemma \ref{LL1}. Conversely, suppose that $X^{pf}$ is a scheme. Then it follows from Lemma \ref{L3} and \cite[Tag07VV]{Stack Project} that $X$ is a scheme. Next, we prove the second statement. If $X$ is an affine scheme, then clearly $X^{pf}$ is a perfect affine scheme. Now, assume that $X^{pf}$ is an affine scheme. Then $X^{pf}$ is quasi-compact and separated, and so is $X$. Thus, it follows from \cite[Tag07SQ]{Stack Project} that $X$ is also an affine scheme.
\end{proof}

A morphism of algebraic spaces is representable if and only if its perfection is representable.
\begin{proposition}
Let $f:X\rightarrow Y$ be a morphism of algebraic spaces in characteristic $p$ over $S$ and let $f^{\natural}:X^{pf}\rightarrow Y^{pf}$ be its perfection. Then $f$ is representable if and only if $f^{\natural}$ is representable.
\end{proposition}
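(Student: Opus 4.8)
The plan is to prove both implications using the compatibility of the perfection functor with fibre products (Proposition \ref{P3}) together with the characterisation of representability of algebraic spaces via their perfections (Lemma \ref{LL5}). Recall that a morphism $f\colon X\to Y$ of algebraic spaces is representable precisely when for every scheme $T$ and every morphism $h_{T}\to Y$ the fibre product $h_{T}\times_{Y}X$ is (representable by) a scheme; the same criterion applies to $f^{\natural}\colon X^{pf}\to Y^{pf}$ with test schemes that may be taken perfect, since every scheme mapping to $Y^{pf}$ factors through its perfection by Lemma \ref{LL1} and the universal property.

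First I would prove that $f$ representable implies $f^{\natural}$ representable. This is essentially Proposition \ref{P4}: given a perfect scheme $U$ and a morphism $h_{U}\to Y^{pf}$, compose with $\phi_{Y}$ to get $h_{U}\to Y$; by representability of $f$ the fibre product $h_{U}\times_{Y}X$ is a scheme $W$, and by Proposition \ref{P3} one computes
$$
h_{U}\times_{Y^{pf}}X^{pf}\;\cong\;(h_{U})^{pf}\times_{Y^{pf}}X^{pf}\;\cong\;(h_{U}\times_{Y}X)^{pf}\;\cong\;W^{pf},
$$
where the first isomorphism uses $U$ perfect (so $h_{U}^{pf}\cong h_{U}$ by Lemma \ref{L7}/\ref{LL1}) and the last is Lemma \ref{LL1} again. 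Since $W^{pf}$ is a perfect scheme, $f^{\natural}$ is representable.

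For the converse, suppose $f^{\natural}$ is representable and let $T$ be a scheme of characteristic $p$ with a morphism $h_{T}\to Y$. I would form the perfection $h_{T}^{pf}\cong h_{T^{pf}}\to Y^{pf}$ and observe, again by Proposition \ref{P3}, that
$$
(h_{T}\times_{Y}X)^{pf}\;\cong\;h_{T^{pf}}\times_{Y^{pf}}X^{pf}.
$$
Because $f^{\natural}$ is representable and $T^{pf}$ is a (perfect) scheme, the right-hand side is a scheme; hence $(h_{T}\times_{Y}X)^{pf}$ is a scheme, and Lemma \ref{LL5} forces $h_{T}\times_{Y}X$ itself to be a scheme. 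As $T$ was arbitrary, $f$ is representable.

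The one point requiring care — and the main obstacle — is the passage from "test schemes $T$ of characteristic $p$" to "all test schemes $T$" in the definition of representability, and the reduction on the $Y^{pf}$ side to perfect test schemes: one must check that an arbitrary $T\to Y$ over $S$ is automatically of characteristic $p$ (which holds since $Y$, being of characteristic $p$, pulls this back), and that it suffices to test $f^{\natural}$ against perfect schemes because any $h_{U}\to Y^{pf}$ with $U$ a scheme factors as $h_{U}\to h_{U^{pf}}\to Y^{pf}$ is false in general — rather one notes that for $f^{\natural}$ between perfect algebraic spaces the relevant fibre products over $Y^{pf}$ with arbitrary scheme targets are already perfect schemes by Corollary \ref{C1} and Proposition \ref{PP2}, so no loss occurs. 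Once these bookkeeping matters are dispatched, the argument is a formal consequence of \proref{P3} and \lemref{LL5}.
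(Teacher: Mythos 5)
Your converse direction is exactly the paper's argument: perfect the test datum, use Proposition \ref{P3} together with Lemma \ref{LL1} to identify $(h_{T}\times_{Y}X)^{pf}\cong h_{T^{pf}}\times_{Y^{pf}}X^{pf}$, and conclude with Lemma \ref{LL5}. The forward direction, however, has a genuine gap. Representability of $f^{\natural}$ must be checked against \emph{every} scheme $T$ with a morphism $h_{T}\rightarrow Y^{pf}$ (all such $T$ are of characteristic $p$, but they need not be perfect), whereas your chain
$h_{U}\times_{Y^{pf}}X^{pf}\cong h_{U}^{pf}\times_{Y^{pf}}X^{pf}\cong(h_{U}\times_{Y}X)^{pf}$
only applies when $U$ is perfect, since it uses $h_{U}\cong h_{U}^{pf}$. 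You correctly observe that a morphism $h_{T}\rightarrow Y^{pf}$ from a non-perfect $T$ does not factor through $h_{T^{pf}}$, but the repair you offer does not work: Proposition \ref{PP2} requires the two source spaces to be perfect, and $h_{T}$ is not, so it says nothing about $h_{T}\times_{Y^{pf}}X^{pf}$; moreover, even if that fibre product were known to be a perfect algebraic space (Corollary \ref{C1} is about perfections, not about such fibre products), that would not make it a \emph{scheme}, which is what representability demands.

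The missing ingredient is Lemma \ref{L4}: $X^{pf}\cong X\times_{Y}Y^{pf}$. This is how the paper handles arbitrary test schemes. For any scheme $T$ of characteristic $p$ and any $h_{T}\rightarrow Y^{pf}$ (equivalently, a morphism $h_{T}\rightarrow Y$ factoring through the monomorphism $Y^{pf}\rightarrow Y$, cf.\ Lemma \ref{L3}), one gets
$h_{T}\times_{Y^{pf}}X^{pf}\cong h_{T}\times_{Y^{pf}}(Y^{pf}\times_{Y}X)\cong h_{T}\times_{Y}X$,
which is a scheme because $f$ is representable; no restriction to perfect $T$ is needed, and no appeal to Proposition \ref{P3} is required in this direction. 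With that substitution your forward direction closes; as written, it only establishes the weaker ``weakly perfect'' conclusion of Proposition \ref{P4} rather than representability of $f^{\natural}$.
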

\begin{proof}
Let $U\in\ObSchS$ of characteristic $p$ with $\xi\in Y(U)$. Note that we have an inclusion $\textrm{Hom}(h_{U},Y^{pf})\subset\textrm{Hom}(h_{U},Y)$ by Yoneda lemma and Lemma \ref{L3}. Now, choose $\xi$ to be some composition $h_{U}\rightarrow Y^{pf}\rightarrow Y$. Then there are isomorphisms $h_{U}\times_{Y}X\simeq h_{U}\times_{Y^{pf}}(X\times_{Y}Y^{pf})\simeq h_{U}\times_{Y^{pf}}X^{pf}\simeq h_{W}$ for some $W\in\ObSchS$. This shows that $f^{\natural}$ is representable.

Conversely, assume that $f^{\natural}$ is representable. Choose $h_{W}\simeq h_{U}\times_{Y^{pf}}X^{pf}$ for some $W\in\ObSchS$ of characteristic $p$ and $\xi'\in Y^{pf}(U)$. Then there is an isomorphism $h_{W^{pf}}\simeq h_{U^{pf}}\times_{Y^{pf}}X^{pf}$. Since the perfection functor is full, it follows from Lemma \ref{LL5} that $h_{U}\times_{Y}X$ is a scheme for every $\xi''\in Y(U)$. Thus, $f$ is representable.
\end{proof}

\subsection{Morphisms of algebraic spaces under the perfection functor}\

In the following, we want to transport properties of algebraic spaces and morphisms from the category $AP_{S}$ to the category $\textrm{Perf}_{S}$. We begin by focusing on properties of representable morphisms of algebraic spaces. The following definition is due to \cite[Tag03HA]{Stack Project}.
\begin{definition}
Let $f:X\rightarrow Y$ be a representable morphism of algebraic spaces in characteristic $p$ over $S$. Let $\cal{P}$ be a property of morphisms of schemes which
\begin{enumerate}[font=\normalfont]
\item
is preserved under arbitrary base change, and
\item
is fppf local on the base.
\end{enumerate}
We say that $f$ has property $\cal{P}$ if for every $U\in\ObSchS$ and any $\xi\in Y(U)$, the morphism of schemes $U\times_{Y}X\rightarrow U$ has property $\cal{P}$.
\end{definition}

The following lemma shows that properties of representable morphisms as above are preserved under the perfection functor.
\begin{lemma}\label{T1}
Let $f:X\rightarrow Y$ be a representable morphism of algebraic spaces in characteristic $p$ over $S$ and let $f^{\natural}:X^{pf}\rightarrow Y^{pf}$ be its perfection. Let $\cal{P}$ be a property of morphisms of schemes which
\begin{enumerate}[font=\normalfont]
\item
is preserved under arbitrary base change, and
\item
is fppf local on the base.
\end{enumerate}
If $f$ has property $\cal{P}$, then $f^{\natural}$ has property $\cal{P}$.
\end{lemma}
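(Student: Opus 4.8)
The plan is to reduce the statement to the base-change stability of $\cal{P}$ by showing that, up to canonical isomorphism, $f^{\natural}$ is a base change of $f$.

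First, Lemma \ref{L4}, applied to the morphism $f\colon X\to Y$, furnishes a canonical isomorphism $X^{pf}\cong X\times_{f,Y,\phi_{Y}}Y^{pf}$ under which the canonical projection $\phi_{X}\colon X^{pf}\to X$ is identified with the first projection $\mathrm{pr}_{1}$. The next step is to identify $f^{\natural}\colon X^{pf}\to Y^{pf}$ with the second projection $\mathrm{pr}_{2}\colon X\times_{f,Y,\phi_{Y}}Y^{pf}\to Y^{pf}$, that is, with the base change of $f$ along $\phi_{Y}$. By Lemma \ref{A21} one has $\phi_{Y}\circ f^{\natural}=f\circ\phi_{X}$; transporting through the isomorphism above this becomes $\phi_{Y}\circ f^{\natural}=f\circ\mathrm{pr}_{1}=\phi_{Y}\circ\mathrm{pr}_{2}$, the last equality being the commutativity of the cartesian square defining $X\times_{f,Y,\phi_{Y}}Y^{pf}$. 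Since $\phi_{Y}=p_{Y}$ is a monomorphism by Lemma \ref{L3}, we cancel $\phi_{Y}$ on the left and obtain $f^{\natural}=\mathrm{pr}_{2}$.

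With this identification in hand, let $T\in\ObSchS$ and $\eta\in Y^{pf}(T)$, and set $\eta'=\phi_{Y}\circ\eta\in Y(T)$. The pasting law for fibre products, together with $f^{\natural}=\mathrm{pr}_{2}$, gives a canonical isomorphism over $T$
$$
T\times_{\eta,Y^{pf},f^{\natural}}X^{pf}\;\cong\;T\times_{\eta,Y^{pf},\mathrm{pr}_{2}}\bigl(X\times_{f,Y,\phi_{Y}}Y^{pf}\bigr)\;\cong\;T\times_{\eta',Y,f}X .
$$
As $f$ is representable with property $\cal{P}$, the right-hand side is a scheme whose projection to $T$ has $\cal{P}$; transporting along the isomorphism shows that $T\times_{\eta,Y^{pf},f^{\natural}}X^{pf}\to T$ is a morphism of schemes having property $\cal{P}$. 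Since $T$ and $\eta$ were arbitrary, $f^{\natural}$ is representable and has property $\cal{P}$.

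The step I expect to be the main obstacle is the identification $f^{\natural}=\mathrm{pr}_{2}$: one must check that the canonical isomorphism supplied by Lemma \ref{L4} is compatible not merely with the projections down to $X$ but also with the structure morphisms up to $Y^{pf}$, and the cancellation genuinely relies on $\phi_{Y}$ being a monomorphism (Lemma \ref{L3}). Everything afterwards is the mechanical transfer of ``$f$ has $\cal{P}$'' through a base change. As an alternative one can avoid Lemma \ref{L4} and argue locally on $Y^{pf}$: by the equivalence of \'{e}tale sites $Y_{\textrm{\'{e}}t}\cong Y^{pf}_{\textrm{\'{e}}t}$ it suffices to test $\cal{P}$ for $f^{\natural}$ after base change along an \'{e}tale presentation $h_{U^{pf}}\to Y^{pf}$ arising from an \'{e}tale cover $U\to Y$ with $U\in\ObSchS$; Proposition \ref{P3} then yields $h_{U^{pf}}\times_{Y^{pf}}X^{pf}\cong(h_{U}\times_{Y}X)^{pf}$, so that the claim reduces to the corresponding statement for the morphism of schemes $h_{U}\times_{Y}X\to U$ (cf.\ \cite{Bertapellea} and Section \ref{S2}).
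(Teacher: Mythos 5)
Your proof is correct and takes essentially the same route as the paper's: the paper also tests $\mathcal{P}$ on scheme points $h_{U}\to Y^{pf}\to Y$ and uses the chain $h_{U}\times_{Y}X\simeq h_{U}\times_{Y^{pf}}(X\times_{Y}Y^{pf})\simeq h_{U}\times_{Y^{pf}}X^{pf}$ furnished by Lemma~\ref{L4}, i.e.\ it identifies $f^{\natural}$ with the base change of $f$ along $\phi_{Y}$, exactly as you do, and then invokes stability of $\mathcal{P}$ under base change. One minor simplification: the identification $f^{\natural}=\mathrm{pr}_{2}$ does not require the monomorphism assertion of Lemma~\ref{L3}, since $X^{pf}$ is perfect and the uniqueness clause in Definition~\ref{A20} (the universal property of $Y^{pf}$) already forces any two morphisms $X^{pf}\to Y^{pf}$ lying over $f\circ\phi_{X}$ to coincide.
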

\begin{proof}
Assume that $f$ has property $\cal{P}$. Then the morphism of schemes $W\rightarrow U$ induced by $h_{W}\simeq X\times_{Y}h_{U}\rightarrow h_{U}$ for $W,U\in\ObSchS,\xi\in Y(U)$ has property $\cal{P}$. Now, choose $\xi$ to be some composition $h_{U}\rightarrow Y^{pf}\rightarrow Y$. Then there are isomorphisms $h_{U}\times_{Y}X\simeq h_{U}\times_{Y^{pf}}(X\times_{Y}Y^{pf})\simeq h_{U}\times_{Y^{pf}}X^{pf}$. Thus, the morphism $h_{W}\simeq h_{U}\times_{Y^{pf}}X^{pf}\rightarrow h_{U}$ is the same as $W\rightarrow U$ which has property $\cal{P}$. This shows that the perfection $f^{\natural}:X^{pf}\rightarrow Y^{pf}$ has property $\cal{P}$.
\end{proof}

More generally, we can extend Lemma \ref{T1} to not necessarily representable morphisms of algebraic spaces. First, we consider properties of morphisms of schemes which are \'{e}tale local on source-and-target.
\begin{lemma}\label{LL3}
Let $f:X\rightarrow Y$ be a morphism of algebraic spaces in characteristic $p$ over $S$. Let $\cal{P}$ be a property of morphisms of schemes which is \'{e}tale local on source-and-target. Consider commutative diagrams
$$
\xymatrix{
  U \ar[d]_{a} \ar[r]^{h} & V \ar[d]^{b} \\
  X \ar[r]^{f} & Y   }
$$
where $U,V\in\ObSchS$ and $a,b$ are \'{e}tale. The following are equivalent:
\begin{enumerate}[font=\normalfont]
  \item
  For any diagram as above, the morphism of schemes $h$ has property $\cal{P}$.
  \item
  For some diagram as above where $a,b$ is surjective, $U,V$ have characteristic $p$ and the morphism of schemes $h$ has property $\cal{P}$.
\end{enumerate}
\end{lemma}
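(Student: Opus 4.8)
The statement is a standard "étale local on source-and-target" descent argument, and the key external input is the notion of a property $\mathcal{P}$ being étale local on source-and-target in the sense of \cite[Tag04QY]{Stack Project}, together with the fact that for algebraic spaces one always has surjective étale charts. The implication $(1)\Rightarrow(2)$ is essentially immediate: choose any étale charts $a:U\to X$ and $b:V\to Y$ with $U,V$ schemes; after replacing $U,V$ by disjoint unions of affine opens we may assume $a,b$ are surjective and that $U,V$ have characteristic $p$ (since $X,Y$ do, any scheme admitting a morphism to them has characteristic $p$), and then lift $f\circ a:U\to Y$ through $b$ after a further étale refinement of $U$, exactly as in the construction of a commutative diagram of the displayed type; hypothesis $(1)$ then forces $h$ to have $\mathcal{P}$.

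The substantive direction is $(2)\Rightarrow(1)$. First I would fix a diagram with $a_0:U_0\to X$, $b_0:V_0\to Y$ surjective étale, $U_0,V_0$ of characteristic $p$, and $h_0:U_0\to V_0$ having $\mathcal{P}$, and then take an arbitrary diagram with $a:U\to X$, $b:V\to Y$ étale and $h:U\to V$; the goal is to show $h$ has $\mathcal{P}$. The plan is to form the fibre products $U\times_X U_0$ and $V\times_Y V_0$, which are algebraic spaces étale over the schemes $U,U_0$ (resp. $V,V_0$), hence are schemes, and to choose surjective étale covers of them by schemes $U'$ and $V'$. One obtains a commutative cube relating $h$, $h_0$, and the induced map $h':U'\to V'$: the maps $U'\to U$, $U'\to U_0$, $V'\to V$, $V'\to V_0$ are étale (the first two surjective after a suitable choice), and compatibility of the $f$-squares gives $b\circ h\circ(U'\to U)=b\circ(V'\to V)\circ h'$ etc. Since $h_0$ has $\mathcal{P}$ and $\mathcal{P}$ is étale local on the base, the base change $h':U'\to V'$ of $h_0$ (up to the étale covers) has $\mathcal{P}$; then since $\mathcal{P}$ is étale local on the source, $h$ has $\mathcal{P}$. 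This is precisely the mechanism packaged in \cite[Tag06FP, Tag06FN]{Stack Project} for morphisms of algebraic spaces.

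The main obstacle is purely bookkeeping: assembling the covering scheme $U'$ so that it simultaneously sits étale-surjectively over $U$ and maps compatibly to $U_0$, $V'$, and so that the relevant squares genuinely commute; one must be careful that $\mathcal{P}$ being étale local on source-and-target is being used in both its "source" and "base" (target) forms, and that the characteristic $p$ hypothesis in $(2)$ is harmless because it is automatically inherited. Concretely I would phrase the argument by reducing to the affine/scheme-theoretic statement: given two charts inducing $h$ and $h_0$ of a single morphism $f$ of algebraic spaces, the morphisms $h$ and $h_0$ are "equivalent" for the purpose of étale-local-on-source-and-target properties, which is exactly \cite[Lemma Tag06FP]{Stack Project} (or its algebraic-space version). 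So the proof is: cite that the property $\mathcal{P}$ descends and ascends along the comparison cube, observe $(1)\Rightarrow(2)$ by producing one such diagram, and observe $(2)\Rightarrow(1)$ by comparing any diagram to the given one through a common refinement; the characteristic-$p$ clause in $(2)$ is recorded only to keep us inside $AP_S^p$ and plays no further role.
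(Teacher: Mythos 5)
Your proposal is correct and follows essentially the same route as the paper, whose entire proof is the citation \cite[Tag03MJ]{Stack Project} --- which is precisely this statement (the characteristic-$p$ clause being automatic) --- while you reconstruct the standard argument behind that tag. The only cosmetic inaccuracies are that $U\times_{X}U_{0}$ and $V\times_{Y}V_{0}$ are schemes because the diagonals of $X$ and $Y$ are representable (not because an algebraic space \'{e}tale over a scheme is automatically a scheme), and that $h'$ is an \'{e}tale localization of $h_{0}$ rather than a literal base change; neither point affects the argument, since you defer to the Stacks Project lemmas for the descent mechanism anyway.
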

\begin{proof}
This follows from \cite[Tag03MJ]{Stack Project}.
\end{proof}

Every property of morphisms of schemes which is \'{e}tale local on the source-and-target gives rise to a property of morphisms of algebraic spaces. Here we specialize the definition in \cite[Tag04RD]{Stack Project} to the following case.
\begin{definition}
Let $f:X\rightarrow Y$ be a morphism of algebraic spaces in characteristic $p$ over $S$ and let $\cal{P}$ be a property of morphisms of schemes which is \'{e}tale local on source-and-target. We say that $f$ has property $\cal{P}$ if one of the equivalent conditions in Lemma \ref{LL3} is satisfied.
\end{definition}

Properties of morphisms of algebraic spaces corresponding to properties of morphisms of schemes which are \'{e}tale local on the source-and-target are preserved under the perfection functor.
\begin{lemma}\label{T2}
Let $f:X\rightarrow Y$ be a morphism of algebraic spaces in characteristic $p$ over $S$. Let $\cal{P}$ be a property of morphisms of schemes which
\begin{enumerate}[font=\normalfont]
\item
is \'{e}tale local on source-and-target, and
\item
is preserved under the perfection functor on schemes.
\end{enumerate}
If $f$ has property $\cal{P}$, then $f^{\natural}:X^{pf}\rightarrow Y^{pf}$ has property $\cal{P}$.
\end{lemma}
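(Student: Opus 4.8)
The plan is to reduce the statement for algebraic spaces to the already-established case for schemes, using étale charts and the compatibility of the perfection functor with étale covers. First I would choose a commutative diagram
$$
\xymatrix{
  U \ar[d]_{a} \ar[r]^{h} & V \ar[d]^{b} \\
  X \ar[r]^{f} & Y   }
$$
with $U,V\in\ObSchS$ of characteristic $p$, $a,b$ surjective étale, and $h$ a morphism of schemes having property $\cal{P}$; this exists by hypothesis (1) together with Lemma \ref{LL3}. The goal is to produce a corresponding diagram witnessing that $f^{\natural}$ has property $\cal{P}$, i.e. a commutative square with vertical surjective étale maps from perfect schemes $U^{pf},V^{pf}$ to $X^{pf},Y^{pf}$ and top arrow having property $\cal{P}$.

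The key step is that applying the perfection functor to the chosen chart yields exactly such a diagram. By Lemma \ref{LL1} (applied to the schemes $U,V$), we have $U^{pf}=h_{U}^{pf}$ and $V^{pf}=h_{V}^{pf}$, and by Lemma \ref{L4} there are canonical isomorphisms $U^{pf}\simeq U\times_{X}X^{pf}$ and $V^{pf}\simeq V\times_{Y}Y^{pf}$. Functoriality (Lemma \ref{A21}) gives the commutative square
$$
\xymatrix{
  U^{pf} \ar[d]_{a^{\natural}} \ar[r]^{h^{\natural}} & V^{pf} \ar[d]^{b^{\natural}} \\
  X^{pf} \ar[r]^{f^{\natural}} & Y^{pf}   }
$$
Since $a,b$ are surjective étale, Lemma \ref{T1} (applied to the representable maps $a,b$, with $\cal{P}$ replaced by ``étale and surjective,'' which is preserved under arbitrary base change and fppf local on the base) shows that $a^{\natural},b^{\natural}$ are again surjective étale morphisms of schemes; alternatively one invokes the equivalence of étale sites $X_{\text{ét}}\cong X^{pf}_{\text{ét}}$ established above, which sends the étale cover $U\to X$ to the étale cover $U^{pf}\to X^{pf}$. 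Now hypothesis (2) enters: since $h$ is a morphism of schemes with property $\cal{P}$ and $\cal{P}$ is preserved under the perfection functor on schemes, the top arrow $h^{\natural}=h^{pf}$ has property $\cal{P}$. Thus condition (2) of Lemma \ref{LL3} is met for $f^{\natural}$, so $f^{\natural}$ has property $\cal{P}$.

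The main obstacle I anticipate is verifying carefully that $h^{\natural}$, which a priori is only the perfection of $h$ viewed as a map of representable algebraic spaces, genuinely coincides (as a morphism of schemes) with the scheme-theoretic perfection $h^{pf}$; this is where Lemma \ref{LL1} and the full faithfulness statements are needed, and one must check the identifications $U^{pf}\simeq U\times_X X^{pf}$ are compatible with $h$, $h^{\natural}$, and the projections. A secondary subtlety is ensuring the chart $U,V$ can be taken of characteristic $p$ simultaneously with $h$ having property $\cal{P}$ and $a,b$ surjective, but this is exactly the content of condition (2) of Lemma \ref{LL3}, so no new argument is required. Once these bookkeeping points are settled, the conclusion follows immediately from Lemma \ref{LL3} applied in the reverse direction to $f^{\natural}$.
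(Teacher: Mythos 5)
Your proposal is correct and follows essentially the same route as the paper's proof: obtain a chart via Lemma \ref{LL3}, apply the perfection functor to it, use Lemma \ref{T1} (for the surjective \'etale vertical arrows) and hypothesis (2) (for the top arrow) to see the perfected chart has the same properties, and conclude with Lemma \ref{LL3}. Your extra care in identifying $h^{\natural}$ with the scheme-theoretic perfection $h^{pf}$ via Lemma \ref{LL1} and Lemma \ref{L4} only makes explicit what the paper leaves implicit.
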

\begin{proof}
First, assume that $f$ has property $\cal{P}$. Then it follows from Lemma \ref{LL3} that there exists a commutative diagram
$$
\xymatrix{
  U \ar[d]_{} \ar[r]^{g} & V \ar[d]^{} \\
  X \ar[r]^{f} & Y   }
$$
where the vertical arrows are surjective \'{e}tale, $U,V$ have characteristic $p$, and the morphism of schemes $g$ has property $\cal{P}$. By Theorem \ref{T1}, this gives rise to a commutative diagram
$$
\xymatrix{
  U^{pf} \ar[d]_{} \ar[r]^{g} & V^{pf} \ar[d]^{} \\
  X^{pf} \ar[r]^{f} & Y^{pf}   }
$$
that satisfies the same properties as the first commutative diagram by assumption. Thus, $f^{\natural}$ has property $\cal{P}$.
\end{proof}

In the following proposition, we list out some properties of morphisms of algebraic spaces that are preserved under the perfection functor.
\begin{proposition}\label{P2}
Let $f:X\rightarrow Y$ be a morphism of algebraic spaces in characteristic $p$ over $S$ and let $f^{\natural}:X^{pf}\rightarrow Y^{pf}$ be its perfection. Then the following properties hold for $f$ if and only if they hold for $f^{\natural}$:
\begin{enumerate}[font=\normalfont]
\item
surjective,
\item
quasi-compact,
\item
dominant,
\item
(universally) submersive,
\item
(universally) closed,
\item
(universally) open,
\item
a (universal) homeomorphism,
\item\label{It1}
affine,
\item\label{It2}
integral,
\item\label{It3}
(quasi-)separated.
\end{enumerate}
If one of the following holds for $f$, then it also holds for $f^{\natural}$:
\begin{enumerate}[start=11,font=\normalfont]
\item
a monomorphism,
\item
a closed immersion,
\item
an open immersion,
\item
an immersion,
\item
locally separated,
\item\label{It4}
\'{e}tale,
\item\label{It6}
(faithfully) flat,
\item
weakly perfect (quasi-perfect, semiperfect, perfect).
\end{enumerate}
\end{proposition}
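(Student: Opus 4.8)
The plan is to reduce the whole list to two structural inputs. The first is that, by \lemref{L4}, there is a canonical isomorphism $X^{pf}\cong X\times_{Y}Y^{pf}$ compatible with the projections, under which $f^{\natural}$ is the base change of $f$ along the canonical projection $p_{Y}\colon Y^{pf}\to Y$; moreover $p_{X}$ and $p_{Y}$ are surjective integral monomorphisms and universal homeomorphisms which, by \lemref{L3}, induce homeomorphisms $|X^{pf}|\cong|X|$ and $|Y^{pf}|\cong|Y|$ compatibly with $f$ and $f^{\natural}$. The second is the bundle of facts that the perfection functor commutes with fibre products (\proref{P3}), is full (\lemref{L2}), and sends schemes to schemes and affine schemes to affine schemes (\lemref{LL5}), together with \lemref{T1} and \lemref{T2}, which let one test a property of $f^{\natural}$ or of $f$ on a scheme chart and so reduce it to the already-understood case of schemes. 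I would then go through the items, grouping them by which input does the work.

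The implications ``$f$ has $\mathcal{P}\Rightarrow f^{\natural}$ has $\mathcal{P}$'' --- items (11)--(18) together with the forward halves of (1), (2), (8), (9), (10) and of the universal versions in (4)--(7) --- are immediate from the Cartesian square $f^{\natural}=f\times_{Y}Y^{pf}$, since each of these properties is stable under arbitrary base change; for (18) one uses the stability of weak, quasi-, semi-, and strong perfection under base change recorded in \cite[\S5]{Liang1}. Equivalently, one may quote \lemref{T2} for those among them that are \'{e}tale local on source-and-target and preserved by the perfection functor on schemes (the scheme-level statements being contained in \cite{Bertapellea}), and \lemref{T1} for the representable ones that are preserved under base change and fppf local on the target. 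The remaining non-universal topological conditions --- dominant, submersive, open, closed, homeomorphism --- follow at once from the fact that $f$ and $f^{\natural}$ induce the same underlying continuous map under the homeomorphisms of \lemref{L3}.

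The converse implications ``$f^{\natural}$ has $\mathcal{P}\Rightarrow f$ has $\mathcal{P}$'' in (1)--(10) are the substance. Items (1)--(7) are read off the identification of underlying spaces: since $f\circ p_{X}=p_{Y}\circ f^{\natural}$ with $p_{X},p_{Y}$ surjective universal homeomorphisms, $f$ is surjective / quasi-compact / dominant / submersive / open / closed / a homeomorphism exactly when $f^{\natural}$ is, and for the universal versions in (4)--(7) one adds that for every $T\to Y$ the projection $T\times_{Y}Y^{pf}\to T$ is again a surjective universal homeomorphism (a base change of $p_{Y}$), so the non-universal statement applied after this base change forces $f$ to be universally so. For (8): given $U\in\ObSchS$ with $\xi\in Y(U)$, \lemref{L4} identifies $U\times_{Y}Y^{pf}$ with $U^{pf}$ and \proref{P3} identifies $(X\times_{Y}U)^{pf}$ with $X^{pf}\times_{Y^{pf}}U^{pf}$, which is an affine scheme because $f^{\natural}$ is affine; hence $X\times_{Y}U$ is affine by \lemref{LL5}, so $f$ is affine. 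Item (9) follows because a morphism is integral if and only if it is affine and universally closed, both of which have now been transported in both directions. For (10): quasi-separatedness reduces to item (2) applied to the diagonal, using that the diagonal of $f^{\natural}$ is the perfection of the diagonal of $f$ (again \proref{P3}); separatedness reduces to item (12) in the forward direction together with the fact that $\Delta_{f}$ is always a monomorphism, so that $\Delta_{f^{\natural}}$ being a closed immersion exhibits $\Delta_{f}$ --- via item (5) applied to $\Delta_{f}$ --- as a universally closed monomorphism, hence a closed immersion.

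The main obstacle is exactly this descent in the converse direction. The forward direction is essentially formal once the Cartesian square is available, but the converse asks one to descend a property along $p_{Y}$, a surjective universal homeomorphism which is nonetheless not flat, so that ordinary fppf or fpqc descent is unavailable. What makes it work is that the properties in (1)--(10) --- topological conditions, affineness, integrality, (quasi-)separatedness --- are insensitive to universal homeomorphisms, either through the homeomorphism of underlying spaces or through \lemref{LL5}, and therefore do descend along $p_{Y}$; by contrast flatness and \'{e}taleness do see universal homeomorphisms, which is precisely why items (17) and (18) appear only in the one-directional part of the list. Within this, the fussiest single point is the closed-immersion descent for the diagonal in item (10), for which one must invoke that a monomorphism of algebraic spaces which is universally closed is a closed immersion --- a statement it is safer to cite from \cite{Stack Project} than to reprove here.
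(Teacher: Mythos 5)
Your converse directions (1)--(10) are in substance the paper's own proof: the topological items come from the identification of underlying spaces in \lemref{L3} and the universal homeomorphisms of \lemref{L4} (the paper argues via $f\circ p_{X}=p_{Y}\circ f^{\natural}$ and cancellation along the surjective universal homeomorphisms rather than via base change, but the content is the same), affineness and integrality are transported through \proref{P3} together with \lemref{LL5} exactly as in the paper, and (quasi-)separatedness is handled by the same diagonal argument (the paper invokes \cite{Stack Project} where you invoke ``universally closed monomorphism $\Rightarrow$ closed immersion''). The divergence, and the gap, is in your primary mechanism for the forward implications: the claim that $f^{\natural}$ is the base change of $f$ along $p_{Y}$, i.e.\ that the square formed by $X^{pf},X,Y^{pf},Y$ is Cartesian. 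The identification $X^{pf}\cong X\times_{Y}Y^{pf}$ can only hold when $X\times_{Y}Y^{pf}$ is itself perfect (as it is when $f$ is \'{e}tale, which is how \lemref{L3} and the \'{e}tale-site theorem use \lemref{L4}); for a general $f$ it fails. Concretely, take $Y=\operatorname{Spec}(\mathbb{F}_{p})$ and $X=\mathbb{A}^{1}_{\mathbb{F}_{p}}$: then $Y^{pf}=Y$, so $X\times_{Y}Y^{pf}=X$, whereas $X^{pf}=\operatorname{Spec}(\mathbb{F}_{p}[t^{1/p^{\infty}}])$. So ``immediate from the Cartesian square, since each of these properties is stable under arbitrary base change'' is not a valid argument, and the universal halves of your converse (4)--(7) as phrased also lean on the same identification.

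The damage is uneven. For the purely topological items the argument is repairable: the canonical map $X^{pf}\rightarrow X\times_{Y}Y^{pf}$ is still a surjective universal homeomorphism, or one simply argues, as the paper does, from the commuting square with $p_{X},p_{Y}$ surjective universal homeomorphisms. Likewise your affine/integral arguments need only \proref{P3} and \lemref{LL5}, not the Cartesian square, so they stand. But for (16)--(18) no topological repair exists: base-change stability is doing all the work there, and since the square is not Cartesian you genuinely need the inputs the paper uses instead, namely \lemref{T2} fed with the scheme-level facts that \'{e}taleness and flatness are preserved by perfection of schemes (cf.\ \cite{Bertapellea}), \lemref{LL4} for monomorphisms, and the \proref{P3}-style computation (as in \proref{P4}) for the weakly/quasi-/semi-perfect cases --- your observation that flatness and \'{e}taleness ``see'' universal homeomorphisms is exactly why. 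You do mention the \lemref{T1}/\lemref{T2} route as an ``equivalent'' alternative; it is not merely equivalent, it is the route that works, and the write-up should make it the primary engine of the forward direction rather than an aside.
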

\begin{proof}
(1)-(3) all follow directly from Lemma \ref{L3} and definitions. (4)-(7) are by Lemma \ref{L4} that the canonical projection $X^{pf}\rightarrow X$ is a universal homeomorphism.

(8) If $f$ is affine, then $f^{\natural}$ is affine by Lemma \ref{T1}. Conversely, suppose that $f^{\natural}$ is affine. Then for any affine scheme $Z$ with morphism $Z\rightarrow Y^{pf}$, the morphism $Z\times_{Y^{pf}}X^{pf}\rightarrow Z$ is affine. This gives rise to an affine morphism $Z^{pf}\times_{Y^{pf}}X^{pf}\rightarrow Z^{pf}$. By Lemma \ref{LL5}, $Z\times_{Y}X$ is affine for every morphism $Z\rightarrow Y$. This shows that $f$ is affine. The proof of (9) is similar.

(10) Assume that $f$ is separated. Since the diagonal morphism $\Delta_{f}:X\rightarrow X\times_{Y}X$ is representable, it follows from Lemma \ref{T1} that $\Delta_{f}^{\natural}:X^{pf}\rightarrow X^{pf}\times_{Y^{pf}}X^{pf}$ is also a closed immersion. Conversely, if the diagonal $\Delta_{f}^{\natural}:X^{pf}\rightarrow X^{pf}\times_{Y^{pf}}X^{pf}$ is a closed immersion, then it is universally closed. Thus, $\Delta_{f}:X\rightarrow X\times_{Y}X$ is also universally closed by (6). This implies that $f$ is separated by \cite[Tag04Y0]{Stack Project}. The second case is clear by (2).

(11) This is obvious since the diagonal $\Delta_{f}^{\natural}:X^{pf}\rightarrow X^{pf}\times_{Y^{pf}}X^{pf}$ is also an isomorphism. Or see the proof of Lemma \ref{LL4}.

(12)-(14) By definition, every closed immersion (resp. open immersion, resp. immersion) is representable. Hence the statements follow directly from Lemma \ref{T1}.

Clearly, (15) is by (14). (16) follows from Lemma \ref{T2} above. Moreover, (17) is by Lemma \ref{T2} and (1). Finally, (18) is obvious by definitions.
\end{proof}

Next, we show that certain properties of algebraic spaces are stable under the perfection functor. First, we consider properties of schemes which is local in the \'{e}tale topology.
\begin{lemma}\label{LL6}
Let $X$ be an algebraic space in characteristic $p$ over $S$. Let $\cal{P}$ be a property of schemes which is local in the \'{e}tale topology. Then the following are equivalent:
\begin{enumerate}[font=\normalfont]
  \item
there exist some $\mathbb{F}_{p}$-scheme $U$ and an \'{e}tale cover $U\rightarrow X$ such that $U$ has property $\cal{P}$, and
  \item
for every scheme $U$ and any \'{e}tale morphism $U\rightarrow X$, the scheme $U$ has property $\cal{P}$.
\end{enumerate}
\end{lemma}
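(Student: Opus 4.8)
The plan is to deduce this directly from the two features packaged into the hypothesis that $\mathcal{P}$ is local in the \'{e}tale topology, exactly as in the analogous statement for algebraic spaces in \cite{Stack Project}: namely that $\mathcal{P}$ is inherited by schemes \'{e}tale over a scheme having $\mathcal{P}$, and that $\mathcal{P}$ can be checked on the total space of an \'{e}tale covering. The implication $(2)\Rightarrow(1)$ is immediate: by the very definition of an algebraic space, $X$ admits an \'{e}tale cover $U\to X$ with $U\in\ObSchS$, and such a $U$ is automatically of characteristic $p$ since $X$ is; applying $(2)$ to this $U\to X$ yields $(1)$. So the content lies in the reverse implication.

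For $(1)\Rightarrow(2)$, I would fix an $\mathbb{F}_p$-scheme $U_0$ together with an \'{e}tale cover $U_0\to X$ such that $U_0$ has property $\mathcal{P}$, and let $V\to X$ be an arbitrary \'{e}tale morphism from a scheme $V\in\ObSchS$ (again automatically of characteristic $p$). First I would observe that, since the diagonal of $X$ is representable, the morphism $U_0\to X$ is representable, so the fibre product $W:=V\times_X U_0$ is a scheme of characteristic $p$ rather than merely an algebraic space. Then I would examine the two projections. The projection $W\to U_0$ is a base change of the \'{e}tale morphism $V\to X$, hence \'{e}tale; since $U_0$ has $\mathcal{P}$ and $\mathcal{P}$ is inherited by schemes \'{e}tale over a scheme with $\mathcal{P}$ (a formal consequence of \'{e}tale-locality, e.g. by comparing the coverings $\{W\sqcup U_0\to U_0\}$ and $\{W\to W\sqcup U_0,\ U_0\to W\sqcup U_0\}$), it follows that $W$ has $\mathcal{P}$. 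The projection $W\to V$ is a base change of the surjective \'{e}tale morphism $U_0\to X$, hence itself surjective \'{e}tale, i.e. an \'{e}tale covering of $V$; since $\mathcal{P}$ can be checked \'{e}tale-locally and $W$ has $\mathcal{P}$, we conclude that $V$ has $\mathcal{P}$. As $V\to X$ was arbitrary, this establishes $(2)$.

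There is no genuine obstacle here; the argument is purely formal. The only points worth pinning down carefully are: the precise formulation of ``local in the \'{e}tale topology'' (which I would cite from \cite{Stack Project}), so that both the localization statement and the descent statement used above are justified; the representability of $U_0\to X$, which is the standard consequence of $X$ having representable diagonal and is what guarantees $W$ is a scheme; and the bookkeeping that every scheme appearing ($U_0$, $V$, $W$) has characteristic $p$, so that nothing leaves the categories in play. The mild subtlety of the proof — that \'{e}tale-locality is stated for coverings yet yields inheritance along non-surjective \'{e}tale morphisms — is handled by the disjoint-union comparison indicated above, and I would either include that one-line remark or simply cite the corresponding lemma.
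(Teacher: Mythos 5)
Your proof is correct: the paper simply cites \cite[Tag03E8]{Stack Project} for this lemma, and your argument (reduce to the fibre product $W=V\times_X U_0$, which is a scheme by representability of the diagonal, then use \'etale-locality of $\mathcal{P}$ along the two projections) is exactly the standard proof of that cited result. So you have essentially the same approach, just written out in full, including the correct handling of the non-surjective case via the disjoint-union trick and the characteristic-$p$ bookkeeping.
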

\begin{proof}
This follows from \cite[Tag03E8]{Stack Project}.
\end{proof}

Every \'{e}tale local property of schemes corresponds to a property of algebraic spaces. Here we specialize the definition in \cite[Tag03E6]{Stack Project} to the following case.
\begin{definition}
Let $X$ be an algebraic space in characteristic $p$ over $S$ and let $\cal{P}$ be a property of schemes which is local in the \'{e}tale topology. We say that $X$ has property $\cal{P}$ if one of the equivalent conditions in Lemma \ref{LL6} is satisfied.
\end{definition}

Properties of algebraic spaces corresponding to \'{e}tale local properties of schemes are preserved under the perfection functor.
\begin{lemma}\label{T3}
Let $X$ be an algebraic space in characteristic $p$ over $S$. Let $\cal{P}$ be a property of schemes
\begin{enumerate}[font=\normalfont]
  \item
which is local in the \'{e}tale topology, and
  \item
which is preserved under the perfection functor on schemes.
\end{enumerate}
If $X$ has property $\cal{P}$, then $X^{pf}$ has $\cal{P}$.
\end{lemma}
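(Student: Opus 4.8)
The plan is to reduce the statement about algebraic spaces to the corresponding statement about schemes, which is supplied by hypothesis (2), via an étale presentation and Lemma~\ref{LL6}. First I would choose, by definition of an algebraic space of characteristic $p$, a scheme $U\in\ObSchS$ with $\textrm{char}(U)=p$ together with a surjective étale morphism $h_U\rightarrow X$. Since $X$ has property $\cal{P}$, the equivalent conditions of Lemma~\ref{LL6} hold; in particular the scheme $U$ has property $\cal{P}$.

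Next I would pass to perfections. By Lemma~\ref{LL1} (more precisely by the compatibility of the perfection functor with representable morphisms and fibre products in Proposition~\ref{P3}, together with Lemma~\ref{L4}), the morphism $h_U\rightarrow X$ induces a surjective étale morphism $h_{U^{pf}}\simeq h_U\times_X X^{pf}\rightarrow X^{pf}$; surjectivity and étaleness are preserved by Proposition~\ref{P2}(1) and Proposition~\ref{P2}(\ref{It4}). Thus $U^{pf}\rightarrow X^{pf}$ is an étale cover of $X^{pf}$ by the scheme $U^{pf}$. Now hypothesis (2) applies: since $U$ has property $\cal{P}$ and $\cal{P}$ is preserved under the perfection functor on schemes, the scheme $U^{pf}$ has property $\cal{P}$.

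Finally, I would invoke Lemma~\ref{LL6} in the other direction for $X^{pf}$: we have exhibited an $\mathbb{F}_p$-scheme $U^{pf}$ and an étale cover $U^{pf}\rightarrow X^{pf}$ with $U^{pf}$ having property $\cal{P}$, which is condition~(1) of Lemma~\ref{LL6} applied to $X^{pf}$. Hence $X^{pf}$ has property $\cal{P}$ by the definition of when an algebraic space has an étale-local property of schemes.

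The only delicate point is making sure the scheme $U^{pf}$ occurring here is genuinely the perfection of $U$ as used in hypothesis (2), i.e.\ that the presentation-level perfection matches the perfection functor on $X$; this is exactly the content of Lemma~\ref{LL1} and the construction in Proposition~\ref{P7}, so it is not a real obstacle. Everything else is a formal chase through the cited lemmas, and no new calculation is required.
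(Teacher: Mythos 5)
Your proposal is correct and follows essentially the same route as the paper: take an étale presentation $U\rightarrow X$ by a scheme $U$ of characteristic $p$ with property $\cal{P}$, pass to the perfection to get the étale cover $U^{pf}\rightarrow X^{pf}$, and apply the hypothesis that $\cal{P}$ is preserved by the perfection functor on schemes. The paper's proof is just a terser version of the same argument; your extra care about identifying $U^{pf}\simeq U\times_X X^{pf}$ via Lemma~\ref{L4} and Proposition~\ref{P2} simply makes explicit what the paper leaves implicit.
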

\begin{proof}
Choose an \'{e}tale cover $U\rightarrow X$ where $U\in\ObSchS$ has characteristic $p$ and has property $\cal{P}$. This gives rise to another \'{e}tale cover $U^{pf}\rightarrow X^{pf}$ where $U^{pf}$ has property $\cal{P}$ by assumption. Thus, $X^{pf}$ has $\cal{P}$.
\end{proof}

In the following proposition, we list out a few properties of algebraic spaces that are preserved under the perfection functor.
\begin{proposition}
Let $X$ be an algebraic space of characteristic $p$ over $S$ with perfection $X^{pf}$. Then the following properties hold for $X$ if and only if they hold for $X^{pf}$:
\begin{enumerate}[font=\normalfont]
\item
quasi-compact,
\item
quasi-separated,
\item
separated.
\end{enumerate}
If one of the following holds for $X$, then it also holds for $X^{pf}$:
\begin{enumerate}[start=4,font=\normalfont]
\item
locally separated,
\item
reduced.
\end{enumerate}
\end{proposition}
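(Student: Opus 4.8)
The plan is to reduce each of the five statements to a property of a single representable morphism — the canonical projection $p_{X}\colon X^{pf}\to X$ for item (1), and the diagonal $\Delta_{X/S}\colon X\to X\times_{S}X$ for items (2)--(5) — and then quote the transfer results already established: Lemma \ref{L3}, Proposition \ref{P2}, Proposition \ref{P3}, Lemma \ref{L5} and Lemma \ref{T3}. The one preliminary point I would record first is the identification $\Delta_{X/S}^{\natural}=\Delta_{X^{pf}/S}$. By Proposition \ref{P3} there is a canonical isomorphism $(X\times_{S}X)^{pf}\cong X^{pf}\times_{S}X^{pf}$ compatible with the two projections; since $\Delta_{X/S}$ is the unique morphism whose two projections are both $\mathrm{id}_{X}$, functoriality of the perfection functor forces $\Delta_{X/S}^{\natural}$ to be the unique such morphism on $X^{pf}$, i.e. the diagonal of $X^{pf}$.

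I would then handle the five items as follows. For (1), an algebraic space is quasi-compact exactly when its underlying topological space is, so the homeomorphism $\left|X\right|\cong\left|X^{pf}\right|$ of Lemma \ref{L3} gives the equivalence at once. For (2), $X$ is quasi-separated over $S$ iff $\Delta_{X/S}$ is quasi-compact; as $\Delta_{X/S}$ is representable with perfection $\Delta_{X^{pf}/S}$, Proposition \ref{P2}(2) transfers quasi-compactness in both directions, so $X$ is quasi-separated iff $X^{pf}$ is. For (3), I would argue as in the proof of Proposition \ref{P2}(10): by \cite[Tag04Y0]{Stack Project} an algebraic space is separated over $S$ iff its diagonal is universally closed, and Proposition \ref{P2}(5) transfers universal closedness in both directions, giving the equivalence. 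For (4), $X$ is locally separated over $S$ iff $\Delta_{X/S}$ is an immersion, and Proposition \ref{P2}(14) carries that forward to $\Delta_{X^{pf}/S}$, so $X^{pf}$ is locally separated — only the one direction, which is what is claimed. For (5), reducedness is a property of schemes local in the \'{e}tale topology, and by Lemma \ref{L5} the perfection of every $\mathbb{F}_{p}$-scheme is reduced, so in particular reducedness is preserved under the perfection functor on schemes; Lemma \ref{T3} then gives that $X^{pf}$ is reduced whenever $X$ is.

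The step I expect to require the most care is (3): one has to detect separatedness via universal closedness of the diagonal rather than via the diagonal being a closed immersion, since Proposition \ref{P2} transfers closed immersions only forward but transfers universal closedness both ways. I would also want to double-check that (4) and (5) genuinely cannot be promoted to equivalences — for instance $X=\mathrm{Spec}(\mathbb{F}_{p}[x]/(x^{2}))$ is non-reduced while its perfection is $\mathrm{Spec}\,\mathbb{F}_{p}$, which is why the proposition keeps these two lists apart.
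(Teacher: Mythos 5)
Your proposal is correct and follows essentially the same route as the paper, which disposes of (1) by the homeomorphism of Lemma \ref{L3}, of (2)--(4) by Proposition \ref{P2} (whose item (10) is itself proved by exactly the universal-closedness argument you inline for separatedness), and of (5) by Lemma \ref{T3} together with Lemma \ref{L5}. Your additional care in identifying $\Delta_{X/S}^{\natural}$ with $\Delta_{X^{pf}/S}$ via Proposition \ref{P3} only makes explicit what the paper's citation of Proposition \ref{P2} leaves implicit.
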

\begin{proof}
(1) is by Lemma \ref{L3}. (2)-(4) follow from Proposition \ref{P2} above. Finally, (5) is by Lemma \ref{T3}.
\end{proof}

As a consequence of Lemma \ref{T3} above, we conclude that every perfect algebraic space is reduced.
\begin{lemma}\label{L9}
Let $X$ be an algebraic space in characteristic $p$ over $S$. If $X$ is perfect, then $X$ is reduced. In particular, the perfection $X^{pf}$ of $X$ is reduced, i.e. $X^{pf}=X_{red}^{pf}$.
\end{lemma}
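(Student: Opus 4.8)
The statement to prove is Lemma~\ref{L9}: if $X$ is a perfect algebraic space in characteristic $p$ over $S$, then $X$ is reduced, and consequently $X^{pf}$ is reduced with $X^{pf} = X_{red}^{pf}$. The natural strategy is to reduce the assertion about algebraic spaces to the corresponding assertion about schemes, which is already available: Lemma~\ref{L5} tells us that a perfect $\mathbb{F}_p$-scheme is reduced, and that the perfection of any $\mathbb{F}_p$-scheme is reduced. The bridge between the scheme case and the algebraic space case is the \'{e}tale-local characterization of reducedness together with Lemma~\ref{T3}.

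First I would recall that reducedness is a property of schemes which is local in the \'{e}tale topology (indeed it is even local in the smooth topology; see \cite[Tag034E]{Stack Project}), and that it is preserved under the perfection functor on schemes by Lemma~\ref{L5}. Hence reducedness of algebraic spaces is well-defined via Lemma~\ref{LL6}, and Lemma~\ref{T3} applies: if $X$ has the property ``reduced'', then so does $X^{pf}$. So the only real content is to show that a \emph{perfect} algebraic space $X$ is reduced. For this, pick an \'{e}tale cover $h_U \to X$ with $U \in \ObSchS$ of characteristic $p$. Since $X$ is perfect, Lemma~\ref{L7} gives a canonical isomorphism $X \cong X^{pf}$; on the other hand, by the construction in Proposition~\ref{P7} (equivalently by Lemma~\ref{LL1} applied to the presentation), the cover $h_U \to X \cong X^{pf}$ may be taken to come from $U^{pf} \to X^{pf}$, i.e.\ we may assume $U$ is a perfect scheme. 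Then $U$ is reduced by Lemma~\ref{L5}, so by the definition of reducedness for algebraic spaces via Lemma~\ref{LL6}, $X$ is reduced.

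The final equality $X^{pf} = X_{red}^{pf}$ then follows: $X^{pf}$ is reduced by Lemma~\ref{T3} (or simply because $X^{pf}$ is a perfect algebraic space, by the part of the lemma just proved, since $(X^{pf})^{pf} \cong X^{pf}$ and perfect algebraic spaces are reduced); meanwhile the closed immersion $X_{red} \to X$ becomes an isomorphism after applying $\underline{\rm{Perf}}_S$, because $|p_X| \colon |X^{pf}| \to |X|$ is a homeomorphism (Lemma~\ref{L3}) and reducedness is detected on an \'{e}tale cover, where the corresponding statement for schemes $(U)^{pf} = (U_{red})^{pf}$ holds.

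The main obstacle, such as it is, is bookkeeping: one must make sure the \'{e}tale cover used to test reducedness can be arranged to consist of perfect schemes when $X$ is perfect, so that Lemma~\ref{L5} applies directly. This is exactly guaranteed by combining Lemma~\ref{L7} (a perfect algebraic space is its own perfection) with Lemma~\ref{LL1} (the perfection of a representable \'{e}tale cover is the representable cover by the perfection of the scheme), and there is no genuine difficulty beyond invoking these. No delicate estimate or new idea is required; the lemma is a formal consequence of Lemmas~\ref{L5}, \ref{L7}, \ref{LL1}, and~\ref{T3}.
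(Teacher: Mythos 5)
Your proof is correct and follows essentially the same route as the paper: reduce to the scheme case via \'{e}tale covers, invoke Lemma~\ref{L5} for perfect schemes, and use Lemma~\ref{T3} for the perfection. The only difference is cosmetic: the paper's first statement appeals directly to ``Lemma~\ref{L5} and definitions'' (the definition of a perfect algebraic space already supplies an \'{e}tale cover by a perfect scheme), whereas you reconstruct such a cover via Lemma~\ref{L7} and Proposition~\ref{P7}/Lemma~\ref{LL1}, and you additionally spell out the identification $X^{pf}=X_{red}^{pf}$, which the paper leaves implicit.
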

\begin{proof}
The first statement follows from Lemma \ref{L5} and definitions. The second statement is by Lemma \ref{T3}.
\end{proof}

\begin{corollary}
Let $X$ be a perfect algebraic space over $S$. Let $Y$ be a sheaf on $(Sch/S)_{fppf}$. Let $f:X\rightarrow Y$ be a representable morphism of functors. Suppose that $f$ is surjective \'{e}tale. Then $Y$ is a reduced algebraic space.
\end{corollary}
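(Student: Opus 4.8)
The plan is to invoke the structure theorem for quotients by étale equivalence relations, using the fact that $X$ is perfect to upgrade the resulting algebraic space $Y$ to a perfect one, and then to pass reducedness from $X$ to $Y$.

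First I would observe that since $f\colon X\to Y$ is representable, surjective, and étale, the fibre product $R:=X\times_{Y}X$ is representable, hence an algebraic space, and in fact $j\colon R\to X\times_{S}X$ is an étale equivalence relation on $X$ with $Y\cong X/R$ as fppf sheaves on $(Sch/S)_{fppf}$ (this is the standard presentation argument, cf. \cite[Tag02WW]{Stack Project}). Next I would reduce to the case where $X$ is a scheme: choose a surjective étale cover $h_{U}\to X$ with $U\in\ObSchS$ a perfect scheme (such a cover exists since $X$ is a perfect algebraic space, and one may take $U$ perfect because $X$ is perfect — any étale cover of a perfect algebraic space may be taken by a perfect scheme). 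Then the composition $h_{U}\to X\to Y$ is surjective étale, so after replacing $X$ by $U$ we may assume $X=h_{U}$ is representable by a perfect scheme $U$; correspondingly $R=h_{U}\times_{Y}h_{U}$ is representable by a scheme, which is moreover perfect since $U$ is perfect and perfectness of schemes is stable under the relevant fibre products (Proposition \ref{PP2} applied in the representable case, or directly \cite[Proposition 3.5]{Liang1}). Now $j\colon R\to U\times_{S}U$ is an étale perfect equivalence relation on the perfect scheme $U$, so Theorem \ref{A16} applies directly: the quotient $U/R\cong Y$ is a perfect algebraic space, and $h_{U}\to Y$ is surjective étale, so $Y$ is an algebraic space. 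The original map $f$ was surjective étale, so the presentation $(U,R,h_{U}\to Y)$ is as desired.

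Finally, for reducedness: $Y$ is perfect by the above, so by Lemma \ref{L9} every perfect algebraic space is reduced, whence $Y$ is reduced. (Alternatively one can argue directly: $Y$ admits an étale cover by the perfect scheme $U$, which is reduced by Lemma \ref{L5}, and reducedness is a local property of schemes in the étale topology, so $Y$ is reduced.)

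The main obstacle I anticipate is the reduction step — namely justifying that a surjective étale cover of a perfect algebraic space may be chosen with source a \emph{perfect} scheme, and that the induced equivalence relation $R$ is then a perfect scheme so that Theorem \ref{A16} is applicable. This is where the hypothesis "$X$ perfect" is genuinely used; without it the argument would only produce an algebraic space $Y$, not a perfect one, and the reducedness conclusion would fail. Everything else — the quotient presentation and the transfer of reducedness — is formal given the results already established in the excerpt.
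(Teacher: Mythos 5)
Your overall route --- present $Y$ as the quotient of a perfect scheme $U$ by the \'etale equivalence relation $R=U\times_{Y}U$, invoke Theorem~\ref{A16} to conclude that $Y$ is a perfect algebraic space, and then deduce reducedness from Lemma~\ref{L9} (equivalently, from Lemma~\ref{L5} and \'etale-localness of reducedness) --- is the right one. The paper itself simply quotes \cite[Proposition 3.12]{Liang1} together with Lemma~\ref{L9}, so what you have written is in effect a reconstruction of that cited external proposition; that is a reasonable and more self-contained thing to do.

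There is, however, a genuine gap at the one step where the hypothesis ``$X$ perfect'' is converted into perfectness of the equivalence relation: you justify that $R=h_{U}\times_{Y}h_{U}$ is a \emph{perfect} scheme by appealing to Proposition~\ref{PP2} ``in the representable case'' or to \cite[Proposition 3.5]{Liang1}. Both of those results require the \emph{base} of the fibre product to be (strongly) perfect, and here the base is $Y$ itself --- the sheaf which at this point is not yet known to be an algebraic space, let alone perfect; as written the justification is circular. The claim is true, but for a different reason: the projection $R\to U$ is \'etale, being the base change of the \'etale morphism $U\to Y$ along itself, and a scheme \'etale over a perfect $\mathbb{F}_{p}$-scheme is perfect (the relative Frobenius of an \'etale morphism is an isomorphism, and $R^{(p)}\to R$ is the base change of the isomorphism $\Phi_{U}$, so $\Phi_{R}$ is an isomorphism; this is the same fact the paper invokes via \cite[Proposition 3.9]{Liang1} in \S\ref{S4} over a perfect field). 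With that substitution, $j:R\to U\times_{S}U$ is indeed an \'etale perfect equivalence relation, Theorem~\ref{A16} applies, and the remainder of your argument (the quotient presentation via \cite[Tag02WW]{Stack Project} and the passage to reducedness) goes through.
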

\begin{proof}
This is by \cite[Proposition 3.12]{Liang1} and Lemma \ref{L9}.
\end{proof}

Now, we can make use of Proposition \ref{P2} to prove the following statement, which says that every algebraic Frobenius is a universal homeomorphism, and is integral.
\begin{proposition}
Let $F$ be an algebraic space of characteristic $p$ over $S$ with algebraic Frobenius morphism $\Psi_{F}$. Then $\Psi_{F}$ is integral, and is a universal homeomorphism.
\end{proposition}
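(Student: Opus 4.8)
The plan is to deduce the statement from Proposition \ref{P2}. Regard the algebraic Frobenius $\Psi_F\colon F\to F$ as a morphism of algebraic spaces in characteristic $p$ over $S$ (both its source and its target are $F$, which has characteristic $p$). By items (7) and (9) of Proposition \ref{P2}, being a universal homeomorphism and being integral hold for $\Psi_F$ if and only if they hold for its perfection $\Psi_F^{\natural}\colon F^{pf}\to F^{pf}$; so it suffices to verify these two properties for $\Psi_F^{\natural}$. In fact I claim that $\Psi_F^{\natural}$ is an \emph{isomorphism}, which is trivially integral and a universal homeomorphism, and that will finish the proof.

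To see this, recall from Lemma \ref{A21} that $\Psi_F^{\natural}$ is the unique morphism making the square
$$
\xymatrix{
  F^{pf} \ar[d]_{\Psi_F^{\natural}} \ar[r]^{\phi_F} & F \ar[d]^{\Psi_F} \\
  F^{pf} \ar[r]^{\phi_F} & F   }
$$
commute. Since the algebraic Frobenius is a $p$-th power operation on functions it is natural, i.e. $\Psi_G\circ g=g\circ\Psi_H$ for every morphism $g\colon H\to G$ of algebraic spaces of characteristic $p$ (this is checked on a presentation using naturality of the absolute Frobenius of schemes, cf. \cite[\S4]{Liang1}). Taking $g=\phi_F$ gives $\Psi_F\circ\phi_F=\phi_F\circ\Psi_{F^{pf}}$, so by the uniqueness clause of Lemma \ref{A21} we obtain $\Psi_F^{\natural}=\Psi_{F^{pf}}$. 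By Proposition \ref{P7} the algebraic space $F^{pf}$ is perfect, hence its algebraic Frobenius $\Psi_{F^{pf}}$ is an isomorphism by (the proof of) Lemma \ref{L7}, i.e. by \cite[Theorem 4.12]{Liang1}. Therefore $\Psi_F^{\natural}$ is an isomorphism. (Alternatively, avoiding naturality, one may use the description $F^{pf}=\lim_{n\in\N}F$ of Proposition \ref{P7}: the endomorphism induced by $\Psi_F$ on this inverse limit — which is $\Psi_F^{\natural}$, again by uniqueness in Lemma \ref{A21} — is the shift $(x_0,x_1,x_2,\dots)\mapsto(\Psi_F(x_0),x_0,x_1,\dots)$, an isomorphism of $S$-sheaves with inverse $(y_0,y_1,y_2,\dots)\mapsto(y_1,y_2,y_3,\dots)$.)

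Combining, $\Psi_F^{\natural}$ is an isomorphism, hence integral and a universal homeomorphism, so by Proposition \ref{P2}(7) and (9) the morphism $\Psi_F$ is integral and a universal homeomorphism. The only step requiring care is the identification $\Psi_F^{\natural}=\Psi_{F^{pf}}$ — equivalently, that the perfection functor $\underline{\textrm{Perf}}_S$ carries the algebraic Frobenius of $F$ to that of $F^{pf}$ — which relies on the functoriality of $\Psi$ established in \cite{Liang1} (and can in any case be bypassed via the explicit limit computation above); every other step is a formal application of results already established.
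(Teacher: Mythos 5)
Your proof is correct and follows essentially the same route as the paper: pass to $\Psi_F^{\natural}\colon F^{pf}\to F^{pf}$, observe it is an isomorphism via \cite[Theorem 4.12]{Liang1}, and transfer integrality and the universal homeomorphism property back to $\Psi_F$ using Proposition \ref{P2}(7) and (9). The only difference is that you explicitly justify the identification $\Psi_F^{\natural}=\Psi_{F^{pf}}$ (by naturality of the Frobenius, or by the shift map on the inverse limit), a step the paper's proof leaves implicit.
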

\begin{proof}
Consider the morphism $\Psi_{F}^{\natural}:F^{pf}\rightarrow F^{pf}$, which is an isomorphism due to \cite[Theorem 4.12]{Liang1}. This shows that $\Psi_{F}^{\natural}$ is a universal homeomorphism, and is integral. By Proposition \ref{P2}, $\Psi_{F}$ is a universal homeomorphism, and is integral.
\end{proof}

Next, we can consider subspaces of algebraic spaces. We conclude that the perfection functor preserves all kinds of subspaces of an algebraic space.
\begin{proposition}
Let $F$ be an algebraic space in characteristic $p$ over $S$.
\begin{enumerate}[font=\normalfont]
  \item
If $F'\subset F$ is an open subspace, then $F'^{pf}$ is an open subspace of $F^{pf}$.
  \item
If $F'\subset F$ is a closed subspace, then $F'^{pf}$ is a closed subspace of $F^{pf}$.
  \item
If $F'\subset F$ is a locally closed subspace, then $F'^{pf}$ is a locally closed subspace of $F^{pf}$.
\end{enumerate}
\end{proposition}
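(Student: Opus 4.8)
The plan is to reduce all three parts to \proref{P2}, which already records that the perfection functor sends closed immersions, open immersions, and immersions of algebraic spaces to morphisms of the same type. By definition an open (resp.\ closed, resp.\ locally closed) subspace $F'\subset F$ is one for which the inclusion $j\colon F'\to F$ is an open immersion (resp.\ closed immersion, resp.\ immersion); since every immersion is representable, \lemref{A21} produces a canonical map $j^{\natural}\colon F'^{pf}\to F^{pf}$, and by \lemref{L4} this is canonically identified with the projection $F'\times_{F}F^{pf}\to F^{pf}$, i.e.\ with the base change of $j$ along the canonical projection $p_{F}\colon F^{pf}\to F$.

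For (1) I would invoke item~(13) of \proref{P2}: since $j$ is an open immersion, $j^{\natural}$ is an open immersion, so $F'^{pf}$ is an open subspace of $F^{pf}$; equivalently, this is just the stability of open immersions under the base change $F'^{pf}\cong F'\times_{F}F^{pf}\to F^{pf}$. Part (2) is identical using item~(12) of \proref{P2}, and (3) uses item~(14) (an immersion). Alternatively, for (3) one can factor the immersion $F'\hookrightarrow F$ as a closed immersion of $F'$ into an open subspace $F''$ of $F$, apply (1) to $F''\subset F$ and (2) to $F'\subset F''$, and use \proref{P3} to check that $(F'')^{pf}$ sits inside $F^{pf}$ as the expected open subspace; but the direct route is shorter.

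Finally, I would record that the subspace obtained in this way is the expected one: by \lemref{L3} the canonical projections induce homeomorphisms $|F'^{pf}|\xrightarrow{\sim}|F'|$ and $|F^{pf}|\xrightarrow{\sim}|F|$ compatibly with $j$ and $j^{\natural}$, so on underlying topological spaces $j^{\natural}$ is identified with the original inclusion $|F'|\hookrightarrow|F|$. I do not expect a real obstacle here: the only point deserving attention is that "$j^{\natural}$ is an immersion" already entails that $j^{\natural}$ is a monomorphism, which is exactly what is needed for $F'^{pf}$ to define a locally closed \emph{subspace} of $F^{pf}$; everything else is bookkeeping with the definitions and \proref{P2}.
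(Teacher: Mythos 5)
Your route is essentially the paper's: the paper proves (1)--(3) by citing Lemma~\ref{T1} directly (open immersions, closed immersions and immersions being representable, stable under base change and fppf local on the base), while you cite items (12)--(14) of \proref{P2}, which the paper itself deduces from \lemref{T1}; so the two arguments coincide up to one level of indirection, and your extra remarks (the identification of $j^{\natural}$ with $F'\times_{F}F^{pf}\to F^{pf}$ via \lemref{L4}, and the topological bookkeeping via \lemref{L3}) are not needed.

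The one point you skip is the only step the paper actually writes out: before you may speak of $F'^{pf}$ or of $j^{\natural}$ at all, and before \lemref{A21} or \proref{P2} apply to the inclusion $j\colon F'\to F$, you must know that the subspace $F'$ itself has characteristic $p$; these statements are formulated only for algebraic spaces and morphisms in characteristic $p$. This is not part of the hypothesis of the proposition, so it has to be verified: choose a surjective \'etale map $V\to F$ with $V$ a scheme of characteristic $p$; then (e.g.\ by \cite[Tag02X1]{Stack Project}, or by taking $U=V\times_{F}F'$, which is a scheme since $j$ is representable) one gets a scheme $U$ \'etale and surjective onto $F'$ admitting a morphism to $V$, hence of characteristic $p$, so $F'$ has characteristic $p$. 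With this inserted, your argument is complete; without it, the appeal to \proref{P2} is not licensed by its hypotheses.
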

\begin{proof}
First, we show that any subspace of $X$ has characteristic $p$. Let $V\in\ObSchS$ in characteristic $p$ and let $V\rightarrow F$ be a surjective \'{e}tale map. Then it follows from \cite[Tag02X1]{Stack Project} that there exists $U\in\ObSchS$ and a commutative diagram
$$
\xymatrix{
  U \ar[d]_{} \ar[r]^{} & V \ar[d]^{} \\
  F' \ar[r]^{} & F   }
$$
This shows that $U$ also has characteristic $p$ so that $F'$ has characteristic $p$. Thus, (1)-(3) all follow from Theorem \ref{T1}.
\end{proof}

\section{Perfection of sites}\label{S4}
In this section, we observe that one can define certain topologies on the category $AP_{S}$ by means of certain perfect morphisms. The perfection functor gives rise to the notion of perfection of sites, which provides us with a framework to restate Zhu's perfect algebraic spaces in \cite{Zhu1}.

\subsection{The perfect sites}\

We first introduce the notion of weakly perfect (resp. quasi-perfect, resp. semiperfect) coverings, which generate the corresponding topology.
\begin{definition}
Let $X$ be an algebraic space over $S$. A weakly perfect (resp. quasi-perfect, resp. semiperfect) covering or simply wp (resp. qp, resp. sp) covering of $X$ is a family $\{f_{i}:X_{i}\rightarrow X\}_{i\in I}$ of morphisms of algebraic spaces over $S$ such that each $f_{i}$ is weakly perfect (resp. quasi-perfect, resp. semiperfect), and
$$
\left|X\right|=\bigcup_{i\in I}\left|f_{i}\right|(\left|X_{i}\right|).
$$
\end{definition}

If the source and all the targets of a weakly perfect (resp. quasi-perfect, resp. semiperfect) covering are representable, then we obtain a weakly perfect (resp. quasi-perfect, resp. semiperfect) covering of schemes.

Clearly, every weakly perfect covering is both a quasi-perfect covering and a semiperfect covering. Meanwhile, every quasi-perfect covering is a semiperfect covering. It can be shown that these notions satisfy the conditions being a topology.
\begin{lemma}
Let $X$ be an algebraic space over $S$.
\begin{enumerate}[font=\normalfont]
  \item
If $X'\rightarrow X$ is an isomorphism, then $\{X'\rightarrow X\}$ is a weakly perfect (resp. quasi-perfect, resp. semiperfect) covering.
  \item
If $\{X_{i}\rightarrow X\}_{i\in I}$ is a weakly perfect (resp. quasi-perfect, resp. semiperfect) covering, and for each $i$, we have a weakly perfect (resp. quasi-perfect, resp. semiperfect) covering $\{X_{ij}\rightarrow X_{j}\}_{j\in J_{i}}$, then $\{X_{ij}\rightarrow X\}_{i\in I,j\in J_{i}}$ is a weakly perfect (resp. quasi-perfect, resp. semiperfect) covering.
  \item
If $\{X_{i}\rightarrow X\}_{i\in I}$ is a weakly perfect (resp. quasi-perfect, resp. semiperfect) covering and $X'\rightarrow X$ is a morphism of algebraic spaces over $S$, then the family $\{X_{i}\times_{X}X'\rightarrow X'\}_{i\in I}$ is a weakly perfect (resp. quasi-perfect, resp. semiperfect) covering.
\end{enumerate}
\end{lemma}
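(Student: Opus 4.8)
The plan is to verify the three axioms of a Grothendieck topology --- stability under isomorphism, composition, and base change --- separately for each of the three flavours of covering. In every case the check splits into two independent pieces: that the relevant class of morphisms (weakly perfect, resp. quasi-perfect, resp. semiperfect) is closed under the operation at hand, and that the joint surjectivity condition $|X|=\bigcup_{i\in I}|f_i|(|X_i|)$ is preserved. For the first piece I would quote \cite{Liang1}, where the three classes of perfect morphisms are introduced and shown to contain all isomorphisms and to be stable under composition and arbitrary base change; the second piece is elementary point-set topology of algebraic spaces. Since a weakly perfect covering is a fortiori quasi-perfect and semiperfect (and likewise for morphisms), the three cases can be run in parallel.

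For (1) I would note that an isomorphism $X'\to X$ is weakly perfect by \cite{Liang1}, hence quasi-perfect and semiperfect, while $|X'\to X|$ is a homeomorphism and therefore surjective; so $\{X'\to X\}$ is a covering of each type.

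For (2), writing $f_i\colon X_i\to X$ and $f_{ij}\colon X_{ij}\to X_i$ (the target $X_j$ in the displayed family in the statement should read $X_i$), each composite $f_i\circ f_{ij}$ is again weakly perfect (resp. quasi-perfect, resp. semiperfect) by stability under composition in \cite{Liang1}, and the joint surjectivity follows from the chain
\begin{align*}
\bigcup_{i\in I,\,j\in J_i}\bigl|f_i\circ f_{ij}\bigr|\bigl(|X_{ij}|\bigr)
&=\bigcup_{i\in I}|f_i|\Bigl(\bigcup_{j\in J_i}|f_{ij}|\bigl(|X_{ij}|\bigr)\Bigr)\\
&=\bigcup_{i\in I}|f_i|\bigl(|X_i|\bigr)=|X|,
\end{align*}
using that each $\{X_{ij}\to X_i\}_{j\in J_i}$ covers $X_i$ and that $\{X_i\to X\}_{i\in I}$ covers $X$.

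For (3) I would use that the three classes of morphisms are stable under arbitrary base change (\cite{Liang1}), so each projection $\mathrm{pr}_i\colon X_i\times_X X'\to X'$ inherits the property of $f_i$; the remaining point is joint surjectivity. Given $x'\in|X'|$ with image $x\in|X|$, I would pick $i\in I$ and $x_i\in|X_i|$ with $|f_i|(x_i)=x$, and then lift $(x_i,x')$ to a point of $|X_i\times_X X'|$ using surjectivity of the canonical map $|X_i\times_X X'|\to|X_i|\times_{|X|}|X'|$; this point maps to $x'$, so $\{X_i\times_X X'\to X'\}_{i\in I}$ is jointly surjective. The one step here that is not purely formal --- and the place I expect to have to be careful --- is invoking the surjectivity of $|X_i\times_X X'|\to|X_i|\times_{|X|}|X'|$ for algebraic spaces (a standard fact about points of fibre products, see \cite{Stack Project}); everything else reduces to bookkeeping plus the permanence properties of perfect morphisms from \cite{Liang1}.
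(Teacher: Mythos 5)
Your proposal is correct and follows essentially the same route as the paper, which simply cites the permanence properties of weakly perfect, quasi-perfect and semiperfect morphisms from \cite{Liang1} (isomorphisms, compositions, base change) for the three axioms; your additional spelling-out of the joint-surjectivity bookkeeping, including the surjectivity of $|X_i\times_X X'|\to|X_i|\times_{|X|}|X'|$, is the standard point-set argument left implicit there. No gap.
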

\begin{proof}
(1) is by \cite[Lemma 5.8]{Liang1}. (2) follows from \cite[Proposition 5.2]{Liang1}. And (3) is from \cite[Proposition 5.3]{Liang1}.
\end{proof}

Weakly perfect (resp. quasi-perfect, resp. semiperfect) morphisms are local on the base for the perfect topologies.
\begin{proposition}
Let $\tau\in\{wp,qp,sp\}$. Let $f:X\rightarrow Y$ be a morphism of algebraic spaces over $S$. The property $\cal{P}(f)=``f\textrm{ is weakly perfect''}$ (resp. $\cal{P}(f)=``f\textrm{ is quasi-perfect''}$, resp. $\cal{P}(f)=``f\textrm{ is semiperfect''}$) is $\tau$ local on the base.
\end{proposition}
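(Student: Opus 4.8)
The plan is to check, for each $\tau\in\{wp,qp,sp\}$, the two conditions that characterize a property $\cal P$ of morphisms of algebraic spaces being $\tau$ local on the base: (i) $\cal P$ is stable under base change, and (ii) whenever $\{g_i\colon Y_i\to Y\}_{i\in I}$ is a $\tau$-covering and $f\colon X\to Y$ is a morphism of algebraic spaces over $S$ such that every base change $f_i\colon X\times_{Y}Y_i\to Y_i$ has $\cal P$, then $f$ itself has $\cal P$. Condition (i) is immediate from \cite[Proposition 5.3]{Liang1}: weak perfectness (resp. quasi-perfectness, resp. semiperfectness) of a morphism of algebraic spaces over $S$ is preserved under arbitrary base change, in particular under base change along a $\tau$-covering morphism; this also takes care of the ``only if'' half of the biconditional hidden in (ii). Hence the whole substance of the proposition is the descent implication (ii).

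To prove (ii) I would reduce to the case of schemes. Choose a surjective \'etale morphism $V\to Y$ with $V\in\ObSchS$ of characteristic $p$, and for each $i$ a surjective \'etale morphism $V_i\to V\times_{Y}Y_i$ with $V_i\in\ObSchS$; set $U=X\times_{Y}V$ and $U_i=X\times_{Y}V_i$. The three notions of perfect morphism are set up in \cite[\S5]{Liang1} by reduction to schemes through \'etale presentations (compare the bootstrap pattern of Lemma~\ref{LL3}), so the hypothesis that each $f_i$ has $\cal P$ amounts to saying that each morphism of schemes $U_i\to V_i$ has $\cal P$, and it suffices to conclude that $U\to V$ has $\cal P$. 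Since the $V_i$ have characteristic $p$ and $\{V_i\to V\}_{i\in I}$ is a jointly surjective family of weakly perfect (resp. quasi-perfect, resp. semiperfect) morphisms, it is a $\tau$-covering of schemes, so we are reduced to the scheme-theoretic descent assertion.

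For schemes the difficulty is that the $V_i\to V$ need not be flat, so ordinary fppf descent of $\cal P$ does not apply directly; the idea is to transport the question through the perfection functor. Since perfection commutes with fibre products (the scheme version of Proposition~\ref{P3}) one has $U_i^{pf}=U^{pf}\times_{V^{pf}}V_i^{pf}$, and $\{V_i^{pf}\to V^{pf}\}$ remains jointly surjective because $|V_i^{pf}|=|V_i|$ and $|V^{pf}|=|V|$; by \cite[Theorem 5.5]{Liang1} and Lemma~\ref{L6} the structure of weakly perfect morphisms of perfect schemes allows one to refine this family to an honest fppf covering of $V^{pf}$. Because $\cal P$ for $U\to V$ is itself tested by base change along perfect schemes over $V$, and because the canonical projection $V^{pf}\to V$ is a universal homeomorphism (Lemma~\ref{L4}), hence induces an equivalence of \'etale sites, the question for $U\to V$ is equivalent to the corresponding question for $U^{pf}\to V^{pf}$, where descent of $\cal P$ along the fppf covering $\{V_i^{pf}\to V^{pf}\}$ is classical. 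Reading the conclusion back yields $\cal P(U\to V)$, and therefore $\cal P(f)$.

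I expect the main obstacle to be precisely this last reduction: matching the definitions of weakly/quasi/semi-perfect morphism with a property that genuinely descends along an fppf covering, and arranging the non-flat $\tau$-covering $\{V_i\to V\}$ to be dominated by an fppf covering after applying the perfection functor. This should succeed because weakly perfect morphisms are built to become well-behaved — in particular, after a further \'etale refinement, flat — once one perfects, so the failure of flatness before perfection is harmless; the remaining bookkeeping then runs parallel to \cite[\S5]{Liang1}.
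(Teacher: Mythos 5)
Your first half (stability under base change, via \cite[Proposition 5.3]{Liang1}) matches the paper, but the descent half is where your argument breaks down, and it is exactly the half the paper handles in a completely different and much more elementary way. Your plan rests on several steps that are nowhere established and are doubtful as stated: (a) the claim that ``each $f_i$ has $\cal{P}$ amounts to saying that each morphism of schemes $U_i\to V_i$ has $\cal{P}$'' treats weak/quasi/semi-perfectness as if it were an \'etale-local-on-source-and-target property in the sense of Lemma \ref{LL3}; but these properties are defined by testing representability of $h_U\times_Y X$ by \emph{perfect} schemes against points $\xi\in Y(U)$, not through \'etale presentations, so this reduction is unjustified; (b) the assertion that after perfection the family $\{V_i^{pf}\to V^{pf}\}$ can be refined to an fppf covering (``weakly perfect morphisms become, after a further \'etale refinement, flat once one perfects'') is not proved in this paper or in \cite{Liang1}, and nothing in the definition of weakly perfect forces any flatness; (c) even granting (b), ``descent of $\cal{P}$ along an fppf covering is classical'' is false as stated: $\cal{P}$ here is representability of fibre products by perfect schemes, which is not one of the standard fppf-local properties, so there is no classical descent statement to invoke. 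Each of (a)--(c) is a genuine missing ingredient, not bookkeeping.

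The paper's own proof needs none of this machinery: it verifies the definition directly. Given a $\tau$-covering $\{Y_i\to Y\}$ such that each base change $X\times_Y Y_i\to Y_i$ has $\cal{P}$, take a test scheme $U\in\ObSchS$ with a point $\xi\in Y_i(U)$; then
$$
h_U\times_{Y}X\;\simeq\;h_U\times_{Y_i}\bigl(Y_i\times_{Y}X\bigr)\;\simeq\;h_W
$$
for some perfect scheme $W$, by the hypothesis on $X\times_Y Y_i\to Y_i$, and this is precisely what $\cal{P}(f)$ asks for; composition with $Y_i\to Y$ supplies the needed points of $Y$. So the entire content is a two-line manipulation of fibre products at the level of the definition, with no reduction to schemes, no use of the perfection functor (Proposition \ref{P3}, Lemma \ref{L4}), and no appeal to fppf descent. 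You should either fill the gaps (a)--(c) with actual proofs --- which would amount to building a new descent theory for perfect representability --- or, better, abandon the detour and argue directly from the definition as above.
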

\begin{proof}
Let $\{Y_{i}\rightarrow Y\}_{i\in I}$ be any $\tau$-covering. If $f$ is weakly perfect (resp. quasi-perfect, resp. semiperfect), then clearly $X\times_{Y}Y_{i}\rightarrow Y_{i}$ is weakly perfect (resp. quasi-perfect, resp. semiperfect). Conversely, let $U\in\ObSchS$ and any $\xi\in Y_{i}(U)$. If $X\times_{Y}Y_{i}\rightarrow Y_{i}$ is weakly perfect (resp. quasi-perfect, resp. semiperfect), then we have $h_{W}\simeq h_{U}\times_{Y_{i}}(Y_{i}\times_{Y}X)\simeq h_{U}\times_{Y}X$ for some perfect scheme $W\in\ObSchS$. Thus, $f$ is weakly perfect (resp. quasi-perfect, resp. semiperfect).
\end{proof}

We record here two propositions of compositions of weakly perfect (resp. quasi-perfect, resp. semiperfect) morphisms that will be useful in the sequel.
\begin{proposition}\label{P6}
Let $f:X\rightarrow Y,g:Y\rightarrow Z$ be morphisms of algebraic spaces over $S$.
\begin{enumerate}[font=\normalfont]
  \item
If $f$ is weakly perfect and $g$ is quasi-perfect (resp. semiperfect), then $g\circ f$ is weakly perfect.
  \item
If $f$ is quasi-perfect and $g$ is weakly perfect (resp. semiperfect), then $g\circ f$ is weakly perfect (resp. quasi-perfect).
  \item
If $f$ is semiperfect and $g$ is weakly perfect (resp. quasi-perfect), then $g\circ f$ is weakly perfect (resp. quasi-perfect).
\item
If $f$ is perfect and $g$ is weakly perfect (resp. quasi-perfect, resp. semiperfect), then $g\circ f$ is perfect. Moreover, if $f$ is weakly perfect (resp. quasi-perfect, resp. semiperfect) and $g$ is perfect, then $g\circ f$ is perfect.
\end{enumerate}
\end{proposition}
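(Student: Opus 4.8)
The plan is to derive all four statements from the corresponding composition facts for morphisms of schemes proved in \cite{Liang1} (collected in \cite[\S5]{Liang1}), transported to algebraic spaces by base change. The mechanism is this: each of the properties ``weakly perfect'', ``quasi-perfect'' and ``semiperfect'' of a morphism of algebraic spaces over $S$ is stable under arbitrary base change (the preceding lemma, item~(3), and \cite[Proposition~5.3]{Liang1}), is local on the base for the relevant perfect topology (the proposition just proved), and can be detected on the base changes along the morphisms $h_{U}\to(\text{target})$ with $U\in\ObSchS$ a perfect scheme. Hence, to determine which of the three (or the ``perfect'') properties $g\circ f$ enjoys, it suffices to fix a perfect scheme $U$ and a morphism $\zeta\colon h_{U}\to Z$ and to analyse the resulting tower $X_{U}\to Y_{U}\to h_{U}$, where $Y_{U}:=h_{U}\times_{Z}Y$ and $X_{U}:=h_{U}\times_{Z}X\simeq Y_{U}\times_{Y}X$.

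For (1)--(3): base-change stability makes $X_{U}\to Y_{U}$ inherit the property of $f$, while the hypothesis on $g$ fixes the type of $Y_{U}$, and hence of the morphism of schemes obtained from a surjective étale (or, for semiperfect $g$, fppf) cover $h_{V}\to Y_{U}$ by a perfect scheme $V\in\ObSchS$. Base changing $X_{U}\to Y_{U}$ along $h_{V}\to Y_{U}$ produces $h_{V}\times_{Y}X\simeq h_{V}\times_{Y_{U}}X_{U}\to h_{V}$, a morphism of schemes whose target is perfect and which carries the property of $f$; the mixed composition statement for morphisms of schemes in \cite[\S5]{Liang1} applied to $h_{V}\times_{Y}X\to h_{V}\to h_{U}$, followed by descent along the cover $h_{V}\to Y_{U}$ (legitimate by locality on the base), gives the asserted property of $X_{U}\to h_{U}$, hence of $g\circ f$. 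The three items are distinguished only by which factor supplies the ``perfect'' datum at the stage $h_{V}$ and which forces the weaker (``quasi-'' or ``semi-'') conclusion, so in each one quotes precisely the clause of the scheme-level lemma pairing those two hypotheses.

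Item (4) is the sturdiest and needs no cover: a perfect morphism is tested over \emph{all} schemes over its target, so for $U$, $\zeta$ as above the factorization $X_{U}\simeq Y_{U}\times_{Y}X\to Y_{U}\to h_{U}$ is already a composition of morphisms of the relevant types over a scheme, and the composition property of perfect morphisms of schemes in \cite[\S5]{Liang1}, together with the fact that a perfect morphism absorbs a weakly perfect, quasi-perfect or semiperfect morphism placed on either side of it, gives both halves simultaneously. (One could equally phrase (1)--(4) in terms of total spaces of base changes and deduce them from the stability of perfect, strongly perfect, \dots\ algebraic spaces under fibre products, as in Proposition~\ref{PP2}.)

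The real difficulty is bookkeeping rather than geometry: since the hypotheses in (1)--(3) are asymmetric, the base-change argument must be run through each mixed case with care that the scheme-level composition lemma being quoted is exactly the one pairing the two properties in play, and that passing from $Y_{U}$ to a perfect-scheme cover loses no information — which is precisely what locality on the base and base-change stability ensure.
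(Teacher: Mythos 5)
There is a genuine gap at the descent step. To show, say in case (1), that $g\circ f$ is weakly perfect you must verify that for a perfect test scheme $U$ with $\zeta\in Z(U)$ the sheaf $X_{U}=h_{U}\times_{Z}X$ is a perfect scheme, i.e.\ you must control the morphism $X_{U}\rightarrow h_{U}$. Your argument establishes the desired property only for $h_{V}\times_{Y_{U}}X_{U}\rightarrow h_{U}$, where $h_{V}\rightarrow Y_{U}$ is a cover of the \emph{intermediate} space $Y_{U}=h_{U}\times_{Z}Y$, and then invokes ``locality on the base'' to pass to $X_{U}\rightarrow h_{U}$. But the locality-on-the-base proposition applies to coverings of the base of the morphism in question, here $h_{U}$, and $\{h_{V}\rightarrow Y_{U}\}$ is not a covering of $h_{U}$; nor is $h_{V}\times_{Y_{U}}X_{U}$ a base change of $X_{U}\rightarrow h_{U}$ along anything over $h_{U}$. (The source-locality statement, which is closer in shape to what you want, is proved in the paper only \emph{after} this proposition and uses it, so it cannot be invoked either.) The detour is also unnecessary: the hypothesis on $g$ already makes $Y_{U}$ representable by a perfect scheme when $U$ is a perfect test scheme, so weak perfectness of $f$ applies directly to the composite $h_{W'}\simeq Y_{U}\rightarrow Y$ and no cover or descent is needed.

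This is in fact how the paper argues: its proof is a direct Yoneda/fibre-product computation --- choose perfect schemes $W\simeq h_{U}\times_{Y}X$ and $W'\simeq h_{V}\times_{Z}Y$ furnished by the hypotheses and identify the test fibre product of $g\circ f$ with a fibre product of perfect schemes, which is perfect by the results recalled from \cite{Liang1}. In particular the paper never appeals to ``mixed composition statements for morphisms of schemes'' in \cite{Liang1}; such scheme-level lemmas are not quoted anywhere in this paper and cannot be taken for granted, so your reduction both rests on an unverified input and, as written, does not glue back to the statement about $X_{U}\rightarrow h_{U}$. A corrected write-up should drop the cover of $Y_{U}$ and the descent, and instead use the representability of $Y_{U}$ by a perfect scheme (supplied by the hypothesis on $g$) followed by the defining property of $f$, case by case; your treatment of (4) needs the same repair, since for a non-perfect test scheme $U$ and $g$ merely weakly perfect the space $Y_{U}$ is not yet known to be a scheme, which is exactly the point requiring an argument rather than a citation.
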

\begin{proof}
We just prove (1) since the other are similar. Let $U,V\in\ObSchS$ be perfect schemes and let $\xi\in Y(U),\xi'\in Z(V)$. Choose isomorphisms $h_{W}\simeq h_{U}\times_{Y}X$ and $h_{W'}\simeq h_{V}\times_{Z}Y$ for some perfect schemes $W,W'\in\ObSchS$. Then since $U,V\neq\emptyset$, the composition $h_{U}\times_{Y}X\rightarrow h_{U}\rightarrow h_{V}$ makes sense. And we have $(h_{U}\times_{Y}X)\times_{h_{V}}(h_{V}\times_{Z}Y)\simeq h_{U}\times_{Z}X\simeq h_{W\times_{V}W'}$. Thus, $g\circ f$ is weakly perfect.
\end{proof}

\begin{proposition}\label{P9}
Let $f:Y\rightarrow X,g:Y'\rightarrow X,h:Y\rightarrow Y'$ be morphisms of algebraic spaces over $S$. Suppose that $f=g\circ h$.
\begin{enumerate}[font=\normalfont]
\item
If $f,g$ are (resp. perfect, resp. weakly perfect, resp. quasi-perfect, resp. semiperfect), then $h$ is quasi-perfect.
\item
If $f,h$ are (resp. perfect, resp. weakly perfect, resp. quasi-perfect, resp. semiperfect), then $g$ is quasi-perfect.
\end{enumerate}
\end{proposition}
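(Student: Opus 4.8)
The plan is to prove both items by the usual cancellation trick built on the graph of $h$, using that the four properties are stable under base change together with the composition rules of Proposition~\ref{P6}. Since $f=g\circ h$, the morphism $h\colon Y\to Y'$ is an $X$-morphism, so it has a graph
$$
\Gamma_h=(\mathrm{id}_Y,h)\colon Y\longrightarrow Y\times_X Y',
$$
a section of $\mathrm{pr}_Y$ with $\mathrm{pr}_{Y'}\circ\Gamma_h=h$. Here $\mathrm{pr}_{Y'}\colon Y\times_X Y'\to Y'$ is the base change of $f$ along $g$, $\mathrm{pr}_Y$ is the base change of $g$ along $f$, and $\Gamma_h$ is the base change of the relative diagonal $\Delta_{Y'/X}\colon Y'\to Y'\times_X Y'$ along $h\times_X\mathrm{id}_{Y'}\colon Y\times_X Y'\to Y'\times_X Y'$. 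The one auxiliary fact I would isolate first is that $\Delta_{Y'/X}$ is always quasi-perfect: it is representable and a monomorphism, so its pullback to a perfect scheme $U$ is a representable monomorphism $W\to U$ that locally exhibits $W$ as a locally closed subscheme of the perfect scheme $U$, and since a reduced closed subscheme of a perfect scheme is again perfect, $W_{\mathrm{red}}$ is perfect, i.e.\ $W$ is quasi-perfect. This is the type of local verification already carried out in \cite{Liang1}.

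For (1), assume $f$ and $g$ both have one of the four properties. Then $\mathrm{pr}_{Y'}$ inherits that property from $f$ (each property being local on the base, and similarly for perfect morphisms), while $\Gamma_h$ is quasi-perfect, being a base change of $\Delta_{Y'/X}$. Applying Proposition~\ref{P6} to the composite $h=\mathrm{pr}_{Y'}\circ\Gamma_h$, with $\Gamma_h$ quasi-perfect and $\mathrm{pr}_{Y'}$ of the relevant type, gives by parts (4) and (2) of that proposition a morphism which is perfect, weakly perfect, or quasi-perfect when $f$ is perfect, weakly perfect, or quasi-perfect, and which is still quasi-perfect when $f$ is semiperfect (part (2), after weakening ``quasi-perfect'' to ``semiperfect'' for $\mathrm{pr}_{Y'}$). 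In every case $h$ is quasi-perfect.

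For (2), I would run the dual argument, descending along $h$ rather than factoring it. Fix a perfect scheme $U$ and $\xi\in X(U)$; the task is to see that $h_U\times_X Y'$ is a quasi-perfect algebraic space. There is a canonical isomorphism $h_U\times_X Y\simeq(h_U\times_X Y')\times_{Y'}Y$ identifying the natural morphism $h_U\times_X Y\to h_U\times_X Y'$ with a base change of $h$, so this morphism has whichever property $h$ has, and in particular is surjective. Its source $h_U\times_X Y$ is a perfect scheme, a quasi-perfect algebraic space, or a semiperfect algebraic space according to whether $f$ is weakly perfect or perfect, quasi-perfect, or semiperfect. Hence $h_U\times_X Y'$ is the target of a surjective morphism having one of the four properties whose source is of the matching type, and the descent statements for these notions from \cite{Liang1} then force $h_U\times_X Y'$ to be quasi-perfect; letting $U$ and $\xi$ vary, this is precisely the assertion that $g$ is quasi-perfect.

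The two steps I expect to demand genuine work are the quasi-perfectness of the relative diagonal $\Delta_{Y'/X}$ — one must check the monomorphism $W\to U$ is tame enough (an immersion, hence locally of finite presentation over $U$) for the ``reduced closed subscheme of a perfect scheme is perfect'' reduction to bite, which may require first reducing to the locally separated case — and the descent step in (2). Everything else is bookkeeping with Proposition~\ref{P6}, and it is exactly this bookkeeping, which never produces anything sharper than quasi-perfect out of mixed inputs, that accounts for the uniform conclusion ``quasi-perfect'' in both items.
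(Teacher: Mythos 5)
Your route is genuinely different from the paper's, and it has two real gaps. The paper proves both parts by a direct fibre-product computation that uses \emph{both} hypotheses: it pulls back $f$ and $g$ along points of $X$ valued in (perfect) schemes to get perfect schemes $W'\simeq U\times_X Y$ and $W\simeq V\times_X Y'$, and then identifies $h_W\times_{Y'}Y\simeq h_{W'}$, so the pullback of $h$ over the perfect scheme $W$ is again a perfect scheme; part (2) is the same computation with the roles rearranged. Your part (1) instead rests entirely on the auxiliary claim that the relative diagonal $\Delta_{Y'/X}$ is quasi-perfect for \emph{arbitrary} algebraic spaces, plus the property of $f$ transferred to $\mathrm{pr}_{Y'}$ by base change; the hypothesis on $g$ is never used. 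That should already be a warning: if the diagonal claim held unconditionally, the assumption on $g$ in (1) would be superfluous, whereas the paper's argument needs $g$ precisely to produce the perfect scheme $W$ over $Y'$ against which $h$ is tested. And the claim itself is not established: the pullback of $\Delta_{Y'/X}$ to a perfect scheme $U$ is a representable monomorphism $W\to U$, but a monomorphism is not an immersion in general (the diagonal of an algebraic space is an immersion only in the locally separated case), so the reduction to ``a reduced closed subscheme of a perfect scheme is perfect'' does not apply as stated; you flag this yourself but do not close it, and nothing in this paper or in the cited results of \cite{Liang1} is available to close it.

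Part (2) has a second, independent gap. You assert that the base change $h_U\times_X Y\to h_U\times_X Y'$ of $h$ ``in particular is surjective,'' but none of perfect, weakly perfect, quasi-perfect, or semiperfect entails surjectivity --- indeed the paper defines wp/qp/sp \emph{coverings} by imposing the surjectivity condition $\left|X\right|=\bigcup_i\left|f_i\right|(\left|X_i\right|)$ as an extra requirement on top of these properties. Without surjectivity there is nothing to descend along, and even granting it, the ``descent statements for these notions from \cite{Liang1}'' (that the target of a surjective weakly perfect morphism with perfect or quasi-perfect source is quasi-perfect) are invoked without a precise reference and are not among the results used anywhere in this paper. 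So as written, (2) does not go through; the intended argument in the paper is again the elementary fibre-product identification, where the hypotheses on $f$ and $h$ directly furnish perfect schemes over $X$ and over $Y$, and $g$ is then tested against those.
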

\begin{proof}
We just prove (1) as the proof of (2) is almost the same. Let $U,V\in\ObSchS$ and let $\xi\in X(U),\xi'\in X(V)$. Choose isomorphisms $h_{W'}\simeq h_{U}\times_{X}Y$ and $h_{W}\simeq h_{V}\times_{X}Y'$ for some perfect schemes $W',W\in\ObSchS$. Since $U,V\neq\emptyset$, the composition $h_{V}\times_{X}Y'\rightarrow h_{V}\rightarrow h_{U}$ makes sense. Then we have $(h_{U}\times_{Y'}Y)\times_{h_{U}}(h_{V}\times_{X}Y')\simeq h_{V}\times_{X}Y\simeq h_{W'}$. But $h_{U}\times_{Y'}Y\times_{h_{U}}h_{W}\simeq h_{W'}$. Thus, we have $h_{W}\times_{Y'}Y\simeq h_{W'}$ such that $h$ is quasi-perfect.
\end{proof}

Quasi-perfect and semiperfect morphisms are local on the source for the perfect topologies.
\begin{proposition}
Let $\tau\in\{wp,qp,sp\}$. Let $f:X\rightarrow Y$ be a morphism of algebraic spaces over $S$. The property $\cal{P}(f)=``f\textrm{ is quasi-perfect''}$ (resp. $\cal{P}(f)=``f\textrm{ is semiperfect''}$) is $\tau$ local on the source.
\end{proposition}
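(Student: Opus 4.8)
The plan is to verify directly the two implications bundled into the assertion that $\cal{P}$ is $\tau$ local on the source: for a $\tau$-covering $\{g_i\colon X_i\to X\}_{i\in I}$ of $X$, that $f$ has $\cal{P}$ if and only if every composite $f\circ g_i\colon X_i\to Y$ has $\cal{P}$. Throughout I use the evident implications weakly perfect $\Rightarrow$ quasi-perfect $\Rightarrow$ semiperfect for morphisms (the analogue for coverings being already recorded above), so that a morphism $g_i$ occurring in a $\tau$-covering is in all three cases semiperfect, and is moreover quasi-perfect when $\tau\in\{wp,qp\}$.

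For the forward implication, assume $f$ has $\cal{P}$ and fix an index $i$; then $g_i$ is weakly perfect, quasi-perfect, or semiperfect according as $\tau=wp$, $qp$, or $sp$. I would apply Proposition~\ref{P6} to the composite $X_i\xrightarrow{g_i}X\xrightarrow{f}Y$, reading $g_i$ as the first factor ``$f$'' and the outer morphism $f$ as the second factor ``$g$'' there. Parts (1)--(3) of Proposition~\ref{P6} then cover every case in which the left-hand factor $g_i$ is weakly perfect or quasi-perfect — using, where needed, that a quasi-perfect morphism is in particular semiperfect — and in each of these cases they yield that $f\circ g_i$ is again quasi-perfect, resp.\ semiperfect. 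The one combination not literally listed in Proposition~\ref{P6}, a semiperfect morphism followed by a semiperfect morphism, arises precisely when $\tau=sp$ and $\cal{P}$ is the property of being semiperfect; this I would settle by repeating the fibre-product computation used for Proposition~\ref{P6}(1), obtaining that $f\circ g_i$ is semiperfect.

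For the converse, assume that $f\circ g_i$ has $\cal{P}$ for some index $i$. I would factor $f\circ g_i$ as $X_i\xrightarrow{g_i}X\xrightarrow{f}Y$ and apply Proposition~\ref{P9}(2) with $h=g_i$ and with $f$ in the role of ``$g$''. Since $g_i$ and $f\circ g_i$ are both semiperfect — whatever $\tau$ and whatever $\cal{P}$ — Proposition~\ref{P9}(2) applies in its ``semiperfect'' form and shows that $f$ is quasi-perfect, hence semiperfect; so $f$ has $\cal{P}$ in either case. Note that this uses neither the joint surjectivity of $\{g_i\}$ nor more than one of its members: a single semiperfect $g$ with $f\circ g$ semiperfect already forces $f$ to be quasi-perfect, so the substance of ``local on the source'' here is essentially a $2$-out-of-$3$ phenomenon.

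The step I expect to demand the most care is the forward implication — chiefly the one composition fact (semiperfect after semiperfect) that Proposition~\ref{P6} does not record, and the bookkeeping of which clause of Proposition~\ref{P6} applies in each of the cases obtained from the three topologies and the two properties. The converse is uniform and follows immediately once Proposition~\ref{P9}(2) is in hand.
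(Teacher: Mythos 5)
Your argument is correct and follows essentially the same route as the paper's own proof: the forward direction via the composition results of Proposition~\ref{P6} and the converse via Proposition~\ref{P9}(2), using the implications weakly perfect $\Rightarrow$ quasi-perfect $\Rightarrow$ semiperfect that the paper itself records at the level of coverings. The paper's proof is terser (it writes out only the weakly perfect covering case and declares the rest similar), so your explicit treatment of the semiperfect-after-semiperfect composition missing from Proposition~\ref{P6} and your observation that the converse uses only a single member of the covering are refinements of, not departures from, the paper's argument.
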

\begin{proof}
Let $\{X_{i}\rightarrow X\}_{i\in I}$ be a weakly perfect covering. If $f$ is quasi-perfect (resp. semiperfect), then each $X_{i}\rightarrow Y$ is quasi-perfect (resp. semiperfect) by Proposition \ref{P6}. Conversely, if each $X_{i}\rightarrow Y$ is quasi-perfect (resp. semiperfect), then it follows from Proposition \ref{P9} that $f$ is quasi-perfect (resp. semiperfect). The rest of the proof are similar.
\end{proof}

In the following, we will define the big perfect sites. First, we consider the cases of schemes.
\begin{definition}\label{D1}
A big weakly perfect site $Sch_{wp}$ is any site given by the category of schemes with weakly perfect coverings.

A big quasi-perfect site $Sch_{qp}$ is any site given by the category of schemes with quasi-perfect coverings.

A big semiperfect site $Sch_{sp}$ is any site given by the category of schemes with semiperfect coverings.
\end{definition}
\begin{remark}
To avoid set-theoretic issues, it suffices to define the sites $Sch_{wp},Sch_{qp},Sch_{sp}$ following the general procedures given in \cite[Tag020M]{Stack Project}.
\end{remark}

Suppose that $S$ is a base scheme contained in $Sch_{wp}$ (resp. $Sch_{qp}$, resp. $Sch_{sp}$). By localization, we obtain the \textit{big weakly perfect} (resp. \textit{quasi-perfect}, resp. \textit{semiperfect}) \textit{site of} $S$, denoted by $(Sch/S)_{wp}$ (resp. $(Sch/S)_{qp}$, resp. $(Sch/S)_{sp}$).

Now, we can more generally define the big perfect sites of algebraic spaces.
\begin{definition}\label{D2}
A big weakly perfect site $(Spaces/S)_{wp}$ is any site given by the category of algebraic spaces over $S$ with weakly perfect coverings.

A big quasi-perfect site $(Spaces/S)_{qp}$ is any site given by the category of algebraic spaces over $S$ with quasi-perfect coverings.

A big semiperfect site $(Spaces/S)_{sp}$ is any site given by the category of algebraic spaces over $S$ with semiperfect coverings.
\end{definition}
\begin{remark}
To avoid set-theoretic issues, it suffices to follow the general procedures given in \cite[Tag03Y6]{Stack Project} to define the sites $(Spaces/S)_{wp},(Spaces/S)_{qp},(Spaces/S)_{sp}$.
\end{remark}

Suppose that $X$ is an algebraic space contained in $(Spaces/S)_{wp}$ (resp. $(Spaces/S)_{qp}$, resp. $(Spaces/S)_{sp}$). By localization at $X$, we obtain the \textit{big weakly perfect} (resp. \textit{quasi-perfect}, resp. \textit{semiperfect}) \textit{site of} $X$, denoted by $(Spaces/X)_{wp}$ (resp. $(Spaces/X)_{qp}$, resp. $(Spaces/X)_{sp}$).

Next, we could define the small perfect sites as follows.
\begin{definition}
Let $X$ be an algebraic space over $S$. The small weakly perfect site $X_{Spaces,wp}$ of $X$ is defined as follows:
\begin{enumerate}[font=\normalfont]
  \item
An object of $X_{Spaces,wp}$ is a morphism $U\rightarrow X$ of algebraic spaces over $S$ which is weakly perfect,
  \item
a morphism $(f:U\rightarrow X)\rightarrow(f':U'\rightarrow X)$ of $X_{Spaces,wp}$ is given by a weakly perfect morphism $\varphi:U\rightarrow U'$ of algebraic spaces over $S$ such that $f=f'\circ\varphi$.
  \item
a family of morphisms $\{f_{i}:(U_{i}\rightarrow X)\rightarrow(U\rightarrow X)\}_{i\in I}$ of $X_{Spaces,wp}$ is a covering if and only if $\left|U\right|=\bigcup_{i\in I}\left|f_{i}\right|(\left|U_{i}\right|)$.
\end{enumerate}
\end{definition}
Similarly, one obtains the \textit{small quasi-perfect site $X_{Spaces,qp}$ of} $X$ and the \textit{small semiperfect site $X_{Spaces,sp}$ of} $X$.

\begin{proposition}
Let $\tau\in\{wp,qp,sp\}$ and let $\tau_{0}\in\{qp,sp\}$. The functor given by
$$
Y_{Spaces,\tau}\longrightarrow X_{Spaces,\tau}, \ \ V\longmapsto V\times_{Y}X
$$
is continuous. The continuous functor $Y_{Spaces,\tau_{0}}\longrightarrow X_{Spaces,\tau_{0}}$ induces a morphism of sites
$$
f_{Spaces,\tau_{0}}:X_{Spaces,\tau_{0}}\longrightarrow Y_{Spaces,\tau_{0}}.
$$
\end{proposition}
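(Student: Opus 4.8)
The plan is to unwind the definitions of \emph{continuous functor} and of \emph{morphism of sites} and feed the result into the stability properties of perfect morphisms already collected in \S\ref{S4}. Write $f\colon X\to Y$ for the ambient morphism of algebraic spaces over $S$, and for $\tau\in\{wp,qp,sp\}$ let
$$
u_{\tau}\colon Y_{Spaces,\tau}\longrightarrow X_{Spaces,\tau},\qquad V\longmapsto V\times_{Y}X,
$$
act on a morphism $\varphi\colon V\to V'$ over $Y$ by $\varphi\times\mathrm{id}_{X}$. The first step is to check that $u_{\tau}$ is well defined. The structure morphism $V\times_{Y}X\to X$ is the base change of the $\tau$-morphism $V\to Y$; since each of the classes of weakly perfect, quasi-perfect, and semiperfect morphisms is stable under arbitrary base change, $V\times_{Y}X\to X$ is again $\tau$, hence an object of $X_{Spaces,\tau}$, and similarly $\varphi\times\mathrm{id}_{X}$, being the base change of $\varphi$ along $V'\times_{Y}X\to V'$, is $\tau$. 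So $u_{\tau}$ is a functor.

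Next I would verify continuity in its two clauses. Given a covering $\{V_{i}\to V\}_{i\in I}$ of $Y_{Spaces,\tau}$, each $V_{i}\times_{Y}X\to V\times_{Y}X$ is a base change of $V_{i}\to V$, hence $\tau$; and since $\coprod_{i}V_{i}\to V$ is surjective and surjectivity is preserved by the base change along $V\times_{Y}X\to V$, the family $\{V_{i}\times_{Y}X\to V\times_{Y}X\}_{i\in I}$ is jointly surjective on topological spaces, so it is a $\tau$-covering of $V\times_{Y}X$. For the second clause, given in addition a morphism $T\to V$ in $Y_{Spaces,\tau}$, the fibre product $V_{i}\times_{V}T$ lies in $Y_{Spaces,\tau}$: its two projections are base changes of the $\tau$-morphisms $V_{i}\to V$ and $T\to V$, and its structure morphism to $Y$ is a composition of $\tau$-morphisms, hence again $\tau$ by \proref{P6} (using, for $\tau\in\{qp,sp\}$, the implications weakly perfect $\Rightarrow$ quasi-perfect $\Rightarrow$ semiperfect between morphism classes). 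Since base change commutes with fibre products,
$$
u_{\tau}(V_{i}\times_{V}T)=(V_{i}\times_{V}T)\times_{Y}X\cong(V_{i}\times_{Y}X)\times_{V\times_{Y}X}(T\times_{Y}X)=u_{\tau}(V_{i})\times_{u_{\tau}(V)}u_{\tau}(T),
$$
which gives the required isomorphism. This establishes continuity for all $\tau\in\{wp,qp,sp\}$.

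For $\tau_{0}\in\{qp,sp\}$ I would then invoke the standard criterion that a continuous functor which carries a final object to a final object and commutes with fibre products defines a morphism of sites in the opposite direction (see \cite{Stack Project}; compare the analogous statement for small \'{e}tale sites). The site $Y_{Spaces,\tau_{0}}$ has final object $\mathrm{id}_{Y}\colon Y\to Y$, which is $\tau_{0}$ since it is an isomorphism, and $u_{\tau_{0}}$ sends it to $Y\times_{Y}X\to X$, canonically $\mathrm{id}_{X}$, the final object of $X_{Spaces,\tau_{0}}$. Moreover $Y_{Spaces,\tau_{0}}$ has fibre products: for $\tau_{0}$-morphisms $V\to Y$ and $W\to Y$ the projections of $V\times_{Y}W$ are base changes of $\tau_{0}$-morphisms and the structure morphism $V\times_{Y}W\to Y$ is a composition of $\tau_{0}$-morphisms, both again $\tau_{0}$ as above, and $u_{\tau_{0}}$ commutes with these by the same base-change identity. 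Hence $u_{\tau_{0}}$ induces a morphism of sites $f_{Spaces,\tau_{0}}\colon X_{Spaces,\tau_{0}}\to Y_{Spaces,\tau_{0}}$, with $f_{Spaces,\tau_{0}}^{-1}=u_{\tau_{0}}^{s}$ (now left exact) and $f_{Spaces,\tau_{0},*}=u_{\tau_{0},s}$.

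The step I expect to be the main obstacle is confirming that the small $\tau$-sites are genuinely closed under the fibre products that continuity demands, i.e. the stability of $\tau$-morphisms under composition (base change being routine). This is immediate for $\tau=wp$ and, via \proref{P6}, for $\tau=qp$; the delicate case is $\tau=sp$, where one must combine the mixed-composition statements of \proref{P6} with the hierarchy of perfect morphism classes, or else argue directly from the definitions in \cite[\S5]{Liang1}, to conclude that a composition of two semiperfect morphisms is semiperfect. Once that closure is in hand, everything else is a formal application of the site-morphism formalism.
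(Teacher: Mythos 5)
Your continuity check matches what the paper leaves as ``clear,'' but your argument for the second statement takes a different route from the paper's and has a genuine gap. You verify only that the candidate fibre product $V\times_{V''}T$ of a diagram in $Y_{Spaces,\tau_{0}}$ is an object of the site and that its two projections are $\tau_{0}$-morphisms; you never verify the universal property \emph{inside the site}: for any other object $Z\to Y$ with site morphisms to $V$ and $T$ agreeing over $V''$, the induced morphism of algebraic spaces $Z\to V\times_{V''}T$ must itself be a $\tau_{0}$-morphism, otherwise it is not a morphism of $Y_{Spaces,\tau_{0}}$ and your object is not a fibre product there. This is exactly what the paper's proof supplies via \proref{P9} (together with base-change stability, \cite[Proposition 5.3]{Liang1}): since the structure morphisms of $Z$ and of $V\times_{V''}T$ to $Y$ are $\tau_{0}$, the induced map is automatically quasi-perfect, hence a morphism of the $qp$- and $sp$-sites. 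It is also the reason the morphism-of-sites claim is restricted to $\tau_{0}\in\{qp,sp\}$: \proref{P9} only produces quasi-perfect, never weakly perfect. Your argument as written applies verbatim to $\tau=wp$ (composition of weakly perfect morphisms follows from part (1) of \proref{P6} plus weakly perfect $\Rightarrow$ quasi-perfect), so it would yield a morphism of sites for the weakly perfect site as well, which is more than the proposition asserts --- a sign that the step enforcing the restriction is missing.

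Relatedly, the obstacle you single out --- that \proref{P6} does not give closure of semiperfect morphisms under composition --- is real for your route but is sidestepped by the paper's: invoking \proref{P9} first, every morphism over $Y$ between objects of the site (in particular the covering maps $V_{i}\to V$ and the projections of the fibre product) is quasi-perfect, and then only the mixed compositions of \proref{P6} are needed (a quasi-perfect morphism followed by a $\tau_{0}$-morphism, using quasi-perfect $\Rightarrow$ semiperfect, is quasi-perfect, hence $\tau_{0}$) to see that the structure morphism of the fibre product is $\tau_{0}$. So the repair is to replace your composition-based membership check, and to complete the missing universal-property check, by the cancellation statement \proref{P9} combined with \cite[Proposition 5.3]{Liang1} --- which is precisely the paper's proof. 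A minor further point: in your last paragraph you only exhibit fibre products over the final object $Y$, whereas the criterion you invoke requires fibre products over arbitrary objects of the site.
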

\begin{proof}
The first statement is clear. For the second statement, we just need to show that $Y_{Spaces,\tau_{0}}$ has fibre products. This follows from Proposition \ref{P9} and \cite[Proposition 5.3]{Liang1}.
\end{proof}

The following lemma shows that every quasi-perfect (resp. semiperfect) covering admits a weakly perfect refinement.
\begin{lemma}\label{L8}
Let $X$ be an algebraic space over $S$. Let $\{X_{i}\rightarrow X\}_{i\in I}$ be a quasi-perfect (resp. semiperfect) covering. Then there exists a weakly perfect covering that refines $\{X_{i}\rightarrow X\}_{i\in I}$.
\end{lemma}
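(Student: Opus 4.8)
The plan is to reduce the problem to the level of schemes by choosing suitable étale covers, and then to exploit the decomposition results for perfect morphisms already established. Suppose $\{X_i \to X\}_{i \in I}$ is a quasi-perfect (resp. semiperfect) covering. First I would pick, for each $i$, a surjective étale map $h_{U_i} \to X_i$ with $U_i \in \ObSchS$; replacing $U_i$ by its perfection via Lemma \ref{LL1} and Lemma \ref{L6}, I may assume each $U_i$ is a perfect scheme and that $h_{U_i} \to X_i$ is weakly perfect (here I use that an étale surjection onto an algebraic space pulls back from a perfect source, together with Lemma \ref{L6}). Likewise choose a surjective étale $h_V \to X$ with $V$ a perfect scheme; the étale cover $h_V \to X$ is weakly perfect by Lemma \ref{L6}.

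Next I would form the composites $h_{U_i} \to X_i \to X$. Since each $X_i \to X$ is quasi-perfect (resp. semiperfect) and each $h_{U_i} \to X_i$ is weakly perfect, Proposition \ref{P6}(1) shows that the composite $h_{U_i} \to X$ is weakly perfect (both in the quasi-perfect and the semiperfect case, the composite of a weakly perfect morphism with a quasi-perfect or semiperfect one is weakly perfect). The family $\{h_{U_i} \to X\}_{i \in I}$ is then a family of weakly perfect morphisms, and it is still jointly surjective on underlying topological spaces: indeed $|h_{U_i}| \to |X_i|$ is surjective and $\bigcup_i |X_i| \to |X|$ is surjective by hypothesis, so $|X| = \bigcup_i |f_i|(|h_{U_i}|)$. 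Hence $\{h_{U_i} \to X\}_{i \in I}$ is a weakly perfect covering of $X$, and it refines the original covering since each $h_{U_i} \to X$ factors through $X_i \to X$ by construction.

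The main obstacle is the passage from $U_i$ and $V$ to perfect schemes while keeping the étale covers weakly perfect: one must verify that after replacing the source of an étale surjection by its perfection one still has a covering of the original algebraic space and that the resulting morphism is weakly perfect. This is exactly what Lemma \ref{LL1} (the perfection of a representable space is $h_{U^{pf}}$), Lemma \ref{L6} (an étale surjection from a perfect scheme onto a perfect algebraic space is weakly perfect), and Lemma \ref{L4} (the canonical projection $U^{pf} \to U$ is a universal homeomorphism, hence surjective) are designed to handle; once these are in place the remaining steps are formal applications of Proposition \ref{P6} and the definition of a $\tau$-covering. Note that one does not need the target algebraic spaces $X_i$ to be perfect for this argument, only the scheme-theoretic sources chosen to cover them.
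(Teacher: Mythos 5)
Your overall route is the one the paper takes: cover each $X_i$ by a scheme via a surjective \'{e}tale map $U_i\rightarrow X_i$, invoke Lemma~\ref{L6} to see that this map is weakly perfect, apply Proposition~\ref{P6}(1) to conclude that the composites $U_i\rightarrow X_i\rightarrow X$ are weakly perfect, and observe that joint surjectivity on underlying spaces persists, so $\{U_i\rightarrow X\}_{i\in I}$ is a weakly perfect covering refining the given one. The auxiliary cover $h_V\rightarrow X$ you choose plays no role and can be dropped.

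The one step that does not work as written is the reduction ``replacing $U_i$ by its perfection \dots\ I may assume each $U_i$ is a perfect scheme and that $h_{U_i}\rightarrow X_i$ is weakly perfect.'' The canonical projection $U_i^{pf}\rightarrow U_i$ is integral and a universal homeomorphism (Lemma~\ref{L4}), but it is not \'{e}tale, so after this replacement the composite $U_i^{pf}\rightarrow X_i$ is no longer a surjective \'{e}tale map, and Lemma~\ref{L6} --- whose hypotheses are precisely a surjective \'{e}tale map from a perfect scheme onto a perfect algebraic space --- can no longer be applied to it; Lemma~\ref{LL1} does not repair this, since it only identifies $h_{U_i}^{pf}$ with $h_{U_i^{pf}}$ and says nothing about \'{e}taleness over $X_i$. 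The paper instead applies Lemma~\ref{L6} directly to the chosen \'{e}tale cover $\varphi_{X_i}:U_i\rightarrow X_i$, without modifying $U_i$ (so any hypotheses of Lemma~\ref{L6} on $U_i$ and $X_i$ are taken as met there; your replacement step neither supplies the perfectness of $X_i$ nor preserves the \'{e}taleness that the lemma needs). If you delete the replacement step and quote Lemma~\ref{L6} for the original \'{e}tale covers, your argument coincides with the paper's proof.
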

\begin{proof}
For each $i$, choose a scheme $U_{i}\in\ObSchS$ such that $\varphi_{X_{i}}:U_{i}\rightarrow X_{i}$ is a surjective \'{e}tale map. By Lemma \ref{L6}, each $\varphi_{X_{i}}$ is weakly perfect. Then it follows from Proposition \ref{P6} that $\{U_{i}\rightarrow X\}_{i\in I}$ is a weakly perfect covering that refines $\{X_{i}\rightarrow X\}_{i\in I}$.
\end{proof}

We will see that the previous lemma indeed tells that the sites $(Spaces/S)_{wp}$, $(Spaces/S)_{qp}$, and $(Spaces/S)_{sp}$ have the same categories of sheaves.
\begin{proposition}
For sites $(Spaces/S)_{wp}$, $(Spaces/S)_{qp}$, and $(Spaces/S)_{sp}$, there is a string of equivalences of topoi
$$
\widetilde{(Spaces/S)_{wp}}=\widetilde{(Spaces/S)_{qp}}=\widetilde{(Spaces/S)_{sp}}.
$$
\end{proposition}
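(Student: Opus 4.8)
The plan is to invoke the standard comparison principle for Grothendieck topologies: the three sites $(Spaces/S)_{wp}$, $(Spaces/S)_{qp}$, $(Spaces/S)_{sp}$ are built on one and the same underlying category (the category of algebraic spaces over $S$, made into a small category by the procedure of \cite[Tag03Y6]{Stack Project}, the same choice being used for all three), and on a fixed category two topologies, one coarser than the other, determine the same sheaves as soon as every covering of the finer topology admits a refinement by a covering of the coarser one; see \cite[Tag00VW]{Stack Project}. Granting this, a presheaf on the common category satisfies the sheaf axiom for the $wp$-topology if and only if it does for the $qp$-topology if and only if it does for the $sp$-topology, and since the three resulting categories of sheaves are full subcategories of the same category of presheaves, they are equal---which is exactly the assertion $\widetilde{(Spaces/S)_{wp}}=\widetilde{(Spaces/S)_{qp}}=\widetilde{(Spaces/S)_{sp}}$.

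The argument then assembles two inputs. First, the comparison of the topologies: every weakly perfect covering is a quasi-perfect covering and every quasi-perfect covering is a semiperfect covering (noted just before Definition \ref{D1}), so the $wp$-topology is coarser than the $qp$-topology, which is coarser than the $sp$-topology. Second, the refinement statement, which is precisely Lemma \ref{L8}: every quasi-perfect covering and every semiperfect covering of an algebraic space over $S$ is refined by a weakly perfect covering. It follows that every $sp$-covering has a $wp$-refinement, that every $qp$-covering has a $wp$-refinement, and---since a $wp$-covering is in particular a $qp$-covering---that every $sp$-covering has a $qp$-refinement. Hence the comparison principle applies to each of the pairs $(wp,qp)$, $(wp,sp)$, $(qp,sp)$, and the chain of equalities of topoi follows.

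In this setup essentially all of the content is carried by Lemma \ref{L8}, so I do not expect a genuine difficulty in the deduction; the point that requires the most care---and the closest thing here to an obstacle---is the formal bookkeeping. One must make sure the three big sites are genuinely localizations of one common small category, so that ``the same presheaf'' and ``the same sheaf condition'' are meaningful across all three, and one must match the notion of refinement appearing in Lemma \ref{L8} with the one required by \cite[Tag00VW]{Stack Project}, namely that a refinement of $\{X_i\to X\}_{i\in I}$ is a covering $\{V_j\to X\}_{j\in J}$ in the finer topology together with, for each $j$, an index $i\in I$ and a morphism $V_j\to X_i$ over $X$. Both are immediate from Lemma \ref{L8} and the discussion preceding Definition \ref{D1}, so no new computation is needed.
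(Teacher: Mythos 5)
Your proposal is correct and follows essentially the same route as the paper: the paper likewise observes that every $wp$-covering is itself a $qp$- (resp.\ $sp$-) covering, invokes Lemma \ref{L8} for the reverse refinements, and concludes by the mutual-refinement criterion from the Stacks Project (cited there as Tag00VX rather than Tag00VW). The only difference is your added bookkeeping about the common underlying category, which the paper leaves implicit.
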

\begin{proof}
First, it is obvious that every weakly perfect covering admits a quasi-perfect (resp. semiperfect) refinement. Conversely, it follows directly from Lemma \ref{L8} that every quasi-perfect (resp. semiperfect) covering has a weakly perfect refinement. Thus, \cite[Tag00VX]{Stack Project} yields equalities $Sh((Spaces/S)_{wp})=Sh((Spaces/S)_{qp})=Sh((Spaces/S)_{sp})$ of categories of sheaves.
\end{proof}

Let $k$ be a perfect field of characteristic $p$ such that the affine scheme $\textrm{Spec}(k)$ is over $S$. Let $\textrm{Spec}(k)_{spaces,\textit{\'{e}t}}$ be a small \'{e}tale site of $\textrm{Spec}(k)$, whose objects are \'{e}tale morphisms $U\rightarrow\textrm{Spec}(k)$ of algebraic spaces over $S$. Then it follows from \cite[Proposition 3.9]{Liang1} that every $U\in\Ob(\textrm{Spec}(k)_{spaces,\textit{\'{e}t}})$ is perfect. Moreover, the quasi-perfect topology of $\textrm{Spec}(k)_{spaces,\textit{\'{e}t}}$ are finer than the \'{e}tale topology of $\textrm{Spec}(k)_{spaces,\textit{\'{e}t}}$.
\begin{lemma}
Every \'{e}tale covering in ${\rm{Spec}}(k)_{spaces,\textit{\'{e}t}}$ is a quasi-perfect covering.
\end{lemma}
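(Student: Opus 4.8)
The plan is to prove that every morphism $f_i\colon U_i\to U$ occurring in an \'{e}tale covering $\{f_i\colon U_i\to U\}_{i\in I}$ of $\textrm{Spec}(k)_{spaces,\textit{\'{e}t}}$ is quasi-perfect; since the surjectivity condition $|U|=\bigcup_{i\in I}|f_i|(|U_i|)$ is literally the requirement both for an \'{e}tale covering and for a quasi-perfect covering, verifying quasi-perfectness of each $f_i$ is all that is needed.

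First I would record two structural facts. By \cite[Proposition 3.9]{Liang1} applied to the perfect field $k$, every object of $\textrm{Spec}(k)_{spaces,\textit{\'{e}t}}$ is a perfect algebraic space; in particular $U_i$ and $U$ are perfect algebraic spaces of characteristic $p$, so by Lemma \ref{L7} the canonical projections $\phi_{U_i}\colon U_i^{pf}\to U_i$ and $\phi_U\colon U^{pf}\to U$ are isomorphisms. Secondly, a morphism of $\textrm{Spec}(k)_{spaces,\textit{\'{e}t}}$ is, by definition, a morphism over $\textrm{Spec}(k)$ between two algebraic spaces that are \'{e}tale over $\textrm{Spec}(k)$, hence is itself \'{e}tale and, in particular, representable.

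Now I would apply Proposition \ref{P4} to the representable morphism $f_i$: it yields that $f_i^{\natural}\colon U_i^{pf}\to U^{pf}$ is weakly perfect. By Lemma \ref{A21} the square with edges $\phi_{U_i}$, $f_i$, $\phi_U$, $f_i^{\natural}$ commutes, i.e. $f_i\circ\phi_{U_i}=\phi_U\circ f_i^{\natural}$, and since $\phi_{U_i}$ and $\phi_U$ are isomorphisms we get $f_i=\phi_U\circ f_i^{\natural}\circ\phi_{U_i}^{-1}$. Thus $f_i$ is obtained from the weakly perfect morphism $f_i^{\natural}$ by pre- and post-composing with isomorphisms, and since the class of weakly perfect morphisms is stable under this (the fibre product defining weak perfectness changes only up to isomorphism), $f_i$ is weakly perfect, hence a fortiori quasi-perfect (see \cite{Liang1}). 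Combined with the topological condition recalled above, this shows that $\{f_i\colon U_i\to U\}_{i\in I}$ is a quasi-perfect covering.

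The only delicate point is the identification of $f_i$ with its perfection $f_i^{\natural}$ via Lemma \ref{L7}. If one prefers to bypass it, one can argue directly from the definition of weak perfectness: for an arbitrary perfect scheme $T$ and $\xi\in U(T)$, representability of $f_i$ gives an isomorphism $h_T\times_{\xi,U,f_i}U_i\simeq h_W$ with $W\to T$ \'{e}tale, and then $W$ is a perfect scheme because the absolute Frobenius $\Phi_W$ factors as $\textrm{pr}_W\circ\Phi_{W/T}$ with $\Phi_{W/T}$ an isomorphism (the relative Frobenius of an \'{e}tale morphism) and $\textrm{pr}_W$ an isomorphism (the base change of $\Phi_T$, which is invertible since $T$ is perfect). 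Either way, the substantive input is the perfectness of the \'{e}tale pullback, which is exactly where the hypothesis that $k$ — hence every perfect scheme over it — has invertible absolute Frobenius enters.
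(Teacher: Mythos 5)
Your argument reaches the right conclusion but by a genuinely different route from the paper's, and it has one gap. The paper's proof is a short cancellation argument: by \cite[Proposition 5.11]{Liang1} the structure morphisms $U_i\rightarrow{\rm Spec}(k)$ and $U\rightarrow{\rm Spec}(k)$ are weakly perfect, and then Proposition \ref{P9}, applied to the factorization of the structure morphism of $U_i$ through $U$, shows at once that \emph{every} morphism in ${\rm Spec}(k)_{spaces,\textit{\'{e}t}}$ is quasi-perfect; joint surjectivity of an \'{e}tale covering then gives the statement. You instead use that all objects of the site are perfect (\cite[Proposition 3.9]{Liang1}), identify each $f_i$ with its perfection via Lemma \ref{L7}, and invoke Proposition \ref{P4} (or, in your last paragraph, a direct computation of the fibre product with a perfect test scheme). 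Your route would even give the stronger conclusion that each $f_i$ is weakly perfect, but it requires an extra hypothesis the paper's route never needs: representability of $f_i$.

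That hypothesis is exactly where the gap lies. The justification ``hence is itself \'{e}tale and, in particular, representable'' is not valid: \'{e}tale morphisms of algebraic spaces need not be representable, and both of your arguments (the one through Proposition \ref{P4} and the bypass at the end) lean on representability of $f_i$, so as written neither is complete. The needed fact is nevertheless true in this specific situation, because an algebraic space \'{e}tale over the spectrum of a field is a scheme --- a disjoint union of spectra of finite separable extensions of $k$ (see \cite{Stack Project}) --- so every object of ${\rm Spec}(k)_{spaces,\textit{\'{e}t}}$ is a scheme and every morphism between such objects is representable; adding this reference repairs your proof. Alternatively, you can avoid representability altogether, as the paper does, by combining \cite[Proposition 5.11]{Liang1} with the cancellation property of Proposition \ref{P9}.
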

\begin{proof}
Let $(U\rightarrow\textrm{Spec}(k))$ be an object of the site $\textrm{Spec}(k)_{spaces,\textit{\'{e}t}}$. It follows from \cite[Proposition 5.11]{Liang1} that the morphism $U\rightarrow\textrm{Spec}(k)$ is weakly perfect. Hence by Proposition \ref{P9}, every morphism in $\textrm{Spec}(k)_{spaces,\textit{\'{e}t}}$ is quasi-perfect. This proves our statement.
\end{proof}

\subsection{On the perfection of sites}\

In this subsection, we study the perfection of topologies. We first formalize several types of families of morphisms such that they could admit perfection in the following sense.
\begin{definition}
Let $X$ be an algebraic space over $S$. Let $\cal{U}=\{f_{i}:X_{i}\rightarrow X\}_{i\in I}$ be a family of morphisms of algebraic spaces over $S$ with fixed target $X$.
\begin{enumerate}[font=\normalfont]
  \item
$\cal{U}$ is said to have characteristic $p$ if $X$ and each $X_{i}$ have characteristic $p$.
  \item
$\cal{U}$ is said to have characteristic $0$ if either $X$ or one of $X_{i}$ has characteristic $0$.
  \item
If $\cal{U}$ has characteristic $p$, then the perfection of $\cal{U}$ is defined to be the family $\cal{U}^{pf}=\{f_{i}^{\natural}:X_{i}^{pf}\rightarrow X^{pf}\}_{i\in I}$.
 \item
We say that $\cal{U}$ is perfect if $X$ and each $X_{i}$ are perfect.
\end{enumerate}
We will use ${\rm{char}}(\cal{U})$ to indicate the characteristic of $\cal{U}$.
\end{definition}
\begin{remark}
Note that one should distinguish between perfect covering and weakly perfect (resp. quasi-perfect, resp. semiperfect) coverings.
\end{remark}

The following lemma shows that \'{e}tale (resp. Zariski, resp. weakly perfect, resp. quasi-perfect, resp. semiperfect) topologies are preserved under the perfection functor.
\begin{lemma}\label{L12}
Let $\tau\in\{\textit{\'{e}tale,Zar,wp,qp,sp}\}$. Let $X$ be an algebraic space over $S$. Let $\{f_{i}:X_{i}\rightarrow X\}_{i\in I}$ be a $\tau$-covering of $X$ in characteristic $p$. Then the perfection $\{f_{i}^{\natural}:X_{i}^{pf}\rightarrow X^{pf}\}_{i\in I}$ is a perfect $\tau$-covering of $X^{pf}$.
\end{lemma}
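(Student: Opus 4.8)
The plan is to check the two clauses in the definition of a perfect $\tau$-covering of $X^{pf}$ separately: (a) that $\{f_i^\natural\colon X_i^{pf}\to X^{pf}\}_{i\in I}$ is a perfect family each of whose members is of the type prescribed by $\tau$, and (b) that $|X^{pf}|=\bigcup_{i\in I}|f_i^\natural|(|X_i^{pf}|)$.

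For (a), note first that since $\{f_i\colon X_i\to X\}_{i\in I}$ has characteristic $p$, all of $X$ and the $X_i$ belong to $AP_S^p$; by Corollary \ref{C1} the perfections $X^{pf}$ and $X_i^{pf}$ are perfect, so $\{f_i^\natural\}_{i\in I}$ is a perfect family. It then remains to see that each $f_i^\natural$ has the right property. If $\tau$ is the Zariski (resp.\ \'{e}tale) topology, each $f_i$ is an open immersion (resp.\ \'{e}tale), and Proposition \ref{P2} shows that $f_i^\natural$ is again an open immersion (resp.\ \'{e}tale). If $\tau\in\{wp,qp,sp\}$, each $f_i$ is weakly perfect (resp.\ quasi-perfect, resp.\ semiperfect), and the last item of Proposition \ref{P2} shows that $f_i^\natural$ inherits the same property; alternatively one can argue directly, since for a perfect scheme $U$ with $h_U\to X^{pf}$ one has $h_U\times_{X^{pf}}X_i^{pf}\simeq h_U\times_{X^{pf}}(X_i\times_X X^{pf})\simeq h_U\times_X X_i$ by Lemma \ref{L4} and Proposition \ref{P3}, which is a perfect scheme by the hypothesis on $f_i$.

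For (b), the input is that $p_X\colon X^{pf}\to X$ and each $p_{X_i}\colon X_i^{pf}\to X_i$ are universal homeomorphisms (Lemma \ref{L4}), hence induce homeomorphisms $\phi\colon|X|\xrightarrow{\sim}|X^{pf}|$ and $\phi_i\colon|X_i|\xrightarrow{\sim}|X_i^{pf}|$ of underlying topological spaces (Lemma \ref{L3}). By Lemma \ref{A21} the square with vertical maps $p_{X_i},p_X$ and horizontal maps $f_i^\natural,f_i$ commutes, so chasing it on underlying spaces gives $\phi\circ|f_i|=|f_i^\natural|\circ\phi_i$. Since $\phi_i$ is surjective onto $|X_i^{pf}|$, we get $|f_i^\natural|(|X_i^{pf}|)=\phi(|f_i|(|X_i|))$, and taking the union over $i$ together with $|X|=\bigcup_{i\in I}|f_i|(|X_i|)$ yields
$$
\bigcup_{i\in I}|f_i^\natural|(|X_i^{pf}|)=\phi\Big(\bigcup_{i\in I}|f_i|(|X_i|)\Big)=\phi(|X|)=|X^{pf}|.
$$
This establishes the covering condition, and together with (a) it follows that $\{f_i^\natural\}_{i\in I}$ is a perfect $\tau$-covering of $X^{pf}$.

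I do not expect a genuine obstacle here: the statement is essentially the combination of the type-preservation packaged in Proposition \ref{P2} with the point-set homeomorphism of Lemma \ref{L3}. The only points requiring a little care are getting the orientation of the commutative square of underlying spaces right in part (b), and --- for the Zariski and \'{e}tale cases --- recalling that a $\tau$-covering is by definition a jointly surjective family of open immersions, resp.\ \'{e}tale morphisms, so that member-wise type preservation is exactly what Proposition \ref{P2} delivers.
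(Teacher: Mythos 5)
Your proposal is correct and follows essentially the same route as the paper: Proposition \ref{P2} gives that each $f_i^{\natural}$ retains the $\tau$-property, and the identification of underlying topological spaces under perfection (Lemma \ref{L3}/Lemma \ref{L4}) gives the joint surjectivity $\left|X^{pf}\right|=\bigcup_{i\in I}\left|f_i^{\natural}\right|(\left|X_i^{pf}\right|)$. You merely spell out the point-set chase via the commuting square of Lemma \ref{A21}, which the paper leaves implicit.
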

\begin{proof}
By Proposition \ref{P2}, each morphism $f_{i}^{\natural}$ is \'{e}tale (resp. open immersion, resp. weakly perfect, resp. quasi-perfect, resp. semiperfect). Moreover, we have
$$
\left|X^{pf}\right|=\left|X\right|=\bigcup_{i\in I}\left|f_{i}\right|(\left|X_{i}\right|)=\bigcup_{i\in I}\left|f_{i}^{\natural}\right|(\left|X_{i}^{pf}\right|).
$$
Thus, $\{f_{i}^{\natural}:X_{i}^{pf}\rightarrow X^{pf}\}_{i\in I}$ is an \'{e}tale (resp. a Zariski, resp. a weakly perfect, resp. a quasi-perfect, resp. a semiperfect) covering of $X^{pf}$.
\end{proof}

It seems to be natural that one can define the perfection of topologies to be some coverings under the perfection functor. However, we will not make use of this definition. In the following lemma, we observe that certain perfect coverings satisfy the conditions being a topology. This inspires us to make the suitable definition of the perfection of sites.
\begin{lemma}
Let $\tau\in\{\textit{fpqc,fppf,\'{e}tale,Zar,smooth},wp,qp,sp\}$. Let $X$ be a perfect algebraic space over $S$.
\begin{enumerate}[font=\normalfont]
  \item
If $X'\rightarrow X$ is an isomorphism, then $\{X'\rightarrow X\}$ is a perfect $\tau$-covering.
  \item
If $\{X_{i}\rightarrow X\}_{i\in I}$ is a perfect $\tau$-covering, and for each $i$, we have a perfect $\tau$-covering $\{X_{ij}\rightarrow X_{i}\}_{j\in J_{i}}$, then $\{X_{ij}\rightarrow X\}_{i\in I,j\in J_{i}}$ is a perfect $\tau$-covering.
  \item
If $\{X_{i}\rightarrow X\}_{i\in I}$ is a perfect $\tau$-covering and $X'\rightarrow X$ is a morphism of perfect algebraic spaces over $S$, then the family $\{X_{i}\times_{X}X'\rightarrow X'\}_{i\in I}$ is a perfect $\tau$-covering.
\end{enumerate}
\end{lemma}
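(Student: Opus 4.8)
The plan is to decouple the two conditions that a perfect $\tau$-covering must satisfy: being a $\tau$-covering of algebraic spaces over $S$, and having all its source and target perfect. Recall that a perfect $\tau$-covering of a perfect algebraic space $X$ is a family $\{f_{i}\colon X_{i}\to X\}_{i\in I}$ which is a $\tau$-covering and for which $X$ and every $X_{i}$ is perfect; for $\tau\in\{wp,qp,sp\}$ the covering condition amounts to $|X|=\bigcup_{i}|f_{i}|(|X_{i}|)$ together with $f_{i}$ being of the corresponding type, and for $\tau\in\{\textit{fpqc},\textit{fppf},\textit{\'{e}tale},\textit{Zar},\textit{smooth}\}$ it is the usual one. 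In every case the class of $\tau$-coverings of algebraic spaces over $S$ already forms a topology: for the classical $\tau$ this is standard, and for $\tau\in\{wp,qp,sp\}$ it is the lemma proved earlier in this section (for arbitrary, not necessarily perfect, $X$). So the three axioms for $\tau$ give for free that the underlying families in (1)--(3) are $\tau$-coverings, and I would only need to check that perfectness propagates in each case.

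For (1), the identity axiom for $\tau$ makes $\{X'\to X\}$ a $\tau$-covering, and $X'$ is perfect because it is isomorphic to the perfect space $X$; hence $\{X'\to X\}$ is a perfect $\tau$-covering. For (2), the transitivity axiom for $\tau$ makes $\{X_{ij}\to X\}_{i\in I,\,j\in J_{i}}$ a $\tau$-covering; its target $X$ is perfect by hypothesis and each source $X_{ij}$ is perfect because $\{X_{ij}\to X_{i}\}_{j\in J_{i}}$ is by assumption a perfect $\tau$-covering; so the composed family is a perfect $\tau$-covering.

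The only step with real content is (3). Stability of $\tau$-coverings under base change (standard for the classical topologies, and established earlier for $wp,qp,sp$) shows that $\{X_{i}\times_{X}X'\to X'\}_{i\in I}$ is a $\tau$-covering, and its target $X'$ is perfect by assumption. For the sources one must know that the fibre product $X_{i}\times_{X}X'$ of the perfect spaces $X_{i}$ and $X'$ over $X$ is again perfect: since $X$ is perfect it is strongly perfect by Lemma \ref{L7} (together with Corollary \ref{C1}), and then Proposition \ref{PP2} gives precisely that $X_{i}\times_{X}X'$ is a perfect algebraic space. Hence $\{X_{i}\times_{X}X'\to X'\}_{i\in I}$ is a perfect $\tau$-covering. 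The crux is thus keeping fibre products inside $\textrm{Perf}_{S}$, which is exactly the content of Proposition \ref{PP2}; everything else is inherited from the fact that $\tau$ is already a topology on $AP_{S}$.
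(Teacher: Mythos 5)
Your proposal is correct and follows essentially the same route as the paper: the underlying families are $\tau$-coverings because $\tau$ is already a topology on algebraic spaces, and the only substantive point is (3), where the paper likewise reduces to the fact that $X_{i}\times_{X}X'$ is perfect, which is exactly Proposition \ref{PP2} (via Lemma \ref{L7} giving that the perfect space $X$ is strongly perfect). Your write-up simply makes explicit what the paper's terse proof leaves implicit.
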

\begin{proof}
For (1), since $X'\rightarrow X$ is an isomorphism, $X'$ is also perfect. Thus, $\{X'\rightarrow X\}$ is a perfect $\tau$-covering. (2) is clear. For (3), note that the fibre products $X_{i}\times_{X}X'$ are perfect.
\end{proof}

Now, observe that every perfect Zariski covering with affine target can be refined by a perfect standard Zariski convering.
\begin{lemma}
Let $T$ be a perfect affine scheme. Let $\cal{T}=\{T_{i}\rightarrow T\}_{i\in I}$ be a perfect Zariski covering. Then there exists a perfect Zariski covering $\{U_{j}\rightarrow T\}_{j=1,2,...,m}$ which refines $\cal{T}$ such that each $U_{j}$ is a standard open of $T$.
\end{lemma}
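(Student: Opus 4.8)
The strategy is to reduce to the classical statement that every open covering of an affine scheme is refined by a covering by standard (principal) opens, and then to observe that standard opens of a perfect affine scheme are again perfect. Since no infinite refinements are allowed in the conclusion, quasi-compactness of the affine scheme $T$ is what makes the refining family finite.

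Write $T=\textrm{Spec}(A)$ with $A$ a perfect ring. Each $T_{i}\to T$ is an open immersion (it is part of a Zariski covering), so it identifies $T_{i}$ with an open subscheme $V_{i}\subseteq T$, and the $V_{i}$ cover $T$. Every $V_{i}$, being open in the affine scheme $T$, is a union of standard opens $D(f)\subseteq V_{i}$ with $f\in A$. Gathering all such $D(f)$ over all $i\in I$ produces a covering of $T$ by standard opens, each of which factors through some $V_{i}\cong T_{i}$, i.e.\ a refinement of $\cal{T}$ by standard opens. Since $T$ is quasi-compact, finitely many of them, say $D(f_{1}),\dots,D(f_{m})$, already cover $T$; for each $j$ pick an index $i(j)$ with $D(f_{j})\subseteq V_{i(j)}$, and note that the inclusion $D(f_{j})\hookrightarrow V_{i(j)}\cong T_{i(j)}$ is the desired morphism $D(f_{j})\to T_{i(j)}$ over $T$.

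It remains to check that $\{D(f_{j})\to T\}_{j=1,\dots,m}$ is a \emph{perfect} Zariski covering, i.e.\ that each $D(f_{j})=\textrm{Spec}(A_{f_{j}})$ is a perfect scheme; this is the only place the hypothesis that $T$ itself (and not merely the $T_{i}$) is perfect enters. Concretely, $A_{f_{j}}$ is reduced, being a localization of the reduced ring $A$, so its Frobenius is injective; and it is surjective, since for $a/f_{j}^{\,n}\in A_{f_{j}}$ one writes $a=b^{p}$ and $f_{j}=g^{p}$ (possible as $A$ is perfect), so that $g$ is a unit in $A_{f_{j}}$ and $a/f_{j}^{\,n}=b^{p}/(g^{n})^{p}=(b/g^{n})^{p}$. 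Hence $A_{f_{j}}$ is perfect. Alternatively, one may invoke that perfectness of schemes is Zariski local, so every open subscheme of the perfect scheme $T$ is perfect, in particular $D(f_{j})$.

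The argument is essentially bookkeeping and there is no serious obstacle. The points to be careful about are that the refining morphisms must be taken \emph{over} $T$ — which is automatic since $T_{i(j)}\hookrightarrow T$ is an open immersion whose image contains $D(f_{j})$ — and that one genuinely needs the perfectness of $T$, rather than just that of the members of the given covering, to ensure the standard opens extracted in the refinement remain perfect.
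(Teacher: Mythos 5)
Your proof is correct and follows essentially the same route as the paper, which simply cites the classical fact that a Zariski covering of an affine scheme is refined by finitely many standard opens (Stacks Project Tag 020Q) together with the Zariski-locality of perfectness from the author's earlier paper; you merely spell out these two ingredients, including the explicit check that $A_{f_j}$ is perfect when $A$ is. No gaps.
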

\begin{proof}
This follows from \cite[Tag020Q]{Stack Project} and \cite[Proposition 2.5]{Liang1}.
\end{proof}

Meanwhile, we can shows that every perfect fppf (resp. \'{e}tale, resp. fpqc, resp. smooth) covering with affine target can be refined by a perfect standard fppf (resp. \'{e}tale, resp. fpqc, resp. smooth) covering.
\begin{lemma}\label{L14}
Let $\tau\in\{\textit{fppf,\'{e}tale,fpqc,smooth}\}$. Let $T$ be a perfect affine scheme. Let $\cal{T}=\{T_{i}\rightarrow T\}_{i\in I}$ be a perfect $\tau$-covering. Then there exists a perfect $\tau$-covering $\{U_{j}\rightarrow T\}_{j=1,2,...,m}$ which refines $\cal{T}$ such that each $U_{j}$ is an affine open in one of the $T_{i}$.
\end{lemma}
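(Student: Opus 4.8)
The plan is to reduce to the underlying non-perfect situation, invoke the classical refinement statements for $\tau$-coverings of affine schemes from \cite{Stack Project}, and then observe that the finite refinement so produced is automatically perfect. Recall that, by definition, calling a family a \emph{perfect} $\tau$-covering only constrains the target and all the sources to be perfect schemes; it says nothing about the morphisms. So the whole task is to produce a finite $\tau$-covering by affine opens of the $T_i$ and then check perfectness of those opens.

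First I would forget the perfect structure and regard $\cal{T}=\{T_i\to T\}_{i\in I}$ as an ordinary $\tau$-covering of the affine scheme $T$. For $\tau=\textit{fpqc}$ one may quote \cite[Tag022C]{Stack Project} directly: it yields a finite refinement $\{U_j\to T\}_{j=1,\dots,m}$ in which each $U_j$ is an affine open of one of the $T_i$. For $\tau\in\{\textit{fppf,\'{e}tale,smooth}\}$ I would argue by hand. Cover each scheme $T_i$ by its affine opens; this is a Zariski, hence a $\tau$-, refinement, so by stability of the $\tau$-topology under composition of coverings the family $\{T_{i\lambda}\to T\}$ is again a $\tau$-covering, all of whose members are affine and are affine opens in some $T_i$. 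Each morphism $T_{i\lambda}\to T$ is flat and locally of finite presentation, hence has open image in $|T|$; since $|T|$ is quasi-compact, finitely many of these images already cover $T$, giving a finite subfamily $\{U_j\to T\}_{j=1,\dots,m}$ which is jointly surjective with each $U_j\to T$ still flat and locally of finite presentation (resp. \'{e}tale, resp. smooth), hence still a $\tau$-covering, and by construction refines $\cal{T}$ with each $U_j$ an affine open in some $T_i$; the inclusion $U_j\hookrightarrow T_i$ furnishes the required factorization over $T$.

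Finally I would upgrade to a perfect $\tau$-covering. The target $T$ is perfect by hypothesis, and each $U_j$ is an affine open subscheme of the perfect scheme $T_i$, hence perfect by \cite[Proposition 2.5]{Liang1} (open subschemes of perfect schemes are perfect). Thus $\{U_j\to T\}_{j=1,\dots,m}$ is a perfect $\tau$-covering refining $\cal{T}$ with members affine opens of the $T_i$, which is exactly the analogue of the Zariski case just treated. The only point requiring genuine care is the extraction of a finite subfamily in the $\tau=\textit{fpqc}$ case: fpqc morphisms need not be open, so there one leans on the standard-fpqc-covering characterization encoded in \cite[Tag022C]{Stack Project} rather than on an open-image-plus-quasi-compactness argument; for the other three topologies the extraction is routine, the only inputs being that open immersions are $\tau$, that $\tau$ is stable under composition of coverings, that flat morphisms locally of finite presentation are open, and that $T$ is quasi-compact.
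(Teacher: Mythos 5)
Your proposal is correct and follows essentially the same route as the paper: the paper's proof simply cites the Stacks Project refinement lemmas for $\tau$-coverings of an affine target (Tags 0218, 021P, 022E, 0222) together with \cite[Proposition 2.5]{Liang1} to see that the affine opens of the perfect schemes $T_i$ are again perfect. You merely spell out by hand the openness-plus-quasi-compactness argument behind those tags for fppf/\'etale/smooth and quote the fpqc tag directly, which is the same argument in expanded form.
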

\begin{proof}
This follows from \cite[Tag0218, Tag021P, Tag022E, Tag0222]{Stack Project} and \cite[Proposition 2.5]{Liang1}.
\end{proof}

Furthermore, these also apply to any perfect weakly perfect (resp. quasi-perfect, resp. semiperfect) coverings with affine target.
\begin{lemma}
Let $T$ be a perfect affine scheme. Let $\cal{T}=\{T_{i}\rightarrow T\}_{i\in I}$ be a perfect weakly perfect covering. Then there exists a perfect weakly perfect covering $\{U_{j}\rightarrow T\}_{j\in J}$ which refines $\cal{T}$ such that each $U_{j}$ is an affine open in one of $T_{i}$.
\end{lemma}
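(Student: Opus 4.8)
The plan is to refine $\cal{T}=\{f_i\colon T_i\to T\}_{i\in I}$ by affine open subschemes of the $T_i$, so I would first make sure each $T_i$ is genuinely a perfect \emph{scheme}. Since $T$ is a perfect scheme and $f_i$ is weakly perfect, the defining property of a weakly perfect morphism applied to the identity $T\to T$ gives that $T_i\cong h_T\times_{T,\mathrm{id}}T_i$ is representable by a perfect scheme; from now on I treat each $T_i$ as such. For every $i\in I$ I would then choose an affine open covering $T_i=\bigcup_{\alpha\in A_i}V_{i\alpha}$ and set
$$
\{U_j\to T\}_{j\in J}:=\{V_{i\alpha}\to T\}_{i\in I,\,\alpha\in A_i},
$$
where $V_{i\alpha}\to T$ is the composite $V_{i\alpha}\hookrightarrow T_i\xrightarrow{f_i}T$; by construction this family refines $\cal{T}$ and its members are affine opens of the $T_i$.

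The bookkeeping points are then routine. Each $V_{i\alpha}$ is an open subscheme of the perfect scheme $T_i$, hence perfect, since the absolute Frobenius of $T_i$ restricts to an isomorphism on any open. The covering condition holds because
$$
|T|=\bigcup_{i\in I}|f_i|(|T_i|)=\bigcup_{i\in I}|f_i|\Big(\bigcup_{\alpha\in A_i}|V_{i\alpha}|\Big)=\bigcup_{i\in I,\,\alpha\in A_i}|f_i|(|V_{i\alpha}|).
$$
So the only genuine step is to check that each composite $V_{i\alpha}\to T$ is weakly perfect; this is the point I expect to require a small argument rather than a citation. Given $U\in\ObSchS$ a perfect scheme and a morphism $U\to T$, there is a canonical identification $h_U\times_{h_T}h_{V_{i\alpha}}\cong (h_U\times_{h_T}h_{T_i})\times_{h_{T_i}}h_{V_{i\alpha}}$. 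The inner fibre product $h_U\times_{h_T}h_{T_i}$ is representable by a perfect scheme $P$ because $f_i$ is weakly perfect, and the further base change along $P\to T_i$ of the open immersion $V_{i\alpha}\hookrightarrow T_i$ is an open subscheme of $P$, hence again a perfect scheme. Thus $h_U\times_{h_T}h_{V_{i\alpha}}$ is representable by a perfect scheme, so $V_{i\alpha}\to T$ is weakly perfect. (Alternatively: an open immersion of perfect schemes is weakly perfect by the same base-change computation, and one may then invoke the composition statements of Proposition \ref{P6}.)

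Putting this together, $\{U_j\to T\}_{j\in J}$ is a perfect weakly perfect covering refining $\cal{T}$ with each $U_j$ an affine open in one of the $T_i$, which is exactly the assertion. I do not foresee any real obstacle; the one subtlety worth flagging is that the $T_i$ need not be quasi-compact, so $J$ may be infinite — which is why, unlike the Zariski and fppf versions above, the statement is phrased with an arbitrary index set $J$.
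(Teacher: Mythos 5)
Your proof is correct and follows essentially the same route as the paper, which simply refines $\cal{T}$ by affine open coverings of the $T_i$. The paper's one-line proof leaves implicit the points you spell out (that each $T_i$ is in fact a perfect scheme, via weak perfectness applied to $\mathrm{id}_T$, and that the composites $V_{i\alpha}\to T$ remain weakly perfect), so your version is just a more detailed rendering of the same argument.
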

\begin{proof}
Assume that $U_{j}$ is an affine open of $T_{i}$. Then the lemma follows from the fact that each $T_{i}$ admits an affine open covering.
\end{proof}

Now, we make the definitions of perfection of sites. The perfection of sites enables us to work in a complete perfect setup in a site. First, we consider perfections of sites of schemes.
\begin{definition}
Let $\tau\in\{\textit{fppf,\'{e}tale,Zar,fpqc,smooth,}wp,qp,sp\}$. Let $Sch_{\tau}$ be a big $\tau$-site containing $S$. Let $(Sch/S)_{\tau}$ and $(\textit{Aff}/S)_{\tau}$ be sites. Then the perfection of $Sch_{\tau}$ is a site $Sch_{\tau}^{pf}$ such that
\begin{enumerate}[font=\normalfont]
  \item
the underlying category of $Sch_{\tau}^{pf}$ is the full subcategory of all perfect schemes in $Sch_{\tau}$,
  \item
the set ${\rm{Cov}}(Sch_{\tau}^{pf})$ of coverings is given by all perfect $\tau$-coverings $\cal{U}$ in $Sch_{\tau}$.
\end{enumerate}
The perfection of $(Sch/S)_{\tau}$ is a site $(Sch/S)_{\tau}^{pf}$ such that
\begin{enumerate}[font=\normalfont]
  \item
the underlying category of $(Sch/S)_{\tau}^{pf}$ is the full subcategory of all perfect schemes in $(Sch/S)_{\tau}$,
  \item
the set ${\rm{Cov}}((Sch/S)_{\tau}^{pf})$ of coverings is given by all perfect $\tau$-coverings $\cal{U}$ in $(Sch/S)_{\tau}$.
\end{enumerate}
The perfection of $(\textit{Aff}/S)_{\tau}$ is a site $(\textit{Aff}/S)_{\tau}^{pf}$ such that
\begin{enumerate}[font=\normalfont]
  \item
the underlying category of $(\textit{Aff}/S)_{\tau}^{pf}$ is the full subcategory of all perfect schemes in $(\textit{Aff}/S)_{\tau}$,
  \item
the set ${\rm{Cov}}((\textit{Aff}/S)_{\tau}^{pf})$ of coverings is given by all perfect standard $\tau$-coverings $\cal{U}$ in $(\textit{Aff}/S)_{\tau}$.
\end{enumerate}
\end{definition}

Next, we consider perfections of sites of algebraic spaces.
\begin{definition}
Let $\tau\in\{\textit{fppf,fpqc,\'{e}tale,Zar,smooth},wp,qp,sp\}$. Let $(Spaces/S)_{\tau}$ be a big $\tau$-site containing an algebraic space $X$ and let $(Spaces/X)_{\tau}$ be a big $\tau$-site of $X$. The perfection of $(Spaces/S)_{\tau}$ is a site $(Spaces/S)^{pf}_{\tau}$ given by
\begin{enumerate}[font=\normalfont]
  \item
the subcategory of all perfect algebraic spaces in $(Spaces/S)_{\tau}$ whose morphisms are weakly perfect, and
  \item
a set ${\rm{Cov}}((Spaces/S)^{pf}_{\tau})$ of coverings which is given by all perfect $\tau$-coverings $\cal{U}$ in $(Spaces/S)_{\tau}$.
\end{enumerate}
The perfection of $(Spaces/X)_{\tau}$ is a site $(Spaces/X)^{pf}_{\tau}$ given by
\begin{enumerate}[font=\normalfont]
  \item
the subcategory of all perfect algebraic spaces in $(Spaces/X)_{\tau}$ whose morphisms are weakly perfect, and
  \item
a set ${\rm{Cov}}((Spaces/X)^{pf}_{\tau})$ of coverings which is given by all perfect $\tau$-coverings $\cal{U}$ in $(Spaces/X)_{\tau}$.
\end{enumerate}
\end{definition}

Observe that the structure morphism of a perfect scheme is weakly perfect.
\begin{lemma}\label{L13}
Let $f:X\rightarrow S$ be a morphism of schemes. If $X$ is perfect, then $f$ is weakly perfect as a morphism of algebraic spaces over $S$.
\end{lemma}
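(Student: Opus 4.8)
The plan is to verify weak perfectness of $f$ directly from its definition, by reducing it to a statement about schemes and then feeding in \lemref{L4}. First I would observe that, since $X$ is a scheme, $f\colon X\to S$ is a \emph{representable} morphism of algebraic spaces over $S$: for any scheme $T$ over $S$ the fibre product $h_T\times_{h_S}h_X$ is the sheaf $W\mapsto\mathrm{Hom}_S(W,T)\times\mathrm{Hom}_S(W,X)$, which is represented by the scheme $T\times_S X$. Since $h_S$ is the terminal object of $AP_S$, the map $h_U\to h_S$ is forced for every $U$, so to say that $f$ is weakly perfect is exactly to say that $U\times_S X$ is a perfect scheme for every perfect scheme $U\in\ObSchS$.

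So I would fix a perfect scheme $U$ over $S$ and put $V:=U\times_S X$. This $V$ is a scheme of characteristic $p$ (it is an $\mathbb{F}_p$-scheme, as $U$ is), and the first projection is a morphism $V\to U$ of algebraic spaces of characteristic $p$ over $S$. Applying \lemref{L4} to this morphism gives a canonical isomorphism $V^{pf}\cong V\times_U U^{pf}$. Because $U$ is perfect, \lemref{L7} shows that the canonical projection $U^{pf}\to U$ is an isomorphism, so $V\times_U U^{pf}\cong V$; combining the two identifications we get $V^{pf}\cong V$, and by the universal property of perfection this forces $V=U\times_S X$ to be perfect. (One could equally apply \lemref{L4} to the second projection $V\to X$ and use instead that $X$ is perfect.) Running $U$ over all perfect schemes over $S$, this would show that $f$ is weakly perfect.

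The one step that actually carries content is the reduction in the first paragraph: it is the representability of $f$ as a morphism of algebraic spaces that lets us replace the a priori algebraic-space fibre product $h_U\times_{h_S}h_X$ by the genuine scheme $U\times_S X$, so that \lemref{L4} — a statement about algebraic spaces of characteristic $p$ — can be applied and makes the perfection of this fibre product collapse back onto itself. After that the argument is purely formal.
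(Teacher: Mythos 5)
The reduction in your first paragraph is the right one, and it matches how ``weakly perfect'' is used throughout this paper: since $h_S$ is the terminal object, weak perfectness of $f$ amounts to asking that $U\times_S X$ be a perfect scheme for every perfect scheme $U\in\ObSchS$. The gap is in your second paragraph, where you apply \lemref{L4} to the projection $V=U\times_S X\to U$. That isomorphism $V^{pf}\cong V\times_U U^{pf}$ cannot be invoked for an arbitrary morphism: if it could, your argument would prove that \emph{every} characteristic-$p$ scheme admitting a morphism to a perfect scheme is perfect (take $U=\mathrm{Spec}(\mathbb{F}_p)$ and $V=\mathbb{A}^1_{\mathbb{F}_p}$, or $V=\mathrm{Spec}(\mathbb{F}_p[x]/(x^p))$), which is absurd. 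The underlying issue is that \lemref{L4}, although stated for arbitrary morphisms, is only justified --- and is only ever used in the paper --- when the morphism is \'etale: its proof needs $V\times_X X^{pf}$ to be perfect, and this fails in general, e.g.\ for the closed immersion $V=\mathrm{Spec}(\mathbb{F}_p)\hookrightarrow X=\mathbb{A}^1_{\mathbb{F}_p}$ one has $V\times_X X^{pf}=\mathrm{Spec}(\mathbb{F}_p[x^{1/p^\infty}]/(x))$, which is not even reduced, whereas $V^{pf}=\mathrm{Spec}(\mathbb{F}_p)$. So the chain $V^{pf}\cong V\times_U U^{pf}\cong V$ is unfounded, and the suggested variant using the second projection to $X$ fails for the same reason.

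You should also be aware that the difficulty is not only in your route: the statement you reduced to is genuinely problematic over a non-perfect base. For $S=\mathrm{Spec}(\mathbb{F}_p(t))$ and $U=X=\mathrm{Spec}(\mathbb{F}_p(t)^{1/p^{\infty}})$, the element $t^{1/p}\otimes 1-1\otimes t^{1/p}$ of $\mathbb{F}_p(t)^{1/p^{\infty}}\otimes_{\mathbb{F}_p(t)}\mathbb{F}_p(t)^{1/p^{\infty}}$ is a nonzero nilpotent, so $U\times_S X$ is non-reduced and hence not perfect, even though $U$ and $X$ are. Thus, under the reading of ``weakly perfect'' used in this paper, the lemma needs an additional hypothesis (for instance $S$ perfect, in which case fibre products of perfect schemes over $S$ are perfect and the claim follows directly) or a different definition from \cite{Liang1}. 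The paper's own justification is only ``easy to see using definitions,'' so it offers no argument against which to check this; but as written your proof does not close the gap, and no argument of this shape can, without restricting the base or the test objects.
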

\begin{proof}
It is easy to see using definitions.
\end{proof}

It follows from Lemma \ref{L13} that the morphisms in $Sch_{\tau}^{pf}$ and $(Sch/X)_{\tau}^{pf}$ are automatically weakly perfect. Then we have the following strings of inclusions
\begin{align}
&Sch^{pf}_{\tau}\subset Sch^{pf}_{wp}\subset Sch^{pf}_{qp}\subset Sch^{pf}_{sp}, \\
&(Sch/S)^{pf}_{\tau}\subset (Sch/S)^{pf}_{wp}\subset(Sch/S)^{pf}_{qp}\subset(Sch/S)^{pf}_{sp}, \\
&(\textit{Aff}/S)_{\tau}^{pf}\subset (\textit{Aff}/S)^{pf}_{wp}\subset(\textit{Aff}/S)^{pf}_{qp}\subset(\textit{Aff}/S)^{pf}_{sp}, \\
&(\textit{Spaces}/S)^{pf}_{\tau}\subset(\textit{Spaces}/S)^{pf}_{wp}\subset (\textit{Spaces}/S)^{pf}_{qp}\subset(\textit{Spaces}/S)^{pf}_{sp}, \\
&(\textit{Spaces}/X)^{pf}_{\tau}\subset(\textit{Spaces}/X)^{pf}_{wp}\subset (\textit{Spaces}/X)^{pf}_{qp}\subset(\textit{Spaces}/X)^{pf}_{sp}.
\end{align}

The perfection of sites inherits some properties from the previous sites. The following proposition shows that the perfections of sites have fibre products.
\begin{proposition}
Let $\tau\in\{\textit{fppf,\'{e}tale,Zar,fpqc,smooth},wp,qp,sp\}$. Let $Sch_{\tau}$ be a big $\tau$-site containing $S$. The underlying categories of the sites $Sch_{\tau}, Sch_{\tau}^{pf},(Sch/S)_{\tau}^{pf}$, and $(\textit{Aff}/S)^{pf}_{\tau}$ have fibre products. In each case, the obvious inclusion functor into $Sch$ commutes with fibre products. Suppose that $S$ is perfect. Then the category $(Sch/S)_{\tau}^{pf}$ has a final object, i.e. $S/S$.
\end{proposition}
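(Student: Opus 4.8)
The statement packages together three claims, which I would treat in order of increasing substance. The underlying category of $Sch_\tau$ is, by the very construction of a big $\tau$-site, a full subcategory of the category of all schemes that is closed under fibre products (cf.\ \cite[Tag020M]{Stack Project}); since the category of schemes has fibre products, so does $Sch_\tau$, and the inclusion into $Sch$ trivially respects them. Each of the three perfection sites $Sch_\tau^{pf}$, $(Sch/S)_\tau^{pf}$ and $(\textit{Aff}/S)_\tau^{pf}$ is (a localization of) the full subcategory of perfect schemes inside the corresponding ambient site, so a span $U\to W\leftarrow V$ in such a category has $U,V,W$ all perfect. Hence everything reduces to the single point: \emph{if $U,V,W$ are perfect schemes and $U\to W\leftarrow V$ are morphisms of schemes, then the fibre product $U\times_W V$ is again a perfect scheme.} Granting this, $U\times_W V$ is at once the fibre product in $Sch$, in $(Sch/S)_\tau$ (with the induced structure maps to $S$), and --- when $W=\mathrm{Spec}(A)$ is affine, so that $U\times_W V=\mathrm{Spec}(B\otimes_A C)$ is affine --- in $(\textit{Aff}/S)_\tau$; being perfect it is then a fibre product in each of the three perfection sites as well (a fibre product in an ambient category that happens to lie in a full subcategory is a fibre product there), and the inclusion functors into $Sch$ obviously send it to $U\times_W V$.

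To prove the displayed point I would first record that perfectness of a scheme is Zariski-local (the absolute Frobenius is an isomorphism precisely when it is so over an affine open cover), so that after covering $W$ by affine opens and $U,V$ by affine opens lying over them one reduces to the affine case. There, for perfect rings $A,B,C$ the tensor-product lemma of \S\ref{S1} yields $(B\otimes_A C)^{pf}=B^{pf}\otimes_{A^{pf}}C^{pf}=B\otimes_A C$, whence $B\otimes_A C$ coincides with its own perfection and is therefore perfect; gluing these affine pieces shows $U\times_W V$ is perfect. Alternatively, one may avoid the local computation: regarding $U,V,W$ as representable algebraic spaces, Proposition \ref{P3} and Lemma \ref{L7} give $(U\times_W V)^{pf}=U^{pf}\times_{W^{pf}}V^{pf}=U\times_W V$, and since the perfection of any algebraic space is perfect, $U\times_W V$ is perfect (and representable by Lemma \ref{LL5}).

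Finally, suppose $S$ is perfect. Then $S$ is a perfect scheme, so the identity morphism $S\to S$ --- the object $S/S$ --- belongs to the underlying category of $(Sch/S)_\tau^{pf}$, and for any perfect scheme $T/S$ over $S$ the structure morphism $T\to S$ is the unique morphism $T/S\to S/S$ in $(Sch/S)_\tau$, hence also the unique one in its full subcategory $(Sch/S)_\tau^{pf}$; thus $S/S$ is a final object. I expect the only genuinely non-formal ingredient to be the displayed stability of perfectness under fibre products; everything else is bookkeeping about full subcategories and the definitions of the sites. Inside that ingredient, the one point needing care is the reduction to the affine case, i.e.\ the Zariski-locality of perfectness.
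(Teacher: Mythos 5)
Your proof is correct, and it is in fact more complete than what the paper records: the paper's proof is a one-line deferral to the standard argument for the usual big sites (\cite[Tag021D]{Stack Project}), which handles the set-theoretic bookkeeping but is silent on the one genuinely new ingredient in the perfect setting, namely that a fibre product of perfect schemes is again perfect, so that the fibre product formed in the ambient site stays in the full subcategory. You isolate exactly this point and prove it twice: once from first principles, by Zariski-localizing to the affine case and invoking the \S\ref{S1} lemma $(B\otimes_A C)^{pf}=B^{pf}\otimes_{A^{pf}}C^{pf}$ (with $A,B,C$ all perfect, so the canonical map $B\otimes_A C\to (B\otimes_A C)^{pf}$ is an isomorphism and $B\otimes_A C$ is perfect), and once internally to the paper via Proposition \ref{P3}, Lemma \ref{L7} and Lemma \ref{LL5}; both routes are sound, and the second has the advantage of not re-proving locality of perfectness, while the first is independent of the algebraic-space machinery. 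The remaining steps — that a limit computed in the ambient category and lying in a full subcategory is a limit there, that the inclusions into $Sch$ then trivially preserve fibre products, and that $S/S$ is final in $(Sch/S)^{pf}_{\tau}$ when $S$ is perfect — are the same formal bookkeeping the paper's citation gestures at.
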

\begin{proof}
The proof is similar to the usual case, see \cite[Tag021D]{Stack Project} for example.
\end{proof}

In the following proposition, we will show that perfect sites and perfect affine sites have the same categories of sheaves.
\begin{proposition}\label{P11}
Let $\tau\in\{\textit{fpqc,fppf,\'{e}tale,smooth}\}$. Let $Sch_{\tau}$ be a big $\tau$-site containing $S$. Then the inclusion functor $u:(\textit{Aff}/S)^{pf}_{\tau}\rightarrow(\textit{Sch}/S)^{pf}_{\tau}$ is special cocontinuous and induces an equivalence of topoi
$$
\widetilde{(\textit{Aff}/S)^{pf}_{\tau}}\cong \widetilde{(\textit{Sch}/S)^{pf}_{\tau}}.
$$
\end{proposition}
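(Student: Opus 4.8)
The strategy is to verify that the inclusion $u$ is a \emph{special cocontinuous functor} in the sense of \cite[Tag03A0 ff.]{Stack Project}, and then to invoke the general principle that such a functor is automatically continuous and induces an equivalence of topoi (the theorem that special cocontinuous functors induce equivalences of topoi, \cite[Tag03A3]{Stack Project}); this is the same route used for the non-perfect comparison $(\textit{Aff}/S)_{\tau}\to(\textit{Sch}/S)_{\tau}$ in \cite[Tag021E]{Stack Project}. Thus it suffices to check three things: that $u$ is fully faithful, that every object of $(\textit{Sch}/S)^{pf}_{\tau}$ admits a covering by objects in the essential image of $u$, and that $u$ is cocontinuous.

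The first two are straightforward. By construction $(\textit{Aff}/S)^{pf}_{\tau}$ is the full subcategory of $(\textit{Sch}/S)^{pf}_{\tau}$ on the perfect affine schemes over $S$, so $u$ is fully faithful. For the covering condition, given a perfect scheme $V$ over $S$ pick an affine open covering $V=\bigcup_{j}V_{j}$; since the absolute Frobenius of $V$ restricts to that of each $V_{j}$, each $V_{j}$ is again perfect, and an affine open covering is a Zariski covering, hence a $\tau$-covering for every $\tau\in\{\textit{fpqc,fppf,\'{e}tale,smooth}\}$. Therefore $\{V_{j}\to V\}_{j}$ is a perfect $\tau$-covering, i.e.\ a covering in $(\textit{Sch}/S)^{pf}_{\tau}$, with each $V_{j}$ an object of $(\textit{Aff}/S)^{pf}_{\tau}$.

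The only substantive point is cocontinuity of $u$. Since every object of $(\textit{Aff}/S)^{pf}_{\tau}$ is affine, we must show: for a perfect affine scheme $U$ over $S$ and any perfect $\tau$-covering $\{V_{i}\to U\}_{i\in I}$ in $(\textit{Sch}/S)^{pf}_{\tau}$, there is a covering $\{U_{j}\to U\}$ in $(\textit{Aff}/S)^{pf}_{\tau}$ whose image refines $\{V_{i}\to U\}_{i\in I}$. This is exactly the content of Lemma \ref{L14}: it produces a perfect $\tau$-covering $\{U_{j}\to U\}_{j=1,\dots,m}$ refining $\{V_{i}\to U\}_{i\in I}$ with each $U_{j}$ an affine open in one of the $V_{i}$. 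As $U_{j}$ is an open subscheme of the perfect scheme $V_{i}$ it is again perfect, and a finite, jointly surjective family of affine schemes over an affine base inherits the relevant local property of $\{V_{i}\to U\}_{i\in I}$ along the open immersions $U_{j}\hookrightarrow V_{i}$, so $\{U_{j}\to U\}$ is a perfect standard $\tau$-covering and hence a legitimate covering of the affine site; since $u(U_{j})=U_{j}$ and each $U_{j}$ factors through some $V_{i}$, this is the refinement we needed. Assembling the three conditions, $u$ is special cocontinuous, and \cite[Tag03A3]{Stack Project} yields the asserted equivalence $\widetilde{(\textit{Aff}/S)^{pf}_{\tau}}\cong\widetilde{(\textit{Sch}/S)^{pf}_{\tau}}$.

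The main obstacle, such as it is, lies in the bookkeeping of the last step: one must check that the affine refinement provided by Lemma \ref{L14} genuinely satisfies the definition of a \emph{standard} $\tau$-covering (finiteness of the index set, affineness of source and target, and flatness, resp.\ \'{e}taleness, resp.\ smoothness, inherited along the open immersions $U_{j}\hookrightarrow V_{i}$) for each of the four topologies, so that it is admissible in $(\textit{Aff}/S)^{pf}_{\tau}$; and, as usual, one must take the standard set-theoretic precautions when passing between $(\textit{Aff}/S)^{pf}_{\tau}$, $(\textit{Sch}/S)^{pf}_{\tau}$ and the ambient site $Sch_{\tau}$, following the procedures of \cite[Tag020M]{Stack Project}.
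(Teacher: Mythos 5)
Your proposal is correct and follows essentially the same route as the paper: both verify the hypotheses of the special cocontinuous functor lemma \cite[Tag03A0]{Stack Project}, obtaining cocontinuity from Lemma \ref{L14} (perfect $\tau$-coverings of a perfect affine admit perfect standard $\tau$-refinements) and the covering condition from the existence of perfect affine open coverings of perfect schemes, with full faithfulness disposing of the remaining compatibility conditions. The only small slip is the claim that continuity is ``automatic'': it is a separate hypothesis of \cite[Tag03A0]{Stack Project}, though here it is immediate since $u$ sends a perfect standard $\tau$-covering to the same family, which is a perfect $\tau$-covering in $(\textit{Sch}/S)^{pf}_{\tau}$, and preserves the relevant fibre products of perfect affines.
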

\begin{proof}
We will use \cite[Tag03A0]{Stack Project} to prove this proposition, i.e. one have to verify assumptions (1)-(5) of \cite[Tag03A0]{Stack Project}. First, Lemma \ref{L14} implies that the functor $u$ is cocontinuous, since every perfect $\tau$-covering of $T/S$, $T$ affine, admits a perfect standard $\tau$-refinement. So the assumption (1) holds. Next, it follows from \cite[Proposition 2.5]{Liang1} that every perfect scheme has a perfect affine open covering. This proves the assumption (5). Then the proof of (2)-(4) are similar to the proof of \cite[Tag021V]{Stack Project}. Thus, we obtain the equivalence of topoi as desired.
\end{proof}

\section{Comparison with Zhu's perfect algebraic spaces}\label{S5}
In this section, we compare our perfect algebraic spaces in \cite[Definition 3.1]{Liang1} with Zhu's perfect algebraic spaces in \cite{Zhu1,Zhu}.

Let $k$ be a perfect field of characteristic $p$ and let $Sch/k$ denote the category of $k$-schemes. We endow $Sch/k$ with Zariski and \'{e}tale topologies, and obtain the big Zariski site $(Sch/k)_{\textit{Zar}}$ and the big \'{e}tale site $(Sch/k)_{\textit{\'{e}t}}$ with perfections $(Sch/k)^{pf}_{\textit{Zar}}$, $(Sch/k)^{pf}_{\textit{\'{e}t}}$. We denote by $\textit{Aff}/k$ the category of affine $k$-schemes. And $k$-alg will be the category of $k$-algebras with the full subcategory $k\textrm{-alg}^{pf}$ of perfect $k$-algebras. Note that there is an anti-equivalence $k\textrm{-alg}\simeq\textit{Aff}/k$. Hence, one may identify $(\textit{Aff}/k)^{opp}$ with $k\textrm{-alg}$. Next, we can also equip $\textit{Aff}/k$ with Zariski and \'{e}tale topologies. This gives rise to the big affine Zariski site $(\textit{Aff}/k)_{\textit{Zar}}$ and the big affine \'{e}tale site $(\textit{Aff}/k)_{\textit{\'{e}t}}$. The sites $(\textit{Aff}/k)_{\textit{Zar}}^{pf}$ and $(\textit{Aff}/k)_{\textit{\'{e}t}}^{pf}$ will be the perfections of $(\textit{Aff}/k)_{\textit{Zar}}$ and $(\textit{Aff}/k)_{\textit{\'{e}t}}$ of perfect $k$-affine schemes.

If we denote the underlying category of $(\textit{Aff}/k)_{\textit{Zar}}^{pf}$ or $(\textit{Aff}/k)_{\textit{\'{e}t}}^{pf}$ by $(\textit{Aff}/k)^{pf}$, then we have an anti-equivalence $(\textit{Aff}/k)^{pf}\simeq k\textrm{-alg}^{pf}$.

By a \textit{presheaf}, we mean a contravariant functor from $(\textit{Aff}/k)^{pf}$ to the category of sets. Then we first restate the definitions of perfect affine schemes and perfect schemes in \cite[Definition A.1.2]{Zhu1} as follows.
\begin{definition}
A perfect affine scheme ${\rm{Spec}}(R)$ over $k$ is a presheaf on $(\textit{Aff}/k)^{pf}$ which is of the form ${\rm{Hom}}_{k\textrm{-alg}^{pf}}(R,-)$. A perfect scheme $X$ over $k$ is a sheaf on the site $(\textit{Aff}/k)_{\textit{Zar}}^{pf}$ that admits a Zariski cover by perfect affine schemes.
\end{definition}

Next, we restate the definition of Zhu's perfect algebraic spaces in our terminologies, see \cite[Definition A.1.3]{Zhu1}. Recall that a map of presheaves is \textit{schematic} if and only if it is representable.
\begin{definition}
A perfect algebraic space $X$ in the sense of Zhu over $k$ is a sheaf on the site $(\textit{Aff}/k)_{\textit{\'{e}t}}^{pf}$ such that the diagonal is schematic, and there exists a surjective \'{e}tale map $U\rightarrow X$ from a perfect scheme $U$ over $k$.
\end{definition}

The following proposition gives an alternative definition of perfect algebraic space $X$ in the sense of Zhu over $k$.
\begin{proposition}\label{P12}
Let $X'$ be a perfect algebraic space over $k$ in the sense of Zhu. Then $X'$ extends uniquely to a sheaf $X$ on the site $(Sch/k)^{pf}_{\textit{\'{e}t}}$ such that
\begin{enumerate}[font=\normalfont]
\item
for every pair of schemes $U,V\in\Ob((Sch/k)_{\textrm{\'{e}t}}^{pf})$ and any $a\in X(U),b\in X(V)$, the functor $U\times_{a,X,b}V$ is a scheme $W\in\Ob((Sch/k)_{\textrm{\'{e}t}}^{pf})$.
\item
there exists a surjective \'{e}tale map $U\rightarrow X$ from a scheme $U\in\Ob((Sch/k)_{\textrm{\'{e}t}}^{pf})$.
\end{enumerate}
\end{proposition}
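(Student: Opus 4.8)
The plan is to obtain $X$ by transport of structure along an equivalence of topoi and then to check that the two itemized properties descend. Recall from (the affine analogue over $k$ of) \proref{P11} that the inclusion $u\colon(\textit{Aff}/k)^{pf}_{\textrm{\'{e}t}}\to(Sch/k)^{pf}_{\textrm{\'{e}t}}$ is special cocontinuous, so that it induces an equivalence of topoi $\widetilde{(\textit{Aff}/k)^{pf}_{\textrm{\'{e}t}}}\cong\widetilde{(Sch/k)^{pf}_{\textrm{\'{e}t}}}$ under which restriction of a sheaf on the big site to perfect affine $k$-schemes is inverse to extension. I would define $X$ to be the essentially unique sheaf on $(Sch/k)^{pf}_{\textrm{\'{e}t}}$ with $X|_{(\textit{Aff}/k)^{pf}_{\textrm{\'{e}t}}}\cong X'$; this settles both the existence and the uniqueness of the extension at once. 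It then remains to verify (1) and (2), the guiding principle being that the structure on $X'$ (a schematic diagonal, a surjective \'{e}tale chart) is assembled from data living on the affine site, hence transports along the equivalence.

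For (2), let $U'\to X'$ be a surjective \'{e}tale cover by a perfect scheme $U'$ over $k$. First I would check that a perfect scheme in the sense of \cite{Zhu1} — a sheaf on $(\textit{Aff}/k)^{pf}_{\textit{Zar}}$ admitting a Zariski cover by perfect affine schemes — is the same datum as an honest perfect scheme in $(Sch/k)^{pf}_{\textrm{\'{e}t}}$; this is the affine-to-big comparison for the perfect Zariski/\'{e}tale sites, again of \proref{P11}-type, together with \lemref{L13} which ensures that all morphisms occurring in these perfect sites are automatically weakly perfect. Transporting $U'\to X'$ along the equivalence then yields a morphism $U\to X$ with $U$ the corresponding perfect scheme. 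Surjectivity is detected on underlying topological spaces, and $|X|$, $|U|$ are computed from the small \'{e}tale sites (cf. \lemref{L3}), while \'{e}taleness may be tested after \'{e}tale base change by affines; hence both properties persist and $U\to X$ is the required chart.

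For (1), fix $U,V\in\Ob((Sch/k)^{pf}_{\textrm{\'{e}t}})$ together with $a\in X(U)$, $b\in X(V)$, and write
$$
U\times_{a,X,b}V\ \simeq\ (U\times_{k}V)\times_{(a,b),\,X\times_{k}X,\,\Delta_{X}}\,X .
$$
The product $U\times_{k}V$ is again a perfect scheme, since perfection commutes with fibre products and $k^{pf}=k$ (the scheme analogue of \proref{P3}), so it is an object of $(Sch/k)^{pf}_{\textrm{\'{e}t}}$. The diagonal $\Delta_{X}\colon X\to X\times_{k}X$ is schematic: representability of a morphism of sheaves on $(Sch/k)^{pf}_{\textrm{\'{e}t}}$ may be tested on affine perfect schemes over $k$, where it is exactly Zhu's hypothesis on $\Delta_{X'}$, so it passes through the equivalence of topoi. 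Consequently the displayed fibre product is represented by a scheme $W$. Finally $W$ is perfect, because pulling back surjective \'{e}tale covers of $U$ and of $V$ by perfect affine schemes produces a surjective \'{e}tale cover of $W$ by perfect affine schemes, and being perfect is an \'{e}tale-local property of schemes (the absolute Frobenius of $W$ is an isomorphism since it is so \'{e}tale-locally). Therefore $W\in\Ob((Sch/k)^{pf}_{\textrm{\'{e}t}})$, which is property (1).

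The main obstacle is the bookkeeping behind the phrase ``representability passes through the equivalence of topoi'': one must check that a sheaf on the big perfect \'{e}tale site whose restriction to the small perfect affine site is representable is itself represented by the same perfect scheme, and that formation of fibre products is compatible with restriction and extension along $u$. This is the perfect-site analogue of the classical fact that an algebraic space over a base is determined by, and can be tested on, affine schemes; granting it, the rest is formal apart from the two ancillary points already isolated above, namely that Zhu's perfect schemes coincide with the perfect schemes of our big site and that perfectness of $W$ is \'{e}tale-local.
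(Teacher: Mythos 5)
Your proposal is correct and follows essentially the same route as the paper, whose entire proof is to invoke Proposition \ref{P11} (the equivalence of topoi between the perfect affine \'{e}tale site and the perfect big \'{e}tale site over $k$) and then unwind Zhu's definitions; you extend $X'$ along exactly this equivalence and check that the schematic diagonal and the surjective \'{e}tale chart transport. In fact your write-up supplies more of the verification (the diagonal reformulation of the fibre product, \'{e}tale-local detection of perfectness of $W$, and the identification of Zhu's perfect schemes with perfect schemes) than the paper's one-line argument does.
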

\begin{proof}
This follows from Proposition \ref{P11} and definitions.
\end{proof}

There is another definition of perfect algebraic spaces in \cite[A.1.2]{Zhu} which is easier for us to deal with. Recall that the \textit{Frobenius endomorphism} of an algebraic space over $k$ is given by evaluation at the $p$-th power morphism.
\begin{definition}
An algebraic space $X$ over $k$ is perfect in the sense of Zhu if the Frobenius endomorphism $\sigma_{X}:X\rightarrow X$ is an isomorphism. And the Zhu's perfection of $X$ will be denoted by $X^{p^{-\infty}}:=\lim\limits_{\longleftarrow}$$_{\sigma_{X}}X$.
\end{definition}
We will make use of this definition since it suffices to fulfill our purpose of comparison. Note that the algebraic spaces in fppf topology are the same as the algebraic spaces in \'{e}tale topology due to \cite[Tag076M]{Stack Project}. So we make use of this identification.

In the following theorem, we will show that Zhu's perfect algebraic spaces is equivalent to our perfect algebraic spaces.
\begin{theorem}\label{T4}
Let $X$ be an algebraic space over $k$. Then $X$ is perfect in the sense of Zhu if and only if $X$ is perfect.
\end{theorem}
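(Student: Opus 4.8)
The plan is to reduce the statement to the single assertion that, over the perfect field $k$, the algebraic Frobenius $\Psi_X\colon X\to X$ of \cite[\S4]{Liang1} coincides with Zhu's Frobenius endomorphism $\sigma_X\colon X\to X$. Granting this, the equivalence is immediate: on the one hand, since $k$ has characteristic $p$ every algebraic space over $k$ has characteristic $p$, so $\Psi_X$ is defined, and by \cite[Theorem 4.12]{Liang1} (the ingredient already used in Lemma \ref{L7}) the space $X$ is perfect exactly when $\Psi_X$ is an isomorphism; on the other hand, by definition $X$ is perfect in the sense of Zhu exactly when $\sigma_X$ is an isomorphism. Thus ``$X$ perfect'' and ``$X$ perfect in the sense of Zhu'' become the single condition ``$\Psi_X=\sigma_X$ is an isomorphism'', and both implications follow at once.

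To prove $\Psi_X=\sigma_X$ I would pass to an \'etale presentation. Choose a scheme $U$ of characteristic $p$ over $k$ together with a surjective \'etale map $h_U\to X$, set $R=h_U\times_X h_U$, so that $X=U/R$ with $U$ and $R$ schemes of characteristic $p$. Both $\Psi_X$ and $\sigma_X$ are defined functorially by ``evaluation at the $p$-th power morphism''; on schemes this recipe produces nothing but the absolute Frobenius, and because $k$ is perfect the base-change twist is canonical --- one has canonical isomorphisms $U^{(p)}\cong U$ and $R^{(p)}\cong R$, exactly as in the situation of Lemma \ref{A15} --- so each of $\Psi_X$ and $\sigma_X$ is an honest $k$-endomorphism which is compatible with the absolute Frobenius $\Phi_U$ and with $\Phi_R$ along the chosen presentation. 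Since a morphism of algebraic spaces over $k$ is determined by its compatible effect on an \'etale presentation, this forces $\Psi_X=\sigma_X$, and the theorem follows by the previous paragraph.

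The one delicate step is precisely this identification $\Psi_X=\sigma_X$: one must unwind the construction of the algebraic Frobenius in \cite[\S4]{Liang1} and of $\sigma_X$, and verify that over the perfect field $k$ they agree as $k$-morphisms --- in particular that both are morphisms \emph{over} $k$, rather than merely over $\Phi_{\mathrm{Spec}\,k}$, which is exactly where perfectness of $k$ enters. As a cross-check, and as an alternative argument for the implication ``perfect in the sense of Zhu $\Rightarrow$ perfect'' that sidesteps the identification, one can proceed directly: by Proposition \ref{P12} a perfect algebraic space in the sense of Zhu admits a surjective \'etale map $U\to X$ with $U$ a perfect scheme, whence the projection $R=U\times_X U\to U$ is \'etale with perfect target and so $R$ is itself a perfect scheme; then $j\colon R\to U\times_k U$ is an \'etale perfect equivalence relation and Theorem \ref{A16} exhibits $X=U/R$ as a perfect algebraic space. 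For the opposite implication one pushes the presentation $(U,R)$ through Proposition \ref{P7}, whose output $U^{pf}/R^{pf}$ is identified with $X$ via Lemma \ref{L7}.
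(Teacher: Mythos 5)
Your main reduction has a genuine gap at exactly the point you yourself flag as delicate. You reduce the theorem to the identity $\Psi_X=\sigma_X$ of the algebraic Frobenius of \cite[\S4]{Liang1} with Zhu's Frobenius endomorphism, but you never prove it: you only describe how one ``would'' unwind the two constructions on an \'etale presentation. That identification is essentially the whole content of the comparison, and it cannot be waved through by saying both are ``evaluation at the $p$-th power morphism'' --- $\sigma_X$ is defined that way, whereas $\Psi_X$ is a separate construction in \cite{Liang1}, and the present paper deliberately does \emph{not} argue via such an identification. Moreover your reduction also needs the biconditional ``$X$ is perfect $\Leftrightarrow$ $\Psi_X$ is an isomorphism''; the only ingredient available here (used in Lemma \ref{L7}) is the implication ``perfect $\Rightarrow$ $\Psi_X$ isomorphism,'' and the converse is nowhere established in this paper, so you are assuming an unproved strengthening of \cite[Theorem 4.12]{Liang1}.

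The paper's proof avoids both issues. For ``Zhu-perfect $\Rightarrow$ perfect'' it cites the proof of \cite[Corollary A.3]{Zhu} (which produces a perfect presentation) together with Lemma \ref{L7}; your fallback argument via Proposition \ref{P12}, the perfect equivalence relation $R=U\times_X U$, and Theorem \ref{A16} is close in spirit to that and is essentially fine (modulo the standard fact that a scheme \'etale over a perfect scheme is perfect, and the mild definitional shuffle between Zhu's site-theoretic definition and the $\sigma_X$-definition the paper adopts). The real problem is the converse. The paper proves ``perfect $\Rightarrow$ Zhu-perfect'' by taking an \'etale cover $U\to X$ with $U$ a perfect scheme and computing $U^{p^{-\infty}}\simeq U\times_X X^{p^{-\infty}}\simeq U$, hence $X^{p^{-\infty}}\simeq X$ --- i.e.\ it works directly with Zhu's perfection along $\sigma_X$. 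Your fallback for this direction (push the presentation through Proposition \ref{P7} and identify $U^{pf}/R^{pf}$ with $X$ via Lemma \ref{L7}) only shows $X\cong X^{pf}$, the perfection along $\Psi_X$; it says nothing about $\sigma_X$ or $X^{p^{-\infty}}$ unless you already have the identification $\Psi_X=\sigma_X$ that this route was supposed to sidestep. So as written, the implication ``perfect $\Rightarrow$ perfect in the sense of Zhu'' is not established.
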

\begin{proof}
By the proof of \cite[Corollary A.3]{Zhu}, every perfect algebraic space $X$ in the sense of Zhu is perfect and strongly perfect. However, due to Lemma \ref{L7}, every perfect algebraic space is strongly perfect. Thus, $X$ is perfect.

Conversely, suppose that $X$ is perfect. Choose an \'{e}tale cover $U\rightarrow X$ where $U$ is a perfect scheme over $k$. Then there are isomorphisms $U^{p^{-\infty}}\simeq U\times_{X}X^{p^{-\infty}}\simeq U$. Thus, we have an isomorphism $X^{p^{-\infty}}\simeq X$. This shows that $X$ is perfect in the sense of Zhu.
\end{proof}

Let $\textrm{AlgSp}_{k}^{pf}$ denote the category of perfect algebraic spaces over $k$ in the sense of Zhu. Then Theorem \ref{T4} yields the following string of full embeddings
\begin{align}
\textrm{AlgSp}_{k}^{pf}=\textrm{Perf}_{k}\subset\textrm{StPerf}_{k}\subset\textrm{QPerf}_{k}\subset\textrm{SPerf}_{k}.
\end{align}

Moreover, we have the following theorem that gives an alternative definition of perfect algebraic spaces in the sense of Zhu over $k$.
\begin{theorem}
Let $F$ be an algebraic space over $k$. Then $F$ is perfect in the sense of Zhu if and only if the following statements are satisfied:
\begin{enumerate}[font=\normalfont]
  \item
  For all schemes $U\in\Ob((Sch/k)_{\textrm{\'{e}t}}^{pf})$, the maps $U\rightarrow F$ are weakly perfect.
  \item
  There exists a weakly perfect, surjective, and \'{e}tale map $V\rightarrow F$ from a scheme $V\in\Ob((Sch/k)_{\textrm{\'{e}t}}^{pf})$. In other words, there is a covering map $V\rightarrow F$ for the perfection of the \'{e}tale topology.
\end{enumerate}
\end{theorem}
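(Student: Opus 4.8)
The plan is to reduce the statement to Theorem~\ref{T4}, which identifies ``perfect in the sense of Zhu'' with ``perfect''. Thus it suffices to show that an algebraic space $F$ over $k$ is perfect if and only if conditions (1) and (2) hold, and I would prove the two implications separately, using Lemma~\ref{L7} (a perfect algebraic space is isomorphic to its own perfection, hence strongly perfect), Lemma~\ref{L6}, Proposition~\ref{P3}, and Theorem~\ref{A16}.

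For the ``only if'' direction, assume $F$ is perfect. By Lemma~\ref{L7} we have $F\cong F^{pf}$, so $F$ is strongly perfect and its diagonal $\Delta_{F}\colon F\to F\times_{k}F$ is weakly perfect. Condition (2) is then immediate: by definition $F$ admits a surjective \'{e}tale map $V\to F$ with $V$ a perfect scheme over $k$, and Lemma~\ref{L6} shows this map is weakly perfect, so $V\in\Ob((Sch/k)^{pf}_{\textrm{\'{e}t}})$ works. For condition (1), let $U\in\Ob((Sch/k)^{pf}_{\textrm{\'{e}t}})$ together with a morphism $g\colon U\to F$; to check $g$ is weakly perfect, take an arbitrary perfect scheme $T$ and $\xi\in F(T)$ and write $h_{T}\times_{\xi,F,g}U\cong (T\times_{k}U)\times_{F\times_{k}F,\Delta_{F}}F$. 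Since $T$ and $U$ are perfect and perfection commutes with fibre products over the perfect field $k$ (Proposition~\ref{P3} and the scheme version), $T\times_{k}U$ is again a perfect scheme, and the base change of the weakly perfect morphism $\Delta_{F}$ along $T\times_{k}U\to F\times_{k}F$ is therefore a perfect scheme. Hence $g$ is weakly perfect.

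For the ``if'' direction, assume (1) and (2). By (2) there is a surjective \'{e}tale weakly perfect morphism $V\to F$ with $V$ a perfect scheme over $k$. Put $R=V\times_{F}V$; it is a scheme because $\Delta_{F}$ is representable, and since $V\to F$ is weakly perfect and the tautological section $V\to F$ has perfect source $V$, the fibre product $R=h_{V}\times_{F}V$ is a perfect scheme. Moreover $R\to V\times_{k}V$ is an \'{e}tale perfect equivalence relation on $V$ in the sense of \cite[Definition 7.1]{Liang1}, so Theorem~\ref{A16} gives $F\cong V/R$ and shows $F$ is a perfect algebraic space. Combining with Theorem~\ref{T4} finishes the argument; condition (1) is recovered from the ``only if'' direction and is recorded in the statement for completeness.

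The step I expect to be the main obstacle is the verification of condition (1) in the ``only if'' direction. Lemma~\ref{L6} only controls the chosen surjective \'{e}tale cover, whereas (1) asserts weak perfection of \emph{every} morphism from a perfect scheme; the remedy is the diagonal computation above, which genuinely needs $F$ to be strongly perfect (via Lemma~\ref{L7}) and needs perfect schemes over $k$ to be stable under products over $k$. The remaining points --- that $R$ in the converse is a perfect scheme and that $R\to V\times_{k}V$ is an \'{e}tale perfect equivalence relation --- are routine once weak perfection of $V\to F$ is in hand.
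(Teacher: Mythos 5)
Your proof is correct and follows the same reduction as the paper: both arguments pass through Theorem~\ref{T4} to replace ``perfect in the sense of Zhu'' by ``perfect''. The difference lies in the second ingredient: the paper's proof is a one-line citation of \cite[Theorem 5.5]{Liang1}, which is precisely the characterization of (strongly) perfect algebraic spaces by the weak-perfection conditions (1)--(2), whereas you re-derive that characterization directly --- Lemma~\ref{L7} to pass from perfect to strongly perfect, the identity $h_T\times_{\xi,F,g}U\cong (T\times_k U)\times_{F\times_k F,\Delta_F}F$ combined with the fact that a product of perfect schemes over the perfect field $k$ is again perfect (Proposition~\ref{P3} and its scheme-level analogue) to obtain (1), Lemma~\ref{L6} for (2), and, for the converse, the observation that $R=V\times_F V$ is a perfect scheme (weak perfection of $V\to F$ evaluated at the tautological point of $V$) so that Theorem~\ref{A16} applies to $V/R\cong F$. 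What your route buys is a self-contained argument that does not lean on the external theorem; what it costs is essentially re-proving it. Two small points to keep in view, neither a gap: your ``only if'' direction uses that a perfect algebraic space over $k$ admits a surjective \'{e}tale cover by a perfect scheme, exactly as the paper does in the proof of Theorem~\ref{T4}, and the perfectness of $T\times_k U$ genuinely requires $k$ to be perfect, which holds in the setting at hand.
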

\begin{proof}
This follows from Theorem \ref{T4} and \cite[Theorem 5.5]{Liang1}.
\end{proof}

The following proposition characterizes the algebraic Frobenius morphism of a perfect algebraic space in the sense of Zhu.
\begin{proposition}
Let $F$ be an algebraic space over $k$. If $F$ is perfect in the sense of Zhu, then the algebraic Frobenius morphism $\Psi_{F}:F\rightarrow F$ of $F$ is weakly perfect, i.e. $\Psi_{F}$ is a covering map for the perfection of the weakly perfect topology.
\end{proposition}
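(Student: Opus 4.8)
The plan is to reduce the statement to facts already established, exploiting that under the hypothesis the algebraic Frobenius is forced to be an isomorphism. First I would invoke Theorem~\ref{T4}: over the perfect field $k$, an algebraic space $F$ is perfect in the sense of Zhu if and only if it is perfect in our sense. So it is enough to prove that whenever $F$ is a perfect algebraic space over $k$, the algebraic Frobenius $\Psi_F\colon F\to F$ is weakly perfect, and that $\{\Psi_F\colon F\to F\}$ is a covering for the perfection of the weakly perfect topology.

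Next I would apply \cite[Theorem 4.12]{Liang1}, which asserts that for a perfect algebraic space $F$ the algebraic Frobenius $\Psi_F\colon F\to F$ is an isomorphism (this is precisely the input used in the proof of Lemma~\ref{L7}). It then remains to check that an isomorphism of algebraic spaces over $S$ is weakly perfect, which is immediate from the definition: for any perfect scheme $U\in\ObSchS$ and any $\xi\in F(U)$, the base change $h_U\times_{\xi,F,\Psi_F}F$ is carried isomorphically onto $h_U$, hence is representable by a perfect scheme. (One may also cite \cite[Lemma 5.8]{Liang1} directly.) Thus $\Psi_F$ is weakly perfect.

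For the reformulation as a covering, I would observe that the one-element family $\{\Psi_F\colon F\to F\}$ has perfect source and target (as $F$ is perfect), has weakly perfect structure morphism by the previous step, and satisfies $|F|=|\Psi_F|(|F|)$ because $\Psi_F$ is an isomorphism. Hence it is a perfect weakly perfect covering, i.e. a covering in the perfection of the weakly perfect topology as defined in \S\ref{S4}. Its compatibility with Lemma~\ref{L12} is automatic, since under the identification $F\cong F^{pf}$ of Lemma~\ref{L7} the morphism $\Psi_F$ coincides with its own perfection.

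I do not anticipate a genuine obstacle here: all the mathematical content is carried by Theorem~\ref{T4} together with \cite[Theorem 4.12]{Liang1}. The only thing requiring a little care is the bookkeeping --- verifying that ``covering map for the perfection of the weakly perfect topology'' unwinds, via the definitions of \S\ref{S4}, to exactly ``perfect weakly perfect covering'', and that a single isomorphism between perfect algebraic spaces does constitute such a covering.
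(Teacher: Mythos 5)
Your proposal is correct and takes essentially the same approach as the paper: the paper also reduces via Theorem~\ref{T4} to the statement that the algebraic Frobenius of a perfect algebraic space (in the sense of \cite{Liang1}) is weakly perfect, which it then obtains by citing \cite[Proposition 5.10]{Liang1} directly. Your only deviation is to re-derive that fact from \cite[Theorem 4.12]{Liang1} (the Frobenius of a perfect space is an isomorphism, hence weakly perfect, and the one-element family it forms is a perfect weakly perfect covering), which is a harmless unpacking of the same input.
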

\begin{proof}
This follows directly from \cite[Proposition 5.10]{Liang1} due to the equivalence described in Theorem \ref{T4}.
\end{proof}

We can show that the category ${\rm{AlgSp}}_{k}^{pf}$ of perfect algebraic spaces over $k$ in the sense of Zhu is stable under fibre products.
\begin{proposition}
Let $F\rightarrow H$ and $G\rightarrow H$ be morphisms of algebraic spaces over $k$. If $F,G,H$ are perfect in the sense of Zhu, then the fibre product $F\times_{H}G$ is also perfect in the sense of Zhu. It is a fibre product in the category ${\rm{AlgSp}}_{k}^{pf}$ of perfect algebraic spaces over $k$ in the sense of Zhu.
\end{proposition}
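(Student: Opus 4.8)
The plan is to deduce the statement from Proposition \ref{PP2} by passing through the equivalence of Theorem \ref{T4}. First I would observe that, since $F$, $G$, and $H$ are perfect in the sense of Zhu, Theorem \ref{T4} tells us that $F$, $G$, and $H$ are perfect (in the sense of \cite[Definition 3.1]{Liang1}); in particular $H$ is perfect, hence strongly perfect by Lemma \ref{L7}.

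Next I would feed the morphisms $F\rightarrow H$ and $G\rightarrow H$ into Proposition \ref{PP2}. Since $F,G$ are perfect and $H$ is strongly perfect, that proposition yields that the fibre product $F\times_{H}G$ (formed in $AP_{k}$) is a perfect algebraic space over $k$, and moreover that it is a fibre product in the category $\textrm{Perf}_{k}$. Applying Theorem \ref{T4} in the other direction then shows that $F\times_{H}G$ is perfect in the sense of Zhu, which settles the first assertion.

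For the second assertion I would invoke the identification $\textrm{AlgSp}_{k}^{pf}=\textrm{Perf}_{k}$ recorded immediately after Theorem \ref{T4}. Under this identification, the statement that $F\times_{H}G$ is a fibre product in $\textrm{Perf}_{k}$ is precisely the statement that the ordinary fibre product $F\times_{H}G$ is a fibre product in $\textrm{AlgSp}_{k}^{pf}$: given any object $T$ of $\textrm{AlgSp}_{k}^{pf}$ with compatible morphisms to $F$ and $G$ over $H$, the induced morphism $T\rightarrow F\times_{H}G$ exists and is unique already as a morphism of algebraic spaces, hence a fortiori in $\textrm{AlgSp}_{k}^{pf}$. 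I do not expect any serious obstacle here; the only point requiring a little care is checking the universal property against all perfect test objects rather than just affine or scheme-theoretic ones, but this is exactly what the second half of Proposition \ref{PP2} already provides. So the proof is essentially a two-step translation: Zhu-perfect $\Rightarrow$ perfect (Theorem \ref{T4}), apply Proposition \ref{PP2}, then perfect $\Rightarrow$ Zhu-perfect (Theorem \ref{T4}).
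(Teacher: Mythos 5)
Your proposal is correct and follows essentially the same route as the paper, whose proof is precisely the one-line combination of Proposition \ref{PP2} with the equivalence of Theorem \ref{T4}; the only cosmetic difference is that you pass through Lemma \ref{L7} to get strong perfectness of $H$, whereas the second statement of Proposition \ref{PP2} already covers the case where $F,G,H$ are all perfect.
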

\begin{proof}
This is by \proref{PP2} due to the equivalence described in Theorem \ref{T4}.
\end{proof}

\section{Perfection of groupoids in algebraic spaces}\label{B7}
In this section, we study properties of group algebraic spaces and groupoids in algebraic spaces under the perfection functor. The theory of perfect groupoids in algebraic spaces can be found in \cite[\S7]{Liang1}. In the following, we first formalize the notion of characteristics of group algebraic spaces and groupoids in algebraic spaces.
\begin{definition}
Let $B$ be a base algebraic space over $S$. Let $(G,m)$ be a group algebraic space over $B$. Let $(U,R,s,t,c)$ be a groupoid in algebraic spaces over $B$. We say that $(G,m)$ has characteristic $p$ if $G$ has characteristic $p$. We say that $(U,R,s,t,c)$ has characteristic $p$ if $U$ and $R$ have characteristic $p$.
\end{definition}

The following lemma shows that group algebraic spaces (resp. groupoids in algebraic spaces) are stable under the perfection functor.
\begin{lemma}\label{L17}
Let $B$ be a base algebraic space in characteristic $p$ over $S$. If $(G,m)$ is a group algebraic space in characteristic $p$ over $B$, then $(G^{pf},m^{\natural})$ is a group algebraic space over $B^{pf}$. If $(U,R,s,t,c)$ is a groupoid in algebraic spaces in characteristic $p$ over $B$, then $(U^{pf},R^{pf},s^{\natural},t^{\natural},c^{\natural})$ is a groupoid in algebraic spaces over $B^{pf}$.
\end{lemma}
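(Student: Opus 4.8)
The plan is to treat both statements uniformly. A group algebraic space $(G,m)$ over $B$, resp. a groupoid $(U,R,s,t,c)$ in algebraic spaces over $B$, amounts to a collection of algebraic spaces and morphisms --- the given objects, the structure morphisms to $B$, the operations $m,e,i$ resp. $s,t,c,e,i$, and the canonical projections of the various iterated fibre products appearing as sources of these operations --- subject to the commutativity of finitely many axiom diagrams (associativity, unit, inverse in the group case; compatibility of $s,t$ with $c$, associativity of composition, identity and inverse in the groupoid case). Since $\underline{\mathrm{Perf}}_{S}$ is a functor, applying it to all of this data produces a candidate collection of objects and morphisms over $B^{pf}$, and since a functor sends commutative diagrams to commutative diagrams, the axiom diagrams will be satisfied automatically once the objects have been identified correctly. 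Thus the only real content is to check that the perfection of each iterated fibre product occurring here is the corresponding iterated fibre product in $\mathrm{Perf}_{S}$, with the projections matching up.

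First I would record that all the relevant fibre products are of characteristic $p$: $G\times_{B}G$, $G\times_{B}G\times_{B}G$, and in the groupoid case $R\times_{s,U,t}R$ and $R\times_{s,U,t}R\times_{s,U,t}R$, are fibre products of algebraic spaces of characteristic $p$ over a base of characteristic $p$, hence themselves of characteristic $p$ over $S$, so that $\underline{\mathrm{Perf}}_{S}$ may be applied to them. Next, \proref{P3} gives canonical isomorphisms $(G\times_{B}G)^{pf}\cong G^{pf}\times_{B^{pf}}G^{pf}$ and $(R\times_{s,U,t}R)^{pf}\cong R^{pf}\times_{s^{\natural},U^{pf},t^{\natural}}R^{pf}$, and likewise for the triple products by iterating. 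Moreover, since $\underline{\mathrm{Perf}}_{S}$ is right adjoint to the inclusion by \proref{P5}, it preserves limits; in particular it sends the cartesian square defining $G\times_{B}G$ to the cartesian square defining $G^{pf}\times_{B^{pf}}G^{pf}$, so under the above isomorphism the perfection of a projection $\mathrm{pr}_{i}$ is the corresponding projection. Consequently $m^{\natural}$ becomes a morphism $G^{pf}\times_{B^{pf}}G^{pf}\to G^{pf}$, the unit becomes $e^{\natural}\colon B^{pf}\to G^{pf}$ (via \lemref{A21}, using that $B^{pf}$ is the perfection of the base), the inverse becomes $i^{\natural}\colon G^{pf}\to G^{pf}$, and similarly $s^{\natural},t^{\natural}\colon R^{pf}\to U^{pf}$ and $c^{\natural}\colon R^{pf}\times_{s^{\natural},U^{pf},t^{\natural}}R^{pf}\to R^{pf}$, all over $B^{pf}$.

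With the objects and structure maps in place, I would finish by transporting each axiom diagram. For instance the associativity square for $m$ lives over $G\times_{B}G\times_{B}G$; applying $\underline{\mathrm{Perf}}_{S}$ and using the identifications above turns it into the associativity square for $m^{\natural}$ over $G^{pf}\times_{B^{pf}}G^{pf}\times_{B^{pf}}G^{pf}$, which therefore commutes; the unit and inverse axioms are handled the same way, and so are the source/target, associativity, identity and inverse axioms for $(U^{pf},R^{pf},s^{\natural},t^{\natural},c^{\natural})$. This shows $(G^{pf},m^{\natural})$ is a group algebraic space over $B^{pf}$ and $(U^{pf},R^{pf},s^{\natural},t^{\natural},c^{\natural})$ is a groupoid in algebraic spaces over $B^{pf}$.

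The main obstacle, such as it is, is purely organizational: one must ensure that the canonical isomorphisms supplied by \proref{P3} for the various iterated fibre products are mutually compatible and compatible with the projection morphisms, so that the diagram chases transport verbatim rather than merely up to unspecified isomorphism. Once the coherence of these identifications is pinned down --- which follows from the fact that $\underline{\mathrm{Perf}}_{S}$ preserves finite limits (\proref{P5}) together with the uniqueness clause in the universal property of the perfection --- there is no genuine difficulty, and the lemma becomes a formal consequence of \proref{P3} and \proref{P5} plus functoriality of $\underline{\mathrm{Perf}}_{S}$.
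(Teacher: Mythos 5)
Your proposal is correct and follows essentially the same route as the paper, whose entire proof is ``this follows from definitions and Proposition~\ref{P3}'': you simply unpack that one-liner by applying the functor to the structure data and using \proref{P3} (together with the limit preservation from \proref{P5}) to identify the perfected fibre products. The extra care you take about coherence of the canonical isomorphisms and about the fibre products having characteristic $p$ is a reasonable elaboration of what the paper leaves implicit, not a different argument.
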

\begin{proof}
This follows from definitions and Proposition \ref{P3}.
\end{proof}

\begin{definition}
Consider the situation as in Lemma \ref{L17}. The perfect group algebraic space $(G^{pf},m^{\natural})$ over $B^{pf}$ is called the perfection of $(G,m)$. The perfect groupoid in algebraic spaces $(U^{pf},R^{pf},s^{\natural},t^{\natural},c^{\natural})$ over $B^{pf}$ is called the perfection of $(U,R,s,t,c)$.
\end{definition}

The canonical morphism $(G^{pf},m^{\natural})\rightarrow(G,m)$ given by $G^{pf}\rightarrow G$ is called the \textit{canonical projection} of $(G^{pf},m^{\natural})$. And the canonical morphism $(U^{pf},F^{pf},s^{\natural},t^{\natural},c^{\natural})\rightarrow(U,F,s,t,c)$ given by $U^{pf}\rightarrow U$ and $F^{pf}\rightarrow F$ is called the \textit{canonical projection} of $(U^{pf},F^{pf},s^{\natural},t^{\natural},c^{\natural})$.

The following lemma shows that morphisms of group algebraic spaces are stable under the perfection functor.
\begin{lemma}
Let $B$ be a base algebraic space in characteristic $p$ over $S$. Let $(G,m)$ and $(G',m')$ be group algebraic spaces in characteristic $p$ over $B$. Let $f:(G,m)\rightarrow(G',m')$ be a morphism of group algebraic spaces over $B$. Then
$$
f^{\natural}:(G^{pf},m^{\natural})\rightarrow(G'^{pf},m'^{\natural})
$$
is a morphism of group algebraic spaces over $B^{pf}$.
\end{lemma}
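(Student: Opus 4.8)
The plan is to apply the perfection functor $\underline{\textrm{Perf}}_{S}$ to the commutative square that encodes that $f$ is a homomorphism, and then to reinterpret the resulting square using that $\underline{\textrm{Perf}}_{S}$ commutes with fibre products. Recall that, by definition, $f:(G,m)\rightarrow(G',m')$ being a morphism of group algebraic spaces over $B$ means precisely that the square
$$
\xymatrix{
G\times_{B}G \ar[r]^{m} \ar[d]_{f\times f} & G \ar[d]^{f} \\
G'\times_{B}G' \ar[r]^{m'} & G'
}
$$
commutes (compatibility of $f$ with the identity sections and the inverse morphisms is then automatic, as one checks on $T$-valued points), so it suffices to reproduce this square after perfection.

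First I would apply $\underline{\textrm{Perf}}_{S}$ to the square above. Since $\underline{\textrm{Perf}}_{S}$ is a functor (see Lemma \ref{A21} and the construction of $\underline{\textrm{Perf}}_{S}$ following it), we obtain a commutative square
$$
\xymatrix{
(G\times_{B}G)^{pf} \ar[r]^{m^{\natural}} \ar[d]_{(f\times f)^{\natural}} & G^{pf} \ar[d]^{f^{\natural}} \\
(G'\times_{B}G')^{pf} \ar[r]^{m'^{\natural}} & G'^{pf}.
}
$$
By the construction of the group structures in Lemma \ref{L17} and the definition of the perfection of $(G,m)$ that follows it, $m^{\natural}$ and $m'^{\natural}$ are exactly the perfections of $m$ and $m'$ transported along the canonical isomorphisms $(G\times_{B}G)^{pf}\cong G^{pf}\times_{B^{pf}}G^{pf}$ and $(G'\times_{B}G')^{pf}\cong G'^{pf}\times_{B^{pf}}G'^{pf}$ supplied by Proposition \ref{P3} (which applies since $G$, $G'$ and $B$ all have characteristic $p$).

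The step that I expect to need the most care is the identification of $(f\times f)^{\natural}$ with $f^{\natural}\times_{B^{pf}}f^{\natural}$ under the isomorphisms of Proposition \ref{P3}: this is a naturality assertion about those isomorphisms, which I would verify by composing with the two projections of the target fibre product, using functoriality of $\underline{\textrm{Perf}}_{S}$ applied to the projections $G\times_{B}G\rightarrow G$, and invoking the uniqueness in the universal property of the fibre product (equivalently, the uniqueness of the perfection) to conclude that the two resulting maps $G^{pf}\times_{B^{pf}}G^{pf}\rightarrow G'^{pf}\times_{B^{pf}}G'^{pf}$ coincide. Granting this, the commuting square becomes
$$
f^{\natural}\circ m^{\natural}=m'^{\natural}\circ(f^{\natural}\times_{B^{pf}}f^{\natural}),
$$
which is exactly the statement that $f^{\natural}:(G^{pf},m^{\natural})\rightarrow(G'^{pf},m'^{\natural})$ is a morphism of group algebraic spaces over $B^{pf}$. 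If one also wishes to record compatibility of $f^{\natural}$ with the perfected unit sections and inverse morphisms explicitly, it follows by applying $\underline{\textrm{Perf}}_{S}$ to the corresponding commuting diagrams for $f$ and using Proposition \ref{P3} in the same way.
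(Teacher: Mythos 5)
Your proposal is correct and follows the same route as the paper, whose proof is simply ``this follows from definitions and Proposition~\ref{P3}'': you apply the perfection functor to the homomorphism square and use Proposition~\ref{P3} to identify the perfected fibre products, which is exactly the intended argument, just spelled out with the naturality check made explicit.
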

\begin{proof}
This follows from definitions and Proposition \ref{P3}.
\end{proof}

Moreover, we can show that morphisms of groupoids in algebraic spaces are stable under the perfection functor.
\begin{lemma}
Let $B$ be a base algebraic space in characteristic $p$ over $S$. Let $(U,R,s,t,c)$ and $(U',R',s',t',c')$ be groupoids in algebraic spaces of characteristic $p$ over $B$. Let $f:(U,R,s,t,c)\rightarrow(U',R',s',t',c')$ be a morphism of groupoids in algebraic spaces over $B$. Then
$$
f^{\natural}:(U^{pf},R^{pf},s^{\natural},t^{\natural},c^{\natural})\rightarrow(U'^{pf},R'^{pf},s'^{\natural},t'^{\natural},c'^{\natural})
$$
is a morphism of groupoids in algebraic spaces over $B^{pf}$.
\end{lemma}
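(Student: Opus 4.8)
The plan is to follow the blueprint of the immediately preceding lemma on morphisms of group algebraic spaces, simply replacing the single multiplication map by the three structure morphisms $s,t,c$ of a groupoid and inserting \proref{P3} at the one place where a fibre product intervenes. Nothing genuinely new is needed beyond functoriality of the perfection functor and its compatibility with fibre products.

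First I would recall the definition: a morphism of groupoids in algebraic spaces $f\colon(U,R,s,t,c)\to(U',R',s',t',c')$ over $B$ is a pair consisting of a morphism $f_{U}\colon U\to U'$ and a morphism $f_{R}\colon R\to R'$ of algebraic spaces over $B$ such that $s'\circ f_{R}=f_{U}\circ s$, $t'\circ f_{R}=f_{U}\circ t$, and $c'\circ(f_{R}\times_{f_{U}}f_{R})=f_{R}\circ c$, the last identity being an equality of morphisms $R\times_{s,U,t}R\to R'$. By \lemref{L17} the triples $(U^{pf},R^{pf},s^{\natural},t^{\natural},c^{\natural})$ and $(U'^{pf},R'^{pf},s'^{\natural},t'^{\natural},c'^{\natural})$ are already groupoids in algebraic spaces over $B^{pf}$, so it remains only to verify that the pair $(f_{U}^{\natural},f_{R}^{\natural})$ is compatible with their structure morphisms.

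Applying the perfection functor $\underline{\mathrm{Perf}}_{S}$ to the first two identities and using its functoriality (\lemref{A21}, which gives $(\psi\circ\varphi)^{\natural}=\psi^{\natural}\circ\varphi^{\natural}$) immediately yields $s'^{\natural}\circ f_{R}^{\natural}=f_{U}^{\natural}\circ s^{\natural}$ and $t'^{\natural}\circ f_{R}^{\natural}=f_{U}^{\natural}\circ t^{\natural}$. For the composition law one first invokes \proref{P3}: since the perfection functor commutes with fibre products there is a canonical identification $(R\times_{s,U,t}R)^{pf}\cong R^{pf}\times_{s^{\natural},U^{pf},t^{\natural}}R^{pf}$, and under it $(f_{R}\times_{f_{U}}f_{R})^{\natural}$ corresponds to $f_{R}^{\natural}\times_{f_{U}^{\natural}}f_{R}^{\natural}$. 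Taking the perfection of $c'\circ(f_{R}\times_{f_{U}}f_{R})=f_{R}\circ c$ then gives $c'^{\natural}\circ(f_{R}^{\natural}\times_{f_{U}^{\natural}}f_{R}^{\natural})=f_{R}^{\natural}\circ c^{\natural}$, which is exactly the required compatibility. I expect the only mildly delicate point to be tracking the canonical fibre-product isomorphisms of \proref{P3} carefully enough that $c^{\natural}$, a priori a map out of $(R\times_{s,U,t}R)^{pf}$, is literally a map out of $R^{pf}\times_{s^{\natural},U^{pf},t^{\natural}}R^{pf}$; once this is made precise everything else is formal and parallels the group-algebraic-space case verbatim.
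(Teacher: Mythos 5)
Your argument is correct and matches the paper's proof, which likewise reduces the claim to the definitions (functoriality of $\underline{\mathrm{Perf}}_{S}$ applied to the compatibility identities) together with Proposition \ref{P3} for the fibre-product identification $(R\times_{s,U,t}R)^{pf}\cong R^{pf}\times_{s^{\natural},U^{pf},t^{\natural}}R^{pf}$. You have simply written out in detail what the paper leaves implicit.
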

\begin{proof}
This follows from definitions and Proposition \ref{P3}.
\end{proof}

The following type of groupoids in algebraic spaces is of particular importance to the study of Deligne-Mumford stacks.
\begin{definition}
Let $B$ be a base algebraic space over $S$. Let $(U,R,s,t,c)$ be a groupoid in algebraic spaces over $B$. We say that $(U,R,s,t,c)$ is an \'{e}tale groupoid in algebraic spaces if the morphisms $s,t$ are \'{e}tale.
\end{definition}
\begin{remark}
Our definition of \'{e}tale groupoids in algebraic spaces is due to \cite[Tag04TH]{Stack Project}.
\end{remark}

We observe that \'{e}tale groupoids in algebraic spaces are stable under fibre product in the following case.
\begin{proposition}
Let $(U,R,s,t,c),(U',R',s',t',c'),(U'',R'',s'',t'',c'')$ be \'{e}tale groupoids in algebraic spaces over $B$. Let $(\varphi_{1},\varphi_{2}):(U,R,s,t,c)\rightarrow(U'',R'',s'',t'',c'')$ and $(\psi_{1},\psi_{2}):(U',R',s',t',c')\rightarrow(U'',R'',s'',t'',c'')$ be \'{e}tale morphisms of groupoids in algebraic spaces over $B$, i.e. $\varphi_{1},\varphi_{2},\psi_{1},\psi_{2}$ are \'{e}tale morphisms of algebraic spaces over $B$. Then $(U\times_{U''}U',R\times_{R''}R',s''',t''',c''')$ is an \'{e}tale groupoid in algebraic spaces over $B$, where $s''',t''':R\times_{R''}R'\rightarrow U\times_{U''}U'$ and $c''':(R\times_{R''}R')\times_{s''',(U\times_{U''}U'),t'''}(R\times_{R''}R')\rightarrow R\times_{R''}R'$ are morphisms of algebraic spaces over $B$.
\end{proposition}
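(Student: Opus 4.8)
The plan is to realize $(U\times_{U''}U',\,R\times_{R''}R')$, equipped with suitable structure maps, as the fibre product of the three given groupoids, to check the groupoid axioms ``by uniqueness'', and then to verify separately that the source and target maps are \'{e}tale. All fibre products below are taken in the category of algebraic spaces over $B$, and are therefore again algebraic spaces over $B$.

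First I would construct the source and target maps. Since $(\varphi_{1},\varphi_{2})$ and $(\psi_{1},\psi_{2})$ are morphisms of groupoids we have $\varphi_{1}\circ s=s''\circ\varphi_{2}$, $\varphi_{1}\circ t=t''\circ\varphi_{2}$ and the two analogous identities for $\psi_{1},\psi_{2}$; consequently $(s\circ\mathrm{pr}_{1},\,s'\circ\mathrm{pr}_{2})$ and $(t\circ\mathrm{pr}_{1},\,t'\circ\mathrm{pr}_{2})$ factor through $U\times_{U''}U'$ and define $s''',t'''\colon R\times_{R''}R'\to U\times_{U''}U'$. From $\varphi_{2}\circ e=e''\circ\varphi_{1}$, $\psi_{2}\circ e'=e''\circ\psi_{1}$ one gets $e'''=(e\circ\mathrm{pr},\,e'\circ\mathrm{pr})$, and from the compatibilities of the inverse maps one gets $i'''$. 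For the composition law, one first checks the canonical identification
$$
(R\times_{R''}R')\times_{s''',\,U\times_{U''}U',\,t'''}(R\times_{R''}R')\;\cong\;(R\times_{s,U,t}R)\times_{R''\times_{s'',U'',t''}R''}(R'\times_{s',U',t'}R'),
$$
again using the relations above; then $c,c',c''$, which are compatible via the $\varphi_{i},\psi_{i}$, induce $c'''$ by the universal property of the right-hand side.

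Second, I would check that $(U\times_{U''}U',\,R\times_{R''}R',\,s''',t''',c''',e''',i''')$ satisfies the groupoid axioms. Each axiom --- associativity of $c'''$, the two unit laws, the two inverse laws --- is an equality of two morphisms into $R\times_{R''}R'$ or into one of the displayed fibre products; composing with the projections onto the factors built out of $R$, $R'$, $R''$ reduces it to the corresponding axiom for the three given groupoids, which holds by hypothesis, so uniqueness in the universal property forces the two morphisms to agree. Hence $(U\times_{U''}U',R\times_{R''}R',\dots)$ is a groupoid in algebraic spaces over $B$.

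Finally, the essential point: $s'''$, and by the symmetric argument $t'''$, is \'{e}tale. Form the auxiliary fibre product $R\times_{U''}R'$, where $R\to U''$ is $s''\circ\varphi_{2}$ and $R'\to U''$ is $s''\circ\psi_{2}$. On one hand, $R\times_{R''}R'\hookrightarrow R\times_{U''}R'$ is the base change of the relative diagonal $\Delta_{s''}\colon R''\to R''\times_{s'',U'',s''}R''$ along $R\times_{U''}R'\to R''\times_{U''}R''$, $(x,x')\mapsto(\varphi_{2}(x),\psi_{2}(x'))$; since $s''$ is \'{e}tale it is unramified, so $\Delta_{s''}$ is an open immersion, and hence so is $R\times_{R''}R'\hookrightarrow R\times_{U''}R'$. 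On the other hand $R\times_{U''}R'\to U\times_{U''}U'$ factors as $R\times_{U''}R'\to U\times_{U''}R'\to U\times_{U''}U'$, the first map being a base change of $s\colon R\to U$ and the second a base change of $s'\colon R'\to U'$, so this composite is \'{e}tale because $s,s'$ are. Therefore $s'''$, being the composition of an open immersion with an \'{e}tale morphism, is \'{e}tale. The main obstacle is purely bookkeeping: correctly identifying the source of $c'''$ and keeping the nested fibre products and compatibility relations straight; once this is set up, every step is a routine application of the universal property of fibre products and of stability of \'{e}tale morphisms under base change and composition.
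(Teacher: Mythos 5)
Your proof is correct, and it is genuinely more self-contained than the paper's: the paper simply defers to the proof of Theorem~5 in \S 3.2 of Acosta's thesis for the assertion that $s''',t'''$ are automatically \'etale, whereas you construct the groupoid structure explicitly and prove \'etaleness from scratch. Your key step is the standard graph/diagonal identity $R\times_{R''}R'\cong (R\times_{U''}R')\times_{R''\times_{s'',U'',s''}R''}R''$, which exhibits $R\times_{R''}R'\hookrightarrow R\times_{U''}R'$ as a base change of $\Delta_{s''}$ (an open immersion since $s''$ is unramified), followed by the factorization of $R\times_{U''}R'\to U\times_{U''}U'$ through $U\times_{U''}R'$ as base changes of $s$ and $s'$; all identifications check out on the level of functors of points, and the groupoid axioms for $c''',e''',i'''$ do follow by composing with the projections and invoking uniqueness in the universal property (your phrase ``universal property of the right-hand side'' for the construction of $c'''$ should really be the universal property of the target $R\times_{R''}R'$, but the intended argument is clear). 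A noteworthy by-product of your route: you never use the hypothesis that $\varphi_{1},\varphi_{2},\psi_{1},\psi_{2}$ are \'etale --- only that $s,s'$ are \'etale and $s''$ is unramified (and symmetrically for the targets) --- so your argument proves a slightly stronger statement than the one cited, at the cost of the bookkeeping you describe; the paper's citation buys brevity but hides exactly this verification.
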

\begin{proof}
It follows from the proof of \cite[\S3.2, Theorem 5]{Acosta} that the morphisms $s''',t'''$ are automatically \'{e}tale such that $(U\times_{U''}U',R\times_{R''}R',s''',t''',c''')$ is an \'{e}tale groupoid in algebraic spaces over $B$.
\end{proof}

The following proposition shows that \'{e}tale groupoids in algebraic spaces are stable under the perfection functor.
\begin{proposition}
Let $B$ be a base algebraic space in characteristic $p$ over $S$. If $(U,R,s,t,c)$ is an \'{e}tale groupoid in algebraic spaces in characteristic $p$ over $B$, then $(U^{pf},R^{pf},s^{\natural},t^{\natural},c^{\natural})$ is an \'{e}tale groupoid in algebraic spaces over $B^{pf}$.
\end{proposition}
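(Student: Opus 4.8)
The plan is to deduce the statement from two facts already in hand: that forming perfections turns the groupoid $(U,R,s,t,c)$ into a groupoid over $B^{pf}$ (Lemma \ref{L17}), and that the perfection functor preserves \'{e}taleness (Proposition \ref{P2}, item \ref{It4}). So the proof should be essentially a matter of combining these.

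First I would invoke Lemma \ref{L17}. Since $(U,R,s,t,c)$ is a groupoid in algebraic spaces of characteristic $p$ over $B$, the tuple $(U^{pf},R^{pf},s^{\natural},t^{\natural},c^{\natural})$ is a groupoid in algebraic spaces over $B^{pf}$; here $s^{\natural},t^{\natural}\colon R^{pf}\to U^{pf}$ are the perfections of $s,t$ in the sense of Lemma \ref{A21}, and $c^{\natural}$ is built on the identification $(R\times_{s,U,t}R)^{pf}\cong R^{pf}\times_{s^{\natural},U^{pf},t^{\natural}}R^{pf}$ furnished by Proposition \ref{P3}. Thus all the groupoid axioms already hold for the perfected data, and the only point left to check is that $s^{\natural}$ and $t^{\natural}$ are \'{e}tale.

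Second, by hypothesis $(U,R,s,t,c)$ is an \'{e}tale groupoid, so $s,t\colon R\to U$ are \'{e}tale morphisms of algebraic spaces over $S$; as $R$ and $U$ have characteristic $p$, their perfections make sense and $s^{\natural},t^{\natural}$ are defined. Applying Proposition \ref{P2}, item \ref{It4} (equivalently Lemma \ref{T2}, since ``\'{e}tale'' is \'{e}tale-local on source-and-target and is preserved by the perfection functor on schemes), we get that $s^{\natural},t^{\natural}\colon R^{pf}\to U^{pf}$ are \'{e}tale. Combining with the first step, $(U^{pf},R^{pf},s^{\natural},t^{\natural},c^{\natural})$ is a groupoid in algebraic spaces over $B^{pf}$ whose source and target maps are \'{e}tale, i.e.\ an \'{e}tale groupoid in algebraic spaces over $B^{pf}$.

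I do not anticipate a genuine obstacle: the argument is bookkeeping. The only subtlety worth a line is confirming that the source and target maps produced by Lemma \ref{L17} really are the perfections $s^{\natural},t^{\natural}$ (so that Proposition \ref{P2} applies to them verbatim) and that the identification of $(R\times_{U}R)^{pf}$ with $R^{pf}\times_{U^{pf}}R^{pf}$ used to define $c^{\natural}$ is the canonical one coming from Proposition \ref{P3} --- but both of these are exactly what is recorded in the proof of Lemma \ref{L17}, so nothing new must be established here.
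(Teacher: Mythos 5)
Your argument is correct and follows exactly the paper's route: the paper's proof also just cites Lemma \ref{L17} to get that the perfected data form a groupoid over $B^{pf}$ and Proposition \ref{P2} (the preservation of \'{e}taleness, item \ref{It4}) to conclude that $s^{\natural},t^{\natural}$ are \'{e}tale. Your extra remarks about the identification $(R\times_{U}R)^{pf}\cong R^{pf}\times_{U^{pf}}R^{pf}$ via Proposition \ref{P3} simply spell out what is implicit in Lemma \ref{L17}, so nothing differs in substance.
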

\begin{proof}
This follows from Lemma \ref{L17} and Proposition \ref{P2}.
\end{proof}

\end{document}